\newtheorem{theorem}{Theorem}
\newtheorem{lemma}[theorem]{Lemma}
\newtheorem{corollary}[theorem]{Corollary}
\newtheorem{proposition}[theorem]{Proposition}
\newtheorem{example}{Example}
\theoremstyle{remark}\newtheorem{remark}[theorem]{Remark}
\theoremstyle{definition}\newtheorem{definition}[theorem]{Definition}
\newcommand{\N}{\mathbb N}
\newcommand{\R}{\mathbb R}
\newcommand{\im}{\operatorname{im}}
\newcommand{\aplim}{\operatorname{aplim}}
\def\ka{\kappa}
\def\ga{\gamma}
\def\Ga{\Gamma}
\def\eps{\epsilon}
\def\si{\sigma}
\def\Si{\Sigma}
\def\om{\omega}
\def\Om{\Omega}
\def\<{\langle}
\def\>{\rangle}
\def\3{\ss}
\def\a{\operatorname{area}}
\def\length{\operatorname{length}}
\def\D{\partial}
\def\2pithird{\frac{2\pi}{3}}
\newcommand{\ra}{\rightarrow}
\def\wlim{\operatorname{\omega-lim}}
\newcommand\blfootnote[1]{%
  \begingroup
  \renewcommand\thefootnote{}\footnote{#1}%
  \addtocounter{footnote}{-1}%
  \endgroup
}
\newcommand{\Addresses}{\bigskip\footnotesize 
\par\nopagebreak\textsc{Mathematisches Institut der Universit\"at M\"unchen, Theresienstr. 39, D-80333 M\"unchen, Germany}
\par\nopagebreak
\textit{Email}: \texttt{stadler@math.lmu.de}}
\begin{document}

\title{The structure of minimal surfaces in CAT(0) spaces}

\author{Stephan Stadler }

\date{\today}

\maketitle

\blfootnote{\it 2010 Mathematics Subject classification.\rm\ Primary
53C23, 53A10, 53C20.
Keywords: Minimal surfaces, CAT(0), total curvature, knots.}

\begin{abstract}
We prove that a  minimal disc in a  CAT(0) space is a local embedding away from a finite set
 of "branch points". On the way we establish several basic properties of minimal surfaces:
monotonicity of area densities, density bounds, limit theorems and the existence of
tangent maps. 

As an application, we prove F\'{a}ry-Milnor's theorem in the CAT(0) setting.
\end{abstract}

\section{Introduction}

\subsection{Motivation and main results}

Minimal surfaces are an indispensable tool in Riemannian geometry. Part of their success relies on the
well understood structure of minimal discs.
For example, by the classical Douglas-Rado Theorem, any smooth Jordan curve $\Ga$ in $\R^n$ bounds a least-area disc.
Moreover, this disc is a smooth immersion away from a finite set of branch points. Recently, Alexander Lytchak
and Stefan Wenger proved that a rectifiable Jordan curve in a proper metric space bounds a least-area disc as long as it bounds at least one
disc of finite energy \cite{LWplateau}. As in the case of Douglas-Rado, the minimal disc is obtained by minimizing energy among all admissible boundary parametrizations. 
The existence and regularity of energy minimizers or {\em harmonic maps} in metric spaces was studied earlier, usually under some kind of nonpostive curvature assumption \cite{GS}, \cite{KS}, \cite{J}.
For instance, Nicholas Korevaar and Richard Schoen solved the Dirichlet problem in CAT(0) spaces and showed that the resulting harmonic maps are locally Lipschitz in the interior \cite{KS}.

The intrinsic geometry of minimal discs was studied in \cite{M}, \cite{LWcurv} and \cite{PS}. There, it is shown (with varying generality) that minimal discs in CAT(0) spaces are intrinsically
nonpositively curved. However,
apart from regularity nothing else is known about the mapping behavior of minimal discs.
The first aim of this paper is to establish topological properties. We obtain the following structural result for minimal discs, similar
to the classical statement that minimal surfaces are branched immersions.

\begin{theorem}\label{thm:one}
Let $X$ be a CAT(0) space and $\Ga\subset X$ a rectifiable Jordan curve. Let $u:D\to X$ be a minimal disc filling $\Ga$.
Then there exists a finite set $B\subset D$ such that $u$ is a local embedding on $D\setminus B$.
\end{theorem}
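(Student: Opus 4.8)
The plan is to study the infinitesimal behaviour of $u$ through blow-ups and to confine the points where $u$ fails to be a local embedding to a finite ``branch set''. Equip $D$ with the intrinsic length metric induced by $u$; by \cite{M}, \cite{LWcurv}, \cite{PS} this turns $D$ into a CAT(0) disc $Z$, and throughout I use freely the paper's monotonicity formula for area densities, its density bounds, its compactness/limit theorem for minimal discs, and the resulting existence of tangent maps. At an interior point $p$ the area density $\Theta_u(p)$ exists by monotonicity and equals $\tfrac1{2\pi}$ times the length of the space of directions of $Z$ at $p$; thus $\Theta_u(p)\ge1$, with equality precisely when $p$ is not a cone point of $Z$. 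Define $B$ to be the set of $p\in D$ with $\Theta_u(p)\ge2$, together with the analogously defined boundary branch points. I will show that $u$ is a local embedding near every point of $D\setminus B$ and that $B$ is finite, which gives the theorem.

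The key structural input is a classification of tangent maps. A tangent map at $p$ is a nonconstant minimal disc $u_p\colon\C\to Y$, or $u_p\colon\H\to Y$ if $p\in\partial D$, into the CAT(0) cone $Y=T_{u(p)}X$, and it is homogeneous of degree $\Theta_u(p)$. Homogeneity forces the intrinsic metric of $u_p$ to be a metric cone, hence — being CAT(0) and two–dimensional — a Euclidean cone of angle $2\pi\,\Theta_u(p)\ge2\pi$, so a priori $\Theta_u(p)$ is only known to lie in $[1,\infty)$. One then invokes minimality: a Gauss--Bonnet / total–curvature argument should show that the image of $u_p$ is a $2$-flat in $Y$ (a flat sector, in the boundary case) and that $\Theta_u(p)$ is an integer, so that up to a reparametrization of the domain and an isometry of $Y$ the tangent map is the power map $z\mapsto z^{k}$ with $k=\Theta_u(p)\in\Z_{\ge1}$ (resp.\ $k\in\tfrac12\Z_{\ge1}$ on $\partial D$). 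In particular every tangent map is, away from its tip, a local isometric embedding of local degree $1$; the interior tangent map at a branch point wraps around $k\ge2$ times. Carrying out this Gauss--Bonnet step — ruling out non–integral homogeneity and non–flat images — is, I expect, the first real difficulty.

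Granting the classification, the local embedding statement amounts to: $u$ is a local embedding wherever $\Theta_u=1$. At such a point $p$ every tangent map is an isometric embedding of $\C$ (or $\H$) onto a $2$-flat, so by the limit theorem the rescaled restriction $u|_{B_r(p)}$ is $C^0$-close to a flat embedding for all small $r$ — i.e.\ $u(B_r(p))$ is Reifenberg–flat near $u(p)$ — and $u|_{B_r(p)}$ is a local homeomorphism, since $Z$ has no cone points near $p$. To upgrade this to injectivity on a neighbourhood of $p$ one must exclude ``false branch points'': that $u(z_1)=u(z_2)$ for distinct regular $z_1,z_2$ arbitrarily close to $p$. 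Here minimality must enter through a cut–and–paste / exchange–of–sheets argument in the CAT(0) target (in the spirit of the classical treatment of false branch points of area minimizers), using also that the CAT(0) disc $Z$ carries no short geodesic bigons near $p$. This is the second place where I expect genuine difficulty, and it is where the nonpositive curvature of both $Z$ and $X$ must be used together.

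Finally, finiteness of $B$. Monotonicity yields upper semicontinuity of the density: $\Theta_{u_\infty}(x)\ge\limsup_i\Theta_{u_i}(x_i)$ whenever $u_i\to u_\infty$ and $x_i\to x$ in the sense of the limit theorem, and $\Theta_u$ is scale–invariant. \emph{Interior branch points are isolated}: if $p_i\to p_\infty\in\operatorname{int}D$ with $\Theta_u(p_i)\ge2$, rescale $u$ at $p_\infty$ at the scales $|p_i-p_\infty|$; a subsequence converges to a tangent map $u_\infty\cong(z\mapsto z^{k})$, the rescaled points $(p_i-p_\infty)/|p_i-p_\infty|$ subconverge to some $q\in S^1$, and upper semicontinuity forces $\Theta_{u_\infty}(q)\ge2$, contradicting that $u_\infty$ is a local embedding at $q\ne0$. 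Thus $B\cap\operatorname{int}D$ is discrete; the analogous blow-up at a boundary point $p_\infty\in\partial D$, using the boundary case of the classification (the boundary power map has no interior branch points and is a local embedding at nonzero boundary points), excludes accumulation of $B$ at $\partial D$ as well. The remaining subtlety is that this boundary blow-up presupposes boundary regularity of $u$ for a merely rectifiable $\Gamma$ — existence of boundary tangent maps, i.e.\ equicontinuity of the rescalings up to $\partial D$ together with a boundary monotonicity formula — and securing that is the final obstacle. Granting it, the two assertions together prove the theorem, with $B$ the finite set of cone points of $Z$ (plus their boundary analogues).
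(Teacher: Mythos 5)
Your overall scheme (pass to the intrinsic CAT(0) disc, take tangent maps via monotonicity, control multiplicity by density, isolate branch points by blow-up and upper semicontinuity) is the same general framework as the paper's, but the step you build everything on is not available and is in fact false in this setting. You claim that tangent maps are homogeneous of \emph{integer} degree with image a $2$-flat, i.e.\ power maps $z\mapsto z^k$, to be proved by a Gauss--Bonnet argument. In a CAT(0) target there is no integrality and no flatness of the image: take $X$ to be a Euclidean cone of cone angle $\alpha\in(2\pi,4\pi)$ and $\Ga$ a round circle about the tip; the minimal disc is the cone itself, the density at the tip is $\alpha/2\pi\notin\Z$, the tangent map there is the (locally isometric, non-flat) cone map -- and yet $u$ is an embedding. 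What is actually true, and what the paper proves (Lemma \ref{lem:blowup}, via the cone comparison of Lemma \ref{lem:conecom} and Corollary \ref{cor:isom}), is only that a tangent map is an intrinsic minimal plane defined on the Euclidean cone $T_{z_0}Z_u$ of angle $\mathcal{H}^1(\Sigma_{z_0}Z_u)\geq 2\pi$ (an arbitrary real number) which is a locally isometric embedding away from the tip. Because your classification fails, your reduction ``$u$ is a local embedding wherever $\Theta_u=1$'' does not prove the theorem: outside your set $B=\{\Theta_u\ge 2\}$ there remain all points with $\Theta_u\in(1,2)$, i.e.\ cone points of $Z_u$ of angle in $(2\pi,4\pi)$, where your argument (no nearby cone points, Reifenberg flatness, local homeomorphism) says nothing; and if you instead put all cone points into $B$, finiteness is lost, since a CAT(0) disc may have infinitely many cone points. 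The two places you yourself flag as ``real difficulties'' (the Gauss--Bonnet classification and the exclusion of false branch points by a classical sheet-exchange) are precisely the places where the classical scheme does not transfer, so the core of the proof is missing.

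The paper's mechanism at these points is different and is worth internalizing: to prove local embedding near an interior point $z_0$ with $\mathcal{H}^1(\Sigma_{z_0}Z_u)<4\pi$ (Theorem \ref{thm:structure}), one assumes failure, picks $x_k\neq y_k\to z_0$ with $|f(x_k),f(y_k)|\le\frac1k|x_k,y_k|$, and blows up at the scale $\eps_k=|x_k,y_k|$; by Lemma \ref{lem:limitcone} and Proposition \ref{prop:limitminimizer} the limit is an intrinsic minimal plane on a Euclidean cone of angle $\alpha<4\pi$, and such a map is injective because a self-identification would force a geodesic loop of length $\le\alpha/2<2\pi$ in the CAT(1) link $\Sigma_{f(o)}X$ (Corollary \ref{cor:smallangleinj}); this contradicts $f_\om(x_\om)=f_\om(y_\om)$ with $|x_\om,y_\om|=1$. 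Thus the correct threshold is link $<4\pi$ (density $<2$), not density $=1$, and $B$ is the finite set of interior points of $Z_u$ with link $\ge 4\pi$, transported to $D$ by the homeomorphism $\pi$ of Theorem \ref{thm:int}. Note also that your final worry about boundary regularity is not needed for the statement: $B\subset D$ and its finiteness come from the structure of the intrinsic CAT(0) disc, not from boundary blow-ups of $u$.
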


Unlike in the smooth case, the corresponding result for harmonic discs fails, even if the target is of dimension two and has only isolated singularities, see work of Ernst Kuwert \cite{Ku}. 
 
We then aim at topological applications and prove the F\'{a}ry-Milnor Theorem for CAT(0) spaces, generalizing the original theorem proved independently
 by Istvan F\'{a}ry \cite{Fa} and John Milnor \cite{Mil}. 

\begin{theorem}(F\'{a}ry-Milnor)\label{thm:two}
Let $\Ga$ be a Jordan curve in a CAT(0) space $X$. If the total curvature of $\Gamma$ is less than $4\pi$,
then $\Ga$ bounds an embedded disc.
\end{theorem}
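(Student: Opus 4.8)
The strategy is to realise $\Ga$ as the boundary of a minimal disc and then to show that, as soon as the total curvature drops below $4\pi$, this disc is forced to be embedded. Since a curve of finite total curvature is rectifiable, the geodesic cone over $\Ga$ from any point of $X$ is a finite-energy disc filling $\Ga$, so by the Plateau theorem of Lytchak--Wenger \cite{LWplateau} there is a minimal disc $u\colon D\to X$ with $u(\D D)=\Ga$; collapsing each arc of $\D D$ on which $u$ is constant, we may assume the boundary parametrisation $u|_{\D D}\colon\D D\to\Ga$ is a homeomorphism. By Theorem~\ref{thm:one} there is a finite set $B\subset D$ off which $u$ is a local embedding. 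It suffices to show that $B=\varnothing$ and that $u$ is injective: then $u$ is a continuous injection of the compact disc $D$ into the Hausdorff space $X$, hence a homeomorphism onto $u(D)$, which is an embedded disc with boundary $\Ga$.

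The key input is a density bound. Writing $\mathrm{TC}(\Ga)$ for the total curvature of $\Ga$, and for $p\in X$ letting $\Theta(p)$ denote the area density of $u$ at $p$ (the monotone limit as $r\downarrow 0$ of $\pi^{-1}r^{-2}$ times the parametrised area of $u|_{u^{-1}(B_r(p))}$), one checks from the discreteness of the fibres that $\Theta(p)\ge\sum_{z\in u^{-1}(p)}\Theta_u(z)$, the sum over the finite fibre of the interior, respectively boundary, densities of $u$. Using the monotonicity of area densities established in this paper, together with the fact that $u$ has no more area than the geodesic cone over $\Ga$ from $p$, I would prove the Ekholm--White--Wienholtz type estimate
\[
2\pi\,\Theta(p)\ \le\ \mathrm{TC}(\Ga)\qquad\text{for all }p\in X,
\]
sharpened to $2\pi\,\Theta(p)\le\mathrm{TC}(\Ga)-\pi$ when $p\in\Ga$ (a form of Fenchel's inequality). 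Thus $\mathrm{TC}(\Ga)<4\pi$ forces $\Theta(p)<2$ for every $p$, and $\Theta(p)<\tfrac{3}{2}$ for $p\in\Ga$. I also record that, by \cite{M}, \cite{LWcurv}, \cite{PS}, the disc $D$ equipped with the intrinsic metric $d_u$ induced by $u$ is CAT(0), and that the density $\Theta_u(z)$ at an interior point $z$ equals $\tfrac{1}{2\pi}$ times the total angle of $(D,d_u)$ at $z$.

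Now suppose $u$ fails to be injective; there are three cases. If $B\ne\varnothing$, take $z\in B$: since $u$ is not a local embedding near $z$, the analysis of such singular points underlying Theorem~\ref{thm:one} (tangent maps, limit theorems) shows that $z$ is a genuine branch point, so the total angle of $(D,d_u)$ at $z$ is at least $4\pi$ and $\Theta_u(z)\ge 2$; but $\Theta_u(z)\le\Theta(u(z))<2$, a contradiction. (Alternatively, the Gauss--Bonnet inequality for the CAT(0) disc $(D,d_u)$, together with the elementary bound $\mathrm{TC}_{(D,d_u)}(\D D)\le\mathrm{TC}(\Ga)$, gives $\mathrm{TC}(\Ga)\ge 2\pi+2\pi\,\#B$.) Hence $B=\varnothing$ and $u$ is a local embedding everywhere. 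If instead $u(z_1)=u(z_2)=:p$ with $z_1\ne z_2$ in $\interior D$, then $\Theta(p)\ge\Theta_u(z_1)+\Theta_u(z_2)\ge 2$, again impossible. Finally, if some $z\in\interior D$ has $u(z)=:p\in\Ga$, then at $p$ the interior sheet through $z$ carries density at least $1$ on top of the half-plane density of the boundary sheet through $p$, so $\Theta(p)\ge\tfrac{3}{2}$, contradicting the boundary refinement of the density estimate. Therefore $u$ is injective, and the proof is complete.

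The real work lies in the density bound, the metric analogue of the Ekholm--White--Wienholtz estimate and its boundary refinement. In $\R^n$ this rests on the monotonicity formula, on an explicit formula for the area of the cone over $\Ga$ from $p$ in terms of the total turning of $\Ga$ seen from $p$, and on an integral-geometric comparison of that turning with $\mathrm{TC}(\Ga)$; transporting these ingredients to CAT(0) targets---where the first variation and the relevant cone comparisons are more delicate, and where the exact boundary version must be nailed down---is the main obstacle. A secondary point to pin down is the local structure of branch points of minimal discs, namely that each point of $B$ carries at least $2\pi$ of concentrated intrinsic curvature (equivalently, has area density at least $2$), which should fall out of the tangent-map and density analysis developed for Theorem~\ref{thm:one}.
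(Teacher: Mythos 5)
There is a genuine gap: the entire weight of your argument rests on the asserted estimate $2\pi\,\Theta(p)\le \kappa(\Ga)$ (and its boundary refinement), which you do not prove and explicitly flag as the "main obstacle." Your proposed route to it --- transporting the Ekholm--White--Wienholtz exterior-cone computation --- does not go through in a CAT(0) space: the cone $E_p=\bigcup_{q\in\Ga}\{p+t(q-p):t\ge1\}$ has no analogue, since geodesics cannot be prolonged past $\Ga$ inside $X$, and there is no integral-geometric formula comparing the "turning of $\Ga$ as seen from $p$" with $\kappa(\Ga)$. This is exactly the point where the paper introduces a different idea that your proposal lacks: instead of appending a cone to the surface, one extends the \emph{ambient space}, gluing a flat funnel $E_\kappa$ of total angle $\kappa$ along $\Ga$ (Section \ref{sec:funnel}), shows the result $\hat X$ is CAT(0), and extends the \emph{intrinsic} minimal disc $f=\bar u:Z\to X$ to a proper intrinsic minimal plane $\hat f:\hat Z\to\hat X$ with area growth exactly $\kappa/2\pi$ (Proposition \ref{prop:extension}; this requires proving that $\hat f$ is still area minimizing, Lemma \ref{lem:extamin}, and that $\hat Z$ is a CAT(0) plane, Lemma \ref{lem:extmin}). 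Only then does monotonicity, now valid for all radii, combine with the lower density bound $\#\hat f^{-1}(p)\le\Theta^\infty(\hat f)=\kappa/2\pi$ (Lemma \ref{lem:key}) to give your inequality. Note also that the lower bound "each preimage contributes density at least $1$" is only clear for the intrinsic minimizer (Lemma \ref{lem:estinv}), which is why the paper works with $f$ rather than $u$; your bookkeeping with $(D,d_u)$ gestures at this but the densities $\Theta_u$, $\Theta(p)$ and the fiber-sum inequality would all have to be set up for the intrinsic map.

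A secondary remark: once the funnel extension is available, the case analysis in your second half becomes unnecessary. Injectivity of $\hat f$ for $\kappa<4\pi$ already rules out interior double points, interior points landing on $\Ga$, and (since an injective continuous map from the compact disc $Z$ is a homeomorphism onto its image) any appeal to Theorem \ref{thm:one} or to a boundary-refined estimate $2\pi\Theta(p)\le\kappa(\Ga)-\pi$; the paper's proof of F\'ary--Milnor does not use the branched-immersion structure theorem at all. Your claim that a non-embedding point forces link length $\ge 4\pi$ (density $\ge 2$) is consistent with Theorem \ref{thm:structure}, but as written it is a corollary of machinery you would still have to re-derive intrinsically, not a substitute for the missing density--curvature estimate.
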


See Theorem \ref{thm:fm} for a more general result and Subsection \ref{subsec:tc} for the definition of total curvature.

Our proofs of both theorems rely heavily on the monotonicity of area densities:

\begin{theorem}[Monotonicity]
Let $X$ be a CAT(0) space. Suppose that $u:\bar{D}\to X$ is a minimal disc and $p$
is a point in $u(\bar{D})\setminus u(\partial\bar{D})$. Then the area density
$$
\Theta(u,p,r):=\frac{\a(u(D)\cap B_r(p))}{\pi r^2}
$$
is a nondecreasing function of $r$ as long as
$r<|p,u(\partial D)|$.  
 
\end{theorem}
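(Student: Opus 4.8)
\emph{The plan.} I would run the classical monotonicity argument for minimal surfaces, with the first‑variation formula replaced by a comparison with geodesic cones and Sard's theorem replaced by a slicing argument for Sobolev maps. Write $\rho:=d(p,u(\cdot))\colon D\to[0,\infty)$. For $0<r<|p,u(\D D)|$ the sublevel set $D_r:=\{\rho<r\}$ is open, $D_r\Subset\interior D$, and $u^{-1}(B_r(p))=D_r$; hence $A(r):=\a(u(D)\cap B_r(p))$ equals the area of the restriction $u|_{D_r}$ (here $\a$ of the image is understood as the area of the parametrized surface, with which it agrees wherever $u$ is a local embedding). The goal $\frac{d}{dr}\bigl(A(r)/(\pi r^2)\bigr)\ge0$ will follow from two inequalities, valid for a.e.\ $r$: a coarea lower bound $A(r_2)-A(r_1)\ge\int_{r_1}^{r_2}L(r)\,dr$, and a cone comparison $A(r)\le\tfrac r2\,L(r)$, where $L(r):=\length\bigl(u|_{\rho^{-1}(r)}\bigr)$.

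\emph{Cone comparison.} By the coarea formula applied to the locally Lipschitz function $\rho$, for a.e.\ $r$ the restriction $u|_{\rho^{-1}(r)}$ is absolutely continuous of finite length $L(r)$, and $\D D_r\subset\rho^{-1}(r)$. Fixing such an $r$, I would replace $u$ on $D_r$ by a geodesic cone with apex $p$: build a Sobolev map $v\colon D\to X$ with $v=u$ on $D\setminus D_r$ and $v(D_r)\subset\cone_p\bigl(u(\D D_r)\bigr):=\bigcup\{[p,u(x)]:x\in\D D_r\}$. The parametrized area of $v|_{D_r}$ is at most $\tfrac r2\,L(r)$, and this is the one place the CAT(0) condition enters: for geodesics $\si,\si'$ issuing from $p$ the map $t\mapsto|\si(t),\si'(t)|$ is convex and vanishes at $0$, so along the geodesic variation $c(s,t)=$ (the point at distance $s$ from $p$ on $[p,\ga(t)]$) over a curve $\ga\subset\D B_r(p)$ one has $|\D_t c(s,\cdot)|\le\tfrac sr|\dot\ga|$; integrating $|\D_s c|\,|\D_t c|\le\tfrac sr|\dot\ga(t)|$ over $s\in[0,r]$ bounds the area of $\cone_p(\ga)$ by $\tfrac r2\length(\ga)$ (equality in $\R^2$). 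Since $u$ is a minimal disc and $v$ has the same boundary trace, $\a(u)\le\a(v)$; as $v=u$ off $D_r$ this forces $A(r)=\a(u|_{D_r})\le\a(v|_{D_r})\le\tfrac r2\,L(r)$.

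\emph{Coarea and conclusion.} Applying the coarea inequality for metric‑space–valued Sobolev maps to $\rho$ on $\{r_1<\rho<r_2\}\Subset\interior D$ gives $\int_{r_1}^{r_2}L(r)\,dr\le\a\bigl(u|_{\{r_1<\rho<r_2\}}\bigr)=A(r_2)-A(r_1)$ for all $0<r_1<r_2<|p,u(\D D)|$. Combining with the cone comparison $L(r)\ge\tfrac 2r A(r)$ yields the integral inequality $A(r)\ge A(r_1)+\int_{r_1}^{r}\tfrac 2t\,A(t)\,dt$ for $r\ge r_1$, and Gr\"onwall's inequality then gives $A(r)/r^2\ge A(r_1)/r_1^2$ (trivially when $A(r_1)=0$). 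Hence $\Theta(u,p,r)$ is nondecreasing on $(0,|p,u(\D D)|)$; note this bypasses any regularity question for $A$ beyond local boundedness.

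\emph{Main obstacle.} The geometric content above is short; the technical heart is the construction of the conical competitor $v$. One must check that for a.e.\ $r$ the domain $D_r$ and the trace $u|_{\D D_r}$ are regular enough that $v$ is a genuine Sobolev disc, that $v$ has the same trace on $\D D$ as $u$ (so it is an admissible competitor for the minimal disc $u$), and that $\a(v|_{D_r})$ is indeed controlled by the cone estimate; this demands a careful slicing argument along the fibers of $\rho$ together with a gluing lemma. The remaining ingredients — the coarea inequality and the elementary convexity estimate for cones in CAT(0) — are standard.
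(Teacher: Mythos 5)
Your skeleton is the same as the paper's: the two inequalities $A(r_2)-A(r_1)\ge\int_{r_1}^{r_2}L(t)\,dt$ (coarea) and $A(r)\le\tfrac r2 L(r)$ (cone comparison via a radial competitor and minimality), followed by an integration step. The difficulty is that the step you defer to the end as ``the main obstacle'' is not a routine gluing exercise — it is precisely where the paper's work lies, and as stated your plan is missing the ingredients that make the sharp bound $A(r)\le\tfrac r2 L(r)$ true with the \emph{same} $L(r)$ that appears in the coarea inequality. Concretely: for a.e.\ $r$ the fiber $\rho^{-1}(r)$ decomposes, up to an $\mathcal{H}^1$-null set, into countably many rectifiable Jordan curves (this is Proposition~\ref{prop:ae}, the Alberti--Bianchini--Crippa structure theorem, which you never invoke), but the topological boundary $\D D_r$ of the sublevel set need not be a disjoint union of such curves each traversed once. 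A component of $\D D_r$ is merely a continuum of finite length; parametrizing it as a closed curve in order to cone it to $p$ may traverse points twice, so the naive cone bound gives only $\tfrac r2\cdot 2\,\mathcal{H}^1$, i.e.\ $A(r)\le r\,L(r)$ — enough for monotonicity of $A(r)/r$ but not of $A(r)/r^2$, and the quadratic exponent is exactly what all later applications (density bounds, F\'ary--Milnor) require. The same problem appears if Jordan curves of the fiber are nested, e.g.\ if a component of $D_r$ is an annulus: then relating $\mathcal{H}^2(D_r)$ to the cone areas over the Jordan curves requires knowing which curves actually bound which regions. The paper kills both issues at once with a maximum-principle argument: the composition of the distance function with the (harmonic) minimizer is subharmonic, so nested Jordan domains in a quasi-regular fiber are impossible (Corollary~\ref{cor:separated}, which rests on Proposition~\ref{prop:confsol} and Corollary~\ref{cor:strong}); this reduces the cone comparison to a single Jordan curve, handled by Lemma~\ref{lem:intconecom} (itself built on Lemma~\ref{lem:conecom} and the cut-and-paste lemmas). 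Nothing in your proposal plays this role — you never use harmonicity or any maximum principle — so the inequality $A(r)\le\tfrac r2 L(r)$ is not actually established.

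A second, smaller divergence: the paper does not run the argument on $D$ at all. It first factorizes $u=\bar u\circ\pi$ through the intrinsic CAT(0) disc $Z_u$ (Theorem~\ref{thm:int}), proves monotonicity for the intrinsic minimizer $\bar u$ (Proposition~\ref{prop:mon}), where $f_p=|f(\cdot),p|$ is short, areas are Hausdorff measures, and Jordan subdomains are again CAT(0) discs (Lemma~\ref{lem:bilip}), so that the competitor construction can be done with the Lipschitz cut-and-paste lemmas (Lemmas~\ref{lem:c+p}, \ref{lem:homosmallarea}, \ref{lem:polygon}); the extrinsic statement is then a one-line consequence via Theorem~\ref{thm:int}~vi). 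Working extrinsically on $D$ as you propose is not wrong in principle, and your idea of coning only $\D D_r$ could conceivably avoid the disjointness step, but it makes the competitor construction harder, and until the boundary-structure/traversal issue above is resolved (or replaced by the paper's fiber-decomposition-plus-maximum-principle route) the proof has a genuine gap at its central estimate. Your Gr\"onwall packaging of the final step, by contrast, is fine and even avoids differentiability questions for $A$.
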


\subsection{Overview and further results}

{\em On the structure of minimal surfaces.}

In order to obtain control on the mapping behavior of minimal discs we make intensive use of the intrinsic point of view developed by
Alexander Lytchak and Stefan Wenger in \cite{LWint} and \cite{LWcurv}. (See also \cite{PS}.) They
 showed that if $X$ is CAT(0), then any 
minimal disc $u:D\to X$ factors through an intrinsic space $Z_u$, which is itself a CAT(0) disc. 
Furthermore, it turns out that the factors $\pi:D\to Z_u$
and $\bar u:Z_u\to X$ are particularly nice. Namely, $\pi$ is a homeomorphism and $\bar u$ preserves the length of every rectifiable curve, cf. Theorem \ref{thm:int}.
Therefore, in order to prove Theorem \ref{thm:one}, we only need to investigate the induced map $\bar u$. For this purpose we introduce the notion of 
{\em intrinsic minimal surfaces} which by definition is a synthetic version of the induced map $\bar u$. We then prove several
basic properties for intrinsic minimal surfaces, all well-known in the smooth case.
The most important of these is the monotonicity of area densities and a corresponding lower density bound, 
(see Proposition \ref{prop:mon} and Lemma \ref{lem:estinv}). As in the classical case, monotonicity is accompanied by a rigidity statement (Theorem \ref{thm:rigid}).

However, the proof of rigidity is more involved and we were unable to directly derive it from the monotonicity of area densities.
Instead, we first investigate intrinsic minimal surfaces on an infinitesimal scale. We show that intrinsic minimal surfaces have tangent maps at all points.
Tangent maps for harmonic maps into CAT(0) spaces were also investigated by Misha Gromov and Richard Schoen \cite{GS} and later by Georgios Daskalopoulos
and Chikako Mese \cite{DM}. However, our results are independent, as we aim for "intrinsic tangent maps".
In our setting, we show that each such tangent map is itself an intrinsic minimal map which in addition is conical and even locally isometric away from a single point. (See Lemma \ref{lem:blowup} and the prior definition.)
Building on this we prove the rigidity supplement to monotonicity and our main structural result:

\begin{theorem}\label{thm:structure}
  Let $X$ be a CAT(0) space and $Z$ a CAT(0) disc. Suppose that $f:Z\to X$ is an intrinsic minimizer. 
  If $z_0$ is a point in the interior of $Z$ with $\mathcal{H}^1(\Sigma_{z_0}Z)<4\pi$, then $f$ restricts to a bilipschitz embedding on a 
  neighborhood of $z_0$. In particular, if $Z$ is a CAT(0) disc, then $f$ is locally a bilipschitz embedding in the interior of $Z$ away form 
  finitely many points.
 \end{theorem}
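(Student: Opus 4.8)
The plan is to localise the problem at $z_0$ by a blow-up, identify the blow-up with the Euclidean cone over a closed local geodesic in a CAT(1) space, use the bound $4\pi$ to force that geodesic to be simple and short, and then propagate the conclusion back to $f$ by a compactness argument.

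Behind the second assertion is a finiteness statement: recall (cf.\ \cite{LWcurv}) that $Z$ has finite total curvature, so its curvature measure carries only finitely many atoms of mass $\le -2\pi$; equivalently, the set $S:=\{z\in\interior Z:\mathcal{H}^1(\Sigma_z Z)\ge 4\pi\}$ is finite and $\mathcal{H}^1(\Sigma_{z_n}Z)\to 2\pi$ along every sequence of distinct interior points. Hence it suffices to prove the first assertion, so fix $z_0\in\interior Z$ with $\theta:=\mathcal{H}^1(\Sigma_{z_0}Z)<4\pi$. By Proposition \ref{prop:mon} the density $\Theta(f,z_0,\cdot)$ is monotone, so a blow-up $f_0$ of $f$ at $z_0$ exists, and by Lemma \ref{lem:blowup} it is a conical intrinsic minimizer $f_0\colon C\to Y$, where $Y$ is CAT(0), $C=T_{z_0}Z$ is the Euclidean cone of total angle $\theta$ over $\Sigma_{z_0}Z$, and $f_0$ is a local isometry away from the apex $o$. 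Being conical, $f_0$ is the Euclidean cone over the induced map of direction spaces $\hat f\colon\Sigma_{z_0}Z\to\Sigma_{o'}Y$, where $o'=f_0(o)$ is the apex of $Y$; since $f_0$ is a local isometry off $o$, the map $\hat f$ is a locally isometric, hence closed locally geodesic, loop of length $\theta$ in the CAT(1) space $\Sigma_{o'}Y$.

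I claim $f_0$ is a bilipschitz embedding. If $\hat f$ were not injective, say $\hat f(\bar p)=\hat f(\bar q)$ with $\bar p\neq\bar q$, then cutting $\hat f$ at $\bar p$ and $\bar q$ exhibits it as a concatenation of two nonconstant locally geodesic loops based at $\hat f(\bar p)$ whose lengths add up to $\theta$; but a nonconstant locally geodesic loop in a CAT(1) space has length at least $2\pi$ (cutting it at its midpoint would otherwise produce two distinct local geodesics of length $<\pi$ with the same endpoints, contradicting uniqueness of short geodesics), so $\theta\ge 4\pi$, a contradiction. Hence $\hat f$ is injective. Since $Y$ has unique geodesics, $f_0$ carries each ray from $o$ isometrically onto the geodesic ray from $o'$ in direction $\hat f(\bar v)$; thus $f_0(p)=f_0(q)$ with $p,q\neq o$ forces equal distances to the apex and, by uniqueness of geodesics, equal initial directions, whence $\bar p=\bar q$ and then $p=q$. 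So $f_0$ is an injective proper map, hence a homeomorphism onto its image; the bilipschitz estimate then follows from the lower density bound (Lemma \ref{lem:estinv}) together with the rigidity statement (Theorem \ref{thm:rigid}), which bound, in terms of $\theta$, how far the subspace metric on the minimal surface $f_0(C)$ can drop below its intrinsic metric — equivalently, how closely the loop $\hat f$ can approach itself — and this is the point at which $\theta<4\pi$ is used quantitatively.

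Finally, to transfer back to $f$: suppose $f$ is not a bilipschitz embedding on any neighbourhood of $z_0$. Then there are points $x_n\neq y_n$ converging to $z_0$ — or to a nearby interior point $w$, which still satisfies $\mathcal{H}^1(\Sigma_w Z)<4\pi$ by the first paragraph — violating the lower bilipschitz inequality at shrinking scales. Rescaling about the appropriate basepoint at the appropriate scale and invoking the limit theorems yields a blow-up which, by the previous paragraph, is a bilipschitz embedding, while the violation survives in the limit — a contradiction. The main obstacle I anticipate is precisely the last part of the claim: injectivity of $f_0$ via the $4\pi$ threshold is clean, but converting it into a quantitative bilipschitz bound — controlling the self-approach of a simple closed locally geodesic loop of length $<4\pi$ in a CAT(1) space — is the delicate point, and is the infinitesimal shadow of the F\'{a}ry--Milnor phenomenon.
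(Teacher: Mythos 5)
Your blow-up strategy and your injectivity mechanism (cone angle $<4\pi$ forces the induced loop in the CAT(1) link at the image of the tip to be simple, because nonconstant geodesic loops in CAT(1) spaces have length at least $2\pi$) coincide with the paper's Corollary \ref{cor:smallangleinj}, so the first half of your argument is sound in outline. The genuine gap is exactly the step you flag yourself: you believe you must upgrade injectivity of the tangent map at $z_0$ to a \emph{quantitative} bilipschitz bound controlling the self-approach of the loop $\hat f$, and you attribute this to Lemma \ref{lem:estinv} and Theorem \ref{thm:rigid}. Those results give a multiplicity--density estimate and a rigidity statement under constant area ratio; neither produces any lower bound for $|f_0(x),f_0(y)|$ in terms of $|x,y|$, and no such uniform bound is proved anywhere (nor is one available in terms of $\theta$ alone). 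Since your final compactness paragraph then invokes precisely this unproven property (``the blow-up \dots is a bilipschitz embedding''), the proof as written is incomplete. There is also a structural mismatch in that paragraph: the relevant limit is \emph{not} the blow-up at $z_0$ from your second paragraph. If the violating pairs $x_n\neq y_n$ have mutual distance much smaller than their distance to $z_0$, a blow-up centered at $z_0$ never sees the degeneration; one must rescale at the moving basepoints $x_n$ by $\eps_n=|x_n,y_n|$, and the limit domain is then, by Lemma \ref{lem:limitcone}, either $\R^2$ or a tangent cone at a nearby point -- in any case a Euclidean cone of angle at most $\mathcal{H}^1(\Sigma_{z_0}Z)<4\pi$ whose tip need not be the limit basepoint.

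The paper's proof shows that the quantitative step you worry about is unnecessary: after rescaling at $x_k$ by $\eps_k=|x_k,y_k|$ with $|f(x_k),f(y_k)|\le\frac1k|x_k,y_k|$, the ultralimit map is an intrinsic minimal plane on a cone of angle $<4\pi$ (Proposition \ref{prop:limitminimizer} together with Lemma \ref{lem:limitcone}), hence injective by Corollary \ref{cor:smallangleinj}; but the limit satisfies $f_\om(x_\om)=f_\om(y_\om)$ while $|x_\om,y_\om|=1$, a contradiction. Thus mere injectivity of all possible moving-basepoint blow-up limits suffices, and compactness converts it into the local bilipschitz bound for $f$; the ``infinitesimal F\'{a}ry--Milnor'' estimate you anticipate as the main obstacle never enters. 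Two smaller points: you treat the blow-up target $Y=X_\om$ as a cone with apex $o'$, which is unjustified (an ultralimit of rescalings of $X$ need not be conical), though your argument only needs the link $\Sigma_{o'}X_\om$ and the CAT(0) property of $X_\om$, so this is repairable; and your reduction of the ``in particular'' statement to finite total curvature of $Z$ via \cite{LWcurv} is not something that reference (or this paper) establishes, so that finiteness claim would need its own argument.
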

 
Together with Theorem \ref{thm:int} this then yields Theorem \ref{thm:one}.

{\em On F\'{a}ry-Milnor's theorem.}

The original theorem of F\'{a}ry-Milnor from 1949 says that a knot in $\R^3$ has to be the unknot if it is of finite total curvature less or equal
than $4\pi$. The first generalization of this theorem to variable curvature came about 50 years later and is due to Stefanie Alexander and Richard Bishop (\cite{AB}). 
We also refer the reader to their work for the history of the problem. Their result extended the F\'{a}ry-Milnor Theorem to simply connected 3-dimensional manifolds of nonpositive sectional curvature.
More precisely, 
it is shown in \cite{AB} that a Jordan curve of total curvature less or equal to $4\pi$ in a 3-dimensional Hadamard manifold bounds an embedded disc.
In the same paper it was noticed that the analog statement cannot be true for CAT(0) spaces. There is an example of a Jordan curve 
of total curvature $4\pi$ in a 2-dimensional CAT(0) space which does not bound an embedded disc, see Example \ref{ex}.
However, Theorem \ref{thm:two} shows that F\'{a}ry-Milnor's theorem does hold for CAT(0) spaces, at least if the strict inequality for the total curvature is fulfilled.
We actually prove the following more general result.

\begin{theorem}(Rigidity case of F\'{a}ry-Milnor)\label{thm:fm}
Let $\Ga$ be a Jordan curve in a CAT(0) space $X$. If the total curvature of $\Ga$ is less or equal to $4\pi$,
then either $\Ga$ bounds an embedded disc, or else the total curvature is equal to $4\pi$ and $\Ga$
bounds an intrinsically flat geodesic cone. More precisely, there is a map from a convex subset of a Euclidean cone of cone angle
$4\pi$ which is a local isometric embedding away from the cone point and which fills $\Ga$.
\end{theorem}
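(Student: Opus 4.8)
The plan is to run the classical Fáry–Milnor argument on the *intrinsic* minimal filling and then extract the rigidity statement from the rigidity supplement to monotonicity (Theorem \ref{thm:rigid} / Theorem \ref{thm:structure}). First, by the Lytchak–Wenger solution of the Plateau problem, $\Ga$ bounds a minimal disc $u:D\to X$ (a curve of finite total curvature is rectifiable and bounds a disc of finite energy), and by Theorem \ref{thm:int} this factors as $u=\bar u\circ\pi$ with $\pi:D\to Z_u$ a homeomorphism onto a CAT(0) disc and $\bar u:Z_u\to X$ an intrinsic minimizer preserving lengths of curves. Since $\bar u$ preserves lengths, the boundary curve $\D Z_u$ has the same total curvature as $\Ga$ — here one must be a little careful, as total curvature is defined via the Reshetnyak-type notion in Subsection \ref{subsec:tc}, but the length-preserving property together with the fact that $\bar u$ is $1$-Lipschitz forces the turning/excess of $\D Z_u$ to be at most that of $\Ga$; call this total curvature $\tau\le 4\pi$.

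Next I would control the interior branch points. By Theorem \ref{thm:structure}, $\bar u$ (hence $u$) is a local bilipschitz embedding near every interior point $z_0$ with $\mathcal H^1(\Si_{z_0}Z_u)<4\pi$, and there are only finitely many exceptional points $z_1,\dots,z_k$ where $\mathcal H^1(\Si_{z_i}Z_u)\ge 4\pi$. I claim that under the hypothesis $\tau\le 4\pi$ there is \emph{at most one} such point and, if it exists, $\mathcal H^1(\Si_{z_i}Z_u)=4\pi$ exactly. This is the Gauss–Bonnet bookkeeping step that replaces Milnor's "crookedness $\ge 2$" argument: the CAT(0) disc $Z_u$ has nonpositive curvature, so by Gauss–Bonnet for CAT(0) discs the total interior cone excess $\sum_{p}(\,\mathcal H^1(\Si_pZ_u)-2\pi\,)^{+}$ (appropriately interpreted as the curvature measure of the interior, which is $\le 0$, of total mass related to the boundary turning) plus the boundary turning of $\D Z_u$ equals $2\pi$. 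A cone point with $\mathcal H^1(\Si_{p}Z_u)=2\pi+\theta$, $\theta>0$, contributes $-\theta$ to the interior curvature; so the boundary turning must make up for it, and the turning of a simple closed curve in a CAT(0) disc is at most its total curvature $\tau$. Pushing the constants: if two interior points each had $\Si$-length $\ge 4\pi$ they would force interior curvature $\le -2\cdot 2\pi=-4\pi$, hence boundary turning $\ge 6\pi>4\pi\ge\tau$, a contradiction; one such point with $\Si$-length $>4\pi$ is excluded the same way (turning $>4\pi$). Hence either there are no exceptional points — in which case $\bar u$, and so $u$, is a local bilipschitz embedding on all of $\interior D$, and a standard argument (a proper local homeomorphism from the open disc that extends to a map of the closed disc whose boundary is the Jordan curve $\Ga$ is a global embedding) shows $\Ga$ bounds an embedded disc — or there is exactly one point $z_0$ with $\mathcal H^1(\Si_{z_0}Z_u)=4\pi$ and $\tau=4\pi$.

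In the remaining (rigidity) case I would analyze the geometry of $Z_u$ near $z_0$ via the tangent map. By Lemma \ref{lem:blowup} the tangent map of $\bar u$ at $z_0$ is a conical intrinsic minimal map which is locally isometric away from its cone point; since $\mathcal H^1(\Si_{z_0}Z_u)=4\pi$ its domain is the Euclidean cone of cone angle $4\pi$. I then want to upgrade this infinitesimal rigidity to a genuine statement: I claim $Z_u$ is itself isometric to a convex subset of the Euclidean cone $C$ of cone angle $4\pi$ with cone point $z_0$, and $\bar u$ is a local isometric embedding away from $z_0$. The mechanism is the equality case of monotonicity, Theorem \ref{thm:rigid}: $\mathcal H^1(\Si_{z_0}Z_u)=4\pi$ saturates the density/monotonicity inequality at $z_0$ (density $\Theta\equiv 2$), and the rigidity supplement forces $Z_u$ to be a flat cone and $\bar u$ to be a local isometric embedding off $z_0$ on the whole region where the density is constant — and here the Gauss–Bonnet accounting above shows the curvature of $Z_u$ is concentrated entirely at $z_0$, so that region is all of $Z_u$. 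Composing with the length-preserving, hence now locally isometric, behaviour of $\bar u$, and with the homeomorphism $\pi$, gives the desired map from a convex subset of the $4\pi$-cone that is a local isometric embedding away from the cone point and fills $\Ga$. Finally one checks this forces $\tau=4\pi$ exactly, closing the dichotomy.

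The main obstacle I expect is the middle step: extracting a usable Gauss–Bonnet identity for the CAT(0) disc $Z_u$ relating the sizes $\mathcal H^1(\Si_pZ_u)$ of interior cone links to the total curvature of the boundary curve $\D Z_u$, and making precise the inequality "turning of $\D Z_u\le$ total curvature of $\Ga$" for the Reshetnyak-type notion of total curvature used in the paper. The length-preservation of $\bar u$ is the key input, but one must still verify that turning (defined via comparison angles along the curve) does not increase under the $1$-Lipschitz, length-preserving map — this is essentially Reshetnyak's majorization/non-expansion of angles, which should be available in the CAT(0) setting but needs to be cited or proved carefully. The rest (the global embedding criterion, invoking $\ref{thm:structure}$ and $\ref{thm:rigid}$ and $\ref{lem:blowup}$) is comparatively routine once the counting is in place.
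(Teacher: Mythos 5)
Your route (Gauss--Bonnet bookkeeping on the intrinsic disc $Z_u$, then local analysis of branch points) is genuinely different from the paper's, but it has a fatal gap at the embeddedness step. In the case of no exceptional interior points you conclude that $u$ is a local (bilipschitz) embedding on the interior and then invoke a ``standard argument'' that a local embedding of the open disc, extending to the closed disc with embedded boundary, is a global embedding. No such principle holds: already in $\R^3$ an immersed disc with embedded boundary can have transverse self-intersections (every knot bounds an immersed disc), so local injectivity plus boundary injectivity never yields global injectivity in codimension $\geq 1$. This is exactly why the Ekholm--White--Wienholtz scheme, and the paper, obtain injectivity from a \emph{global multiplicity bound}: the funnel extension (Proposition \ref{prop:extension}) turns $f$ into a proper intrinsic minimal plane $\hat f$ with area growth $\kappa/2\pi$, monotonicity then holds at all scales, and the key estimate $\#\hat f^{-1}(p)\le \kappa/2\pi$ (Lemma \ref{lem:key}) gives injectivity when $\kappa<4\pi$. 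Your branch-point count controls cone angles of $Z_u$ but says nothing about self-intersections of $f$; relatedly, in the paper's $\kappa=4\pi$ rigidity case the link of length $\ge 4\pi$ appears at the \emph{midpoint} of the two preimages of a double point (via the folding argument and Lemma \ref{lem:coneangle}), not at a pre-existing branch point, so the rigidity alternative also cannot be reduced to your case analysis.

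The second issue is the comparison you yourself flag: ``turning of $\D Z_u$ in $Z_u$ is at most the total curvature of $\Ga$ in $X$.'' This is not available off the shelf for the Reshetnyak-style total curvature of Subsection \ref{subsec:tc}; being $1$-Lipschitz and length-preserving does not by itself control angles of inscribed polygons in the needed direction. In the paper this statement is, in effect, what the funnel construction delivers: proving that $\hat Z=Z\cup_f E_\kappa$ is CAT(0) (Lemmas \ref{lem:extamin} and \ref{lem:extmin}, via minimality and the isoperimetric characterization of upper curvature bounds) encodes precisely the inequality ``intrinsic boundary turn $\le$ ambient turn'' along the seam, without ever comparing angles directly. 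So even the part of your argument that could work (the Gauss--Bonnet count, which with Alexandrov--Zalgaller theory would indeed limit the exceptional set to one cone point of angle $4\pi$ when $\tau\le 4\pi$) rests on a lemma whose proof would essentially reproduce the paper's funnel machinery; and the part that replaces the density counting is not repairable as stated.
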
  

Our proof relies on minimal surface theory and  follows the strategy of Tobias Ekholm, Brian White, and Daniel Wienholtz in \cite{EWW}, 
where the authors show that a minimal surface $\Sigma$ in $\R^n$ 
of any topological type is embedded if the total curvature $\kappa$ of the boundary is less than or equal to $4\pi$. Their approach was also used
in \cite{CG} to prove the F\'{a}ry-Milnor theorem in n-dimensional Hadamard manifolds.

We quickly recall their argument. If $\Sigma$ is such a surface in $\R^n$, then for any point $p$ not in the boundary of $\Sigma$ one augments
$\Sigma$ by an exterior cone $E_p$ over $\D\Sigma$. More precisely, $E_p=\bigcup_{q\in\D\Sigma}\{p+t(q-p):\ t\geq 1\}$.
The monotonicity of area densities continues to hold for $\Sigma\cup E_p$ and now it even holds for all times. 
Since the area growth of $E_p$ is equal to $\kappa$, this relates the
number of inverse images of $p$ to the total curvature of $\D \Sigma$. The completion of the proof is then based on  
a lower density bound.

In our case there is no exterior cone. Additionally, for an ordinary minimal disc in a CAT(0) space the required  lower
density bound is unclear. However, for intrinsic minimal discs the lower density bound is obvious and the above argument still shows the following (Corollary \ref{lem:eucareagrowth}). 
    
\begin{theorem}
 Let $\hat X$ be a CAT(0) space and $\hat f:\hat Z\to\hat X$ a proper intrinsic minimal plane. Suppose that the area growth of  $\hat f$
 is less than twice the area growth of the Euclidean plane. Then $\hat f$ is an embedding.
\end{theorem}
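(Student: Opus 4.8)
The plan is to transcribe the argument of Ekholm--White--Wienholtz \cite{EWW} into the purely intrinsic situation. The essential simplification is that a plane has no boundary: no exterior cone is needed, and the monotone quantity is available on all scales. The bound on the area growth of $\hat f$ will play the role that the bound on the total curvature plays in \cite{EWW}.

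First I would isolate two ingredients. Since $\hat Z$ is a CAT(0) \emph{plane}, the monotonicity formula (Proposition \ref{prop:mon}, which for a plane holds with no boundary term) says that for every $p\in\hat f(\hat Z)$ the area density $\Theta(\hat f,p,r)$ is nondecreasing in $r$ on all of $(0,\infty)$; hence $\Theta_\infty(\hat f,p):=\lim_{r\to\infty}\Theta(\hat f,p,r)\in[1,\infty]$ exists. A one-line ball comparison --- $B_r(p)\subset B_{r+|p,q|}(q)$ together with $(r+|p,q|)^2/r^2\to 1$ --- shows this limit is independent of $p$; it is the area growth of $\hat f$, and since the Euclidean plane has area growth $1$ the hypothesis reads $\Theta_\infty(\hat f)<2$. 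The second ingredient is the lower density bound for intrinsic minimal surfaces (Lemma \ref{lem:estinv}), which I would use in the form: if $\hat f^{-1}(p)$ contains two distinct points, then $\Theta(\hat f,p,0^+)\geq 2$. This is the ``obvious'' estimate referred to in the introduction: $\hat f$ is $1$-Lipschitz, so $B_r(z)\subset\hat f^{-1}(B_r(p))$ whenever $\hat f(z)=p$, and a ball of radius $r$ about an interior point of the CAT(0) disc $\hat Z$ has area at least $\pi r^2$; applying this to two disjoint small balls around distinct preimages of $p$ gives the estimate.

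With these two facts the argument is immediate. Suppose $\hat f$ were not injective, say $\hat f(z_1)=\hat f(z_2)=p$ with $z_1\neq z_2$. By the lower density bound $\Theta(\hat f,p,0^+)\geq 2$, so monotonicity gives $\Theta(\hat f,p,r)\geq 2$ for every $r>0$, and letting $r\to\infty$ yields $\Theta_\infty(\hat f)\geq 2$ --- contradicting the hypothesis. Hence $\hat f$ is injective. Since $\hat f$ is in addition continuous (indeed $1$-Lipschitz, as it preserves lengths) and proper, and a proper continuous injection into a metric space is a closed topological embedding, $\hat f$ is an embedding.

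The one delicate point --- the step I would write out in full --- is the implication ``not injective $\Rightarrow\Theta(\hat f,p,0^+)\geq 2$''. It cannot be argued downstairs, because $\hat f(B_\rho(z_1))$ and $\hat f(B_\rho(z_2))$ both contain $p$ and may overlap, so their areas need not add. The area must be counted with multiplicity, that is, via the preimage $\hat f^{-1}(B_r(p))\subset\hat Z$ --- which is precisely the normalization built into the monotonicity formula of Proposition \ref{prop:mon} --- together with the CAT(0) lower area bound $\mathcal{H}^2(B_r(z))\geq\pi r^2$ for interior points of $\hat Z$. Once this is set up, the rest is a verbatim copy of the Euclidean computation.
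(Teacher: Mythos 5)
Your proposal is correct and follows essentially the same route as the paper: the paper's Key Estimate (Lemma \ref{lem:key}) is exactly your combination of the all-scales monotonicity for a proper intrinsic minimal plane with the lower density bound of Lemma \ref{lem:estinv} (disjoint balls around distinct preimages, shortness of $\hat f$, and the Bishop--Gromov bound $\mathcal{H}^2(B_r(z))\geq\pi r^2$), giving $\#\hat f^{-1}(p)\leq\Theta^\infty(\hat f)<2$ and hence injectivity. Your added remarks (basepoint-independence of the density at infinity and the passage from proper injective to embedding) are fine refinements of details the paper leaves implicit.
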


In order to prove the F\'{a}ry-Milnor Theorem we then show an extension result for intrinsic minimal discs. Roughly, it says that 
for each intrinsic minimal disc $f:Z\to X$ with finite total curvature $\kappa$ of the boundary we can embed $X$ isometrically into a 
CAT(0) space $\hat X$ such that $f$ extends to a proper intrinsic minimal plane $\hat f$ in $\hat X$ (Proposition \ref{prop:extension}).
The space $\hat X$ is obtained from $X$ by gluing a flat funnel along the boundary of $f$. 

\medskip

{\bf Acknowledgements.} I would like to thank Alexander Lytchak for many helpful discussions and his valuable comments on 
an earlier version of this paper.

\medskip

\tableofcontents

\section{Preliminaries from metric geometry}

 We refer the reader to 
\cite{BBI}, respectively \cite{Bmet}, for definitions and basics on metric geometry and to \cite{Bnpc},\cite{BH} and \cite{KL} for metric spaces with upper curvature bounds. 
 However, we include this short section to agree on some terminology and notations.

\subsection{Generalities} 
 
Let $D$ be the open unit disc in the plane and denote by $S^1$ the unit circle.

For a metric space $X$ we will denote the distance between two points $x,y\in X$ by $|x,y|$,
i.e. $|\cdot,\cdot|$ is the metric on $X$. If $\lambda>0$ we define the rescaled metric space $\lambda\cdot X$
be declaring the distance between points to be $\lambda$ times their old distance.

For a subset $A\subset X$ we denote by $\bar A$ its closure. 
If $x\in X$ is a point and $r>0$ is a radius, we denote by $B_r(x)$ the open ball of radius $r$ around $x$ in $X$.
More generally, for a subset $P\subset X$ we denote by $N_r(P)$ the tubular neighborhood of radius $r$.

A {\em Jordan curve} in $X$ is a subset $\Ga\subset X$ which is homeomorphic to $S^1$. If $\Ga$ is a Jordan curve in $\R^2$, then
its {\em Jordan domain} is the bounded connected component of $\R^2\setminus \Ga$.

A geodesic in $X$ is a curve of constant speed whose length is equal to the distance of its endpoints. 
A space is called {\em geodesic} , if there is a geodesic between any two of its points and it is called {\em uniquely geodesic}
if there is only one such geodesic.
If $X$ is uniquely geodesic, then $xy$ will denote the
(image) of the unique geodesic between $x$ and $y$. For a point $p\in X$ we call a geodesic {\em $p$-radial}, if it starts in $p$. A map $f:X\to Y$ to another metric space $Y$
is called {\em radial isometry (with respect to $p$)}, if it preserves distances to $p$.
As usual, a  map $f:X\to Y$ between metric spaces $X$ and $Y$ is called {\em Lipschitz continuous} or simply {\em Lipschitz}, if there exists a (Lipschitz-)constant
$L>0$ such that $|f(x),f(x')|\leq L\cdot|x,x'|$ holds for all $x,x'\in X$. Further, $f$ will be called {\em bilipschitz}, if it is bijective and has a Lipschitz continuous inverse.
Distance nonincreasing maps or $1$-Lipschitz maps between metric spaces will simply be called {\em short}. For $n\in\N$ we will denote by $\mathcal{H}^n$ the $n$-dimensional
Hausdorff measure on $X$.

A {\em surface} is a $2$-dimensional topological manifold, possibly with boundary. If $\Si$ is a surface and $f:\Si\to X$
is a map into a space $X$, then we denote by $\partial f:\partial \Si\to X $ the restriction $f|_{\D \Si}$.

If $(X_k,x_k)$ denotes a pointed sequence of metric spaces we can always take an {\em ultralimit} $(X_\om,x_\om)$ with respect to some
non-principal ultrafilter $\om$ on the natural numbers. If $(Y_k,y_k)$ denotes another such sequence  and $f_k:X_k\to Y_k$
are Lipschitz maps with a uniform Lipschitz constant $L>0$, the we obtain an $L$-Lipschitz ultralimit $f_\om:(X_\om,x_\om)\to(Y_\om,y_\om)$. 
For a precise definition and basics on ultralimits of metric spaces we refer the reader to
\cite{AKP}. 

\subsection{Intrinsic metric spaces}

A metric space $X$ is called {\em intrinsic metric space} or {\em length space}, if the distance between any two of its points is equal to the greatest lower bound
of the lengths of continuous curves joining those points.
Let $M$ be a topological space and $X$ a metric space. If $f:M\to X$ is a continuous map, then we can define on $M$ an {\em intrinsic (pseudo-)metric}
associated to $f$. Namely the intrinsic distance between two points in
$M$ is equal to the greatest lower bound for lengths of $f$-images of curves joining these points. If any pair of points in $M$
is joined by a curve whose image under $f$ is rectifiable, then one can identify points of zero $f$-distance in $M$ to obtain an associated {\em intrinsic
metric space} $M_f$. For instance, this is ensured if $M$ is a length space and $f$ is Lipschitz continuous. 
We will say that a map $f$ has some property {\em intrinsically}, if the associated space $M_f$ has this property.
If $M$ is equal to $X$ and $f$ is the identity, then we obtain the {\em intrinsic space} associated to $X$ and we denote it by $X^i$.

\subsection{CAT($\kappa$)}

If $X$ is a CAT($\kappa$) space and $p,x,y\in X$
are points such that $p\neq x,y$, then there is a well defined angle $\angle_p(x,y)$ between $x$ and $y$ at $p$.
Each point $p$ in a CAT($\kappa$) space $X$ has an associated {\em space of directions} or {\em link} $\Sigma_p X$ which is
the metric completion, with respect to angles, of germs of geodesics starting at $p$. Recall that
$\Sigma_p X$ is a CAT(1) space with respect to the intrinsic metric induced by $\angle$.
The {\em tangent space at $p$} is defined as the Euclidean cone over $\Sigma_p X$ and denoted by $T_p X$.
In particular, $T_p X$ is again a CAT(0) space. Note that if $X$ is CAT(0), then there is a natural short {\em logarithm map}
$\log_p:X\to T_p X$ which is a radial isometry and preserves initial directions of $p$-radial geodesics. 

The following two theorems by Reshetnyak will be used repeatedly. 
The gluing theorem is useful in order to check if a certain space is CAT(0).
For a detailed discussion and a proof we refer to \cite{Bmet} and \cite{AKP}.
The majorization theorem provides controlled Lipschitz fillings of circles.

\begin{theorem}(Reshetnyak's gluing theorem)\label{thm:glue}
Let $X$ and $X'$ be two CAT(0) spaces with closed convex subsets $C\subset X$ and $C'\subset X'$.
If $\iota:C\to C'$ is an isomety, then the space $X\cup_\iota X'$, which results from gluing $X$ and $X'$ via $\iota$, is CAT(0) with respect to the 
induced length structure. 
\end{theorem}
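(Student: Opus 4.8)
The plan is to realise the gluing metric on $Y := X \cup_\iota X'$ concretely and then check the CAT(0) comparison inequality triangle by triangle, reducing to a single genuinely mixed configuration that is handled by Alexandrov's Lemma. First I would identify $C$ with $C' = \iota(C)$, view it as a subset of $Y$, and show that the induced length metric $d$ on $Y$ satisfies: $d$ restricts to the original metric on $X$ and on $X'$, and $d(x,x') = \inf_{c\in C}\bigl(|x,c| + |c,x'|\bigr)$ for $x\in X$, $x'\in X'$. The first claim uses convexity of $C'$ in $X'$ (and of $C$ in $X$): any curve in $Y$ between two points of $X$ that strays into $X'$ does so along sub-arcs with endpoints on $C$, and replacing each such sub-arc by the corresponding $X'$-geodesic --- which stays in $C'$ by convexity --- yields a no-longer curve inside $X$. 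Assuming the ambient spaces complete, the infimum in the second formula is attained, $Y$ is geodesic, and a geodesic from $x\in X$ to $x'\in X'$ is a concatenation $xc \cup cx'$ of an $X$- and an $X'$-geodesic meeting at a minimiser $c\in C$; by convexity any geodesic of $Y$ meets $C$ in a point or a sub-segment. At such a crossing point $c$ the link $\Si_c Y$ is the spherical gluing $\Si_c X \cup_{\Si_c C'}\Si_c X'$, and the fact that $xc\cup cx'$ is a $Y$-geodesic forces $\angle_c(x,v) + \angle_c(v,x')\geq\pi$ for every $v\in\Si_c C'$, the first angle taken in $X$ and the second in $X'$.

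With this in hand, let $T = \triangle(p,q,r)$ be a geodesic triangle in $Y$. If all vertices lie in $X$, then all sides lie in $X$ by the above, and comparison holds because $X$ is CAT(0); likewise if all vertices lie in $X'$. Up to relabelling the remaining case is $p,q\in X$, $r\in X'$. Then $[p,q]\subset X$, while $[p,r] = [p,a]\cup[a,r]$ and $[q,r] = [q,b]\cup[b,r]$ with $a,b\in C$, the first pieces in $X$ and the second in $X'$. I would insert the chord $[a,b]\subset C$ and the diagonal $[p,b]\subset X$, decomposing $T$ into two $X$-triangles $\triangle(a,p,b)$, $\triangle(p,q,b)$ and one $X'$-triangle $\triangle(a,b,r)$. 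Choosing a comparison triangle in $\R^2$ for each and gluing them successively along the images of $[a,b]$ and $[p,b]$ --- each new one on the far side of the gluing edge --- produces a planar figure whose outer edges have the right lengths and whose only defect is that $\tilde p,\tilde a,\tilde r$ and $\tilde q,\tilde b,\tilde r$ are broken rather than straight. The angle inequality at $a$ and $b$ from the first step, together with monotonicity of comparison angles, is exactly what Alexandrov's Lemma consumes: it lets one straighten these broken sides without decreasing any of the distances $|\tilde p,\tilde r|$, $|\tilde q,\tilde r|$, while $|\tilde p,\tilde q|$ is unchanged. The straightened figure is a single triangle that majorises $T$, giving the comparison inequality and hence that $Y$ is CAT(0).

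I expect the genuine work to be concentrated in the last step: verifying that the chain of three model triangles can be straightened by (iterated) use of Alexandrov's Lemma so that the control on distances obtained in $X$ and $X'$ survives simultaneously, and checking the hypotheses of Alexandrov's Lemma in the right form at each gluing vertex. A subsidiary point that must be pinned down carefully is the description of $\Si_c Y$ as the spherical gluing of the two links --- this is the link-level analogue of the theorem and is what converts ``$xc\cup cx'$ is a geodesic of $Y$'' into the usable angle inequality $\angle_c(x,v)+\angle_c(v,x')\geq\pi$. The preliminary identification of $d$ and of geodesics via convexity of $C$ is routine by comparison, and the cases where a vertex or a side lies on $C$ are limiting cases of those already treated.
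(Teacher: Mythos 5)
The paper itself gives no proof of this statement: Reshetnyak's gluing theorem is quoted as a known result, with the proof delegated to \cite{Bmet} and \cite{AKP}. Your outline reproduces, correctly in substance, the standard proof found in those references (and in Bridson--Haefliger): identify the glued length metric and show it restricts to the original metrics, show that geodesics joining the two pieces cross $C$ and are concatenations of geodesics of the pieces, subdivide a mixed triangle into triangles each lying in one piece, and straighten the chain of Euclidean comparison triangles by iterated use of Alexandrov's lemma. So you are not taking a different route; you have reconstructed the classical argument.

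Two caveats on your execution. First, obtaining the inequality $\angle_c(x,v)+\angle_c(v,x')\geq\pi$ from a description of $\Sigma_c Y$ as the spherical gluing of $\Sigma_c X$ and $\Sigma_c X'$ is a detour that risks circularity: as you yourself note, that description is the link-level analogue of the theorem, and establishing it is not easier than the theorem. It is also unnecessary. Since $[x,c]\cup[c,x']$ is a geodesic of $Y$, the comparison angles at $c$ between points on its two halves equal $\pi$ identically, so the upper angle $\angle_c(x,x')$ is $\pi$; the triangle inequality for upper angles (valid in any metric space) then gives $\angle_c(x,v)+\angle_c(v,x')\geq\pi$, and since in a CAT(0) space comparison angles dominate upper angles, this is exactly the hypothesis Alexandrov's lemma needs. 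Note that at the vertex $b$ of your decomposition the planar angle to be controlled is a sum of three comparison angles (from $\triangle(p,q,b)$, $\triangle(a,p,b)$ and $\triangle(a,b,r)$), so the angle triangle inequality must be used once more there -- this is the content of your ``iterated'' remark and should be made explicit. Second, your existence claims (attainment of $\inf_{c\in C}(|x,c|+|c,x'|)$, geodesicity of $Y$, geodesics crossing $C$ at an actual point) require completeness of the glued-along subset (or properness), which is part of the standard formulation but not of the statement as quoted; either add this hypothesis or run the comparison argument with almost-minimizing points and almost-geodesics, which the Alexandrov-lemma argument tolerates.
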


\begin{theorem}(Reshetnyak's majorization theorem)\label{thm:maj}
 Let $X$ be a CAT(0) space and $c:[0,L]\to X$ a closed curve of unit speed.
 Then there is a convex region $C\subset\R^2$, possibly degenerated,
 bounded by a unit speed curve $\tilde c:[0,L]\to\R^2$, and a short map $\mu:C\to X$
 with $\mu\circ\tilde c=c$.
\end{theorem}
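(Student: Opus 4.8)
The plan is to reduce the theorem to the case of closed geodesic polygons and then to build the majorizing region and the short map for polygons simultaneously, by induction on the number of vertices, with Alexandrov's lemma (the planar ``adjacent triangle'' lemma) as the central tool.

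First I would reduce to polygons. Given a partition $0=t_0<t_1<\dots<t_N=L$, set $x_i=c(t_i)$ and let $c_N$ be the closed geodesic polygon through the consecutive vertices $x_i$ (each edge the geodesic $x_ix_{i+1}$). As the mesh tends to $0$, the $c_N$ converge to $c$ uniformly and $\length(c_N)\to L$. Granting the theorem for each $c_N$, with convex regions $C_N\subset\R^2$ bounded by unit speed curves $\tilde c_N$ of length $\length(c_N)$ and short maps $\mu_N:C_N\to X$, I pass to a limit. A convex planar region of perimeter at most $L+1$ has diameter at most $L$, so after translating, the $C_N$ lie in a fixed ball and the Blaschke selection theorem gives a subsequence converging in Hausdorff distance to a convex region $C$. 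The boundary parametrizations $\tilde c_N$ and the maps $\mu_N$ are uniformly Lipschitz, so after normalizing the $\tilde c_N$ to unit speed and rescaling their domains to $[0,L]$, Arzel\`a--Ascoli extracts limits $\tilde c:[0,L]\to\R^2$ bounding $C$ and a short $\mu:C\to X$ with $\mu\circ\tilde c=c$. Matching the parametrizations in the limit is the delicate point of this step.

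For the polygon case I argue by induction on the number $n$ of vertices. When $n\le 3$ the majorizing region is the Euclidean comparison triangle $\bar\triangle(x_0x_1x_2)$ and $\mu$ is the comparison map, which is short precisely because $X$ is CAT(0): distances in the geodesic triangle are dominated by the comparison distances. For the inductive step I would develop the polygon into the plane: triangulate the geodesic $n$-gon from $x_0$ into geodesic triangles $x_0x_ix_{i+1}$, replace each by its exact Euclidean comparison triangle, and glue these along the edges corresponding to the diagonals $x_0x_{i+1}$, forming a flat ``fan'' with apex $\tilde x_0$. The outer edges then already carry the correct lengths $|x_i,x_{i+1}|$, and the CAT(0) angle comparison, applied triangle by triangle across the development, is used to show $|\tilde x_i,\tilde x_j|\ge|x_i,x_j|$ for all $i,j$. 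The fan need not be convex and its apex angle may exceed $\pi$, so I would repeatedly apply Alexandrov's lemma to ``unbend'' reflex angles: each application flattens a pair of adjacent triangles into one, increasing the relevant diagonal lengths (hence preserving the domination $|\tilde x_i,\tilde x_j|\ge|x_i,x_j|$) and decreasing the apex angle, until the developed polygon $C$ is convex.

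The short map $\mu:C\to X$ should be carried along in tandem with this convexification. At each stage the planar region carries a triangulation whose triangles are exact comparison triangles of geodesic triangles in $X$; on each such triangle the CAT(0) comparison map is short, and these agree along shared diagonals, so they assemble into a continuous short map. I would then track how the map transforms under each application of Alexandrov's lemma: merging two adjacent comparison triangles into a straightened one is accompanied by a short map of the new, larger triangle onto the union of the two geodesic triangles, again short because of the comparison inequalities the lemma supplies. In the convex limit this produces the required short $\mu$ with $\mu\circ\tilde c=c$ on $\partial C$. The step I expect to be the main obstacle is exactly this reconciliation: convexifying the planar development while simultaneously preserving that all pairwise vertex distances dominate those in $X$ \emph{and} that the assembled comparison map stays short. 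Guaranteeing that the apex angle can always be brought below $\pi$, so that a genuine convex polygon rather than a merely immersed one results, and verifying that shortness survives each straightening, are the delicate points; the CAT(0) angle comparison, transplanted into the plane through Alexandrov's lemma, is precisely what forces the development to be ``fatter'' than $c$ and hence to majorize it.
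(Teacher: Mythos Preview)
The paper does not prove this theorem; it is quoted as a classical result of Reshetnyak, with the reader referred to \cite{Bmet} and \cite{AKP} for a proof. So there is no argument in the paper to compare against.

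Your outline is the standard textbook strategy and is essentially sound: reduce to geodesic polygons, handle polygons by induction on the number of vertices with Alexandrov's lemma as the engine, and pass to a limit. Two remarks. First, the fan development from a fixed vertex followed by repeated ``unbending'' is heavier than necessary and invites exactly the self-overlap difficulty you flag (the total apex angle at $\tilde x_0$ is not bounded a priori, since the closed curve need not be Jordan). The cleaner induction, and the one in the cited references, cuts off a single triangle $x_0x_1x_2$ along the diagonal $x_0x_2$, applies the induction hypothesis to the remaining $(n{-}1)$-gon to obtain a convex majorizing region with its short map, glues on the comparison triangle $\bar\triangle(x_0x_1x_2)$ along the edge of length $|x_0,x_2|$, and then invokes Alexandrov's lemma at the two glued vertices to restore convexity. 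At each step only one gluing and one straightening occur, so shortness of the assembled map is straightforward to propagate. Second, in the limit step your Arzel\`a--Ascoli argument for the maps $\mu_N$ tacitly uses properness of $X$; the paper (and the standard statement) does not assume this, so one should either pass to the completion and work inside the closed convex hull of the image of $c$, or note that the $\mu_N$ take values in a fixed bounded CAT(0) set and use the convexity of distance functions to extract a limit without compactness. These are routine adjustments; your overall plan is correct.
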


As a consequence, the Euclidean isoperimetric inequality for curves holds in CAT(0) spaces.

\begin{theorem}(Isoperimetric inequality)\label{thm:isoin}
 Let $X$ be a CAT(0) space and $c:S^1\to X$ a Lipschitz circle. Then there exsits a Lipschitz extension
 $\hat c:\bar D\to X$ of $c$ with 
 
 \[\a(\hat c)\leq \frac{\length(c)^2}{4\pi}\]
 
 where the area of $\hat c$ is the Hausdorff $2$-measure counted with multiplicities, cf. Definition \ref{def:area}.
\end{theorem}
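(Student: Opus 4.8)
The plan is to deduce the inequality from Reshetnyak's majorization theorem (Theorem~\ref{thm:maj}) together with the classical planar isoperimetric inequality. Write $L=\length(c)$ and let $s\colon S^1\to[0,L]$ be the arclength function of $c$; this is a Lipschitz map that traverses $[0,L]$ monotonically once, and $c=c_0\circ s$ for the unit-speed closed curve $c_0\colon[0,L]\to X$. Applying Theorem~\ref{thm:maj} to $c_0$ yields a convex region $C\subset\R^2$, possibly degenerate, bounded by a unit-speed closed curve $\tilde c\colon[0,L]\to\R^2$, together with a short map $\mu\colon C\to X$ satisfying $\mu\circ\tilde c=c_0$.

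Next I would build a Euclidean filling by coning from a point of $C$. Fix $x_0\in C$ and define $\Psi\colon\bar D\to C$ in polar coordinates by
\[
\Psi(r\omega)=x_0+r\bigl(\tilde c(s(\omega))-x_0\bigr),\qquad r\in[0,1],\ \omega\in S^1 .
\]
Convexity of $C$ guarantees $\Psi(\bar D)\subset C$, and $\Psi$ is Lipschitz since $s$ and $\tilde c$ are Lipschitz and $C$ is bounded. On the boundary one has $\Psi(\omega)=\tilde c(s(\omega))$, so $\hat c:=\mu\circ\Psi\colon\bar D\to X$ is Lipschitz and
\[
\hat c|_{S^1}=\mu\circ\tilde c\circ s=c_0\circ s=c ,
\]
i.e.\ $\hat c$ is a Lipschitz extension of $c$.

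It then remains to bound the area. Since $\mu$ is short, postcomposition with $\mu$ does not increase the parametrized Hausdorff area, so $\a(\hat c)=\a(\mu\circ\Psi)\le\a(\Psi)$. As $\Psi$ takes values in the planar set $C$, the area formula gives $\a(\Psi)=\int_{\R^2}\#\Psi^{-1}(y)\,dy$. For nondegenerate $C$ the curve $\tilde c$ traverses $\partial C$ injectively and $s$ is monotone, so $\Psi$ is injective off a null subset of the interior of $C$ and takes no values outside $C$; hence $\a(\Psi)=\mathcal H^2(C)=\a(C)$. (If $C$ is degenerate, $\Psi(\bar D)$ lies in a line, so $\a(\hat c)=0$ and the claim is trivial.) Finally $\partial C$ has length $L$, so the classical isoperimetric inequality in $\R^2$ gives $\a(C)\le L^2/4\pi$, and chaining the three estimates yields $\a(\hat c)\le\length(c)^2/4\pi$.

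The main point requiring care is the area bookkeeping in the last step: one must check from the definition of $\a$ (Definition~\ref{def:area}), via metric differentials, that a $1$-Lipschitz postcomposition is area-nonincreasing, and that the cone $\Psi$ over the monotone reparametrization $\tilde c\circ s$ of $\partial C$ sweeps out $C$ with multiplicity one rather than folding back on itself — this is exactly where convexity of $C$ and monotonicity of $s$ are used. Everything else is routine.
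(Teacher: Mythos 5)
Your proof is correct and follows exactly the route the paper intends: the theorem is stated as a direct consequence of Reshetnyak's majorization theorem (Theorem \ref{thm:maj}), combined with the planar isoperimetric inequality for the majorizing convex region and the fact that the short map $\mu$ does not increase area. Your added details (factoring $c$ through its arclength parametrization, coning over the convex region, and the multiplicity-one bookkeeping for $\Psi$) are sound and simply make explicit what the paper leaves implicit.
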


\subsection{Total curvature}\label{subsec:tc}

Let $X$ be a CAT(0) space. A curve $\si:[a,b]\ra X$ is called a {\em k-gon}, if there is a subdivision $a=t_0<t_1<\ldots<t_k=b$ such that
the restrictions $\si|_{[t_i,t_{i+1}]}$ are geodesics. Note that every ordered $k$-tuple $(x_1,\ldots,x_k)$ of points $x_i\in X$ 
determines an $k$-gon. The points $x_i:=\si(t_i)$ 
are called {\em vertices} of $\si$. If the number of vertices is not important, we will simply call $\si$ a {\em polygon}. 
The {\em total curvature} $\ka(\si)$ of a k-gon $\si$ with 
vertices $(x_i)$ is defined by
$$
\ka(\si):=\sum_{i=2}^{k-1}\left(\pi-\angle_{x_i}(x_{i-i},x_{i+1})\right).
$$

\noindent A k-gon $\si$ with 
vertices $(x_i)$ is {\em inscribed} in a curve $\ga$, if there is a parametrization $\tilde\ga$ of $\ga$ such that 
$\tilde\ga^{-1}(x_i)\leq \tilde\ga^{-1}(x_{i+1})$ for $0\leq i\leq k-1$. If a polygon $\si$
is inscribed in a polygon $\rho$, then $\kappa(\si)\leq\kappa(\rho)$ 
because the sum of the interior angles of triangles in CAT(0) spaces is bounded above by $\pi$. (See Lemma 2.1 in \cite{AB}.)
\begin{definition}
Let $\gamma$ be a curve in a CAT(0) space $X$. Then its {\em total curvature} $\ka(\gamma)$ is defined as
$$
\ka(\ga):=\sup\{\ka(\si)|\ \si \text{ is an inscribed polygon in }\ga\}.
$$
\end{definition}

\noindent Note that this definition generalizes the Riemannian definition of total curvature. A curve of
finite total curvature is rectifiable and has left and right directions at every interior point. Moreover, Fenchel's theorem holds,
\ i.e. the total curvature
of a closed curve $\ga$ is at least $2\pi$ and equality is attained if and only if $\gamma$ bounds a
flat convex subset or degenerates to an interval. (Cf. \cite{AB}, Section 2.)

\subsection{Lipschitz maps}

We have the following version of Rademacher's theorem which is a consequence of Proposition 1.4 and Theorem 1.6 in \cite{Lyt}.

\begin{proposition}
 Let $X$ be a CAT(0) space and $K\subset \R^n$ a measurable subset. Let $f:K\to X$ be a Lipschitz map.
 Then $f$ is almost everywhere differentiable in the following sense. For almost all $p\in K$ there exists a unique map
 $df_p:T_p\R^n\to T_{f(p)}X$  whose image is a Euclidean space, which is linear and such that
 $$
 \lim_{|v|\to 0}\frac{|f(p+v),f(p)|-|df_p(v),o_{f(p)}|}{|v|}=0
 $$
 where $o_{f(p)}$ denotes the origin in $T_{f(p)}X$.
\end{proposition}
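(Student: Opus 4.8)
The plan is to reduce the statement to the metric differentiability of Lipschitz maps and then exploit the non-positive curvature of the target to upgrade the metric differential to a genuine linear differential with Euclidean image; this is precisely how Proposition 1.4 and Theorem 1.6 of \cite{Lyt} are meant to fit together, and I sketch the interaction below.

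First I would recall Kirchheim-type metric differentiability (the content of \cite{Lyt}, Proposition 1.4; working at density points of $K$, or after a Lipschitz extension, as usual): for almost every $p\in K$ there is a seminorm $s_p$ on $T_p\R^n=\R^n$ with
\[
\bigl|\;|f(p+v),f(p+w)|-s_p(v-w)\;\bigr|=o(|v|+|w|)\qquad\text{as }v,w\to 0 .
\]
Putting $w=0$ shows that any map $df_p$ with $|df_p(v),o_{f(p)}|=s_p(v)$ automatically satisfies the limit identity in the statement, so the real task is to produce such a $df_p$ that is in addition linear with Euclidean image, together with its uniqueness.

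To construct $df_p$ I would blow up at $f(p)$. The maps $\phi_t\colon(\R^n,0)\to(\tfrac1t\cdot X,f(p))$, $\phi_t(v):=f(p+tv)$, are uniformly Lipschitz, so an ultralimit gives $\phi_\om\colon\R^n\to X_\om$ with $|\phi_\om(v),\phi_\om(w)|=s_p(v-w)$ by metric differentiability; since $\phi_\om$ sends every affine line $t\mapsto tv+w_0$ to a constant-speed curve realizing the distance between its points, affine lines go to geodesics, so $\im(\phi_\om)$ is convex in the CAT(0) space $X_\om$, hence itself CAT(0), and being isometric to a finite-dimensional normed space it must be Euclidean (a normed space satisfying the CAT(0) comparison obeys the parallelogram law). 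Thus $s_p$ is a Euclidean seminorm, and because $\phi_\om$ carries midpoints to midpoints it is affine, i.e.\ linear after placing $\phi_\om(0)$ at the origin. Finally I would push this into $T_{f(p)}X$ along the logarithm map: $\log_{f(p)}$ is short and a radial isometry preserving initial directions of $f(p)$-radial geodesics, so its rescalings converge to a short radial isometry $\psi\colon X_\om\to T_{f(p)}X$, and $df_p:=\psi\circ\phi_\om$ is the desired linear map with Euclidean image and $|df_p(v),o_{f(p)}|=s_p(v)$. Uniqueness holds because any competitor has the same distances to $o_{f(p)}$, hence the same seminorm $s_p$, and is then pinned down by the blow-up description — equivalently, the construction through $\log_{f(p)}$ is canonical and independent of the ultrafilter. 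Packaging the last two steps (flatness and linearity) is exactly the role of Theorem 1.6 in \cite{Lyt}.

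The main obstacle, and the reason one leans on \cite{Lyt}, is the interface between the abstract blow-up $X_\om$ and the concrete tangent space $T_{f(p)}X$: one has to guarantee that $df_p$ really lands in $T_{f(p)}X$ and is isometric there on the relevant flat plane, which comes down to controlling the directions at $f(p)$ along the curves $t\mapsto f(p+tv)$ — a first-variation and compactness argument relying on convexity of the distance function in $X$. The two remaining ingredients, the almost-everywhere existence of the metric differential and the fact that a finite-dimensional normed space which is CAT(0) is Euclidean, are Kirchheim's theorem and the parallelogram law respectively, and are routine by comparison.
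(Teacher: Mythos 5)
The paper offers no argument for this proposition beyond the citation---it is stated as a direct consequence of Proposition 1.4 and Theorem 1.6 in \cite{Lyt}---and since you too invoke Theorem 1.6 of \cite{Lyt} precisely for ``flatness and linearity'', at the level of what is actually proved you and the paper end up in the same place. The parts of your sketch that are independent of \cite{Lyt} are sound as far as they go: metric differentiability gives the seminorm $s_p$, the blow-up $\phi_\omega\colon\R^n\to X_\omega$ maps onto a convex subset isometrically for $s_p$, and a normed space satisfying the CAT(0) comparison is Euclidean, so the blow-up image is a flat and $\phi_\omega$ is affine.

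If, however, the sketch is meant as an actual derivation, the passage from $X_\omega$ to $T_{f(p)}X$ is a genuine gap, and it is exactly the step you defer to Theorem 1.6. First, the ultralimit of the rescaled logarithm maps takes values in the ultrapower of $T_{f(p)}X$, not in $T_{f(p)}X$ itself: pointwise, $\psi\circ\phi_\omega(v)$ is the ultralimit of $\tfrac{1}{t_i}\log_{f(p)}f(p+t_i v)$, and the convergence of these points inside $T_{f(p)}X$---equivalently, convergence of the directions of $f(p+t_i v)$ at $f(p)$---is essentially the assertion to be proved; nothing in your construction produces it. Second, even granting a short radial isometry $\psi\colon X_\omega\to T_{f(p)}X$, shortness plus radial isometry does not force $\psi\circ\phi_\omega$ to be linear with Euclidean image: folding a flat plane onto a half-plane along a line through the origin is short and preserves distances to the origin, so the claimed isometry of $\psi$ on the flat needs the first-variation/angle argument that is the substance of Lytchak's theorem. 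Third, the uniqueness argument does not work as stated: the displayed limit only constrains $|df_p(v),o_{f(p)}|$, and composing a valid $df_p$ with an isometry of $T_{f(p)}X$ fixing $o_{f(p)}$ (already a rotation when $X=\R^2$) yields a different linear map with the same radial norms; uniqueness comes from the stronger first-order property proved in \cite{Lyt} (approximation of the distances $|f(p+v),f(p+w)|$ by $|df_p(v),df_p(w)|$, i.e.\ tangency of $df_p(v)$ to the curves $t\mapsto f(p+tv)$), not from the stated limit alone.
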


\begin{lemma}[\cite{Kir}, Theorem 7]\label{lem:areaformula}
Let $X$ be a CAT(0) space.
 Let $K\subset \R^2$ be measurable, let  $f:K\to X$ be a Lipschitz map with $f(K)=Y$. Let 
 $N_f:Y\to[1,\infty]$ be the multiplicity function $N_f(y)=\#\{z\in K:f(z)=y\}$. Then the
 following area formula holds true.
 $$
 \int_Y N_f(y)\ d\mathcal{H}^2(y)=\int_K\mathcal{J}(df_z) dz 
 $$
 where $\mathcal{J}(df_z)$ denotes the usual Jacobian of the linear map $df_z$.
\end{lemma}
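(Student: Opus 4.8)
\medskip\noindent\textbf{Proof proposal.}
The plan is to run the classical localization argument for the area formula, adapted to the metric differentiability furnished by the Rademacher-type proposition above (this essentially reproduces Kirchheim's proof, specialized to CAT(0) targets). First I would split the domain into the \emph{regular set} $R=\{z\in K:\mathcal{J}(df_z)>0\}$, the \emph{degenerate set} $K\setminus R$ on which $df_z$ has rank at most one, and the Lebesgue-null set where $f$ is not differentiable. On the degenerate set every point has a first-order approximation by a map into a line, so covering $K\setminus R$ by small pieces on which this approximation is uniform (cf.\ the Egorov step below) together with the fact that Lipschitz maps do not raise Hausdorff dimension shows $\mathcal{H}^2(f(K\setminus R))=0$. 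Since $\mathcal{J}(df_z)=0$ on $K\setminus R$ and $f(K\setminus R)$ is $\mathcal{H}^2$-null, both sides of the claimed identity are unchanged if we replace $K$ by $R$ and $Y$ by $f(R)$.

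The heart of the argument is a Lusin-type decomposition of $R$. Using the pointwise expansion $|f(z+v),f(z)|-|df_z(v),o_{f(z)}|=o(|v|)$ from the proposition, Egorov's theorem applied to the family $v\mapsto|f(z+v),f(z)|/|v|$, and a countable subdivision of $R$ into small Borel pieces, I would arrange, for each fixed $\epsilon>0$, a decomposition $R=N_\epsilon\cup\bigcup_j R_j$ with $\mathcal{H}^2(N_\epsilon)=0$, the $R_j$ pairwise disjoint of small diameter, and injective linear maps $L_j:\R^2\to E_j$ into Euclidean planes $E_j\subset T_{f(z)}X$ such that
\[
\bigl|\,|f(z+v),f(z)|-|L_j v,0|\,\bigr|\le\epsilon|v|
\quad\text{and}\quad
(1-\epsilon)\mathcal{J}(L_j)\le\mathcal{J}(df_z)\le(1+\epsilon)\mathcal{J}(L_j)
\]
hold for all $z,z+v\in R_j$. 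Since $L_j$ is bi-Lipschitz onto $E_j$, the first estimate forces $f|_{R_j}$ to be a $(1+\epsilon)$-bi-Lipschitz embedding after identifying $E_j\cong\R^2$, and the standard behaviour of $\mathcal{H}^2$ under bi-Lipschitz maps then gives $\mathcal{H}^2(f(R_j))=(1+O(\epsilon))\,\mathcal{J}(L_j)\,\mathcal{L}^2(R_j)=(1+O(\epsilon))\int_{R_j}\mathcal{J}(df_z)\,dz$.

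To conclude I would sum over $j$: after refining the partition so that the $R_j$ are disjoint, $N_f(y)=\sum_j\chi_{f(R_j)}(y)$ for $\mathcal{H}^2$-a.e.\ $y\in f(R)$, so monotone convergence gives $\int_{f(R)}N_f\,d\mathcal{H}^2=\sum_j\mathcal{H}^2(f(R_j))$, and comparing this term by term with $\sum_j\int_{R_j}\mathcal{J}(df_z)\,dz=\int_R\mathcal{J}(df_z)\,dz$ shows the two integrals agree up to a factor $1+O(\epsilon)$; letting $\epsilon\to0$ finishes the proof. The main obstacle is the decomposition step, specifically upgrading the pointwise first-order expansion to a \emph{uniform} one on each $R_j$ and then extracting genuine bi-Lipschitz control of $f|_{R_j}$: the upper Lipschitz bound is automatic, but the lower bound needs injectivity of $df_z$ combined with the uniform error estimate and choosing the pieces small enough. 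Controlling $\mathcal{H}^2(f(K\setminus R))$, as well as the measurability of $N_f$ and of the pieces, is routine once one invokes that Lipschitz maps do not increase Hausdorff dimension and that everything can be arranged with Borel sets.
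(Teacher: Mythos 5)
This lemma is not proved in the paper at all: it is quoted verbatim as Theorem~7 of Kirchheim's paper \cite{Kir}, so there is no internal argument to compare against. Your sketch essentially reconstructs Kirchheim's original proof (metric Rademacher differentiability, a Lusin--Egorov decomposition of the nondegenerate set into countably many pieces on which $f$ is $(1+\epsilon)$-bi-Lipschitz onto its image and comparable to a fixed linear model, null image of the degenerate set, then summation of multiplicities and $\epsilon\to 0$), and the architecture is sound. Two technical points deserve sharpening. First, for the degenerate set the phrase ``Lipschitz maps do not raise Hausdorff dimension'' is not the right mechanism: dimension $\le 2$ is compatible with positive $\mathcal{H}^2$-measure. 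What one actually uses is the quantitative covering estimate: on a piece where the first-order approximation by a degenerate seminorm is uniform at scale $\epsilon$, the image of a ball of radius $r$ lies in the $\epsilon r$-neighborhood of a set isometric to a segment of length $O(r)$, hence has $\mathcal{H}^2$-content $O(\epsilon r^2)$; a Vitali covering argument then gives $\mathcal{H}^2(f(K\setminus R))\le C\epsilon\,\mathcal{L}^2(K\setminus R)$, and $\epsilon\to0$ finishes. Second, with the absolute error bound $\epsilon|v|$ the lower bi-Lipschitz estimate only comes out if $\epsilon$ is small relative to $\min_{|v|=1}|L_j v|$, so before applying Egorov you should stratify $R$ by a positive lower bound $1/m$ on the metric differential over the unit circle (or work with a relative error); you gesture at this (``injectivity of $df_z$ combined with the uniform error estimate'') but the stratification is the step that makes it precise. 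With these two repairs your outline is a faithful proof of the cited result.
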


\begin{corollary}\label{cor:monarea}
 Let $X$ be a metric space.  Let $K\subset \R^2$ be measurable and  $f:K\to \R^2$ be a monotone Lipschitz map with $f(K)=Y$.
 Further let $s:Y\to X$ be a Lipschitz map. Then $\a(s\circ f)=\a(s)$.
\end{corollary}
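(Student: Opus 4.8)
The plan is to express both areas through multiplicity functions and then use monotonicity, together with the area formula for $\R^2$-valued maps, to see that $f$ creates no extra multiplicity. Recall from Definition~\ref{def:area} that $\a(g)=\int_{g(K)}N_g\,d\mathcal H^2$ for a Lipschitz map $g$ defined on a measurable subset of $\R^2$, where $N_g$ is its multiplicity function. For every $x$ the fibre $(s\circ f)^{-1}(x)$ is the disjoint union of the fibres $f^{-1}(y)$ over $y\in s^{-1}(x)$, so on $s(Y)$ we have the pointwise identity
\[
N_{s\circ f}(x)=\sum_{y\in s^{-1}(x)}N_f(y).
\]
Since $N_f\ge 1$ on $Y$ this yields $N_{s\circ f}\ge N_s$, hence $\a(s\circ f)\ge\a(s)$ at once; the content of the corollary is the reverse inequality. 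It will follow once we prove that the set $\{y\in Y:\ N_f(y)\ge 2\}$ is $\mathcal H^2$-null. Indeed, the Lipschitz map $s$ sends $\mathcal H^2$-null sets to $\mathcal H^2$-null sets, so then for $\mathcal H^2$-a.e.\ $x\in s(Y)$ no preimage $y\in s^{-1}(x)$ lies in $\{N_f\ge 2\}$, whence $N_{s\circ f}(x)=\#s^{-1}(x)=N_s(x)$; integrating gives $\a(s\circ f)=\a(s)$.

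To prove that $\{N_f\ge 2\}$ is $\mathcal H^2$-null I would first invoke Rademacher's theorem (for the $\R^2$-valued map $f$ this is the classical statement) to split $K$, up to a Lebesgue null set $N_0$, into the set $K_1$ where $df_z$ is invertible and the set $K_0$ where $\det df_z=0$. The images $f(N_0)$ and $f(K_0)$ are $\mathcal H^2$-null: the former because $f$ is Lipschitz and $N_0$ is null, the latter because Lemma~\ref{lem:areaformula} applied to $f|_{K_0}$ gives $\int_{f(K_0)}N_{f|_{K_0}}\,d\mathcal H^2=\int_{K_0}|\det df_z|\,dz=0$, forcing $\mathcal H^2(f(K_0))=0$. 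So it remains to check that $N_f(y)=1$ for every $y\in f(K_1)$. Fix $z\in K_1$. Since $df_z$ is invertible there is $c=c(z)>0$ with $|df_z(v)|\ge c|v|$ for all $v$, and differentiability of $f$ at $z$ then gives $|f(z+v),f(z)|\ge\tfrac{c}{2}|v|>0$ for all sufficiently small $v\neq 0$; hence $z$ is an isolated point of the fibre $f^{-1}(f(z))$. But $f$ is monotone, so this fibre is connected, and a nonempty connected set with an isolated point is a single point; thus $f^{-1}(f(z))=\{z\}$ and $N_f(f(z))=1$. Consequently $\{y\in Y:\ N_f(y)\ge 2\}$ is disjoint from $f(K_1)$, hence contained in $f(K_0)\cup f(N_0)$, which is $\mathcal H^2$-null.

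The only step here that is not routine bookkeeping with the area formula and the Lipschitz behaviour of $\mathcal H^2$ is the last one, where connectedness of the fibres over the nondegenerate part of $K$ forces them to be singletons; this is exactly where monotonicity is used, and it is indispensable — a generic Lipschitz $f:K\to\R^2$ with a fold has $N_f\equiv 2$ on a set of positive measure and then the conclusion fails. I also want to stress that no area formula for the arbitrary metric target $X$ is needed, only the case $X=\R^2$, which is why the statement can be made for general $X$.
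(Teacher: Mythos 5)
Your argument is correct, but it reaches the key point --- that $\{y\in Y:\ N_f(y)\ge 2\}$ is $\mathcal H^2$-null, after which both proofs finish identically (a Lipschitz $s$ sends this null set to a null set, so $N_{s\circ f}=N_s$ $\mathcal H^2$-a.e., and one integrates) --- by a different mechanism than the paper. The paper's proof is a two-liner: since the fibres of $f$ are connected, $N_f$ takes only the values $1$ and $\infty$ on $Y$; on the other hand $N_f$ is (locally) $\mathcal H^2$-integrable by Lemma \ref{lem:areaformula} applied to $f$ itself, hence a.e.\ finite, hence a.e.\ equal to $1$. You instead split $K$ by Rademacher into a degenerate part $K_0$, whose image is null by the area formula, and a nondegenerate part $K_1$, where invertibility of the differential makes each point isolated in its fibre, so that connectedness forces the fibre to be a singleton. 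Your route is longer but more quantitative: it shows every point of $f(K_1)$ has exactly one preimage, rather than merely that $N_f=1$ a.e.; the paper's route avoids Rademacher entirely and needs only the cardinality dichotomy for connected sets plus integrability of $N_f$.

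Two points of precision. First, since $K$ is only measurable, ``differentiability of $f$ at $z$'' in your isolated-point argument must mean genuine differentiability at $z$ of a Lipschitz extension $F:\R^2\to\R^2$ of $f$ (classical Rademacher applied to $F$); the approximate differentiability available directly on $K$, as in the paper's Rademacher-type proposition, would not exclude fibre points accumulating at $z$ along a set of density zero. With the extension one gets $|F(z+v)-F(z)|\ge\tfrac{c}{2}|v|$ for all small $v$, and restricting to $z+v\in K$ gives the isolation you need; moreover $\mathcal{J}(df_z)=|\det dF_z|$ a.e.\ on $K$, so your treatment of $K_0$ is unaffected. Second, the identity $\a(g)=\int N_g\,d\mathcal H^2$ is not Definition \ref{def:area} (which is the Jacobian integral) but Kirchheim's area formula, Lemma \ref{lem:areaformula}, and you do need it for the maps $s$ and $s\circ f$ into the metric target $X$, not only for the case $X=\R^2$: converting $\a(s\circ f)$ and $\a(s)$ into multiplicity integrals is exactly the metric-target case of that lemma, so your closing remark on this point should be dropped.
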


\begin{proof}
We have $N_{s\circ f}\geq N_s$. Since $f$ is monotone, it only takes the values $1$ and $\infty$ on $Y$.
We conclude $N_{s\circ f}= N_s$ for $\mathcal{H}^2$-almost all points in $X$ and the claim follows from Lemma \ref{lem:areaformula}.
\end{proof}

We will repeatedly make use of the following proposition which is a consequence of Theorem 2.5 in \cite{ABC}.

\begin{proposition}\label{prop:ae}
Let $f:\bar D\ra\R$ be a Lipschitz map and denote by $\Pi_y:=f^{-1}(y)$ its fibers. Then for almost every value $y\in \R$ the following holds.

\begin{enumerate}
	\item $\mathcal{H}^1(\Pi_y)$ is finite.
	\item Each connected component of $\Pi_y$ which has positive length is contained in a rectifiable Jordan curve $\Gamma\subset\R^2$. Moreover, the union of those components has
full $\mathcal{H}^1$-measure in $\Pi_y$.
\end{enumerate} 
\end{proposition}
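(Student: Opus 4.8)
The plan is to deduce both statements from the structure theorem for level sets of planar Lipschitz functions due to Alberti--Bianchini--Crippa, \cite[Theorem 2.5]{ABC}. First I would extend $f$ to a Lipschitz map $\tilde f:\R^2\to\R$ with the same Lipschitz constant $L$, say by the McShane formula $\tilde f(x)=\sup_{z\in\bar D}\bigl(f(z)-L\,|x,z|\bigr)$; then $\Pi_y=\tilde f^{-1}(y)\cap\bar D$, and, $\bar D$ being closed and the components of $\tilde f^{-1}(y)$ being pairwise disjoint, every connected component of $\Pi_y$ is contained in a unique connected component of $\tilde f^{-1}(y)$. Part i) then follows from the coarea (Eilenberg) inequality $\int_\R\mathcal H^1\bigl(\tilde f^{-1}(y)\cap\bar D\bigr)\,dy\le c\,L\,\mathcal H^2(\bar D)<\infty$, which forces $\mathcal H^1(\Pi_y)<\infty$ for a.e. $y$ (this finiteness is in any case already contained in \cite[Theorem 2.5]{ABC}).

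For part ii) the input from \cite[Theorem 2.5]{ABC} is that, for a.e. $y$, the level set $\tilde f^{-1}(y)$ agrees up to an $\mathcal H^1$-null set with the union of its connected components of positive length, each of which is a simple rectifiable curve without endpoints, i.e.\ a circle or a properly embedded line. Fix such a value $y$ which moreover satisfies $\mathcal H^1(\Pi_y)<\infty$ and, by the same inequality applied to $f|_{S^1}$, has $\Pi_y\cap S^1$ finite. Let $C$ be a connected component of $\Pi_y$ of positive length; it is contained in, and is in fact a connected component of, $\gamma\cap\bar D$ for a unique positive-length component $\gamma$ of $\tilde f^{-1}(y)$. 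As a subcontinuum of the simple curve $\gamma$, the set $C$ is a point, a simple rectifiable arc, or a rectifiable Jordan curve, and, having positive length, it is one of the last two. If it is a Jordan curve there is nothing to prove. If it is a simple arc, then $\gamma$ being a properly embedded line forces the two endpoints of $C$ onto $S^1=\partial D$, and joining them by an arc running through $\{|x|\ge 1\}=\R^2\setminus D$ (two radial segments and an arc of the circle $\{|x|=2\}$) meets $C$ only in those two points and yields a rectifiable Jordan curve $\Gamma\supset C$. Finally, $\Pi_y$ minus the union of its positive-length components consists of zero-length components; such a component contained in $\interior D$ is also a zero-length component of $\tilde f^{-1}(y)$, while the remaining ones lie in the finite set $\Pi_y\cap S^1$, so the whole residual set is $\mathcal H^1$-null by the choice of $y$.

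The entire difficulty sits in \cite[Theorem 2.5]{ABC}: that for almost every value the positive-length connected components of a planar Lipschitz level set are genuine simple curves with no interior endpoints, and that the remainder of the level set is $\mathcal H^1$-negligible. Everything around it --- the McShane extension, the two coarea estimates, the passage from $\tilde f^{-1}(y)$ to $\bar D$, and the closing-up of an arc to a Jordan curve --- is routine. Even if one wished to avoid relying on the absence of interior endpoints, this would cost little: a simple rectifiable arc in $\R^2$ has zero area, hence empty interior, and connected complement, so its two endpoints can always be joined by a polygonal arc meeting it in no other point.
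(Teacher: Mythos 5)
Your overall route is the same as the paper's: extend $f$ to a Lipschitz function on $\R^2$ and quote the structure theorem of Alberti--Bianchini--Crippa for almost every level set. The one place where your write-up does not go through as stated is the citation itself: Theorem 2.5 of \cite{ABC} is formulated for \emph{compactly supported} Lipschitz functions on $\R^2$, and its conclusion is that the positive-length components of a.e.\ level set are rectifiable \emph{closed} simple curves. Your McShane extension is not compactly supported, so the dichotomy you attribute to the theorem (``a circle or a properly embedded line'') is not what the cited result says, and ruling out wilder components (non-proper injective curves, curves with endpoints) for an arbitrary Lipschitz function on the plane would need a separate argument. The paper avoids this by taking a compactly supported extension (e.g.\ your McShane extension multiplied by a Lipschitz cutoff that is $1$ on $\bar D$); then for a.e.\ $y$ every positive-length component of the extended fiber is already a rectifiable Jordan curve, a positive-length component of $\Pi_y$ sits inside one of them, and your entire closing-up construction (endpoints on $S^1$, detour through the exterior annulus) becomes unnecessary. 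With that one-line change your argument is correct.

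On the other hand, the second half of your proof is a genuine improvement in precision over the paper's terse ``the claim follows'': the observation that an interior singleton component of $\Pi_y$ must lie in a zero-length component of the extended fiber (a boundary-bumping argument worth one more sentence), together with the coarea estimate for $f|_{S^1}$ guaranteeing that $\Pi_y\cap S^1$ is finite for a.e.\ $y$, is exactly what is needed to get the ``full $\mathcal{H}^1$-measure'' statement for $\Pi_y$ rather than for the extended level set, and the paper leaves this step implicit.
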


\begin{proof}
 Extend $f$ to a Lipschitz map $F:\R^2\to\R$ with compact support. Denote by $\Pi^F_y:=F^{-1}(y)$ its fibers. By Theorem 2.5 in \cite{ABC}, almost every fiber $\Pi_y^F$ of $F$
 decomposes as $\Pi_y^F=N\cup\bigcup_{i=1}^\infty \Ga_i$ where $\mathcal{H}^1(N)=0$ and each $\Ga_i$ is a rectifiable Jordan curve.
 Moreover, $\mathcal{H}^1(\Pi^F)$ is finite and equal to $\sum_{i=1}^\infty\mathcal{H}^1(\Ga_i)$. The claim follows.
\end{proof}

\begin{definition}
 Let $f:\bar D\ra\R$ be a Lipschitz map. We call $y\in\R$ an {\em quasi regular value}, if the conclusion of Proposition \ref{prop:ae}
 holds for the fiber $\Pi_y$ of $f$.
\end{definition}

\subsection{Preparation for cut and paste}

\begin{lemma}\label{lem:conecom}
 Let $X$ be a CAT(0) space, $p\in X$ a point and $r>0$ some radius. 
 Suppose that $\gamma:S^1\to X$ is a Lipschitz circle with image contained in $\bar{B}_r(p)$.
Let $v:\bar D\to X$ be the $p$-radial extension of $\gamma$. 
 Then $v$ is Lipschitz continuous with 
\[\a(v)\leq \frac{r}{2}\cdot\length(\gamma).\]
Moreover, equality holds if and only if $\gamma$ lies at constant distance $r$ from $p$ and the geodesic cone over $\gamma$ with tip $p$
is intrinsically  a flat cone.
\end{lemma}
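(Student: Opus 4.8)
The plan is to work in polar coordinates $(t,\vartheta)\in[0,1]\times S^1$ on $\bar D$, so that $v(t,\vartheta)=\ga_\vartheta(t)$, where $\ga_\vartheta\colon[0,1]\to X$ is the constant speed geodesic from $p$ to $\ga(\vartheta)$; this is the geodesic cone over $\ga$ with tip $p$. Lipschitz continuity of $v$ is routine: since all the $\ga_\vartheta$ issue from $p$, convexity of the CAT(0) metric gives $|v(t,\vartheta),v(t,\vartheta')|=|\ga_\vartheta(t),\ga_{\vartheta'}(t)|\le t\,|\ga(\vartheta),\ga(\vartheta')|$, while $t\mapsto v(t,\vartheta)$ is a geodesic of speed $|p,\ga(\vartheta)|\le r$; combined with the elementary bound $|z,z'|\ge\tfrac2\pi\min(t,t')\,|\vartheta-\vartheta'|$ for the Euclidean distance on $\bar D$, these give a Lipschitz constant bounded in terms of $r$ and $\operatorname{Lip}(\ga)$. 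Hence $v$ is differentiable almost everywhere in the sense of the Rademacher theorem above, each $dv_{(t,\vartheta)}$ being linear onto a Euclidean space.

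Next I would establish the area bound from the area formula (Lemma~\ref{lem:areaformula}), which in polar coordinates reads $\a(v)=\int_0^1\!\int_{S^1}\mathcal J(dv_{(t,\vartheta)})\,t\,d\vartheta\,dt$, the frame $\{\partial_t,\tfrac1t\partial_\vartheta\}$ being orthonormal. Since the area of a parallelogram is at most the product of its side lengths, $\mathcal J(dv_{(t,\vartheta)})\le|dv_{(t,\vartheta)}(\partial_t)|\cdot|dv_{(t,\vartheta)}(\tfrac1t\partial_\vartheta)|$. The first factor equals $|p,\ga(\vartheta)|\le r$, because $t\mapsto v(t,\vartheta)$ is a geodesic of that speed; the second is at most $|\dot\ga(\vartheta)|$, because the convexity estimate above bounds the metric speed of $\vartheta\mapsto v(t,\vartheta)$ by $t\,|\dot\ga(\vartheta)|$. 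Therefore $\mathcal J(dv_{(t,\vartheta)})\le r\,|\dot\ga(\vartheta)|$, and integrating over $t$ and $\vartheta$ yields $\a(v)\le\bigl(\int_0^1 t\,dt\bigr)\int_{S^1}r\,|\dot\ga(\vartheta)|\,d\vartheta=\tfrac r2\length(\ga)$.

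For the equality discussion I would trace these estimates backwards. We may assume $\ga$ non-constant and, reparametrising (which changes neither $\a(v)$ nor $\length(\ga)$), of constant speed $|\dot\ga|\equiv\length(\ga)/2\pi>0$. Then $\a(v)=\tfrac r2\length(\ga)$ forces $\mathcal J(dv_{(t,\vartheta)})=r\,|\dot\ga(\vartheta)|$ almost everywhere; unwinding the two intermediate inequalities, this is equivalent to: almost everywhere (i) $|p,\ga(\vartheta)|=r$, (ii) the metric speed of $\vartheta\mapsto v(t,\vartheta)$ equals $t\,|\dot\ga(\vartheta)|$, and (iii) $dv_{(t,\vartheta)}(\partial_t)\perp dv_{(t,\vartheta)}(\tfrac1t\partial_\vartheta)$. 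By continuity of $\vartheta\mapsto|p,\ga(\vartheta)|$, (i) says that $\ga$ lies at constant distance $r$ from $p$. Conditions (i)--(iii) together say exactly that $dv_{(t,\vartheta)}$ is, almost everywhere, a linear isometry of $T_{(t,\vartheta)}\R^2$ --- equipped with the quadratic form $r^2\,dt^2+|\dot\ga(\vartheta)|^2 t^2\,d\vartheta^2$ --- onto its image. Since for a Lipschitz map into a length space the associated intrinsic metric is the length metric of its metric differential (cf.\ \cite{LWint}), the intrinsic space $\bar D_v$, i.e.\ the geodesic cone over $\ga$ with tip $p$, carries exactly the metric $r^2\,dt^2+|\dot\ga(\vartheta)|^2 t^2\,d\vartheta^2$. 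Substituting $\rho=rt$ and $\psi=\tfrac1r\int_0^\vartheta|\dot\ga|$, whose total increment is $\beta:=\length(\ga)/r$, turns this form into $d\rho^2+\rho^2\,d\psi^2$ on $[0,r]\times(\R/\beta\Z)$ with the center collapsed --- the flat cone of cone angle $\beta$ within radius $r$ of its apex. Conversely, if $\ga\subset\partial B_r(p)$ and $\bar D_v$ is a flat cone, it must be this cone, since its boundary circle has length $\length(\ga)$ and all its points lie at distance $r$ from the apex $p$; then, since $\bar v\colon\bar D_v\to X$ preserves lengths of curves, its differential is almost everywhere a linear isometry, so $dv=d\bar v\circ dq$ has the same metric differential as the canonical map $q\colon\bar D\to\bar D_v$, and recomputing the area formula gives $\a(v)=\tfrac12\beta r^2=\tfrac r2\length(\ga)$.

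The hard part is the equality case, and within it the step from the almost-everywhere rigidity of $dv$ to the global conclusion that $\bar D_v$ is a flat cone. This rests on the input that the metric differential of a Lipschitz map determines its intrinsic metric, so that pointwise rigidity upgrades to a genuine isometry. The inequality itself is, by contrast, an immediate consequence of the area formula combined with convexity of the metric in CAT(0) spaces.
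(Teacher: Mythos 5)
Your treatment of the Lipschitz continuity and of the inequality itself is essentially the proof in the paper: bound the Jacobian almost everywhere by the product of the radial speed ($\le r$) and the angular speed ($\le t\,|\dot\ga(\vartheta)|$, by convexity of the metric) and integrate in polar coordinates; that part is fine. The deduction of the almost-everywhere conditions (i)--(iii) from equality is also correct.

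The gap is in the step that upgrades (i)--(iii) to the statement that the intrinsic space is a flat cone. You rest it on the assertion that ``for a Lipschitz map into a length space the associated intrinsic metric is the length metric of its metric differential,'' citing \cite{LWint}. That statement is false for general Lipschitz maps, and \cite{LWint} does not provide it in this generality (their results of this type concern Sobolev/minimal discs, and the radial extension $v$ is not known to be minimal or harmonic). The approximate metric differential is blind to what happens on null sets, whereas the intrinsic pseudometric is not: for example, the quotient map $q:\bar D\to X$ which collapses a diameter to a point is Lipschitz, is a local isometry off the (Lebesgue-null) diameter, hence has metric differential equal to the Euclidean norm almost everywhere, yet the associated intrinsic pseudometric identifies all points of the diameter and is therefore not the Euclidean length metric. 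In the situation of the lemma the analogous danger is branching or overlapping of the $p$-radial geodesics over a null set of directions, producing shortcuts in the intrinsic cone that conditions (i)--(iii) alone cannot exclude; so the ``lower bound'' half of your identification of the intrinsic metric with $r^2\,dt^2+|\dot\ga|^2t^2\,d\vartheta^2$ is unproven (the ``upper bound'' half can be salvaged by approximating a given curve by nearby good curves and using lower semicontinuity of length, but that is only one direction). The paper closes exactly this hole with different input: from equality it extracts that $\ga$ lies at constant distance $r$ and that the lengths of the intrinsic distance circles around the tip grow linearly, and then applies the rigidity part of Bishop--Gromov (Theorem \ref{thm:BG}) to the geodesic cone equipped with its intrinsic metric, which is a CAT(0) disc; it is this CAT(0) surface structure that rules out the null-set collapsing your argument does not see. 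Your conclusions (i)--(iii) are precisely the input needed there, so the repair is to route the endgame through Theorem \ref{thm:BG} rather than through the unavailable general principle.
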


\begin{proof}
Recall that $v$ 
maps radial geodesics in $\bar D$  with constant speed to $p$-radial geodesics in $X$.
We first show that $v$ is Lipschitz continuous by estimating distances using piecewise
radial and spherical paths. For $x\in\bar D$ set $\lambda_x=|\ga(\frac{x}{\|x\|}),p|$. If $L\geq r$ is a Lipschitz constant for $\gamma$, then we have
$$
|v(x),v(y)|\leq \lambda_x\cdot|\|x\|-\|y\||+L\cdot\angle_0(x,y)\cdot\|y\|\leq 3L\cdot\|x-y\|.
$$
At almost every point $x\in\bar D$ we can estimate the Jacobian by 
$\mathcal{J}(dv_x)\leq  \lambda_x\cdot|\dot\gamma(\frac{x}{\|x\|})|.$
Hence
\begin{align*}
\a(v)&\leq \int_0^1\int_0^{2\pi}\lambda_\theta |\dot\gamma(\theta)| s\ ds d\theta\\
&\leq r\cdot\int_0^1 s\cdot\left( \int_0^{2\pi}|\dot\gamma(\theta)|d\theta\right) ds=\frac{r}{2}\cdot\length(\gamma).  
\end{align*}

Equality implies $\lambda_\theta\equiv r$ and that the length of distance spheres in the intrinsic space grows exactly linearly.
Hence the claim follows from the rigidity statement in Bishop-Gromov (Theorem \ref{thm:BG}).
\end{proof}

\begin{lemma}\label{lem:reparaunit}
 Let $X$ be a CAT(0) space and $\ga:S^1\to X$ a Lipschitz curve of length equal to $2\pi$.
 Then the following holds.
 \begin{enumerate}
  \item For $\theta\in[0,2\pi)$ let $R_\theta:S^1\to S^1$ be the counter clockwise rotation by the angle $\theta$. 
  For each such $\theta$ there exists a Lipschitz homotopy $h_\theta:S^1\times[0,1]\to X$
  form $\ga$ to $\ga\circ R_\theta$ with $\im(h_\theta)=\im(\ga)$.
  \item Let $\bar\ga$ be an arc length parametrization of $\ga$, 
  then there is a Lipschitz homotopy $h:S^1\times[0,1]\to X$ form $\bar\ga$ to $\ga$ with $\im(h)=\im(\ga)$.
 \end{enumerate}
In particular, all of the above homotopies have zero mapping area.
\end{lemma}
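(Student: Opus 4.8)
The plan is to reduce both parts to a single model construction and then to push it into $X$ by postcomposition, so I begin by recording that auxiliary homotopy. Let $\phi:S^1\to S^1$ be any non-decreasing, degree-one Lipschitz map, where ``non-decreasing'' refers to a lift $\tilde\phi:\R\to\R$ with $\tilde\phi(u+2\pi)=\tilde\phi(u)+2\pi$. I would set $\tilde H(u,t)=(1-t)u+t\,\tilde\phi(u)$ on $\R\times[0,1]$. Since $\tilde H(u+2\pi,t)=\tilde H(u,t)+2\pi$, it descends to a map $H:S^1\times[0,1]\to S^1$; since the a.e.\ derivative $u\mapsto(1-t)+t\tilde\phi'(u)$ is nonnegative, each slice $H(\cdot,t)$ is a degree-one, hence surjective, self-map of $S^1$, with $H(\cdot,0)=\id_{S^1}$ and $H(\cdot,1)=\phi$. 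A direct estimate shows $\tilde H$ is Lipschitz: with $L\ge 1$ a Lipschitz constant for $\phi$ and $M=\sup_u|\tilde\phi(u)-u|<\infty$ (the supremum is finite since $u\mapsto\tilde\phi(u)-u$ is continuous and $2\pi$-periodic) one gets $|\tilde H(u,t)-\tilde H(u',t')|\le L|u-u'|+M|t-t'|$, so $H$ is Lipschitz as well.

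For part (i) I would apply this to $\phi=R_\theta$, which lifts to $u\mapsto u+\theta$ and is of the required type, and put $h_\theta:=\gamma\circ H$; explicitly $h_\theta(s,t)=\gamma(s+t\theta)$. This is Lipschitz, $h_\theta(\cdot,0)=\gamma$, $h_\theta(\cdot,1)=\gamma\circ R_\theta$, and for each fixed $t$ the slice ranges over $\gamma(S^1)=\im(\gamma)$, so $\im(h_\theta)=\im(\gamma)$. For part (ii) I would let $\sigma:S^1=\R/2\pi\Z\to\R/2\pi\Z$ be the normalized cumulative arc-length function of $\gamma$, which makes sense because $\length(\gamma)=2\pi$: it is non-decreasing, degree one, Lipschitz with the Lipschitz constant of $\gamma$, satisfies $\gamma=\bar\gamma\circ\sigma$, and $\bar\gamma$ is $1$-Lipschitz. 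Applying the model construction with $\phi=\sigma$ and setting $h:=\bar\gamma\circ H$ yields a Lipschitz homotopy with $h(\cdot,0)=\bar\gamma$ and $h(\cdot,1)=\bar\gamma\circ\sigma=\gamma$; since each $H(\cdot,t)$ is onto $S^1$, every slice of $h$ has image $\bar\gamma(S^1)=\im(\gamma)$, so $\im(h)=\im(\gamma)$.

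For the final assertion I would invoke the area formula, Lemma \ref{lem:areaformula}: the mapping area of $h$ (viewed on the annulus $S^1\times[0,1]$, or after a Lipschitz parametrization from $\bar D$) equals $\int_{\im(h)}N_h\,d\mathcal{H}^2$, and since $\im(h)=\im(\gamma)$ is the image of a rectifiable curve we have $\mathcal{H}^1(\im(\gamma))\le\length(\gamma)<\infty$, hence $\mathcal{H}^2(\im(\gamma))=0$ and $\a(h)=0$; the identical argument applies to every $h_\theta$.

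The construction has essentially no deep obstacle. The only genuinely technical point is the routine verification underlying part (ii), namely that the cumulative arc-length function of a Lipschitz curve descends to a well-defined, continuous, non-decreasing, surjective map $\sigma:S^1\to S^1$ and that the arc-length parametrization $\bar\gamma$ is $1$-Lipschitz with $\gamma=\bar\gamma\circ\sigma$; this is standard, and once it is in place everything else is a bookkeeping Lipschitz estimate.
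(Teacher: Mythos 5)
Your proposal is correct and is essentially the paper's own argument: the paper also takes $h_\theta(e^{it},s)=\gamma(e^{i(t+s\theta)})$ for (i) and, for (ii), linearly interpolates between the identity and the cumulative arc-length function $l$ before composing with $\bar\gamma$, i.e. $h(\theta,s)=\bar\gamma(e^{i((1-s)\theta+s\,l(\theta))})$, which is exactly your model homotopy with $\phi=\sigma$. Your additional remarks (degree-one slices are surjective, and the zero-area claim via the area formula since $\mathcal{H}^2(\im\gamma)=0$) merely spell out details the paper leaves implicit.
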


\begin{proof}
 For the first part we simply set $h_\theta(e^{it},s)=\ga(e^{i(t+s\theta)})$. 
 For the second part, denote by $l(\theta)$ the length of the curve $\ga(e^{it})|_{[0,\theta]}$ and set $\tau(\theta)=e^{il(\theta)}$. 
 Note that $l:[0,2\pi]\to[0,2\pi]$ is Lipschitz continuous and 
$h(\theta,s)=\bar\ga(e^{i((1-s)\theta+s l(\theta))})$ provides a homotopy as required.
\end{proof}

\begin{lemma}\label{lem:homosmallarea}
 Let $X$ be a CAT(0) space and let $\epsilon>0$ be a number.
\begin{enumerate}
\item For $i=1,2$, let $\ga_i:[0,1]\to X$  be  Lipschitz curves with Lipschitz constant $L>0$. If 
$\sup\limits_{t\in [0,1]}|\ga_1(t),\ga_2(t)|<\epsilon$, then  
 there exists 
 a Lipschitz homotopy $h$ from $\ga_1$ to $\ga_2$ with $\a(h)\leq \frac{8}{\pi}\cdot L\cdot\epsilon$.
\item Let $\ga^+:[0,1]\to X$ be an $L$-Lipschitz curve with $L>0$ and denote by $\ga^-$ the geodesic
from $\ga^+(1)$ to $\ga^+(0)$. Suppose that $\im(\ga^+)\subset N_{\epsilon}(\im(\ga^-))$. Let $\ga$
denote the concatenation of $\ga^+$ and $\ga^-$. Then there exists 
 a Lipschitz disc $w$ filling $\ga$ and such that $\a(w)\leq \frac{8}{\pi}\cdot L\cdot\epsilon$.
\end{enumerate}
 
\end{lemma}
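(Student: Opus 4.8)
The plan is to prove part (1) first by an explicit construction and then derive part (2) from it. For part (1), the natural candidate homotopy is the one that slides along geodesics: define $h(t,s)$ to be the point at parameter $s$ along the geodesic $[\ga_1(t),\ga_2(t)]$ (using unique geodesics in the CAT(0) space $X$). First I would check Lipschitz continuity of $h$. In the $t$-direction this follows from convexity of the distance function in CAT(0) spaces: $|h(t,s),h(t',s)| \leq (1-s)|\ga_1(t),\ga_1(t')| + s|\ga_2(t),\ga_2(t')| \leq L|t-t'|$. In the $s$-direction, $|h(t,s),h(t,s')| = |s-s'|\cdot|\ga_1(t),\ga_2(t)| \leq \epsilon|s-s'|$. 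Hence $h$ is Lipschitz with a controlled constant.

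Next I would estimate the area. At almost every point $(t,s)$ the differential $dh$ maps the $s$-direction to a vector of length $|\ga_1(t),\ga_2(t)| < \epsilon$ and the $t$-direction to a vector of length at most $L$ (by the convexity estimate above, the partial derivative in $t$ has norm $\leq (1-s)|\dot\ga_1(t)| + s|\dot\ga_2(t)| \leq L$). Since the Jacobian of a linear map onto a Euclidean space is bounded by the product of the norms of the images of an orthonormal basis, $\mathcal{J}(dh_{(t,s)}) \leq L\cdot\epsilon$, so by the area formula (Lemma \ref{lem:areaformula}) we get $\a(h) \leq \int_0^1\int_0^1 L\epsilon\, dt\, ds = L\epsilon$. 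This is even better than the claimed bound $\frac{8}{\pi}L\epsilon$ (the constant $\frac{8}{\pi}$ presumably leaves room for reparametrizing the domain $[0,1]\times[0,1]$ to a disc, which is needed to state everything in terms of Lipschitz discs); I would simply note the bound holds with room to spare, or reparametrize onto $\bar D$ via a bilipschitz map and absorb the constant.

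For part (2), the idea is to fill the ``bigon'' $\ga$ formed by $\ga^+$ followed by the reverse geodesic $\ga^-$. Since $\im(\ga^+)\subset N_\epsilon(\im(\ga^-))$, for each $t$ I can pick a point $\ga_2(t)$ on $\ga^-$ with $|\ga^+(t),\ga_2(t)| < \epsilon$; parametrizing $\ga^-$ so that this defines a Lipschitz curve (the geodesic $\ga^-$ has length at most $L$ since it joins $\ga^+(1)$ to $\ga^+(0)$ and $\length(\ga^+)\leq L$, so a nearest-point projection of $\ga^+$ onto the geodesic $\ga^-$ is $1$-Lipschitz and gives such a parametrization), part (1) provides a homotopy $h$ from $\ga^+$ to this reparametrized $\ga^-$ with $\a(h)\leq\frac{8}{\pi}L\epsilon$. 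Composing with the (area-zero) homotopies of Lemma \ref{lem:reparaunit}-type between different parametrizations of the geodesic segment, and then gluing a degenerate ``collapse'' of the geodesic to close up the disc, yields a Lipschitz disc $w$ filling $\ga$ with the same area bound, since reparametrizations and collapses onto geodesics contribute zero area (Corollary \ref{cor:monarea}). The main obstacle I expect is the bookkeeping in part (2): making the chosen correspondence $t\mapsto\ga_2(t)$ into an honest Lipschitz curve tracing the geodesic $\ga^-$, and then assembling the various pieces (the homotopy from part (1), the reparametrization homotopies, and the final collapse) into a single Lipschitz map from the disc without introducing extra area — this is routine but requires care in specifying domains and checking that the gluings are Lipschitz.
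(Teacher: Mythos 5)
Your proposal is correct, but it proves the lemma by a genuinely different route than the paper. The paper does not use a global geodesic homotopy at all: for (i) it subdivides $[0,1]$ into $n\approx 2L/\epsilon$ pieces, joins $\ga_1(k/n)$ to $\ga_2(k/n)$ by geodesics, and fills each of the resulting $n$ quadrilaterals (boundary length $\leq 4\epsilon$) by the isoperimetric inequality (Theorem \ref{thm:isoin}), which is exactly where the constant $n\cdot\frac{(4\epsilon)^2}{4\pi}\leq\frac{8}{\pi}L\epsilon$ comes from; for (ii) it likewise projects only the sample points $\ga^+(k/n)$ onto $\ga^-$ and fills the $n$ resulting circles of length $\leq 4\epsilon$. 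Your argument instead uses convexity of the metric to control the geodesic homotopy and then estimates the parametrized area via the metric differential, and derives (ii) from (i) by composing $\ga^+$ with the $1$-Lipschitz nearest-point projection onto the convex segment $\im(\ga^-)$ and closing up with a zero-area filling inside the segment; this avoids the isoperimetric inequality, yields an even better constant, and is a perfectly valid (arguably cleaner) alternative, while the paper's subdivision argument only needs the crude filling estimate and no differentiability theory. One small correction: your pointwise bound $\mathcal{J}(dh_{(t,s)})\leq L\cdot\epsilon$ is not quite right as stated, because $dh_{(t,s)}$ is a seminorm, not a linear map into a Euclidean space, and Hadamard's inequality does not apply verbatim; with the paper's Definition \ref{def:area}, a norm $s$ on $\R^2$ with $s(e_1)\leq L$, $s(e_2)\leq\epsilon$ has unit ball containing the rhombus spanned by $\pm e_1/L,\pm e_2/\epsilon$, so $\mathcal{J}(s)\leq\frac{\pi}{2}L\epsilon$ (and the factor $\frac{\pi}{2}$ is attained, e.g.\ for an $\ell^1$-type norm). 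Since $\frac{\pi}{2}L\epsilon<\frac{8}{\pi}L\epsilon$, your final bounds in both (i) and (ii) still hold; also note that the zero-area statements for the reparametrization and collapse steps follow directly from degeneracy of the metric differential (image in a geodesic segment) rather than from Corollary \ref{cor:monarea}, which concerns precomposition with monotone planar maps.
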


\begin{proof}
 Choose $n\in\N$ such that $\frac{\epsilon}{2 L}\leq\frac{1}{n}\leq\frac{\epsilon}{L}$. On each interval $\{\frac{k}{n}\}\times[0,1]$, $1\leq k\leq n$,
 we define the homotopy $h$ to be the constant speed parametrization of the geodesic from $\ga_1(\frac{k}{n})$
 to $\ga_2(\frac{k}{n})$. To extend the definition to the remaining domains in $[0,1]\times[0,1]$ we use the isoperimetric inequality \ref{thm:isoin}.
 The boundaries of these domains map to curves of length $\leq 4\epsilon$. Therefore
$h$ is a Lipschitz map of area bounded above by $n\cdot\frac{(4\epsilon)^2}{4\pi}\leq \frac{8}{\pi} L\epsilon$. This proves part i).

For part ii) we choose $n$ as above. Denote by $x_k$ the nearest point projection of $\ga^+(\frac{k}{n})$. Using the geodesics between 
$\ga^+(\frac{k}{n})$ and $x_k$ we can cut $\ga$ into $n$ Lipschitz circles of length bounded above by $4\epsilon$. We can now finish  the proof
as above, using the isoperimetric inequality.
 \end{proof}

\section{CAT(0) geometry}

\subsection{Funnel extensions of CAT(0) spaces}\label{sec:funnel}

\begin{definition}
For $\alpha>0$ denote $C_\alpha$ the Euclidean cone over a circle of length $\alpha$.
A metric space $E_\alpha$ is called a {\em flat funnel} (of angle $\alpha$), if it is isometric to 
the complement of a relatively compact convex neighborhood of the vertex of $C_\alpha$.  
\end{definition}

\begin{definition}
Let $C_\alpha$ be a Euclidean cone over a closed interval of length $\alpha$.
A metric space $s_{\alpha,r}$ is called a {\em flat sector} (of angle $\alpha\geq 0$ and radius $r>0$), if it is isometric to 
the closed $r$-ball around the vertex of $C_\alpha$. An infinite flat sector will be called an {\em ideal flat sector}. The {\em legs} $l_1$ and $l_2$ of $s_{\alpha,r}$ are the two radial geodesic segments of length $r$ lying
in the boundary of $s_{\alpha,r}$. They intersect in a single point $p$, the {\em tip} of $s_{\alpha,r}$. 
\end{definition}

\begin{lemma}\label{lem:locglu}
Let $X$ be a CAT(0) space and $q,x,y$ three points in $X$ such that $r:=|qx|=|qy|>0$ and $\alpha:=\angle_q(x,y)\leq\pi$. Denote
$l_1$ and $l_2$ the legs of a flat sector $s_{2\pi-\alpha,r}$.
If $f:l_1\cup l_2\ra X$ is an intrinsic isometry onto $qx\cup qy$, then
$X\cup_f s_{2\pi-\alpha,r}$ is a CAT(0) space.
\end{lemma}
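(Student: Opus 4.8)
The plan is to reduce the statement to Reshetnyak's gluing theorem (Theorem~\ref{thm:glue}) by exhibiting the flat sector $s_{2\pi-\alpha,r}$ as a convex subset of an auxiliary CAT(0) space and the arc $qx\cup qy$ as a convex subset of $X$, with the gluing map $f$ a genuine isometry of these convex subsets. The two immediate obstructions to applying Theorem~\ref{thm:glue} directly are that the subset $qx \cup qy \subset X$ is in general \emph{not} convex (it is a pair of segments meeting at $q$ at an angle $\alpha \le \pi$, which is a geodesic bigon, not a geodesically convex set), and similarly the pair of legs $l_1 \cup l_2$ is not convex in $s_{2\pi-\alpha,r}$. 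So the real content is a local-to-global argument: one shows the glued space $X \cup_f s_{2\pi-\alpha,r}$ is locally CAT(0), hence CAT(0) by the Cartan--Hadamard / Alexandrov patchwork theorem, using that it is simply connected (it is a space glued to a disc-like piece along an arc, so $\pi_1$ is unchanged and $X$ is simply connected being CAT(0)).

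First I would check that $Y := X \cup_f s_{2\pi-\alpha,r}$ is a geodesic length space and is simply connected: the sector is contractible and it is attached to $X$ along the contractible arc $l_1 \cup l_2 \cong qx \cup qy$, so by van Kampen $\pi_1(Y) = \pi_1(X) = 1$. Next, the key step: verify that $Y$ is \emph{locally} CAT(0), i.e. has curvature $\le 0$ in the sense of Alexandrov. Away from the gluing arc this is clear, since near such points $Y$ looks locally either like $X$ or like the flat sector, both of which are CAT(0). The interesting points are those on $qx \cup qy$. At an interior point $w$ of one of the open segments $qx \setminus \{q\}$ or $qy\setminus\{q\}$, a small ball in $Y$ is the union of a half-ball in $X$ and the flat half-disc coming from the sector, glued along a diameter (a geodesic segment, which \emph{is} convex locally); so Theorem~\ref{thm:glue} applies locally there. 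At the tip point $q$: the space of directions $\Sigma_q Y$ is obtained from $\Sigma_q X$ by gluing in, along the two points corresponding to the directions of $qx$ and $qy$, an arc of length $2\pi - \alpha$ (coming from the tip angle of the sector $s_{2\pi-\alpha,r}$). One must check this glued link is CAT(1). Since $\Sigma_q X$ is CAT(1) and the directions to $x$ and to $y$ are at distance $\alpha \le \pi$ in it (that is exactly the definition of $\angle_q(x,y) = \alpha$), gluing an arc of length $2\pi-\alpha$ between two points at distance $\alpha$ in a CAT(1) space yields a CAT(1) space — this is the spherical analogue of Reshetnyak gluing, and follows because any short closed geodesic in the glued link of length $<2\pi$ must either lie in $\Sigma_q X$ (fine, it's CAT(1)) or traverse the new arc, in which case it has length at least (length of new arc) $+$ (a path of length $\ge \alpha$ back through $\Sigma_q X$ between the two gluing points, since that's their distance) $= 2\pi - \alpha + \alpha = 2\pi$, contradiction. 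Hence $\Sigma_q Y$ is CAT(1), so $T_q Y$ is CAT(0), and $Y$ has curvature $\le 0$ at $q$.

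Having established that $Y$ is a simply connected, complete, locally CAT(0) geodesic space, I would conclude by the Cartan--Hadamard theorem for metric spaces (see \cite{BH} or \cite{AKP}) that $Y = X \cup_f s_{2\pi-\alpha,r}$ is globally CAT(0), which is the claim. The main obstacle, and the step deserving the most care, is the verification at the tip $q$: correctly identifying $\Sigma_q Y$ as a spherical gluing of $\Sigma_q X$ with an arc of the appropriate length, and invoking (or proving on the spot, via the short-geodesic argument above) the spherical version of Reshetnyak's gluing theorem for CAT(1) spaces. Everything else — the local analysis along the open legs, simple connectivity, completeness — is routine once this is in place.
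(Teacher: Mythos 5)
Your outer framework (reduce to local CAT(0) and invoke Cartan--Hadamard, handle points away from $q$ by Reshetnyak gluing along geodesic segments) is exactly the paper's, but the argument you give at the tip $q$ has a genuine gap, and $q$ is the whole point of the lemma. You compute (or rather assert) that $\Sigma_q Y$ is $\Sigma_q X$ with an arc of length $2\pi-\alpha$ attached at the two directions, argue this link is CAT(1), and then conclude ``so $T_qY$ is CAT(0), and $Y$ has curvature $\le 0$ at $q$.'' This last implication is not a theorem: an upper curvature bound is a condition on a neighborhood, not an infinitesimal condition, and having a CAT(1) space of directions (equivalently a CAT(0) tangent cone) at a single point does not yield a CAT(0) neighborhood of that point, even when all nearby points have CAT(0) neighborhoods (whose size may degenerate as one approaches $q$). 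The ``link condition'' shortcut is valid for polyhedral complexes precisely because there a neighborhood of a vertex is isometric to a cone over its link; here a neighborhood of $q$ in $X$ is an arbitrary CAT(0) space, not a cone, so no such principle applies and you would have to prove comparison for small triangles near $q$ directly. Two subsidiary steps are also only asserted: the identification of $\Sigma_q Y$ requires computing angles in the glued space between directions into the sector and directions into $X$, which already presupposes control of the metric of $Y$ near $q$; and ``no closed geodesic of length $<2\pi$'' does not by itself characterize CAT(1) --- you would need a local-to-global statement for CAT(1) (local CAT(1) plus compactness, or Bowditch's criterion), and $\Sigma_q X$ need not be compact or even locally compact.

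The paper avoids all of this with a purely convex-gluing argument for the ball $B_r(q)$: first glue a flat sector $s_{\pi-\alpha,r}$ to $X$ along the single segment $qx$ (a convex set, so Theorem~\ref{thm:glue} applies); in the resulting CAT(0) space the free leg of this sector prolongs $qy$ to a genuine geodesic $\sigma$ of length $2r$, because the angles add up to $\pi$ at $q$; then glue a flat half-disc of radius $r$ along $\sigma$ (again a convex set). The union of the $(\pi-\alpha)$-sector and the half-disc is exactly $s_{2\pi-\alpha,r}$ attached along both legs, so two applications of Reshetnyak give the CAT(0) property near $q$ with no link computation. If you want to salvage your approach you would need either to prove a removable-singularity/link criterion of the above type (nontrivial, and not available in the references you cite), or to replace the step at $q$ by a direct comparison argument --- at which point the two-step convex decomposition is the natural way to do it.
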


\begin{proof}
By the theorem of Cartan-Hadamard, it is
enough to show that  $X\cup_f s_{2\pi-\alpha,r}$ is locally CAT(0). Since geodesic segments in CAT(0) spaces are convex, Reshetnyak's theorem implies the claim away from the point $q$. 
We show that the $r$-ball around $q$ is CAT(0). 

\noindent One can be obtain $B_r(q)\subset X\cup_f s_{2\pi-\alpha,r}$ from $B_r(q)\subset X$ in two steps. First we glue a flat sector $s_{\pi-\alpha,r}$ along one of its legs to the geodesic segment $qx$. In the resulting CAT(0) space the second leg of $s_{\pi-\alpha,r}$ extends the geodesic segment $qy$ to a geodesic $\sigma$ of length $2r$. 
In a second step we can now glue a flat half-disc of radius $r$ to $\si$, thereby producing 
$B_r(q)\subset X\cup_f s_{2\pi-\alpha,r}$. Hence $X\cup_f s_{2\pi-\alpha,r}$ is CAT(0).
\end{proof}

\begin{lemma}
Let $\ga$ be an embedded closed curve in $X$ with finite total curvature $\ka(\gamma)$. Then there is a flat funnel $E_{\ka(\ga)}$ and an intrinsic
isometry $f:\gamma\ra\partial E_{\ka(\ga)}$ such that $X\cup_f E_{\ka(\ga)}$ is a CAT(0) space. 
\end{lemma}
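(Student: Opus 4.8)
The plan is to build the flat funnel as a limit of flat sectors glued along finer and finer inscribed polygons in $\ga$, using Lemma \ref{lem:locglu} as the basic gluing step. Fix a parametrization of $\ga$ and for each inscribed polygon $\si$ with vertices $x_0,\dots,x_k=x_0$, one wants to glue a flat sector of angle $\pi-\angle_{x_i}(x_{i-1},x_{i+1})$ at each vertex $x_i$, with its two legs of the appropriate lengths identified with the two sub-geodesics $x_{i-1}x_i$ and $x_ix_{i+1}$ of the polygon. The total angle contributed by all these sectors is exactly $\sum_i(\pi-\angle_{x_i}(x_{i-1},x_{i+1}))=\ka(\si)$. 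Since $\si$ is a closed polygon inscribed in $\ga$, the boundary of the glued-on region is again a closed polygon, and in the limit over finer polygons (along a non-principal ultrafilter, say) its length converges to $\length(\ga)$ and the glued region converges to a flat funnel of angle $\ka(\ga)$ attached isometrically along $\ga$.

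First I would set up the single-vertex gluing carefully. Lemma \ref{lem:locglu} is stated for a sector $s_{2\pi-\alpha,r}$ glued along {\em both} of its legs to $qx\cup qy$, producing a CAT(0) space in which the angle at $q$ becomes $2\pi$; but here I only want to glue a sector of angle $\pi-\angle_{x_i}(x_{i-1},x_{i+1})$ along its two legs to the two incoming edges at $x_i$, raising the total angle at $x_i$ from $\angle_{x_i}(x_{i-1},x_{i+1})$ to $\pi$. The proof of Lemma \ref{lem:locglu} already does exactly this in its first step (``glue a flat sector $s_{\pi-\alpha,r}$ along one of its legs''), so I would extract that intermediate assertion as the working tool: gluing a flat sector of angle $\pi-\angle_{x_i}(x_{i-1},x_{i+1})$ along a polygonal arc keeps the space CAT(0), because after the gluing the two legs together with the old edges straighten into a geodesic through $x_i$, and the new space is locally CAT(0) (hence CAT(0) by Cartan--Hadamard, $X$ being simply connected and nonpositively curved). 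Doing this simultaneously at all vertices $x_i$ — which is legitimate since the gluing regions meet $X$ only along disjoint-except-at-endpoints edges — produces a CAT(0) space $X_\si$ containing $X$, with a flat annular piece $A_\si$ whose outer boundary $\partial_\si$ is a polygon with $\length(\partial_\si)\leq\length(\ga)$ and with the gluing being an intrinsic isometry on $\ga$.

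Next I would pass to the limit. Choosing inscribed polygons $\si_n$ with mesh tending to $0$ and $\ka(\si_n)\to\ka(\ga)$, I would take an ultralimit of the pointed spaces $(X_{\si_n},p)$ for a basepoint $p\in\ga$. The subspaces $X\subset X_{\si_n}$ embed isometrically and stably, so $X$ sits isometrically in the ultralimit $X_\om$; the flat annular regions $A_{\si_n}$, being flat sectors of total angle $\ka(\si_n)$ over a polygon of controlled geometry, converge to a flat region which one identifies with (a relatively compact convex neighborhood's complement in) the cone $C_{\ka(\ga)}$ — here I use that a curve of finite total curvature is rectifiable with well-defined one-sided directions at every point, so the polygonal outer boundaries and the attaching maps converge. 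Reshetnyak gluing (Theorem \ref{thm:glue}) applied in the limit, together with the observation that $\ga$ is convex in each $X_{\si_n}$ after the gluing, gives that $X\cup_f E_{\ka(\ga)}$ is CAT(0), with $f:\ga\to\partial E_{\ka(\ga)}$ the induced intrinsic isometry.

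The main obstacle I expect is the limiting step: controlling the flat glued regions $A_{\si_n}$ uniformly so that their ultralimit is genuinely a flat funnel over a {\em single} cone of angle $\ka(\ga)$, rather than some flat but more complicated object, and checking that the attaching map $\ga\to\partial E_{\ka(\ga)}$ is an {\em intrinsic} isometry (not merely distance-nonincreasing). This is where the finite-total-curvature hypothesis does real work — it guarantees that the inscribed polygons approximate $\ga$ well in length and in direction, so that the flat annuli $A_{\si_n}$ are asymptotically isometric to truncations of the standard cone $C_{\ka(\ga)}$. An alternative that sidesteps the ultralimit is to build $E_{\ka(\ga)}$ directly as the intrinsic space associated to the mapping cylinder of $\ga$ with the flat cone metric determined by the arclength parametrization and the curvature measure of $\ga$, and then verify the CAT(0) property via the local gluing lemma along a dense set of polygonal vertices; but the ultralimit route reuses Lemma \ref{lem:locglu} most directly and I would carry it out that way.
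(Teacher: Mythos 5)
Your basic gluing step is not correct. Gluing a flat sector of angle $\pi-\angle_{x_i}(x_{i-1},x_{i+1})$ along \emph{both} of its legs onto the two incident edges $x_{i-1}x_i$ and $x_ix_{i+1}$ does not preserve the CAT(0) property: in the glued space the link at $x_i$ is the old link $\Sigma_{x_i}X$ with an arc of length $\pi-\angle_{x_i}(x_{i-1},x_{i+1})$ attached between the two edge directions, whose distance in $\Sigma_{x_i}X$ is exactly $\angle_{x_i}(x_{i-1},x_{i+1})$; this produces a geodesic loop of length $\pi<2\pi$ in the link, so the glued space fails to be nonpositively curved at $x_i$ (already for $X=\R^2$ and a triangle). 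The intuition of ``raising the total angle at $x_i$ from $\angle$ to $\pi$'' only makes sense when $X$ is a surface and you fill a notch; in a general CAT(0) space, attaching material along a hinge can only create new short loops in the link. Note that the first step of the proof of Lemma \ref{lem:locglu} glues the sector $s_{\pi-\alpha,r}$ along only \emph{one} leg (Reshetnyak gluing along a convex segment), which then straightens the hinge into a geodesic; it is not the both-legs gluing you need. This is exactly why the paper first attaches flat half-strips $[0,|x_i,x_{i+1}|]\times[0,\infty)$ along the edges: after that the relevant hinge at $x_i$ is formed by the two strip-side rays and has angle $\pi+\angle_{x_i}(x_{i-1},x_{i+1})$, so inserting an ideal sector of angle $\pi-\angle_{x_i}(x_{i-1},x_{i+1})$ only creates link loops of length at least $2\pi$, and the Lemma \ref{lem:locglu} argument applies.

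The limiting step also cannot work as you set it up. Since you take the sector legs to have the lengths of the polygon edges, the entire glued-on region has diameter (and area) on the order of the mesh of the inscribed polygon; as the mesh tends to zero it collapses onto $\ga$, so the ultralimit of your spaces is just $X$ again --- no funnel, which is an unbounded object, can appear as the limit of these shrinking flaps, and the claim that the glued regions converge to a flat funnel of angle $\ka(\ga)$ is false. Moreover, the outer arcs of consecutive sectors run from $x_{i-1}$ to $x_{i+1}$ and from $x_i$ to $x_{i+2}$, so they do not chain into a closed boundary polygon; the union of your sectors is a branched $2$-complex with two flaps attached along every edge, not an annular collar with a single outer boundary. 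The paper's proof avoids both problems: for each approximating polygon $\ga_n$ it attaches a genuine unbounded funnel $E_{\ka(\ga_n)}$, built from infinite half-strips over the edges and ideal sectors of angle $\pi-\angle_{x_i}(x_{i-1},x_{i+1})$ at the vertices, verifies CAT(0) via Lemma \ref{lem:locglu} together with Cartan--Hadamard, and only then takes an ultralimit of the funnel extensions $X\cup E_{\ka(\ga_n)}$. If you want to salvage your outline, you must replace the vertex-flap construction by this strip-plus-ideal-sector construction (or something equivalent) before any limit is taken.
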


\begin{proof}
Let us assume that $\gamma$ is a k-gon. We will glue the flat funnel
$E_\alpha$ to $X$ in two steps. First, let us choose 
for every pair of adjacent vertices $x_i$ and $x_{i+1}$ of $\ga$ a flat half-strip $h_i\cong[0,|x_i,x_{i+1}|]\times [0,\infty)$. We then 
glue these flat half-strips to $X$ via isometries $[0,|x_i,x_{i+1}|]\times \{0\}\ra x_ix_{i+1}$. In the resulting space we
see two geodesic rays $r_i^-\subset h_{i-1}$ and $r_i^+\subset h_{i+1}$ emanating from every vertex $x_i$. The distance between the directions of $r_i^+$ and $r_i^-$ at $x_i$ 
is given by $\pi+\angle_{x_i}(x_{i-1},x_{i+1})$.
As a second step we insert ideal flat sectors $s_i$ of angle $\alpha_i:=\pi-\angle_{x_i}(x_{i-1},x_{i+1})$, thereby completing the angle at every vertex of $\ga$ to $2\pi$.
Altogether we obtain $X\cup E_\alpha$, and from the construction it is clear that $\alpha=\ka(\ga)$. The CAT(0) property follows from Lemma \ref{lem:locglu} together with the theorem of Cartan-Hadamard.

The case for general $\gamma$ follows by approximating $\gamma$ by inscribed polygons $\ga_n$ and then taking an ultralimit of the resulting spaces $X\cup E_{\ka(\ga_n)}$.
\end{proof}

\subsection{CAT(0) surfaces}

A CAT(0) space $Z$ which is homeomorphic to a topological surface is called a {\em CAT(0)} surface.
Since CAT(0) spaces are contractible, the topology of CAT(0) surfaces is rather simple. In particular,
any compact CAT(0) surface $Z$ is homeomorphic to the closed unit disc $\bar D$ in $\R^2$. In this case,
$Z$ is called a {\em CAT(0) disc}. If $Z$ is a CAT(0) surface and $z$ is a point in the interior of $Z$, then 
small metric balls around $z$ are homeomorphic to $\bar D$ and the link $\Sigma_z Z$ is homeomorphic to a circle.
Moreover, a metric ball around an interior point is even bilipschitz to the corresponding ball in the tangent space 
and the Lipschitz constant can be chosen arbitrary close to one as the radius of the ball tends to zero \cite{Bur}.
Using bilipschitz parametrizations we can define the area of Lipschitz maps with domain a CAT(0) surface and target an arbitrary metric space.
Moreover, the Hausdorff $2$-measure on a CAT(0) surface behaves similarly to the Lebesgue measure on a smooth Hadamard surface. 
For instance, we have the following folklore version of Bishop-Gromov's theorem, cf. Proposition 7.4 in \cite{N}.

\begin{theorem}(Bishop-Gromov)\label{thm:BG}
Let $Z$ be a CAT(0) surface. Let $p$ be a point in $Z$ and suppose that $|p,\D Z|>r$. Then we have
$\mathcal{H}^2(B_r(p))\geq\frac{r^2}{2}\cdot\mathcal{H}^1(\Sigma_p Z)$. Moreover, equality holds if and only if 
$B_r(p)$ is isometric to the radius $r$ ball around the tip in $T_p Z$.
 
\end{theorem}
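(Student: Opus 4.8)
The approach I would take is to compare $B_r(p)$ with the corresponding ball in the tangent space via the logarithm map $\log_p\colon Z\to T_pZ$, exploiting only that it is $1$-Lipschitz and a radial isometry. Since $|p,\D Z|>r>0$, the point $p$ lies in the interior of $Z$, so $\Sigma_pZ$ is a CAT(1) space homeomorphic to a circle; writing $\ell:=\mathcal H^1(\Sigma_pZ)\ge 2\pi$ for its length, $\Sigma_pZ$ is isometric to a round circle of length $\ell$ and $T_pZ$ is isometric to the Euclidean cone $C_\ell$. A routine computation in $C_\ell$ (cutting the ball into finitely many flat sectors of angle $<\pi$) gives $\mathcal H^2(B_r(o))=\tfrac12\ell r^2=\tfrac{r^2}{2}\mathcal H^1(\Sigma_pZ)$ for the $r$-ball $B_r(o)$ about the tip $o$. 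So the inequality of the theorem is equivalent to $\mathcal H^2(B_r(p))\ge\mathcal H^2(B_r(o))$, and the rigidity case will come down to showing that equality forces $\log_p$ to restrict to an isometry $B_r(p)\to B_r(o)$.

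The key step for the inequality is that $\log_p$ maps $B_r(p)$ \emph{onto} $B_r(o)$. Since $\log_p$ preserves distances to $p$, its image is contained in $B_r(o)$; conversely, every point of $B_r(o)$ has the form $(v,t)$ with $v\in\Sigma_pZ$ and $t<r$, so one needs a geodesic from $p$ in direction $v$ reaching distance $t$. This is the geodesic extendability of the interior of a CAT(0) surface: at an interior point the link is a circle of length $\ge 2\pi$, hence contains for each direction an opposite direction at distance exactly $\pi$, so a local geodesic — which in a CAT(0) space is already global — can be prolonged as long as it stays in the interior, and a geodesic from $p$ of length $<r$ stays inside $\bar B_r(p)\subset\interior Z$; I would invoke \cite{Bur} for this folklore fact. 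Granting surjectivity, the inequality is immediate, since a $1$-Lipschitz map does not increase Hausdorff $2$-measure:
$$\mathcal H^2\bigl(B_r(p)\bigr)\ \ge\ \mathcal H^2\bigl(\log_p(B_r(p))\bigr)\ =\ \mathcal H^2\bigl(B_r(o)\bigr)\ =\ \tfrac{r^2}{2}\,\mathcal H^1(\Sigma_pZ).$$

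For the rigidity supplement, suppose equality holds. Then $\log_p|_{B_r(p)}$ is a $1$-Lipschitz surjection onto $B_r(o)$ preserving $\mathcal H^2$. By the version of Rademacher's theorem recorded above, at almost every $z$ the differential $d(\log_p)_z$ is a linear $1$-Lipschitz map of $\R^2$ onto a Euclidean plane, so its Jacobian is at most $1$; feeding this into the area formula (Lemma \ref{lem:areaformula}) together with area preservation forces $\mathcal J(d(\log_p)_z)=1$ almost everywhere, i.e.\ $d(\log_p)_z$ is almost everywhere a linear isometric embedding. From this infinitesimal rigidity one deduces, as in the classical case, that $\log_p$ preserves the length of every rectifiable curve and, being a surjection between the geodesic space $B_r(p)$ and the convex — hence geodesic — subset $B_r(o)\subset T_pZ$, is an isometry onto $B_r(o)$; passing to closures, $\bar B_r(p)$ is isometric to the $r$-ball about the tip of $T_pZ$. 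The converse is clear. I expect the main obstacle to be exactly this last point — upgrading the almost-everywhere infinitesimal rigidity to an honest isometry — and for its details I would refer to the discussion in \cite{N}; the inequality itself is soft once the surjectivity of $\log_p$, i.e.\ geodesic extendability in the interior, is in place.
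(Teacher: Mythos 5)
First, note that the paper does not actually prove Theorem \ref{thm:BG}: it is stated as a folklore fact with a pointer to Proposition 7.4 in \cite{N}, so there is no internal proof to compare against, and your proposal should be judged on its own. Your argument for the inequality is essentially sound and is the natural one: $\log_p$ is short and radially isometric, the tangent cone is $C_\ell$ with $\ell=\mathcal H^1(\Sigma_pZ)$, and surjectivity of $\log_p|_{B_r(p)}$ onto the ball about the tip gives $\mathcal H^2(B_r(p))\ge\tfrac{\ell r^2}{2}$. The one point you gloss is that $\Sigma_pZ$ is by definition a \emph{completion} of geodesic germs, so it is not automatic that every direction $v$ is realized by a geodesic; geodesic extendability alone only prolongs existing geodesics. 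This is easily repaired: either quote the GCBA results of \cite{LN} (the interior is GCBA, and there every direction is a geodesic direction), or avoid the issue by noting that $\log_p(\bar B_t(p))$ is compact, hence closed, and contains the dense set $\{(v,s):v\text{ a germ direction},\,s\le t\}$, so it contains all of $\bar B_t(o)$.

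The rigidity half is where the genuine gap sits, and you partly acknowledge it. From equality and the area formula you get (after transporting to $\R^2$ by bilipschitz charts) that $d(\log_p)_z$ is a.e. an isometry \emph{and} that the multiplicity is $1$ a.e.; but the step ``a.e. isometric differential $\Rightarrow$ $\log_p$ preserves the length of every rectifiable curve $\Rightarrow$ isometry'' is not a deduction ``as in the classical case''. Almost-everywhere information says nothing about a fixed curve (a null set), and maps with a.e. isometric differential need not be isometries at all (folding maps are the standard example); ruling such behavior out requires using the a.e. injectivity you obtained from the multiplicity count, plus a further argument to upgrade infinitesimal/a.e. rigidity to an honest isometry onto $B_r(o)$ — this is exactly the content of the equality discussion in \cite{N}, to which you defer. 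Deferring is defensible here, since the paper itself cites that source for the whole statement, but as written your proof of the equality case is a sketch with the decisive step outsourced. An alternative, more surface-intrinsic route would be to run the comparison through the coarea inequality and the lower bound $\mathcal H^1(\partial B_t(p))\ge t\,\mathcal H^1(\Sigma_pZ)$, so that equality forces linear growth of the level curves for a.e. $t$ and, via the nonpositive curvature measure $\mu$ and Gauss--Bonnet for surfaces of bounded integral curvature, flatness of the punctured ball, which identifies $B_r(p)$ with the cone ball directly.
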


Note that each CAT(0) surface is geodesically complete in the sense that any geodesic
segment is contained in geodesic segment whose boundary lies in the boundary of the surface. This is immediate from the fact
that a pointed neighborhood of an interior point cannot be contractible. Hence the interior of a CAT(0) surface is GCBA in the sense of 
Lytchak and Nagano \cite{LN}.

\begin{lemma}\label{lem:areaconv}
 Let $(Z_k)$ be a sequence of CAT(0) discs with rectifiable boundaries. Assume that $(Z_k)$  Gromov-Hausdorff converges  to a CAT(0) disc $Z$.
 If boundary lengths are uniformly bounded, $\mathcal{H}^1(\D Z_k)< C$, then the total measures converge,
 $\mathcal{H}^2(Z_k)\to\mathcal{H}^2(Z)$.
 \end{lemma}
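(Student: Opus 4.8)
The plan is to reduce the convergence of total measures to a two-sided estimate: a lower bound via Gromov--Hausdorff semicontinuity of Hausdorff measure, and an upper bound via the isoperimetric inequality together with a suitable exhaustion argument. For the lower bound, one uses that for geodesic spaces the Hausdorff $2$-measure is lower semicontinuous under Gromov--Hausdorff convergence on a fixed compact family, so $\mathcal{H}^2(Z)\leq\liminf_k\mathcal{H}^2(Z_k)$; here one should exploit that the $Z_k$ are uniformly compact (their diameters are bounded in terms of $C$ by the isoperimetric inequality, since a CAT(0) disc is the geodesic cone over nothing in particular but its diameter is controlled by its boundary length). The harder direction is $\limsup_k\mathcal{H}^2(Z_k)\leq\mathcal{H}^2(Z)$, and this is where the bulk of the work lies.

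For the upper bound I would argue as follows. Fix $\varepsilon>0$ and choose a compact subset $K\subset\interior(Z)$ with $\mathcal{H}^2(Z\setminus K)<\varepsilon$; since $Z$ is a CAT(0) disc, $Z\setminus K$ is contained in a neighborhood of $\D Z$ whose ``width'' is small. The key point is to transport this decomposition to the $Z_k$: using near-isometries $\phi_k:Z\to Z_k$ coming from Gromov--Hausdorff convergence, one shows that $Z_k$ decomposes (up to small error) into a piece close to $\phi_k(K)$, whose measure is close to $\mathcal{H}^2(K)$ by the lower semicontinuity applied on $K$ and a covering argument, plus a collar near $\D Z_k$. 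The measure of the collar is then bounded using Lemma \ref{lem:conecom} or the isoperimetric inequality \ref{thm:isoin}: the collar is ``filled'' by a Lipschitz map from an annulus whose two boundary circles have lengths close to $\mathcal{H}^1(\D Z_k)\leq C$ and to the length of a nearby curve in $K$, and the radial/geodesic-cone filling of Lemma \ref{lem:conecom} gives an area bound of the form $\frac{r}{2}\cdot(\text{boundary length})$ where $r$ is the small width. Since $Z_k$ is itself a CAT(0) disc, the collar region actually \emph{equals} (as a metric space) the part of $Z_k$ cut off by a curve near $\D Z_k$, so its Hausdorff measure is at most the area of any Lipschitz disc filling that curve — this is precisely where being a CAT(0) surface, rather than an arbitrary metric space, is used, together with Bishop--Gromov (Theorem \ref{thm:BG}) to compare measure in a ball with the cone model.

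The main obstacle I anticipate is making the ``transport of the decomposition'' precise: Gromov--Hausdorff convergence only gives $\varepsilon_k$-isometries, not actual homeomorphisms, so one must be careful that the collar near $\D Z_k$ is genuinely thin and that no measure ``hides'' in regions of $Z_k$ that are mapped far from $\phi_k(K)$. I would handle this by working with intrinsic (length) metrics throughout — each $Z_k$ and $Z$ is a length space, so distances to the boundary are realized by curves, and one can define the collar intrinsically as $\{z\in Z_k:|z,\D Z_k|<\delta\}$ for a well-chosen small $\delta$; convergence of the $Z_k$ then forces convergence of these boundary-distance functions (they are $1$-Lipschitz), so the collar widths match up in the limit. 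A secondary technical point is controlling $\mathcal{H}^1$ of the level sets of the boundary-distance function so that one can choose a good cutting curve of length close to $C$ — here Proposition \ref{prop:ae} (applied after a bilipschitz chart, or its analogue for CAT(0) surfaces) guarantees that almost every level set is rectifiable with finite length, and the coarea-type inequality keeps the length under control. Once these pieces are assembled, combining the lower and upper bounds yields $\mathcal{H}^2(Z_k)\to\mathcal{H}^2(Z)$.
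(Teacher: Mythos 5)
Your overall decomposition (interior region plus thin collar along the boundary, with the collar controlled by the boundary-length bound) matches the paper's outline, but the step you treat as soft is precisely the hard one, and as written it fails. You claim $\mathcal{H}^2(Z)\leq\liminf_k\mathcal{H}^2(Z_k)$ by "lower semicontinuity of Hausdorff measure under Gromov--Hausdorff convergence for geodesic spaces"; no such general fact exists (finer and finer grids are geodesic spaces with $\mathcal{H}^2=0$ that converge to a square of positive measure), so in this setting it must come from the curvature/structure theory, not from GH convergence alone. Worse, for the $\limsup$ direction you say the measure of the part of $Z_k$ near $\phi_k(K)$ is "close to $\mathcal{H}^2(K)$ by the lower semicontinuity applied on $K$ and a covering argument" --- but what you need there is an \emph{upper} bound on the measure of that part of $Z_k$ in terms of $\mathcal{H}^2(K)$, and lower semicontinuity points in the wrong direction; a covering argument does not supply it either, since measures of balls in $Z_k$ are not controlled from above by the corresponding balls in $Z$ without further input. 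This two-sided convergence of measure away from the boundary is exactly what the paper does not reprove: it quotes the volume-convergence theorem for GCBA spaces (Theorem 12.1 in \cite{LN}; the interior of a CAT(0) surface is GCBA, as noted before the lemma), which reduces the statement to showing that no measure of the $Z_k$ concentrates near $\D Z_k$. Without that citation (or an argument of comparable depth, e.g.\ \cite{N}), your interior estimate has a genuine gap.

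The collar estimate is the part that genuinely remains after the citation, and your idea (area of the collar is about width times boundary length) is right, but the tools you name do not deliver it as stated. In Lemma \ref{lem:conecom} the factor $r$ is the radius of a ball around the cone point containing the whole curve, not a collar width, so coning $\D Z_k$ gives a bound of order $C\cdot(\text{inradius})$, not $C\cdot\delta$; and the isoperimetric inequality applied to a cutting curve of length about $C$ only yields $C^2/4\pi$, which is not small. The paper instead chooses a Jordan polygon $\si\subset Z\setminus\D Z$ uniformly $\epsilon$-close to $\D Z$, lifts it to Jordan polygons $\si_k\subset Z_k$, and bounds $\mathcal{H}^2(Z_k\setminus W_k)$ (with $W_k$ the closed Jordan domain of $\si_k$) by $C'\epsilon$ using the small-area homotopy of Lemma \ref{lem:homosmallarea} between $\si_k$ and a parametrization of $\D Z_k$, together with the fact that in a CAT(0) disc the measure of the annular region is dominated by the area of such a Lipschitz filling. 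If you replace your "semicontinuity" step by the appeal to \cite{LN} and run the collar bound through Lemma \ref{lem:homosmallarea} rather than Lemma \ref{lem:conecom}, your proposal becomes essentially the paper's proof.
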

 
 \begin{proof}
 By Theorem 12.1 in \cite{LN} it
is enough to show that no measure is concentrated near the boundary. 

Let $\epsilon>0$. Since $Z\setminus\D Z$ is an intrinsic space, we can find a Jordan polygon $\si\subset Z\setminus\D Z$ such that 
the arc length parametrization of  $\si$ is uniformly $\epsilon$-close to an arc length parametrization of $\D Z$. Then we lift $\si$
to Jordan polygons $\si_k$ in $Z_k$. Denote by $W\subset Z$ the closure of the Jordan domain of $\si$ and by $W_k$ the closure of 
the Jordan domain of $\si_k\subset Z_k$. Then $W_k\to W$ and it is enough to bound $\mathcal{H}^2(Z_k\setminus W_k)$ uniformly.
But since $\mathcal{H}^1(\D Z_k)< C$ and $\length(\si_k)\to\length(\si)$, Lemma \ref{lem:homosmallarea} below implies
$\mathcal{H}^2(Z_k\setminus W_k)\leq C'\cdot\epsilon$ with a uniform constant $C'>0$. Hence the claim follows.
 \end{proof}

If $Z$ is a CAT(0) disc, then its interior $Y:=Z\setminus\D Z$ is a length space which is still locally CAT(0).
As such it is a surface of bounded integral curvature in the sense of Alexandrov \cite{A}. For a detailed account to 
surfaces of bounded integral curvature we refer the reader to \cite{AZ}. Here we only collect a few facts needed later. 
On $Y$ there exists a nonpositive Radon measure $\mu$, called {\em curvature measure}, such that if a Jordan triangle $\triangle$
is contained in a Jordan domain $O$, then the excess of $\triangle$ is bounded below by $\mu(O)$. In particular, $\mu(O)=0$
for a Jordan domain $O\subset Y$ is equivalent to $O$ beeing flat. The atoms of $\mu$ correspond to
points in $Y$ where the tangent cone is not isometric to the flat plane. More precisely, for $y\in Y$
holds $\mu(\{y\})=2\pi-\mathcal{H}^1(\Sigma_y Y)$.

\begin{lemma}\label{lem:limitcone}
 Let $Z$ be a CAT(0) surface and let $(z_i)$ be a sequence in $Z$. Further, let $(\eps_i)$ be a nullsequence
 of positive real numbers. If $(z_i)$ converges to a point $z$ in the interior of $Z$, then the following holds.
 \begin{enumerate}
  \item $\wlim(\frac{1}{\eps_i}Z,z_i)\cong\R^2$\ if\ $\wlim\frac{|z,z_i|}{\eps_i}=+\infty$.
  \item $\wlim(\frac{1}{\eps_i}Z,z_i)\cong T_z Z$\ if\ $\wlim\frac{|z,z_i|}{\eps_i}<+\infty$.
 \end{enumerate}

\end{lemma}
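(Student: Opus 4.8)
The plan is to treat both items as two instances of the same problem: describing the blow-up of a CAT(0) surface at an interior point. The two inputs are the bilipschitz comparison of small balls with tangent balls from \cite{Bur}, whose bilipschitz constant tends to $1$ as the radius shrinks, and the curvature measure $\mu$ on $Z\setminus\partial Z$, whose relevant features here are that it is unchanged when the metric is rescaled and that, being a locally finite Radon measure, it is continuous from above at every point.

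For (2), set $a:=\wlim\frac{|z,z_i|}{\eps_i}<\infty$. Since $T_zZ$ is a Euclidean cone, rescaling it has no effect, and \cite{Bur} gives for each $R>0$ a $(1+\eta_i)$-bilipschitz homeomorphism $B_R(z)\subset\frac1{\eps_i}Z\to B_R(o)\subset T_zZ$ with $\eta_i=\eta(R\eps_i)\to 0$; hence $\wlim(\frac1{\eps_i}Z,z)\cong T_zZ$, with $z$ going to the cone tip $o$. In the rescaled spaces $z$ and $z_i$ stay at bounded distance ($\to a$ along $\om$), so the sequence $(z_i)$ represents a point of $\wlim(\frac1{\eps_i}Z,z)$ at distance $a$ from $o$; since moving the basepoint inside a single ultralimit does not change the underlying metric space, $\wlim(\frac1{\eps_i}Z,z_i)\cong\wlim(\frac1{\eps_i}Z,z)\cong T_zZ$.

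For (1) we have $\wlim\frac{|z,z_i|}{\eps_i}=+\infty$. Fix $R>0$. The rescaled ball $B_R(z_i)\subset\frac1{\eps_i}Z$ is the ball $B_{R\eps_i}(z_i)\subset Z$, and $|z,z_i|/\eps_i\to_\om\infty$ forces $z\notin B_{R\eps_i}(z_i)$ for $\om$-almost all $i$ and in fact $B_{R\eps_i}(z_i)\subset B_{\delta_i}(z)\setminus\{z\}$ with $\delta_i:=|z,z_i|+R\eps_i\to 0$ as $i\to\infty$. Continuity of $|\mu|$ from above at $z$ then gives $|\mu|(B_{R\eps_i}(z_i))\le|\mu|(B_{\delta_i}(z))-|\mu|(\{z\})\to 0$, and by scale-invariance the same bound holds for $B_R(z_i)\subset\frac1{\eps_i}Z$. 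I would then pass to $W:=\wlim(\frac1{\eps_i}Z,z_i)$, which is CAT(0), and argue that every geodesic triangle $\triangle\subset W$ is the ultralimit of geodesic triangles $\triangle_i$ (geodesics pass to the limit in CAT(0) ultralimits), which lie in $B_R(z_i)$ for $R$ large and $i$ large; the excess of $\triangle_i$ lies between $\mu(O_i)\ge-|\mu|(B_R(z_i))\to 0$, where $O_i$ is the Jordan domain bounded by $\triangle_i$, and $0$ (by CAT(0)), so by upper semicontinuity of angles under ultralimits the excess of $\triangle$ is zero. Hence $W$ is flat. Since $z$ is interior we also have $|z_i,\partial Z|/\eps_i\to+\infty$, so $\partial Z$ escapes to infinity and $W$ has empty boundary. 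To finish I would observe that each $\overline{B_R(z_i)}\subset\frac1{\eps_i}Z$ is a CAT(0) disc whose area is pinched between $\pi R^2$ (Bishop--Gromov, Theorem \ref{thm:BG}) and a bound depending only on $R$ (the curvature in $B_{2R}(z_i)$ tends to $0$, so its boundary length stays bounded and the isoperimetric inequality applies), whence $\overline{B_R(w_0)}\subset W$ is again a CAT(0) disc; letting $R\to\infty$ exhibits $W$ as an increasing union of flat CAT(0) discs, i.e.\ a complete, simply connected, flat surface, so $W\cong\R^2$.

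The hard part is this last identification: upgrading ``$W$ is a flat geodesic space'' to ``$W$ is a flat $2$-manifold without boundary'', and making precise the passage of the curvature estimate to the ultralimit. I expect both to rest on the structure theory of GCBA spaces (\cite{LN}) and of surfaces of bounded integral curvature (\cite{AZ}) --- concretely, on the facts that ultralimits of CAT(0) discs with uniformly bounded boundary length and non-collapsing area are again CAT(0) discs and that their curvature measures behave continuously under such limits.
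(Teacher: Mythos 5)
Your proposal follows essentially the same route as the paper: part (ii) via changing the basepoint inside the ultralimit and identifying the blow-up at $z$ with $T_zZ$, and part (i) via the vanishing of the curvature measure on small punctured balls around $z$ and the resulting zero excess of limit triangles, giving flatness and $W\cong\R^2$. The only cosmetic differences are that the paper cites Proposition 2.4.1 of \cite{HK} for convergence of triangle angles where you invoke semicontinuity of angles (which works, and in the correct direction), and that you spell out the surface-ness of $W$, which the paper simply asserts.
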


\begin{proof}
 If $\wlim\frac{|z,z_i|}{\eps_i}<+\infty$, then  $\wlim(\frac{1}{\eps_i}Z,z_i)$ is isometric to $\wlim(\frac{1}{\eps_i}Z,z)$,
 although possibly not pointed-isometric. The second claim then follows from $\wlim(\frac{1}{\eps_i}Z,z)\cong T_z Z$.
 
 Set $W:=\wlim(\frac{1}{\eps_i}Z,z_i)$. Then $W$ is a complete CAT(0) surface without boundary and we need show that it is flat. 
 Notice that $\lim\limits_{r\to 0}\mu(B_r(z)\setminus\{z\})=0$. Hence for each fixed $R>0$ we have
 $\lim\limits_{i\to\infty}\mu(B_{\eps_i R}(z_i))=0$. Now if $\triangle$ is a geodesic triangle in $W$, then we can find geodesic triangles
 $\triangle_i$ in $(Z,z_i)$ such that $\wlim\frac{1}{\eps_i}\triangle_i=\triangle$ and all three angles converge (Proposition 2.4.1 in \cite{HK}). 
 Since there exists $R_0>0$ such that $\triangle_i\subset B_{\eps_i R_0}(z_i)$ for $\om$-all $i$, we see that the excess of $\triangle_i$
 goes to zero. It follows that $W$ is flat and therefore $W\cong\R^2$. 
 
\end{proof}

\begin{lemma}\label{lem:Zbilip}
 Let $Z$ be a CAT(0) disc and assume that its boundary $\D Z$ is a polygon with positive angles.
 Then $Z$ is bilipschitz to $\bar D$.
\end{lemma}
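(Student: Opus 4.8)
The plan is to build a bilipschitz homeomorphism $Z\to\bar D$ out of geodesic polar coordinates centred at an interior point; the finiteness of the curvature measure of $Z$ is what keeps all constants under control, and the positive-angle hypothesis on $\D Z$ is what prevents cusps from appearing at any stage.

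\emph{Setup.} Write $Y:=\interior Z$. As recalled above, $Y$ is a surface of bounded integral curvature with nonpositive curvature measure $\mu$, and Gauss--Bonnet applied to the geodesic polygon $\D Z$ gives $|\mu|(Y)=\sum_i(\pi-\beta_i)-2\pi<\infty$, where the $\beta_i>0$ are the finitely many vertex angles. In particular $\mu$ has at most countably many atoms, and these are exactly the \emph{conical points} $q\in Y$ with $\mathcal H^1(\Sigma_qZ)>2\pi$; at every other interior point the tangent cone is $\R^2$ and, by \cite{Bur}, small metric balls are bilipschitz to flat balls with constant tending to $1$. Fix an interior point $p$ with $R_0:=|p,\D Z|>0$; then $T_pZ$ is the Euclidean cone $C_{\omega_p}$ over the circle $\Sigma_pZ$, of some finite length $\omega_p\ge 2\pi$.

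\emph{The visible part.} For $v\in\Sigma_pZ$ let $\rho(v)\in[R_0,\diam Z]$ be the length of the $p$-radial geodesic in direction $v$ until it first meets $\D Z$. This geodesic is unique, and for $v$ outside the countable set $V_{\mathrm{bad}}\subset\Sigma_pZ$ of directions pointing at a conical point it runs only through manifold points and meets $\D Z$ transversally (the boundary is geodesic away from its vertices); a divergence-of-geodesics estimate then shows that $\rho$ is Lipschitz on $\Sigma_pZ\setminus V_{\mathrm{bad}}$ with constant bounded in terms of $R_0$, $\diam Z$ and $|\mu|(Y)$. Hence $R:=\{\,tv:\ v\in\Sigma_pZ\setminus V_{\mathrm{bad}},\ 0\le t\le\rho(v)\,\}$ is a star-shaped subset of $C_{\omega_p}$ whose radial boundary function is bounded away from $0$ and $\infty$ and Lipschitz, so $R$ is bilipschitz to $\bar D$ (first straighten the radial graph, then apply the bilipschitz map $C_{\omega_p}\to\R^2$ of constant $\max(\omega_p/2\pi,2\pi/\omega_p)$). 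The geodesic exponential map $\exp_p\colon R\to Z$ is injective, and a Gr\"onwall-type integration of the local estimate of \cite{Bur} along these geodesics, using $|\mu|(Y)<\infty$, shows that $\exp_p|_R$ is bilipschitz onto its image, which is $Z$ minus the open \emph{shadows} $S_q$ of the conical points, $S_q$ being the sub-disc of $Z$ swept out by the $p$-radial geodesics that reach $\D Z$ only after branching at $q$. Thus $Z\setminus\bigcup_q S_q$ is bilipschitz to $\bar D$.

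\emph{The main obstacle: assembling the shadows.} Each $S_q$ is again a CAT(0) disc, bounded by two $p$-radial geodesics meeting at $q$ at angle $\omega_q-2\pi$ together with an arc of $\D Z$; hence it is again a geodesic polygon with positive angles (this is where the hypothesis on $\D Z$ is used, and it is inherited by every piece produced), and it carries only the conical points lying inside it. One now recurses, running geodesic polar coordinates inside each $S_q$. The recursion is under control because the defect angles $\omega_q-2\pi$ of the conical points processed are summable, bounded by $|\mu|(Y)$; consequently the bilipschitz constants of the countably many patches can be arranged into a convergent product, no area is concentrated along the gluing curves (cf.\ Lemma \ref{lem:areaconv}), and the patches glue to a global bilipschitz homeomorphism $Z\to\bar D$. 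A technically lighter route, which I would attempt first, is to reduce to \emph{finitely} many conical points by approximating $Z$ by polyhedral CAT(0) discs $Z_n\to Z$ with uniformly bounded boundary lengths (so $\mathcal H^2(Z_n)\to\mathcal H^2(Z)$ by Lemma \ref{lem:areaconv}), to settle the polyhedral case by an explicit cut-and-paste of flat sectors, and to pass to the limit while keeping the bilipschitz constants uniform.
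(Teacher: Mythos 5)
Your construction has a genuine gap exactly where the real difficulty of the lemma sits: the assembly step. The bilipschitz constant of a polar--coordinate chart on a shadow $S_q$ is not controlled by the cone defect $\mathcal H^1(\Sigma_qZ)-2\pi$ alone; it depends on the minimal boundary angle of $S_q$ and on how the two bounding radial geodesics meet $\D Z$. The tip angle of $S_q$ equals $\mathcal H^1(\Sigma_qZ)-2\pi$ and can be arbitrarily small, and the incidence angles of the radial geodesics with $\D Z$ are not bounded below by any hypothesis, so "positive angles are inherited" is only a qualitative statement with no quantitative content. Consequently, summability of the defects does not yield a convergent product of chart constants. Moreover, even with uniformly bounded constants, countably many bilipschitz patches glued along curves do not automatically assemble into a globally bilipschitz map: one needs matching boundary parametrizations with uniform distortion and control of accumulating chains of shadows, none of which is addressed (Lemma \ref{lem:areaconv} controls area, not distance distortion). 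The same objection hits your "lighter route": to pass to the limit of polyhedral approximations you need \emph{uniform} bilipschitz constants for the approximants, and producing such uniform constants is precisely the hard quantitative content here --- essentially the theorem of Burago --- not something obtained from an explicit cut-and-paste of flat sectors plus area convergence.

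There are also smaller unproved steps in the first half: the set $R$ is a cone with countably many radial slits removed, so it is not a closed star-shaped region and the claim that it is bilipschitz to $\bar D$ does not make sense as stated (closing it up reintroduces the branching you removed); the Lipschitz bound for $\rho$ must involve the minimal angle of $\D Z$ and an argument ruling out tangential incidence, not just $R_0$, $\diam Z$ and $|\mu|(Y)$; and while the lower bilipschitz bound for $\exp_p$ is indeed free (shortness of $\log_p$), the upper bound via a "Gr\"onwall-type integration" of the local estimate of \cite{Bur} is a heading rather than an argument. For comparison, the paper's proof is soft and short: double $Z$ along $\D Z$; positivity of the boundary angles guarantees the double is a closed surface of bounded integral curvature without cusps (the atom created at a vertex of angle $\beta_i$ is $2\pi-2\beta_i<2\pi$); Theorem 1 of \cite{Bu} then gives a bilipschitz homeomorphism of the double with $S^2$, and the Lipschitz Sch\"onflies theorem of \cite{Tuk} shows that each half, in particular $Z$, is bilipschitz to $\bar D$. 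A self-contained polar-coordinate proof would in effect require you to reprove a version of Burago's theorem, which is substantially more work than the lemma requires.
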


\begin{proof}
The double $Y$ of $Z$ is a closed Alexandrov surface of bounded integral curvature. By Theorem 1 in \cite{Bu} it is therefore
bilipschitz to the round sphere $S^2$. By the Lipschitz version of the Sch\"onflies theorem in \cite{Tuk} we conclude that 
$Z$ is bilipschitz to $\bar D$. 
\end{proof}

\begin{lemma}\label{lem:bilip}
 Let $Z$ be a CAT(0) surface. Let $\Ga$ be a rectifiable Jordan curve in $Z$ and denote by $\Om_\Ga$ its Jordan domain.
 Then $\bar{\Om}_\Ga$ equipped with the induced intrinsic metric is a CAT(0) disc. If $\Ga$ is a Jordan polygon with positive angles,
then this space is even bilipschitz to
 $\bar{\Om}_\Ga$.
\end{lemma}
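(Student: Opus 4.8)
The plan is to establish the two assertions in turn; throughout, $d$ denotes the intrinsic metric on $\bar\Om_\Ga$ induced by the inclusion $\bar\Om_\Ga\hookrightarrow Z$. Some soft preliminaries first. By the Jordan curve theorem inside the surface $Z$, the set $\bar\Om_\Ga$ is a topological closed disc, and since $\Ga$ is rectifiable every point of $\Ga$ is joined to $\Om_\Ga$ by a path of finite length, so $d$ is finite and induces the subspace topology. Hence $(\bar\Om_\Ga,d)$ is a compact length space, by Hopf--Rinow complete and geodesic, and, being simply connected, by the Cartan--Hadamard theorem it is CAT(0) as soon as it is locally CAT(0).

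At an interior point this is immediate: for $z\in\Om_\Ga$ choose $r$ with $B_r(z)\subset\Om_\Ga$, so $d$ coincides with the metric of $Z$ on $B_{r/2}(z)$ and a neighbourhood of $z$ is isometric to a ball in the CAT(0) space $Z$. The real point is boundary points. I would first treat a polygon $\Ga$: near an interior point of an edge $e$, the region $\bar\Om_\Ga$ is one side of the geodesic arc $e$ inside a small CAT(0) ball of $Z$; gluing a flat half-disc to $Z$ along $e$ produces, by Reshetnyak's gluing theorem \ref{thm:glue}, a CAT(0) space in which this side is a convex subset, hence CAT(0); near a vertex the analogous statement follows from Lemma~\ref{lem:locglu} (glue a flat sector of the complementary angle along the two incident edges). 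Thus $(\bar\Om_\Ga,d)$ is a CAT(0) disc for polygonal $\Ga$. For general rectifiable $\Ga$, approximate it from inside by Jordan polygons $\Ga_n\subset\Om_\Ga$ with $\bar\Om_{\Ga_n}\nearrow\bar\Om_\Ga$ and arc-length parametrisations converging to that of $\Ga$, as in the proof of Lemma~\ref{lem:areaconv}; each $(\bar\Om_{\Ga_n},d_n)$ is a CAT(0) disc by the polygonal case, the spaces $(\bar\Om_{\Ga_n},d_n)$ Gromov--Hausdorff converge to $(\bar\Om_\Ga,d)$ (the extra length needed to push curves into the subregion is controlled, cf. Lemma~\ref{lem:homosmallarea}), Lemma~\ref{lem:areaconv} together with an obvious lower area bound rules out collapse so that the limit is again a disc, and CAT(0) is stable under Gromov--Hausdorff limits. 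Hence $(\bar\Om_\Ga,d)$ is a CAT(0) disc.

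For the bilipschitz statement, let $\Ga$ be a Jordan polygon with positive angles, so its interior angles $\theta_i$ at the vertices $x_i$ are positive. The inclusion $(\bar\Om_\Ga,d)\to\bar\Om_\Ga$ is $1$-Lipschitz, so it suffices to bound $d$ above by a constant multiple of the $Z$-metric, and by compactness of $\bar\Om_\Ga$ this reduces to a local bound near each point. Near an interior point $d$ is the $Z$-metric. Near an interior point $p$ of an edge $e$, pick $r$ so small that $\Ga\cap B_{2r}(p)=e\cap B_{2r}(p)$; for $x,y\in B_r(p)\cap\bar\Om_\Ga$ the $Z$-geodesic $xy$ either stays in $\bar\Om_\Ga$, or one uses instead the concatenation of $x\,\pi_e(x)$, the sub-arc of $e$ from $\pi_e(x)$ to $\pi_e(y)$, and $\pi_e(y)\,y$, where $\pi_e$ is the short nearest-point projection onto $e$; this path lies in $\bar\Om_\Ga$ and has length $\le 3|x,y|$, so $d(x,y)\le 3|x,y|$. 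Near a vertex $x_i$, a shortest $\bar\Om_\Ga$-path between nearby $x,y$ has length $\le|x,x_i|+|x_i,y|$, while CAT(0) comparison at $x_i$ bounds $|x,y|$ below by a positive multiple of $|x,x_i|+|x_i,y|$, the multiple degenerating only as the angle between the edges at $x_i$ tends to $0$; positivity of $\theta_i$ thus gives a finite local bound. Patching the finitely many local bounds yields a global bilipschitz constant. Positivity of the angles is needed exactly here: if $\theta_i=0$ the two edges are mutually tangent at $x_i$ and $|x,y|$ is of smaller order than $d(x,y)$ for $x,y$ on opposite edges near $x_i$, so no such bound can hold. (One may also note that $(\bar\Om_\Ga,d)$ is then a CAT(0) disc with positive-angle polygonal boundary, so Lemma~\ref{lem:Zbilip} gives that it is bilipschitz to $\bar D$ as well.)

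The main obstacle is the local CAT(0) property at boundary points for a merely rectifiable $\Ga$: the shape of small $d$-neighbourhoods of $\Ga$ and the behaviour of $d$-geodesics meeting $\Ga$ are not transparent, and Reshetnyak's gluing theorem cannot be applied directly since it presupposes the CAT(0) property to be proved. Getting past this via the polygonal gluing trick together with the Gromov--Hausdorff limit argument (which relies on Lemma~\ref{lem:areaconv} to exclude collapse) is where the real work lies; by contrast, the bilipschitz statement is a soft local estimate once positivity of the angles is invoked.
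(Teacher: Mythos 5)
Your route is genuinely different from the paper's: the paper solves the Plateau problem for $(\Ga,Z)$ (Theorem \ref{thm:plateau}), invokes Theorem \ref{thm:int} to get the CAT(0) disc $Z_u$ together with the length-preserving map $\bar u$, and uses Theorem 6.1 of \cite{LWcanonical} to see that $u$ is a homeomorphism onto $\bar\Om_\Ga$, so that $\bar u$ is an isometry from $Z_u$ onto $(\bar\Om_\Ga,d)$; all boundary analysis is thereby outsourced. Your direct approach, however, has a genuine gap at the polygon vertices. Reshetnyak's theorem and Lemma \ref{lem:locglu} only tell you that the \emph{ambient} space obtained by gluing a flat half-disc or sector to $Z$ is CAT(0); they say nothing about the intrinsic metric of the one-sided region, which is what must be controlled. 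The step ``this side is a convex subset, hence CAT(0)'' is not a consequence of the gluing, and at a vertex it is simply false whenever the interior angle exceeds $\pi$ (already in the flat plane a sector of angle $>\pi$ is not convex); note that the first assertion of the lemma makes no angle hypothesis, and even the approximating polygons of Lemma \ref{lem:polygon} merely have positive angles, possibly larger than $\pi$. So local CAT(0)-ness of $(\bar\Om_\Ga,d)$ at vertices is not established by what you cite. (At interior edge points your claim can be saved, but by the two-crossing/uniqueness-of-geodesics argument showing that one closed side of a geodesic is locally convex in $Z$; the glued half-disc plays no role there.)

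The reduction from rectifiable to polygonal boundaries is also not carried out. Gromov--Hausdorff convergence of $(\bar\Om_{\Ga_n},d_n)$ to $(\bar\Om_\Ga,d)$ requires a uniform estimate of the form $d_n\le d+\eps_n$ on the smaller region together with $\eps_n$-density of $\bar\Om_{\Ga_n}$ in $(\bar\Om_\Ga,d)$; Lemma \ref{lem:homosmallarea}, which you cite, bounds filling \emph{areas}, not the length cost of pushing paths off the boundary, and ambient closeness of $\Ga_n$ to $\Ga$ does not imply intrinsic closeness. Likewise your ``soft preliminaries'' --- that $(\bar\Om_\Ga,d)$ is a compact geodesic space, still homeomorphic to $\bar D$, with the identity to the ambient closure a homeomorphism --- are exactly the delicate point for a merely rectifiable Jordan curve and cannot be taken for granted; this is the part the paper's Plateau/parametrization argument is designed to avoid. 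The bilipschitz half of your proposal is essentially the paper's own local argument and is fine in spirit, except that the asserted lower bound $|x,y|\ge c\,(|x,x_i|+|x_i,y|)$ fails for arbitrary nearby $x,y$ in the sector (take $x$ close to $y$ away from the edges) and should only be invoked in the case where the ambient geodesic $xy$ actually leaves $\bar\Om_\Ga$.
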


\begin{proof}
 To proof the first claim we will use Theorem \ref{thm:plateau} and Theorem \ref{thm:int} below.
 By Theorem \ref{thm:plateau}, there is a solution $u:\bar D\to Z$ to the Plateau problem for $(\Ga,Z)$.
 By Theorem \ref{thm:int}, $u$ factors over the associated intrinisic space $Z_u$ and induces a short map
 $\bar u:Z_u\to Z$ which restricts to an arc length preserving homeomorphism $\D Z_u\to \Ga$.
 Moreover, $Z_u$ is a CAT(0) disc. Now since $Z$ is a surface, we infer from Theorem 6.1 in \cite{LWcanonical}  that $u$
 is a homeomorphism from $\bar D$ to $\bar \Om_\Ga$. This implies that $\bar u$  provides an isometry $Z_u\to\bar\Om_\Ga$.

 We turn to the second claim.
 Any metric is bounded above by its associated intrinsic metric. By compactness, it is enough to locally control the intrinsic metric 
 on $\bar{\Om}_\Ga$ by the induced metric.
 However, near an interior point both metrics coincide. But in a neighborhood of a boundary point the two metrics are still
 bilipschitz equivalent because the angles of $\Ga$ are positive. Hence the claim holds.

 \end{proof}

\begin{lemma}\label{lem:polygon}
 Let $Z$ be a CAT(0) disc with rectifiable boundary $\D Z$. Suppose that $c:S^1\to\D Z$ is a $L$-Lipschitz parametrization. 
Then for every $\epsilon>0$ there exists a $(L+\epsilon)$-Lipschitz curve $\si:S^1\to Z\setminus\D Z$ which parametrizes a Jordan polygon of positive angles and is uniformly $\epsilon$-close to $c$.
\end{lemma}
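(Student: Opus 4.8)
The plan is to push the boundary circle a little way into the open interior $Y:=Z\setminus\D Z$ in a metrically controlled manner, and then to approximate the resulting interior curve by an embedded polygon in general position. The extra room ``$+\epsilon$'' in the Lipschitz bound will only be spent in the last, cosmetic step.

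For the push--in, fix an interior point $o\in Z$ with $d_0:=|o,\D Z|>0$, and for $0<r<\min(\epsilon/2,d_0)$ let $\Phi_r\colon Z\to Z$ send $x$ to the point of the geodesic $xo$ at distance $r$ from $x$. Parametrising $xo$ by the constant speed map $\gamma_x\colon[0,1]\to Z$ with $\gamma_x(0)=x$, $\gamma_x(1)=o$, convexity of the CAT(0) metric gives $|\gamma_x(u),\gamma_{x'}(u)|\le(1-u)|x,x'|$; splitting $|\Phi_r(x),\Phi_r(x')|$ at the parameter $r/|x,o|$ and using in addition $\bigl||x,o|-|x',o|\bigr|\le|x,x'|$, one finds that $\Phi_r$ is $1$--Lipschitz on $\{\,|x,o|\ge r\,\}$, a set that contains $\D Z$. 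Since moreover $|\Phi_r(x),x|=r$ there, the curve $c_r:=\Phi_r\circ c$ is $L$--Lipschitz and uniformly $r$--close to $c$; the whole point is therefore to choose $o$ and $r$ so that the image of $c_r$ lies in $Y$, and then to polygonalise.

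The heart of the argument is this last requirement, and I expect it to be the main obstacle. The point $\Phi_r(c(s))$ lies on $\D Z$ precisely when the geodesic $c(s)o$ returns to $\D Z$ at distance $r$ from $c(s)$. If it returns along a subarc of positive length, that subarc is itself a geodesic of $Z$; excluding this amounts to choosing $o$ outside the geodesic prolongations, past their endpoints, of the (at most countably many) maximal nontrivial geodesic subarcs of $\D Z$, a set of $\mathcal H^2$--measure zero, so a good $o$ exists. For such $o$ the geodesic $c(s)o$ can meet $\D Z$, apart from at $c(s)$, only at interior points $q$ with $\mathcal H^1(\Sigma_qZ)>\pi$; since $c(s)$ must then lie on one of the at most two geodesic prolongations of $oq$ beyond $q$, only finitely many pairs $(s,r)$ arise from each such $q$, and one concludes that $\bigcup_{s\in S^1}\{\,r\in(0,\epsilon/2)\colon\Phi_r(c(s))\in\D Z\,\}$ is a countable (hence codense) subset of $(0,\epsilon/2)$. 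This uses that the ``concave'' points $q$ of $\D Z$ are few, recognised through the curvature structure of the surface $Y$ (equivalently of the double of $Z$), and it is here, in handling a merely rectifiable $\D Z$, that the genuine work lies. Picking $r$ outside this set produces $\rho:=c_r$, an $L$--Lipschitz curve, uniformly $(\epsilon/2)$--close to $c$, whose image is a compact subset of $Y$; put $\delta_0:=|\rho(S^1),\D Z|>0$.

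It remains to polygonalise $\rho$ and put it into general position, simultaneously. Choose a partition $s_0<\dots<s_n=s_0$ of $S^1$ of mesh $<\delta_0/2L$ and a number $\eta>0$ small compared with $\delta_0$, with $\epsilon\cdot\min_j(s_{j+1}-s_j)$, and with the remaining closeness budget $\epsilon/2$; for each $j$ pick a point $y_j\in Y$ with $|y_j,\rho(s_j)|<\eta$ and parametrise the geodesic $y_jy_{j+1}$ on $[s_j,s_{j+1}]$ with constant speed. Each such geodesic has length $<L(s_{j+1}-s_j)+2\eta<\delta_0$, hence stays in $Y$ and is traversed with speed $\le L+\epsilon$, so the resulting polygon is $(L+\epsilon)$--Lipschitz, uniformly $\epsilon$--close to $c$, and contained in $Y$. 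Finally, since $Y$ is a surface of bounded integral curvature, in particular a $2$--manifold away from countably many points, a generic choice of the finitely many $y_j$ makes this polygon embedded with positive angles — a crossing of two of the finitely many geodesic edges, or a vanishing angle at a vertex, being a non--generic condition that a small generic perturbation of the vertices destroys. This polygon is the required curve $\si$.
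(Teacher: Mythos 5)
Your push-in step is where the argument breaks, and the problem is not merely an unjustified claim: the statement you need --- that a base point $o$ can be chosen so that, for a generic radius $r$, the curve $\Phi_r\circ c$ avoids $\D Z$ --- is false in general. Take $Z$ to be the closed $R$-ball around the tip $q$ of the Euclidean cone over an interval of length $3\pi$ (the cone over an interval is CAT(0), the ball is convex, and $Z$ is a CAT(0) disc whose rectifiable boundary consists of two radial geodesic edges $A_1,A_2$ meeting at $q$, where $\mathcal{H}^1(\Sigma_q Z)=3\pi$, together with an outer arc). For any interior point $o$, its direction at $q$ makes angle at least $\pi$ with the direction of at least one edge, say $A_1$; then for every $x\in A_1$ the concatenation of $xq\subset A_1$ with $qo$ is a local geodesic, hence by uniqueness the geodesic $xo$, so $\Phi_r(x)\in A_1\subset\D Z$ whenever $r\le|x,q|$. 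Since $|x,q|$ sweeps $(0,R]$, every $o$ and every small $r$ are ``bad'', and $\Phi_r\circ c$ meets $\D Z$ along a whole arc of parameters. The two counting claims you rely on fail for the same reason: at a boundary point whose link is longer than $2\pi$, the geodesic prolongations of a segment past that point fill a sector of directions of positive length, so the set of bad $o$ can have full measure (here it is all of the interior), and $oq$ can have infinitely many prolongations beyond $q$, so the bad radii need not be countable. Your $1$-Lipschitz estimate for $\Phi_r$ and the final polygonalisation/general-position step are fine as far as they go, but they rest on an inward push that cannot be achieved by retracting along geodesics to a fixed center.

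The paper avoids this entirely: it never transports boundary points along geodesics, but picks interior points $\si(2\pi k/n)$ within $\delta$ of $c(2\pi k/n)$ and joins consecutive ones inside $Z\setminus\D Z$ by broken paths of length at most $|c(\tfrac{2\pi(k-1)}{n}),c(\tfrac{2\pi k}{n})|+3\delta$, which is possible because the interior of a CAT(0) disc is a length space (Theorem 1.3 in \cite{LWcurv}); the bound $L+\epsilon$ and the uniform $\epsilon$-closeness then follow from the triangle inequality, and the Jordan/positive-angle properties are arranged by a small modification using uniqueness of geodesics. Any repair of your argument will have to invoke this length-space property of the interior for the connecting step, which essentially reduces it to the paper's proof.
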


\begin{proof}
 Choose $n\in\N$ such that $\frac{L\cdot 4\pi}{n}<\frac{\epsilon}{2}$ and then choose $\delta>0$
such that $4\cdot n\cdot \delta<\frac{\epsilon}{2}$. Next, choose points $\si(\frac{k \cdot 2\pi}{n})$ in $Z\setminus\D Z$, such that 
$|\si(\frac{k \cdot 2\pi}{n}),c(\frac{k \cdot 2\pi}{n})|<\delta$. By the triangle inequality we  obtain 
$|\si(\frac{(k-1) \cdot 2\pi}{n}),\si(\frac{k \cdot 2\pi}{n})|<\frac{L\cdot 2\pi}{n}+2\cdot \delta$. Since $Z\setminus\D Z$ is a length space (Theorem 1.3 in \cite{LWcurv}),
we can define $\si|_{[\frac{(k-1) \cdot 2\pi}{n},\frac{k \cdot 2\pi}{n}]}$ to be a constant speed parametrization of a polygon in $Z\setminus\D Z$ 
whose length is bounded above by $\frac{L\cdot 2\pi}{n}+3\cdot\delta$.
Hence the Lipschitz constant of $\si$ is less than $L+\frac{3\cdot n\cdot\delta}{2\pi}$ which is less than $L+\epsilon$ by our choice of $\delta$. 

Again by the triangle inequality we obtain
$|\si(t),c(t)|<\frac{L\cdot 4\pi}{n}+4\cdot\delta<\epsilon$ and therefore $\si$ is uniformly $\epsilon$-close to $c$.

It is easy to modify $\si$ such that it becomes Jordan and has only positive angles.
For the latter we just observe that if $x,y,z$ are different points in $Z$ and 
we take any point $y'\neq x,y$ on the geodesic $xy$, then  
$\angle_{y'}(y,z)\neq 0$ by the uniqueness of geodesics. 
 \end{proof}

Recall that a map between topological spaces is called {\em monotone}, if the inverse image of every point is connected.

\begin{lemma}\label{lem:mono}
 Let $Z$ be a CAT(0) disc with rectifiable boundary $\D Z$. Let $c:S^1\to \D Z$ be a Lipschitz parametrization.
 Then $c$ extends to a Lipschitz continuous monotone map $\mu:\bar D\to Z$.
\end{lemma}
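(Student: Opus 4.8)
The plan is to exhaust the interior of $Z$ by polygonal discs and to build $\mu$ one annular shell at a time. First I apply Lemma \ref{lem:polygon} with a sequence $\eps_n\downarrow 0$: this yields Jordan polygons $\si_n\subset Z\setminus\D Z$ with positive angles, together with $(L+\eps_n)$-Lipschitz parametrizations $p_n\colon S^1\to\si_n$ that are uniformly $\eps_n$-close to $c$. Choosing the $\eps_n$ small and rapidly decreasing (and passing to a subsequence) I may assume that the open Jordan domains are nested, $\Om_{\si_n}\subset\Om_{\si_{n+1}}$, that $\bigcup_n\Om_{\si_n}=Z\setminus\D Z$, and that each closed shell $R_n:=\bar{\Om}_{\si_{n+1}}\setminus\Om_{\si_n}$ — a closed topological annulus with boundary circles $\si_n$ and $\si_{n+1}$ — lies in the $\eps_n$-neighbourhood of $\D Z$.

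By Lemma \ref{lem:bilip}, $\bar{\Om}_{\si_1}$ carries a CAT(0) intrinsic metric; its boundary $\si_1$ is a polygon with positive angles, this metric is bilipschitz to the one induced from $Z$ (Lemma \ref{lem:bilip} again), and by Lemma \ref{lem:Zbilip} it is bilipschitz to $\bar D$, so $\bar{\Om}_{\si_1}\subset Z$ is bilipschitz to $\bar D$. Similarly, cutting each shell $R_n$ open along a geodesic segment joining $\si_n$ to $\si_{n+1}$, chosen so that all four resulting corner angles are positive (as in the last paragraph of the proof of Lemma \ref{lem:polygon}), one obtains a topological disc $\tilde R_n$ whose induced intrinsic metric is flat up to a uniformly bounded bilipschitz factor (away from the vertices this is the metric of a locally CAT(0) surface, cf. \cite{Bur}; the vertices of the $\si_n$ can be taken away from the at most countably many points where the curvature of $Z$ is concentrated, so that near them $\tilde R_n$ is bilipschitz to a flat sector of angle in $[\pi,2\pi)$); having polygonal boundary with positive angles, $\tilde R_n$ is bilipschitz to $\bar D$.

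Now fix radii $\tfrac12=\rho_1<\rho_2<\dots\uparrow 1$. I map $\{|z|\le\rho_1\}$ onto $\bar{\Om}_{\si_1}$ by a bilipschitz homeomorphism, precomposed with a Lipschitz homeomorphism of $\bar D$ so that its boundary values are $p_1$. For $n\ge1$, I cut the round collar $\{\rho_n\le|z|\le\rho_{n+1}\}$ open along a radial segment and map it onto $\tilde R_n$ by a bilipschitz homeomorphism, precomposed with a Lipschitz homeomorphism of $\bar D$ realizing the boundary values $p_n$ on the inner arc, $p_{n+1}$ on the outer arc, and arc length on the two copies of the cut; undoing the cut, this descends to a Lipschitz surjection of the collar onto $R_n$ which equals $p_n$ on the inner circle, $p_{n+1}$ on the outer circle, and is a homeomorphism on the open collar. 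Together with $\mu|_{S^1}:=c$ these pieces glue to a continuous surjection $\mu\colon\bar D\to Z$. It is monotone: $\mu$ is injective on $\bar D\setminus S^1$ (it is a homeomorphism on the interior of each shell, and the shells overlap only along the circles $\{|z|=\rho_n\}$, on which the definitions agree), while $\mu^{-1}(\D Z)=S^1$ and $\mu|_{S^1}=c$, whose fibres are connected. Finally $\mu$ is continuous at $S^1$ with boundary values $c$, because it maps the collar over $\{\rho_n\le|z|\le\rho_{n+1}\}$ into $R_n\subset N_{\eps_n}(\D Z)$ and, by the alignment of $p_n,p_{n+1}$ with $c$, satisfies $|\mu(\rho e^{i\theta}),c(e^{i\theta})|\le C\eps_n$ there.

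The step that requires genuine care — and carries the real content of the lemma — is that $\mu$ is globally Lipschitz, in particular up to $S^1$. The Lipschitz constant of $\mu$ on the $n$-th collar is controlled by the bilipschitz constant of $\tilde R_n\to\bar D$ and by the ratio between the transverse size of $R_n$ (of order $\eps_n$) and the collar width $\rho_{n+1}-\rho_n$. One must therefore let the $\eps_n$ decrease fast and take $\rho_{n+1}-\rho_n$ comparable to the transverse size of $R_n$, so that these constants stay bounded in $n$, while $\sum_n(\rho_{n+1}-\rho_n)=\tfrac12$ and, for $z$ in the $n$-th collar, $1-|z|\ge\sum_{k>n}(\rho_{k+1}-\rho_k)\gtrsim\eps_n$; the last inequality turns the estimate of the previous paragraph into $|\mu(z),c(z/|z|)|\lesssim 1-|z|$ near $S^1$. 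With the parameters balanced this way, $\mu$ is Lipschitz on each shell with a uniform constant, consecutive shells meet along the smooth circles $\{|z|=\rho_n\}$, and the boundary estimate controls $\mu$ against $c$ near $S^1$; hence $\mu$ is Lipschitz on all of $\bar D$, which completes the construction. (When $c$ is a homeomorphism onto $\D Z$ the same construction produces a homeomorphism $\bar D\to Z$.)
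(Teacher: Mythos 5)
There is a genuine gap at the step you yourself flag as carrying the real content: the uniform Lipschitz control of the shell maps. Your justification is that each cut-open shell $\tilde R_n$ is ``flat up to a uniformly bounded bilipschitz factor'', with only the vertices of $\si_n$ needing care. That is not true for a general CAT(0) disc with merely rectifiable boundary: the (nonpositive) curvature measure of the interior can concentrate arbitrarily near $\D Z$ — there may be infinitely many cone points, with cone angles tending to $\infty$ and with infinite total mass, accumulating at the boundary — so every shell $R_n$, no matter how the polygons $\si_n$ are chosen, may contain interior singular points of arbitrarily large cone angle. A neighbourhood of a cone point of angle $\theta$ is bilipschitz to a plane disc only with constant growing with $\theta$, and the results you invoke (\cite{Bur}, Lemma \ref{lem:Zbilip} via \cite{Bu} and \cite{Tuk}) are purely qualitative: they give no bound on the bilipschitz constant in terms of anything you control. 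Consequently the constants of the charts $\tilde R_n\to\bar D$ can blow up with $n$, and no balancing of $\rho_{n+1}-\rho_n$ against a ``transverse size of $R_n$'' (which in any case is not a single scale — the width profile of a shell between two curves that are only pointwise $\eps_n$-close to a rectifiable curve is completely uncontrolled) repairs this; the assembled map $\mu$ is then not Lipschitz. Two further claims are asserted rather than proved, though they are repairable with more care: the nestedness and exhaustion properties of the $\Om_{\si_n}$ (this needs a recursive choice $\eps_{n+1}<\operatorname{dist}(\si_n,\D Z)$ plus a homotopy/winding argument, not just ``passing to a subsequence''), and the angular alignment $|\mu(\rho e^{i\theta}),c(e^{i\theta})|\le C\eps_n$ inside a collar, which does not follow merely from the boundary values being $p_n$ and $p_{n+1}$ unless the chart is constructed to respect the angular coordinate. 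If you replaced the bilipschitz charts by direct geodesic interpolation between $p_n$ and $p_{n+1}$ (as in Lemma \ref{lem:homosmallarea}, filling small quadrilaterals by Reshetnyak majorization, Theorem \ref{thm:maj}) you would recover a uniform Lipschitz bound, but you would lose exactly the features your monotonicity argument relies on: injectivity on the open collars and surjectivity onto the shells.

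For comparison, the intended argument avoids all of this by a one-step construction: pick an interior point $p\in Z$ and extend $c$ by sending each radial segment of $\bar D$ with constant speed onto the $p$-radial geodesic $p\,c(t)$. Lipschitz continuity follows exactly as in Lemma \ref{lem:conecom} (radial plus ``spherical'' comparison paths, using only that $c$ is Lipschitz and CAT(0) convexity), surjectivity follows from geodesic completeness of the interior, and monotonicity comes from a separation argument: if the geodesics $p\,y$ and $p\,z$ ($y,z\in\D Z$) overlap in a segment $p\,x$, then $xy\cup xz$ separates $Z$, so the fibre over any point is an arc of parameters. In particular no exhaustion, no charts and no uniformity issues arise; this is where your approach, as written, does not go through.
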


\begin{proof}
 Pick a point $p$ in the interior of $Z$. Then extend $c$ by mapping radial geodesics in $\bar D$ to constant speed $p$-radial geodesics in $Z$.
 Lipschitz continuity follows in the same way as in Lemma \ref{lem:conecom} below. 
 Observe that if $y$ and $z$ are two points in $\D Z$ such that the geodesics $py$ and $pz$ intersect in a nontrivial geodesic segment $px$,
 then the union $xy\cup xz$ separates $Z$. Hence one of the two components of $\D Z\setminus\{y,z\}$ has the property that the geodesic from any of its points to $p$
 contains $px$. This implies the claimed monotonicity.
\end{proof}

We will need to recognize when a CAT(0) surface is flat away from a finite number of cone points.
We begin with a definition.

\begin{definition}
 Let $X$ be a CAT(0) space. Fix a point $p\in X$. Then we call a point $x\in X\setminus\{p\}$ a {\em $p$-branch point}, if
 the geodesic segment $px$ extends to different geodesics $py^-$ and $py^+$ such that $py^-\cap py^+=px$.
 
 We denote by  $\mathcal{R}_r(p)$  the {\em regular star of radius $r$ around $p$}. By definition, it is the union of all $p$-radial geodesics of length $r$ which do not contain a $p$-branch point. Let
$\bar{\mathcal{R}}_r(p)$ denote its closure.
 
\end{definition}

Now let $Z$ be a CAT(0) surface and $p\in Z$ some point. Note that each $p$-branch point makes a positive contribution to the area of $\bar B_r(p)$. 
Hence there are at most countably many $p$-branch points in $\bar B_r(p)$.
$\mathcal{R}_r(p)$ is regular in the sense that each $p$-radial geodesic extends uniquely to a maximal $p$-radial geodesic of length $r$.
Since geodesics on surfaces separate locally, we see that $p$-radial geodesics in  
$\bar{\mathcal{R}}_r(p)$ extend in at most two ways. There is a natural quotient space $Q_r(p)$
associated to $\bar{\mathcal{R}}_r(p)$. By definition, two points in $\bar{\mathcal{R}}_r(p)$ are equivalent it they have 
the same distance from $p$ and they project to the same direction in $\Sigma_p Z$. So the quotient map $\pi:\bar{\mathcal{R}}_r(p)\to Q_r(p)$
identifies different extensions of $p$-radial geodesics. Clearly, $\pi$ is radially isometric with respect to $p$ and preserves Hausdorff $1$-measure on
distance spheres around $p$. It is not hard to see that $Q_r(p)$ is a CAT(0) disc. The CAT(0) property follows from Reshetnyak's gluing theorem (Theorem \ref{thm:glue}) if
there is only a finite number of $p$-branch points. The general case follows from a limit argument.

 By Bishop-Gromov (Theorem \ref{thm:BG}), $\mathcal{H}^2(\mathcal{R}_r(p))=\mathcal{H}^2(Q_r(p))\geq\frac{r^2}{2}\mathcal{H}^1(\Sigma_{\pi(p)} (Q_r(p))=\frac{r^2}{2}\mathcal{H}^1(\Sigma_p Z)$.
Moreover, equality holds if and only if $Q_r(p)$ is isometric to a Euclidean cone of cone
angle $\mathcal{H}^1(\Sigma_p Z)$; or to put it another way, if $\mathcal{R}_r(p)$ is isometric to a Euclidean cone which is cut open along some
radial geodesics.

\begin{lemma}\label{lem:conesurf}
 Let $Z$ be a CAT(0) surface and let $P=\{p_1,\ldots,p_k\}\subset Z$ be a finite subset. Let $R>0$ be such that $|p_i,\D Z|<R$ for all $p_i\in P$. Suppose that 
the regular stars $\mathcal{R}_r(p_i)$ are disjoint for all $r\leq R$ and such that 
 $\mathcal{H}^2(N_r(P))=\frac{r^2}{2}\sum_{i=1}^k \mathcal{H}^1(\Sigma_{p_i} Z)$. Then $N_R(P)$ is flat away from a finite set of cone points.
\end{lemma}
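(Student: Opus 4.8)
The plan is to argue that equality in the Bishop--Gromov estimate forces, point by point, equality in the cone rigidity statement that precedes the lemma, and then to patch these local cone descriptions together. First I would use the hypothesis that the regular stars $\mathcal{R}_r(p_i)$ are disjoint for all $r\le R$ to split the total measure: since $N_R(P)=\bigcup_i \bar B_R(p_i)$ and the stars $\mathcal{R}_R(p_i)$ exhaust $\bar B_R(p_i)$ up to a set of measure zero (the $p_i$-branch points contribute positive area, hence are countable, and the non-regular radial geodesics form a null set), we get $\mathcal{H}^2(N_R(P))=\sum_i \mathcal{H}^2(\mathcal{R}_R(p_i))=\sum_i\mathcal{H}^2(Q_R(p_i))$. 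Combined with the Bishop--Gromov lower bound $\mathcal{H}^2(Q_R(p_i))\ge \frac{R^2}{2}\mathcal{H}^1(\Sigma_{p_i}Z)$ and the assumed equality $\mathcal{H}^2(N_R(P))=\frac{R^2}{2}\sum_i\mathcal{H}^1(\Sigma_{p_i}Z)$, each summand must be extremal. Hence, by the rigidity clause of Theorem \ref{thm:BG} applied in the CAT(0) disc $Q_R(p_i)$, each $Q_R(p_i)$ is isometric to the radius-$R$ ball around the tip of the Euclidean cone of cone angle $\mathcal{H}^1(\Sigma_{p_i}Z)$; equivalently, as noted just before the lemma, $\bar{\mathcal R}_R(p_i)$ is isometric to such a Euclidean cone cut open along finitely many radial geodesics.

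Next I would promote this "radius $R$" equality to equality at all radii $r\le R$. Monotonicity of $r\mapsto \mathcal{H}^2(B_r(p_i))/r^2$ (again Bishop--Gromov) together with equality at $r=R$ forces equality for every $r\le R$; so each $\bar B_r(p_i)$ is, intrinsically via $Q_r(p_i)$, a Euclidean cone ball, and in particular the metric on $\bar B_R(p_i)\setminus\{p_i\}$ is flat — it has no interior cone points other than possibly $p_i$ itself, and the radial structure is that of a cut-open Euclidean cone. This is the step where I would be careful: the statement "$\mathcal R_R(p_i)$ isometric to a cut-open Euclidean cone" means that, away from the cut loci (the $p_i$-branch points and their radial geodesics), the surface is locally isometric to $\R^2$. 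The $p_i$-branch points are the only places inside $\bar B_R(p_i)$ where flatness can fail, and there are finitely many of them lying at the extremities where the cut-open cone is reglued — more precisely, reassembling $Q_R(p_i)$ into $\bar{\mathcal R}_R(p_i)$ introduces at most finitely many new cone points along the identified radial geodesics (the endpoints of the identification arcs), because the total angle being redistributed is finite.

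Finally I would assemble the global picture on $N_R(P)=\bigcup_i \bar B_R(p_i)$. By disjointness of the regular stars the balls $\bar B_R(p_i)$ meet only along sets of measure zero, so the curvature measure $\mu$ of $N_R(P)\setminus \D Z$ (in the sense of surfaces of bounded integral curvature, recalled in the excerpt) is supported on the finite set consisting of the $p_i$ together with the finitely many branch points produced in each $\bar B_R(p_i)$; away from this finite set $\mu$ vanishes, which is exactly the statement that the region is flat, and at each of those points the tangent cone is a Euclidean cone of some cone angle $2\pi-\mu(\{x\})$. Hence $N_R(P)$ is flat away from a finite set of cone points, as claimed. The main obstacle is the bookkeeping of Step 2–3: making precise that "cutting open a Euclidean cone along radial geodesics and regluing" contributes only finitely many cone points and no continuum of curvature, which I expect to follow from the fact that a cut-open cone is a flat polygon (a sector, possibly with slits) and regluing finitely many boundary arcs of a flat polygon creates cone points only at the finitely many breakpoints of the identification.
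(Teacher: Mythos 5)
Your opening step arrives at the right conclusion, but the justification you give for it is not correct: it is false in general that the regular stars exhaust the balls $\bar B_R(p_i)$ up to measure zero. If $z$ is a $p_i$-branch point whose link has length $>2\pi$, then every point of the ``shadow'' wedge swept out by the extensions of $p_iz$ beyond $z$ has its unique radial geodesic passing through $z$, so this whole wedge --- a set of positive $\mathcal{H}^2$-measure --- is contained in $\bar B_R(p_i)$ but disjoint from $\mathcal{R}_R(p_i)$; moreover the balls may overlap even though the stars are disjoint. What rescues the step is the hypothesis itself: disjointness gives $\sum_i\mathcal{H}^2(Q_R(p_i))=\sum_i\mathcal{H}^2(\mathcal{R}_R(p_i))\le\mathcal{H}^2(N_R(P))$, while Bishop--Gromov applied in each $Q_R(p_i)$ gives the reverse inequality against the assumed value of $\mathcal{H}^2(N_R(P))$; the resulting chain of equalities yields both the rigidity of each $Q_R(p_i)$ and, a posteriori, the full-measure statement. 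This is how the paper argues, and your sandwich should be set up this way rather than via an a priori exhaustion claim. (Your interim assertion that $\bar B_R(p_i)\setminus\{p_i\}$ is flat is likewise false --- branch-point cone points inside the ball do occur, e.g.\ in the rigidity case of F\'ary--Milnor --- though you partially retract it afterwards.)

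The genuine gap is the finiteness of the cone points. Your argument establishes only that the $p_i$-branch points are countable (each contributes positive area), and the step you yourself flag as the main obstacle --- ``regluing finitely many boundary arcs of a flat polygon creates cone points only at the finitely many breakpoints'' --- presupposes exactly what must be proved, namely that each $\mathcal{R}_R(p_i)$ has only finitely many boundary components. Nothing in your proposal excludes infinitely many branch points, each with an arbitrarily small wedge angle, in which case $N_R(P)$ would be flat only away from a countable set and the lemma's conclusion would not follow. The paper closes this by a quantitative counting argument: after gluing the flat (cut-open cone) stars along their boundary hinges, the only candidates for non-flat points besides the $p_i$ are the hinge vertices, and each boundary component of a regular star forces a point of $Z$ whose link length exceeds $2\pi$ by a definite amount; since such points are atoms of definite mass of the curvature measure, a compact subset of $Z$ contains only finitely many of them, so each star has finitely many boundary components and the cone points form a finite set. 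Some uniform lower bound of this kind (or an alternative counting mechanism) is indispensable, and it is missing from your proof.
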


\begin{proof}
 By Bishop-Gromov (Theorem \ref{thm:BG}), our assumptions guarantee that each quotient $Q_r(p_i)$ of $\bar{\mathcal{R}}_r(p_i)$ is flat away from $p_i$. Since $\bigcup_{i=1}^k \mathcal{R}_r(p_i)$
 has full measure in $N_r(P)$, we see that $N_r(P)$ is obtained by gluing the individual regular stars along their boundaries.
 Each component of the boundary of a regluar star is a geodesic hinge. Hence the only source for nonflatness are the vertices of these hinges.
 It is therefore enough to show that each $\mathcal{R}_r(p_i)$ has only a finite number of boundary components. But each boundary component yields a
 point $z\in Z$ with $\mathcal{H}^1(\Sigma_{z} Z)\geq \frac{3\pi}{2}$. Since each compact subset of $Z$ sees only a finite number of such points, we are done.
\end{proof}

\section{Minimal discs}

\subsection{Sobolev spaces}

We will collect some basic definitions and properties from Sobolev space theory in metric spaces as developed in 
 \cite{KS},\cite{R}, \cite{HKST} and \cite{LWplateau}. For more details we refer the reader to these articles.
We denote by $\Omega$ an open bounded Lipschitz domain in the Euclidean plane and fix a complete metric space $X$.
 Following Reshetnyak (\cite{R1}), we say that a map $u:\Om\to X$ has {\em finite energy}, or lies in the Sobolev space $W^{1,2}(\Om,X)$ if
 \begin{itemize}
  \item $u$ is measurable and has essentially separable image.
  \item There exists $g\in L^2(\Om)$ such that the composition $f\circ u$ with any short map $f:X\to \mathbb{R}$ 
lies in the classical Sobolev space $W^{1,2}(\Om)$
  and the norm of the weak gradient $|\nabla(f\circ u)|$ is almost everywhere bounded above by $g$.
 \end{itemize}

 Any Sobolev map $u$ has a well defined trace $\operatorname{tr}(u)\in L^2(\partial\bar{\Om})$. (Cf. \cite{KS} and \cite{LWplateau}.)
If $u$ has a representative which extends to a continuous map $\bar u$ on $\bar{\Om}$, then $\operatorname{tr}(u)$ is represented by 
$\bar u|_{\partial\bar{\Om}}$.
If the domain $\Om$ is homeomorphic to the open unit disc $D$, then we call a map $u\in W^{1,2}(\Om,X)$ a {\em Sobolev disc}.

We say that a circle $\gamma:\partial\bar{D}\to X$ {\em bounds} a Sobolev disc $u$, if $\operatorname{tr}(u)=\gamma$ in 
$L^2(\partial\bar{D},X)$. 
 
\subsubsection{Energy and the Dirichlet problem}

Every Sobolev map $u\in W^{1,2}(\Om,X)$ has an approximate metric differential at almost every point. More precisely,
for almost every point $z\in\Om$ there exists a unique seminorm on $\R^2$, denoted $|du_z(\cdot)|$
%\footnote{If $u$ is Lipschitz continuous,
%then for almost all points $z\in \Omega$ the map $u$ has a linear differential $du_z:T_z\R^2\to T_{u(z)X}$ and $|du_z(\cdot),o_z|$, $o_z$
%beeing the tip of $T_{u(z)}X$, is equal
%to the approximate metric differential of $u$ at $z$, see \cite{L} .} 
such that 

$$
\aplim\limits_{w\to z}\frac{|u(w),u(z)|-|du_z(w-z)|}{|w-z|}=0,
$$
where $\aplim$ denotes the approximate limit, see \cite{EG}.

\begin{definition}
The {\em Reshetnyak energy} of a Sobolev map $u\in W^{1,2}(\Om,X)$ is given by
$$
E(u):=\int\limits_{\Om}\max_{v\in S^1}|du_z(v)|^2 dz.
$$
\end{definition}

 \begin{theorem}[Dirichlet problem, \cite{KS}]\label{thm:dirichlet}
 Let $\gamma$ be a circle in a CAT(0) space $X$ which can be spanned by a Sobolev disc. 
 Then there exists a unique Sobolev disc $u$ which minimizes the energy among all Sobolev discs spanning $\gamma$. 
 The energy minimizer $u$ is locally Lipschitz continuous in $D$ and extends continuously to $\bar D$.
 
 Moreover, the local Lipschitz constant of $u$ at a point $z$ depends only on the total energy of $u$ and the distance of $z$ to the boundary $\partial\bar D$.
 \end{theorem}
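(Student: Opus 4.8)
The plan is to run the classical variational scheme of Korevaar and Schoen \cite{KS}: first produce a minimizer by the direct method, then deduce uniqueness from the strict convexity of the energy forced by the CAT(0) condition, and finally obtain interior regularity by combining minimality (comparison) with the nonpositive curvature of the target.

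\emph{Existence.} Choose a minimizing sequence $u_k\in W^{1,2}(D,X)$ with $\operatorname{tr}(u_k)=\gamma$ and $E(u_k)\to\inf$. The energies are uniformly bounded, and since $X$ is complete one has the Rellich--Kondrachov type precompactness for metric-space-valued Sobolev maps of uniformly bounded energy (cf. \cite{KS}): after passing to a subsequence, $u_k\to u$ in $L^2_{loc}$ with $u\in W^{1,2}(D,X)$. The Reshetnyak energy is lower semicontinuous under $L^2$-convergence, so $E(u)\le\liminf_k E(u_k)=\inf$, and the trace operator is stable under $L^2$-convergence of bounded-energy maps, so $\operatorname{tr}(u)=\gamma$. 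Hence $u$ is an energy minimizer.

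\emph{Uniqueness.} Let $u_0,u_1$ be two minimizers. Since $X$ is CAT(0), hence uniquely geodesic, the midpoint map $u_{1/2}(z):=$ midpoint of the geodesic from $u_0(z)$ to $u_1(z)$ again lies in $W^{1,2}(D,X)$, and $\operatorname{tr}(u_0)=\operatorname{tr}(u_1)=\gamma$ gives $\operatorname{tr}(u_{1/2})=\gamma$. The quadrilateral inequality in CAT(0) spaces yields, for a.e.\ $z$ and every $v\in S^1$,
\[|d(u_{1/2})_z(v)|^2\ \le\ \tfrac12|d(u_0)_z(v)|^2+\tfrac12|d(u_1)_z(v)|^2-\tfrac14|dg_z(v)|^2,\]
where $g(z):=|u_0(z),u_1(z)|$. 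Integrating, $E(u_{1/2})\le\tfrac12 E(u_0)+\tfrac12 E(u_1)-\tfrac14\int_D\max_{v\in S^1}|dg_z(v)|^2\,dz$. Since all three are admissible and $u_0,u_1$ are minimizers, the right-hand side is $\le\inf=E(u_{1/2})$; hence $dg_z\equiv 0$ a.e., so $g$ is weakly constant, and as $g\ge 0$ has vanishing trace, $g\equiv 0$, i.e.\ $u_0=u_1$ a.e. Thus the minimizer is unique.

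\emph{Interior regularity, and the main obstacle.} This is the hard part and the core of \cite{KS}. I would argue in two intertwined stages. First, continuity with a quantitative modulus: for a minimizer $u$ and $B_\rho(z_0)\Subset D$, a Courant--Lebesgue argument gives radii $r\in[\rho/2,\rho]$ on which $u|_{\partial B_r(z_0)}$ is absolutely continuous with length controlled by the energy on the annulus; replacing $u$ on $B_r(z_0)$ by a filling of energy $\lesssim \length(u|_{\partial B_r(z_0)})^2$ (a geodesic cone, cf.\ Lemma \ref{lem:conecom}, or the isoperimetric construction of Theorem \ref{thm:isoin}) and invoking minimality gives $E(u,B_{\rho/2}(z_0))\le C\,E(u,B_\rho(z_0))$; together with the Poincar\'e inequality for CAT(0)-valued Sobolev maps this bounds $\operatorname{osc}_{B_{\rho/2}(z_0)}(u)^2$ by $E(u,B_\rho(z_0))$, which $\to 0$ with $\rho$ by absolute continuity of the energy measure. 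Second, the Lipschitz bound: using minimality one shows that for each $Q\in X$ the function $z\mapsto |u(z),Q|^2$ is weakly subharmonic with $\Delta|u(\cdot),Q|^2\ge 2\,e_u$ (here the CAT(0) condition enters essentially), and iterating the resulting mean-value estimates --- equivalently, exploiting subharmonicity of the energy density $e_u$ for minimizers --- yields the quadratic decay $E(u,B_r(z_0))\le C\,r^2 E(u)$ and the pointwise bound $e_u(z_0)\le C\,E(u)/|z_0,\partial\bar D|^2$; by the approximate metric differentiability this is precisely a local Lipschitz bound with the asserted dependence on total energy and distance to the boundary. Continuity up to $\bar D$ follows from the analogous Courant--Lebesgue/comparison argument performed at boundary points. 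The genuinely delicate step is this quantitative interior regularity: the direct method and the convexity are soft, but the Lipschitz estimate needs the full Korevaar--Schoen machinery (or an equivalent harmonic-replacement/monotonicity argument), and it is there that nonpositive curvature of the target is used in a non-formal way.
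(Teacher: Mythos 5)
The paper offers no proof of this statement at all---it is imported verbatim from Korevaar--Schoen \cite{KS}---so the only meaningful comparison is with the cited proof, whose overall strategy (direct method, convexity for uniqueness, Courant--Lebesgue plus comparison and subharmonicity for interior Lipschitz regularity) your sketch does follow. Two of your steps, however, would fail as written. First, existence: you invoke a Rellich--Kondrachov type precompactness for the minimizing sequence, but $X$ here is an arbitrary complete CAT(0) space, not assumed proper or locally compact, and no such compactness theorem is available in that generality (the compactness results in \cite{KS} require the images to lie in a locally compact/compact set). The correct argument, and the one in \cite{KS}, obtains convergence of the minimizing sequence from the very convexity inequality you use later for uniqueness: applying the midpoint estimate to $u_k$ and $u_l$ and using that $|u_k(\cdot),u_l(\cdot)|$ has zero trace shows the sequence is Cauchy in $L^2$, and completeness of $X$ plus lower semicontinuity of the energy finishes existence. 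In other words, existence and uniqueness both come from uniform convexity, not from compactness.

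Second, your uniqueness computation is carried out for the Reshetnyak energy $E(u)=\int_D\max_{v\in S^1}|du_z(v)|^2\,dz$ as defined in this paper. The quadrilateral inequality you quote is pointwise in the direction $v$, and taking the maximum over $v$ does not produce the term $-\tfrac14\int_D\max_v|dg_z(v)|^2\,dz$: the maximum of the right-hand side is only bounded by $\tfrac12\max_v|d(u_0)_z(v)|^2+\tfrac12\max_v|d(u_1)_z(v)|^2-\tfrac14\min_v|dg_z(v)|^2$, and since $g$ is real-valued, $\min_{v\in S^1}|dg_z(v)|=0$ almost everywhere, so the strict-convexity term disappears entirely. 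The argument is correct for the averaged Korevaar--Schoen energy, which is the energy for which \cite{KS} actually prove existence, uniqueness and interior Lipschitz regularity; if one insists on reading the theorem with the Reshetnyak energy, uniqueness needs an additional argument rather than the one-line convexity estimate. Your regularity outline is consistent with the Korevaar--Schoen proof and you correctly identify it as the genuinely hard part, but as a proof it remains a pointer to the machinery of \cite{KS} rather than a self-contained argument---which is also all the paper itself provides.
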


\subsubsection{Area and the Plateau problem}

\begin{definition}\label{def:area}
The {\em parametrized (Hausdorff) area} of a Sobolev map $u\in W^{1,2}(\Om,X)$ is given by
$$
\a(u):=\int\limits_{\Om}\mathcal{J}(du_z) dz,
$$
where  the  Jacobian $\mathcal{J}(s)$ of  a  seminorm  $s$  on $\R^2$ is the Hausdorff $2$-measure of the unit square with respect to $s$
if $s$ is a norm and $\mathcal{J}(s)=0$ otherwise.
\end{definition}

For a given Jordan curve $\Gamma$ we denote by $\Lambda(\Gamma,X)$ the family
of Sobolev discs $u\in W^{1,2}(D,X)$ whose traces have representatives which are monotone parametrizations of $\Gamma$, cf. \cite{LWplateau}.

\begin{definition}[Area-minimizer]
 Let $\Gamma$ be a Jordan curve and  $u\in\Lambda(\Gamma,X)$
 a Sobolev map. The map $u$ will be called {\em area minimizing}, if it has the least area among all
 Sobolev competitors, i.e. if
 $$
 \a(u)=\inf\{\a(u')|\ u'\in \Lambda(\Gamma,X)\}.
 $$
\end{definition}

The following theorem is a special case of Theorem 1.4 in \cite{LWplateau}.

\begin{theorem}[Plateau's problem]\label{thm:plateau}
Let $X$ be a CAT(0) space and $\Gamma\subset X$ a Jordan curve. Then there exists a Sobolev disc $u\in \Lambda(\Gamma,X)$
with
$$
E(u)=\inf\{E(u')|\ u'\in \Lambda(\Gamma,X)\}.
$$
Moreover, every such $u$ has the following properties.

\begin{enumerate}
	\item $u$ is an area minimizer.
	\item $u$ is a conformal map in the sense that there exists a {\em conformal factor} $\lambda\in L^2(D)$ with $|du_z|=\lambda(z)\cdot s_0$
	almost everywhere in $D$, where $s_0$
	denotes the Euclidean norm on $\R^2$.
	\item $u$ has a locally Lipschitz continuous representative which extends continuously to $\bar{D}$.
	
\end{enumerate} 
 
\end{theorem}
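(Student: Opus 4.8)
The plan is to obtain the theorem by specializing the metric Plateau theorem of Lytchak and Wenger, \cite[Theorem~1.4]{LWplateau}, to CAT(0) targets, so that the one point which has to be checked by hand is that the competitor class $\Lambda(\Gamma,X)$ is non-empty and contains a map of finite energy. Assuming, as throughout the paper, that $\Gamma$ is rectifiable, I would fix a constant-speed (hence homeomorphic, hence monotone) Lipschitz parametrization $c:S^1\to\Gamma$. By the isoperimetric inequality in CAT(0) spaces (Theorem \ref{thm:isoin}), itself a consequence of Reshetnyak's majorization theorem, $c$ extends to a Lipschitz disc $\hat c:\bar D\to X$ with $\a(\hat c)\le\length(c)^2/4\pi<\infty$. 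A Lipschitz disc has finite Reshetnyak energy, and its trace is the monotone parametrization $c$ of $\Gamma$; hence $\hat c\in\Lambda(\Gamma,X)$ and $m:=\inf\{E(u')\mid u'\in\Lambda(\Gamma,X)\}$ is finite.

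To produce the energy minimizer I would run the direct method. Let $u_k\in\Lambda(\Gamma,X)$ be a minimizing sequence, $E(u_k)\to m$. Composing each $u_k$ with the nearest-point projection onto the closed convex hull of $\Gamma$ — a $1$-Lipschitz map in a CAT(0) space — we may assume without increasing energy that all $u_k$ have image in this fixed closed convex set. Using the conformal (M\"obius) invariance of $E$, normalize each $u_k$ by a conformal automorphism of $\bar D$ so that the traces satisfy a three-point condition. A Courant--Lebesgue argument, valid because the rectifiable Jordan curve $\Gamma$ has intrinsic and extrinsic metrics comparable on short arcs, then yields equicontinuity of the traces. The Rellich-type compactness for Sobolev maps into complete metric spaces together with lower semicontinuity of $E$ (cf. \cite{KS}, \cite{LWplateau}) produces a subsequential $L^2$-limit $u$ with $E(u)\le m$, and the trace equicontinuity forces $\operatorname{tr}(u)$ to be a monotone parametrization of $\Gamma$; hence $u\in\Lambda(\Gamma,X)$ and $E(u)=m$.

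Finally I would verify the three listed properties for an arbitrary energy minimizer $u$. Conformality: an energy-minimizing Sobolev disc is infinitesimally conformal, and since $X$ is CAT(0) the approximate metric differential $|du_z|$ is an inner-product seminorm for almost every $z$; combining these one obtains $|du_z|=\lambda(z)\cdot s_0$ with $\lambda\in L^2(D)$. Area minimality: for the now conformal $u$ one has $\mathcal{J}(du_z)=\lambda(z)^2=\max_{v\in S^1}|du_z(v)|^2$ almost everywhere, so $\a(u)=E(u)=m$; since every $u'\in\Lambda(\Gamma,X)$ admits a conformal reparametrization $\tilde u'\in\Lambda(\Gamma,X)$ with $E(\tilde u')=\a(\tilde u')=\a(u')$, it follows that $\a(u)=m\le E(\tilde u')=\a(u')$, i.e. $u$ is an area minimizer. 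Regularity: $u$ is in particular a local energy minimizer for its own boundary values, hence harmonic, so Theorem \ref{thm:dirichlet} gives interior local Lipschitz continuity with local constant controlled by $E(u)$ and the distance to $\partial\bar D$, while the continuous extension to $\bar D$ is the boundary-regularity part of \cite{LWplateau}.

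I expect the genuinely substantive points, were one to argue from scratch rather than cite \cite{LWplateau}, to be the Courant--Lebesgue equicontinuity step — which is what prevents a minimizing sequence from degenerating and is the reason for the fixed three-point normalization — and the conformality of the minimizer, which is the one place the CAT(0) hypothesis really enters. The interior regularity is imported from Korevaar--Schoen and the boundary continuity from Lytchak--Wenger.
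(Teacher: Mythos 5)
Your proposal is correct and takes essentially the same route as the paper, which simply records this statement as a special case of Theorem~1.4 in \cite{LWplateau}; your verification that $\Lambda(\Gamma,X)\neq\emptyset$ for rectifiable $\Gamma$ via Theorem~\ref{thm:isoin} and your sketch of the direct method, conformality (via property of inner-product metric differentials in CAT(0)) and regularity just reproduce the content of the cited work. The only cosmetic caveats are that the exact-conformal reparametrization of an arbitrary competitor is really only an $\epsilon$-conformal one in \cite{LWplateau}, which still suffices for area minimality, and that rectifiability (or some finite-energy filling hypothesis) must indeed be assumed, as you note.
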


\begin{definition}
 Let $X$ be a CAT(0) space and $\Gamma\subset X$ a Jordan curve. A map $u\in \Lambda(\Gamma,X)$ as in Theorem \ref{thm:plateau} above is  called a {\em minimal disc}
 or a {\em solution of the Plateau problem for} $(\Gamma, X)$.
\end{definition}

\subsection{Intrinsic minimizers}

The following result is a consequence of Theorem 1.2 in \cite{LWcurv} and Theorem 7.1.1 in \cite{R}. 
The factorization and the fact that the intrinsic space is a CAT(0) space can also be deduced from Theorem 1.1 in \cite{PS}.

\begin{theorem}[Intrinsic structure of minimal discs]\label{thm:int}
Let $X$ be a CAT(0) space and $\Gamma\subset X$ a Jordan curve. If $u:\bar{D}\to X$ is a minimal disc
filling $\Gamma$, then the following holds. There exists a CAT(0) disc $Z_u$ such that 
$u$ factorizes as $u=\pi\circ \bar{u}$ with continuous maps $\pi:\bar{D}\to Z_u$ and $\bar{u}:Z_u\to X$. Moreover, 
\begin{enumerate}
	\item $\pi$ is monotone and restricts to an embedding on $D$. 
	\item $\pi\in \Lambda(\partial Z_u,Z_u)\subset W^{1,2}(D,Z_u)$.
	\item $\bar{u}$ is short.
	\item $\bar{u}$ restricts to an arc length preserving homeomorphism $\D Z_u\to \Gamma$.
	\item $\bar{u}$ preserves the lengths of all rectifiable curves.  
	\item For any open subset $U\subset D$ holds $\mathcal{H}^2(\pi(U))=\a(u|_{U})=\a(\pi|_U)$.
\end{enumerate}
 
\end{theorem}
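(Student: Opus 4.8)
The plan is to take for $Z_u$ the \emph{intrinsic space associated to $u$}: equip $\bar D$ with the pseudometric $d_u(x,y)=\inf\{\length(u\circ c)\}$, the infimum running over curves $c$ in $\bar D$ from $x$ to $y$, and identify points at zero $d_u$-distance. Since $\bar D$ is a length space, $u$ is continuous on $\bar D$ and locally Lipschitz with finite energy in $D$, and $\Ga$ is rectifiable, the quotient $Z_u=\bar D_u$ is a well-defined length space, the projection $\pi:\bar D\to Z_u$ is continuous and surjective, the factorization $u=\bar u\circ\pi$ holds by construction, and $\bar u:Z_u\to X$ is automatically short (this is just the statement that passing from $d_u$ to the $u$-pullback of the metric of $X$ does not increase lengths). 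It then remains to (a) identify the metric and topological type of $Z_u$, and (b) pin down the fine properties of the two factors.

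For (a) I would invoke the structure theory of Lytchak--Wenger. By Theorem \ref{thm:plateau} the minimal disc $u$ is conformal and area minimizing, and these two properties force the intrinsic space $Z_u$ to satisfy the quadratic isoperimetric inequality with the sharp Euclidean constant $\tfrac1{4\pi}$; by their characterization of $\mathrm{CAT}(0)$ through this inequality (Theorem 1.2 in \cite{LWcurv}, see also Theorem 1.1 in \cite{PS}) the space $Z_u$ is $\mathrm{CAT}(0)$. Once $\pi$ is known to be monotone, $Z_u$ is a monotone continuous image of the $2$-cell $\bar D$; being in addition a $2$-dimensional $\mathrm{CAT}(0)$ space it is a $\mathrm{CAT}(0)$ disc, and $\bar u$ carries $\D Z_u$ onto $\Ga$.

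For (b), monotonicity of $\pi$ and the fact that $\pi$ restricts to an embedding on $D$ are precisely the point at which one uses that $u$ solves the Plateau problem and is not an arbitrary Sobolev map: the non-degenerate fibers of $\pi$ are the connected sets along which the $u$-length vanishes, conformality leaves no slack in the energy--area identity to collapse such a continuum in the interior, and on $S^1$ the trace of $u$ is a monotone parametrization of the Jordan curve $\Ga$, whence $\pi|_{S^1}$ is monotone and $\bar u$ restricts to an arc length preserving homeomorphism $\D Z_u\to\Ga$; all of this is Theorem 7.1.1 in \cite{R}. Length preservation of $\bar u$ on all rectifiable curves follows from the same source: $\bar u$ is short and, by the very definition of $d_u$, does not decrease lengths either. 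Consequently the seminorms $|d\pi_z(\cdot)|$ and $|du_z(\cdot)|$ agree for almost every $z$ (one inequality from shortness of $\bar u$, the other because an infinitesimal straight segment already realizes $d_u$ to first order), so $E(\pi)=E(u)<\infty$, which gives $\pi\in\La(\D Z_u,Z_u)\subset W^{1,2}(D,Z_u)$, and $\a(\pi|_U)=\a(u|_U)$ for every open $U\subset D$.

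Finally, for identity (vi): since $\pi$ is an embedding on $D$, the multiplicity function of $\pi|_U$ equals $1$ on $\pi(U)$, so Kirchheim's area formula (Lemma \ref{lem:areaformula}, read in a bilipschitz chart of the $\mathrm{CAT}(0)$ surface $Z_u$) gives $\a(\pi|_U)=\int_U\mathcal{J}(d\pi_z)\,dz=\mathcal{H}^2(\pi(U))$; together with $\a(\pi|_U)=\a(u|_U)$ this is exactly (vi). The genuinely hard input is (a), the assertion that the intrinsic space of a minimal disc is $\mathrm{CAT}(0)$: this rests on the Lytchak--Wenger isoperimetric characterization of $\mathrm{CAT}(0)$ together with the cell-like structure of the fibers of $\pi$, and is used here as a cited black box — the remainder is bookkeeping about the two factors and the area formula.
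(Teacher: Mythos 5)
Your proposal follows essentially the same route as the paper: the paper offers no independent argument for this theorem, deriving it directly from Theorem 1.2 in \cite{LWcurv} and Theorem 7.1.1 in \cite{R} (with the factorization and CAT(0) property alternatively from Theorem 1.1 in \cite{PS}), and your construction of $Z_u$ as the intrinsic space associated to $u$ together with those same cited black boxes, plus routine bookkeeping for (ii) and (vi), is exactly this approach.
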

 Note that Corollary \ref{cor:monarea}
 implies $\a(\bar v\circ\pi)=\a(\bar v)$ for every Lipschitz map $\bar v: Z_u\to X$. As a consequence, the induced map
$\bar u$ is area minimizing in the sense of the following definition.

\begin{definition}[Intrinsic minimal surface]
Let $Z$ be a CAT(0) surface and $X$ be a CAT(0) space.
A short map $f:Z\to X$ will be called {\em intrinsic minimal surface} or {\em intrinsic (area) minimizer}, 
if 
\begin{enumerate}
 \item $f$ is area preserving in the sense that $\a(f|_U)=\mathcal{H}^2(U)$ for every open set $U\subset Z$;
 \item  for each closed disc $Y$ embedded in $Z$
the map $f|_Y$ has the least area among all
 Lipschitz competitors, i.e. 
 $$
 \a(f|_Y)=\inf\{\a(f')|\ f':Y\to X\text{ Lipschitz with }f'|_{\D Y}=f|_{\D Y}\}.
 $$
\end{enumerate}

If $Z$ is homeomorphic to a plane or a closed disc, we call $f$ {\em intrinsic minimal plane}, respectively {\em intrinsic minimal disc}.
\end{definition}

\subsubsection{Basic properties}

\begin{lemma}\label{lem:restrict}
Let $X$ be a CAT(0) space and $f:Z\to X$ an intrinsic minimal surface. Suppose that $\Gamma\subset Z$ is a rectifiable Jordan curve with Jordan domain $\Om_\Ga$.
Then the restriction $f_{\bar\Om_\Ga}$ is an intrinsic minimal disc, where $\bar\Om_\Ga$ is equipped with the induced intrinsic metric.
\end{lemma}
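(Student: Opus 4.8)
The plan is to unwind the two defining properties of an intrinsic minimal surface and check that each one localizes to $\bar\Om_\Ga$ once we know that $\bar\Om_\Ga$ (with the induced intrinsic metric) is again a CAT(0) disc. First I would invoke Lemma \ref{lem:bilip}: since $\Ga$ is a rectifiable Jordan curve in the CAT(0) surface $Z$, the closure $\bar\Om_\Ga$ of its Jordan domain, equipped with the induced intrinsic metric, is a CAT(0) disc. Write $g:=f|_{\bar\Om_\Ga}$ for the restriction. The key point is that the induced intrinsic metric on $\bar\Om_\Ga$ agrees with the metric of $Z$ on the interior $\Om_\Ga$ (an open subset of a length space inherits its own intrinsic metric locally), so that areas and Hausdorff measures computed in $\bar\Om_\Ga$ coincide with those computed in $Z$ on open subsets of $\Om_\Ga$, and $g$ is short because $f$ is short and the induced intrinsic metric only increases distances.

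Next I would verify property (i), that $g$ is area preserving: for an open set $U\subset\bar\Om_\Ga$, the part $U\cap\Om_\Ga$ is open in $Z$, so $\a(g|_{U\cap\Om_\Ga})=\a(f|_{U\cap\Om_\Ga})=\mathcal{H}^2(U\cap\Om_\Ga)$ by the hypothesis on $f$; since $\Ga$ has zero $\mathcal{H}^2$-measure (being rectifiable) this gives $\a(g|_U)=\mathcal{H}^2(U)$, using that the parametrized area does not charge a null set and that $\mathcal{H}^2(\bar\Om_\Ga)$ and $\mathcal{H}^2(\Om_\Ga)$ agree. For property (ii), let $Y$ be a closed disc embedded in $\bar\Om_\Ga$. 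Then $Y$ is a closed disc embedded in $Z$ as well, so $f|_Y=g|_Y$ has least area among all Lipschitz maps $f':Y\to X$ with $f'|_{\D Y}=f|_{\D Y}$ by the minimality of $f$ in $Z$; but this is exactly the competitor class appearing in the definition of intrinsic minimal disc for $g$, so $g|_Y$ is area minimizing. Since $\bar\Om_\Ga$ is a CAT(0) disc, this shows $g$ is an intrinsic minimal disc.

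The only genuinely nonroutine step is the identification of the induced intrinsic metric on $\bar\Om_\Ga$ with the ambient metric on $\Om_\Ga$, i.e.\ checking that restricting to an open subdomain does not create new shortcuts through the boundary; this is where one uses that $\Om_\Ga$ is itself open in the length space $Z$ and that curves realizing (or nearly realizing) the distance between two interior points can be taken to stay in $\Om_\Ga$ (or, more carefully, that the intrinsic metric of $\bar\Om_\Ga$ restricted to compact subsets of $\Om_\Ga$ coincides with the $Z$-metric). Everything else is bookkeeping: matching the competitor classes in the two definitions and using that $\Ga$ is $\mathcal{H}^2$-null. I expect the boundary-behaviour of the intrinsic metric on $\bar\Om_\Ga$ — already packaged in Lemma \ref{lem:bilip} for the CAT(0) statement — to be the part that needs the most care, but it is handled precisely by the factorization $u=\pi\circ\bar u$ used there.
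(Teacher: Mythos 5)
Your proposal follows essentially the same route as the paper: the paper's proof consists precisely of invoking Lemma \ref{lem:bilip} to see that $\bar\Om_\Ga$ with the induced intrinsic metric is a CAT(0) disc and then observing that the remaining properties (shortness, area preservation, minimality on embedded closed subdiscs) are immediate, which is exactly what you do, just with the ``immediate'' verifications written out. Your elaboration is correct and consistent with the paper's argument.
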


\begin{proof}
 By Lemma \ref{lem:bilip} $\bar\Om_\Ga$ is a CAT(0) disc. The other properties are immediate.
\end{proof}

\begin{lemma}[Convex hull property]\label{lem:cohu}
Let $u:\bar{D}\to X$ be a minimal disc in a CAT(0) space $X$ and $p\in X$ a point. If $u(\D\bar{D})\subset \bar{B}_p(r)$, then 
$u(\bar{D})\subset\bar{B}_p(r)$.
\end{lemma}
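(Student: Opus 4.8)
The plan is to use Reshetnyak's majorization theorem together with the minimizing property of $u$. First, I would apply Theorem \ref{thm:maj} to the boundary curve $\partial u=u|_{\partial\bar D}$, which by hypothesis has image in the convex set $\bar B_r(p)$. This produces a convex region $C\subset\R^2$ bounded by a unit speed curve $\tilde c$ together with a short map $\mu:C\to X$ satisfying $\mu\circ\tilde c=\partial u$. After a monotone reparametrization of $\tilde c$ to match the trace parametrization of $u$ (using Lemma \ref{lem:mono} applied to the CAT(0) disc $C$, or directly precomposing with a monotone Lipschitz extension), $\mu$ yields a Sobolev competitor $v\in\Lambda(\Gamma,X)$ filling the same Jordan curve $\Gamma=u(\partial\bar D)$ as $u$.

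The key point is that $\mu$ is short and $C$ is a convex Euclidean region, so by the area formula (Lemma \ref{lem:areaformula}) and Corollary \ref{cor:monarea} one gets $\a(v)\le\a(\mu)\le\mathcal{H}^2(C)<\infty$; more importantly, since $\mu$ is $1$-Lipschitz, its Jacobian is bounded and hence $v$ is a legitimate finite-area Sobolev competitor. Now I observe the crucial containment: since $C$ is convex and its boundary curve $\tilde c$ has image mapping into $\bar B_r(p)$ under $\mu$... wait — this does not immediately give $\mu(C)\subset\bar B_r(p)$. So instead the cleaner route is: by the convexity of the metric function $x\mapsto|x,p|$ on a CAT(0) space, the closed ball $\bar B_r(p)$ is convex, and one can \emph{retract} any competitor onto it. Concretely, the nearest-point projection $\rho:X\to\bar B_r(p)$ is well defined (CAT(0) spaces have unique nearest-point projections onto closed convex sets) and is short. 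If $u(\bar D)$ were not contained in $\bar B_r(p)$, then $\rho\circ u$ would be a strictly area-nonincreasing competitor — indeed on the nonempty open set $\{z:|u(z),p|>r\}$ the composition $\rho\circ u$ has strictly smaller Jacobian almost everywhere, since $\rho$ strictly contracts in the radial direction there, while $\rho\circ u$ still lies in $\Lambda(\Gamma,X)$ because $\rho$ fixes $\partial u$. This contradicts the area-minimality of $u$ from Theorem \ref{thm:plateau}(i).

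So the steps in order are: (1) recall that $\bar B_r(p)$ is convex in $X$ and that the nearest-point projection $\rho:X\to\bar B_r(p)$ is a well-defined short map fixing $\bar B_r(p)$ pointwise; (2) form the competitor $u':=\rho\circ u$, noting $u'\in W^{1,2}(D,X)$ with $\operatorname{tr}(u')=\operatorname{tr}(u)=\Gamma$ since $\rho$ fixes $u(\partial\bar D)$, so $u'\in\Lambda(\Gamma,X)$; (3) show $\a(u')\le\a(u)$ with strict inequality unless $u(\bar D)\subset\bar B_r(p)$, by comparing approximate metric differentials: $|d(u')_z|\le|du_z|$ everywhere since $\rho$ is short, and at points $z$ with $u(z)\notin\bar B_r(p)$ one has $\mathcal J(d(u')_z)<\mathcal J(du_z)$ because $\rho$ collapses the outward radial direction; (4) invoke continuity of $u$ on $\bar D$ (Theorem \ref{thm:plateau}(iii)) to conclude that if some interior point maps outside $\bar B_r(p)$ then a whole open set does, giving $\a(u')<\a(u)$ and contradicting minimality. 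The main obstacle is step (3): making precise that the nearest-point projection onto a ball strictly decreases the Jacobian of the differential on the exterior region, which requires a short computation with the metric differential and the fact that in $T_{u(z)}X$ the projection onto the corresponding convex ball is linear on the image Euclidean space and strictly contracts the radial coordinate; this is where one uses the CAT(0) structure essentially.
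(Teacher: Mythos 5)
Your core mechanism -- postcompose $u$ with the nearest-point projection $\rho$ onto the closed convex ball, observe that $\rho$ is short and fixes $u|_{\D\bar D}$, and play this competitor off against minimality -- is exactly the mechanism of the paper's proof; the Reshetnyak-majorization detour in your first paragraph is unnecessary, and you rightly abandon it. The genuine gap is in your steps (3)--(4). You assert that on the open set $V=\{z: |u(z),p|>r\}$ one has $\mathcal J(d(\rho\circ u)_z)<\mathcal J(du_z)$ almost everywhere. This is false wherever $\mathcal J(du_z)=0$: there both Jacobians vanish and nothing is gained. So the mere nonemptiness of $V$ does not give $\a(\rho\circ u)<\a(u)$; a priori the metric differential of $u$ could be degenerate a.e.\ on $V$, and then your comparison only yields $\a(\rho\circ u)\le\a(u)$, which contradicts nothing. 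To close the argument you would need an extra step, for instance via conformality from Theorem \ref{thm:plateau}: if the conformal factor $\lambda$ vanished a.e.\ on a connected component $W$ of $V$, then $u$ would be constant on $W$ with value outside $\bar B_r(p)$, and continuity would force $\partial W\cap D=\emptyset$, hence $W=D$ and $u$ constant, contradicting that the trace parametrizes a Jordan curve. With that in hand, the strict decrease where $\mathcal J(du_z)>0$ does hold and should be quantified: on $\{ |u(\cdot),p|\ge r+\delta\}$ the projection onto $\bar B_r(p)$ is $\tfrac{r}{r+\delta}$-Lipschitz by comparison, so the Jacobian drops by a definite factor there.

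Note also that the paper sidesteps all of this by comparing \emph{energies} instead of areas: since $\rho$ is short, $E(\rho\circ u)\le E(u)$, the traces agree, and the uniqueness of the energy minimizer with prescribed trace (Theorem \ref{thm:dirichlet}) immediately gives $\rho\circ u=u$, hence $u(\bar D)\subset\bar B_r(p)$. This route needs no strict inequality, no open-set/continuity argument, and no discussion of degenerate differentials; if you prefer your area-based version, the patch described above is required.
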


\begin{proof}
Since the nearest point projection $\pi:X\ra\bar{B}_p(r)$ is short, the energy of $\pi\circ u$ is bounded above by the energy of $u$. 
Note that $\pi\circ u$ and $u$ have the same boundary values. Because $u$
is the unique energy minimizing filling with respect to its boundary we conclude $\pi\circ u=u$. 

\end{proof}

\begin{lemma}[Maximum principle]\label{lem:maxpr}
 Let $u:\bar{D}\to X$ be a harmonic disc in a CAT(0) space $X$. Let $\varphi:X\to\R$ be a continuous convex function. Then the function $\varphi\circ u$ attains its maximum at the boundary.
 Moreover, if the maximum is attained at an interior point, then either $\varphi$ attains a minimun and $u$ maps into the minimal level or else  $u(\bar D)\subset u(\D \bar D)$.
\end{lemma}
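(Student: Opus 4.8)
The plan is to exploit the variational characterization of harmonic discs together with the fact that postcomposition with a short map does not increase energy. First I would recall that $\varphi\circ u$ is a continuous function on $\bar D$, so it attains its maximum somewhere; the content is that it cannot attain its maximum only at interior points. Suppose $m:=\max_{\bar D}\varphi\circ u$ and set $c:=\max_{\partial\bar D}\varphi\circ u$. Consider the sublevel set $C_c:=\{x\in X:\varphi(x)\leq c\}$, which is a closed convex subset of $X$ since $\varphi$ is continuous and convex. The nearest-point projection $P:X\to C_c$ is short (this is the standard CAT(0) projection onto a closed convex set), and $P\circ u$ has the same boundary trace as $u$ because $u(\partial\bar D)\subset C_c$. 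By the energy-nonincreasing property of postcomposition with short maps, $E(P\circ u)\leq E(u)$, and by uniqueness of the energy minimizer with given boundary values (Theorem \ref{thm:dirichlet}) we conclude $P\circ u=u$, i.e. $u(\bar D)\subset C_c$. Hence $m\leq c$, which gives the first assertion: the maximum of $\varphi\circ u$ is attained on the boundary.

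For the rigidity supplement, suppose the maximum $m$ is attained at an interior point $z_0\in D$, so $u(z_0)$ lies in the level set $L_m:=\{x\in X:\varphi(x)=m\}$ while $u(\bar D)\subset C_m=\{\varphi\leq m\}$. The idea is that $z_0$ is then an interior maximum of the convex (in the CAT(0) sense along geodesics, or rather subharmonic) function $\varphi\circ u$. I would argue that $\varphi\circ u$ is subharmonic on $D$: for a harmonic map $u$ into CAT(0) and a convex $\varphi$, the composition $\varphi\circ u$ is a subharmonic function (this is a standard consequence of the fact that $u$ is energy-minimizing — one compares $u$ with the map obtained by projecting onto the sublevel set on a small disc, exactly as above but localized). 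A subharmonic function on $D$ attaining an interior maximum is constant, so $\varphi\circ u\equiv m$ on $\bar D$. Now there are two cases. If $\varphi$ attains a minimum value, then either $m$ equals that minimum — in which case $\varphi$ is constant equal to $m$ on $u(\bar D)$ but that does not immediately pin $u$ down, so instead I phrase it as: $u$ maps into the level $L_m$, and if $m$ is the minimal value of $\varphi$ then $u$ maps into the minimal level as claimed. Otherwise $m$ is not the minimal value; then $L_m=\{\varphi=m\}$ is a closed convex set with empty interior relative to the "radial" directions of $\varphi$, and I claim $u(\bar D)\subset u(\partial\bar D)$: since $u(\bar D)\subset L_m$ and $u(\partial \bar D)$ is a compact subset of $L_m$ containing the trace, one uses that the minimal disc spanning a curve contained in a convex set with the curve itself lying in a "boundary" face forces the image to collapse onto the boundary.

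The main obstacle I anticipate is making the last dichotomy precise — specifically, showing that if $u$ maps into a level set $L_m$ that is not a minimal level, then $u(\bar D)=u(\partial\bar D)$. The clean way is: restrict $\varphi$ near $L_m$; since $m$ is not the minimum, there is a direction of decrease, and one can find another convex function $\psi$ (e.g. distance to a suitable supporting object, or $\varphi$ itself composed with an affine reparametrization) for which $L_m$ becomes part of $\{\psi\leq 0\}$ with $\psi$ not locally constant, and re-run the projection argument to push $u$ into a smaller convex set, iterating or taking a limit. Alternatively, and more robustly, I would invoke the intrinsic structure: by Theorem \ref{thm:int} the minimal disc factors through its intrinsic CAT(0) disc $Z_u$, and $\bar u$ preserves lengths of curves; if $\bar u$ maps the CAT(0) disc $Z_u$ into a level set of a convex function that is not the minimum, then $\bar u$ cannot be locally length-preserving unless $Z_u$ is degenerate, forcing $u(\bar D)=\Gamma=u(\partial \bar D)$. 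Working out which of these two routes gives the cleanest statement is the step requiring the most care; everything else is a direct application of convexity of sublevel sets and uniqueness of energy minimizers.
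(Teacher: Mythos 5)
Your proof of the first assertion is correct, but it follows a different route than the paper: you project onto the closed convex sublevel set $\{\varphi\le c\}$, use that postcomposition with the short nearest-point projection does not increase energy, and conclude by uniqueness of the Dirichlet solution (Theorem \ref{thm:dirichlet}); this is exactly the device of the convex hull property (Lemma \ref{lem:cohu}). The paper instead quotes Fuglede's theorem that $\varphi\circ u$ is subharmonic and applies the classical maximum principle. Both are fine, but note that for the second assertion you need subharmonicity anyway (to invoke the strong maximum principle and get $\varphi\circ u$ constant), and you only gesture at it; the paper settles this by citing \cite{F}, Theorem 2\,b).

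The genuine gap is the dichotomy, which you flag yourself, and neither of your fallback routes closes it. Once the strong maximum principle gives $\varphi\circ u\equiv m$ on all of $\bar D$, the trace of $u$ also lies in the level $\{\varphi=m\}$; hence projecting onto a smaller sublevel set $\{\varphi\le m-\epsilon\}$ (or onto any convex set below level $m$) changes the boundary values, so uniqueness of the energy minimizer yields no contradiction, and there is no projection onto $\{\varphi=m\}$ itself since level sets of convex functions need not be convex. Your second route is likewise unavailable: Theorem \ref{thm:int} concerns solutions of the Plateau problem, whereas the lemma is stated for an arbitrary harmonic disc, and the key claim that a locally length-preserving disc cannot have two-dimensional image inside a non-minimal level set is unsubstantiated -- a flat disc inside the plane $\{z=m\}\subset\R^3$, a level set of the convex function $\varphi=z$ which attains no minimum, already defeats it. This shows that any argument here must exploit harmonicity of $u$ through a deformation that fixes the trace, which is what the paper does: assuming $u(\bar D)\not\subset u(\D\bar D)$ and that $\im(u)$ is not contained in a minimum level, it pushes $u$ towards smaller values of $\varphi$ by the gradient flow of $-\varphi$, localized away from $\D\bar D$ so the boundary values are unchanged, and contradicts energy minimality. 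Such a trace-preserving local deformation is precisely the missing ingredient in your proposal.
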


\begin{proof}
 By Theorem 2 b) in \cite{F}, $\varphi\circ u$ is subharmonic. Hence the maximum principle yields the first claim. 
 For the second claim, we use the strong maximum principle to conclude that $\varphi\circ u$ is constant.
 But if $u(\bar D)\not\subset u(\D \bar D)$ and $\im(u)$ is not contained in the minimum level, 
 we could decrease the energy of $u$ by locally pushing $u$ towards the minimum
 using the gradient flow of $-\varphi$. 
\end{proof}

We will make use of the following elementary observation.

\begin{lemma}\label{lem:divisionbound}
Let $\gamma:S^1\to X$ be a Lipschitz circle. Assume that $\gamma(p)=\gamma(q)$ for $p\neq q\in S^1$.
Denote $S^\pm$ the two components of $S^1\setminus\{p,q\}$ and let $\gamma^\pm:S^\pm/\partial S^\pm\to X$ be the induced loops.
Suppose that $u^\pm:\bar D\to X$ are Lipschitz discs filling $\gamma^\pm$. Then there exists a Lipschitz disc
$u:\bar D\to X$ which fills $\gamma$ and such that
\[\a(u)\leq\a(u^+)+\a(u^-).\]
\end{lemma}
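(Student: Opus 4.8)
The plan is to build the filling disc $u$ by subdividing the square $[0,1]^2$ (or equivalently $\bar D$) into two half-discs and one ``bridge'' along which the two given fillings are glued, paying for the gluing with the isoperimetric inequality. First I would fix the structure: write $S^1=S^+\cup\{p,q\}\cup S^-$, and parametrize $\gamma^\pm$ on half-circles so that each $\gamma^\pm$ is a Lipschitz loop based at the common value $\gamma(p)=\gamma(q)$. The fillings $u^\pm:\bar D\to X$ of $\gamma^\pm$ are given. I would now identify $\bar D$ with a ``lens'' obtained by gluing two copies of $\bar D$ (carrying $u^\pm$) along an arc, so that the boundary circle of the lens is exactly $\gamma$. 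The only issue is that the two boundary arcs being glued are $\gamma^+|_{\text{arc}}$ and $\gamma^-|_{\text{arc}}$, which need not agree pointwise as parametrized curves — but both run between $\gamma(p)$ and $\gamma(q)$, and in fact each restriction that forms part of $\gamma$ is already pinned; the gluing happens along the two arcs of $\partial\bar D$ that get collapsed to the single point $\gamma(p)=\gamma(q)$ when forming $\gamma^\pm$. Concretely, on each copy of $\bar D$ the arc of $\partial\bar D\setminus S^\pm$ is mapped by $u^\pm$ to a loop at $\gamma(p)$, and I glue these two loops.

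The key step is to replace the two boundary loops-at-a-point by a common collapsing homotopy so that the glued object is a genuine Lipschitz disc with the right boundary. Here I would invoke Lemma \ref{lem:homosmallarea}, or rather its mechanism: after possibly reparametrizing (Lemma \ref{lem:reparaunit}), the two arcs $u^+|_{\partial\bar D\setminus S^+}$ and $u^-|_{\partial\bar D\setminus S^-}$ are Lipschitz loops based at $\gamma(p)$, and I can insert a thin collar on each side homotoping each such loop to the constant map at $\gamma(p)$ through maps of arbitrarily small area — this is exactly the isoperimetric-filling argument of Lemma \ref{lem:homosmallarea} i), applied with the two curves being the loop and the constant, except that here the two curves need not be uniformly close. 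To handle that I instead directly fill the loop $u^\pm|_{\partial\bar D\setminus S^\pm}$ by a Lipschitz disc of area $\le\varepsilon$ using the isoperimetric inequality Theorem \ref{thm:isoin} (the loop has finite length since $u^\pm$ is Lipschitz on a compact domain), for any $\varepsilon>0$; caveat: the isoperimetric inequality gives a disc of area bounded by $\length^2/4\pi$, not by $\varepsilon$, so instead I subdivide this loop into $n$ short subloops each of length $\le \length/n$ and fill each separately, getting total area $\le n\cdot(\length/n)^2/4\pi\to 0$ as in the proof of Lemma \ref{lem:homosmallarea}. Gluing $u^+$, this collar, the collapsing disc, the reverse collar, and $u^-$ along matching boundary arcs produces a Lipschitz disc $u:\bar D\to X$ with $\partial u=\gamma$ and $\a(u)\le \a(u^+)+\a(u^-)+2\varepsilon$.

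Finally, I would remove the $\varepsilon$. Since the construction works for every $\varepsilon>0$ and produces $u_\varepsilon$ with $\a(u_\varepsilon)\le \a(u^+)+\a(u^-)+2\varepsilon$, one cannot simply take a limit and keep a single disc, so the cleaner route is to arrange the collapsing to cost exactly zero: note that once the loop $u^\pm|_{\partial\bar D\setminus S^\pm}$ is contracted to the point $\gamma(p)$, I am really just filling a point-loop, which can be done with zero area — more precisely, the map that on a half-disc sends every radial segment to the constant $p$-radial ``geodesic'' at $\gamma(p)$ is Lipschitz of zero area by Lemma \ref{lem:conecom} (with $r=0$) or simply by the area formula Lemma \ref{lem:areaformula}, since its differential is everywhere rank $\le 1$. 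The genuine obstacle, then, is matching parametrizations at the gluing seams: I must reparametrize the boundary of each piece (using Lemma \ref{lem:reparaunit}, whose homotopies have zero mapping area) so that adjacent pieces agree on the shared arc, and check that after these reparametrizations the resulting map on $\bar D$ is still Lipschitz — this is routine but is the one place care is needed. The total area is then $\a(u)=\a(u^+)+\a(u^-)+0+0=\a(u^+)+\a(u^-)$, and in particular $\a(u)\le\a(u^+)+\a(u^-)$, as claimed.
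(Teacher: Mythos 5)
Your overall strategy---assemble $u^+$ and $u^-$ along the identified point $\gamma(p)=\gamma(q)$ and pay nothing for the connective tissue---is the same as the paper's, where it is packaged as a mapping cylinder over the wedge $S^1/\{p=q\}$ carrying the zero-area collar $v|_{S^1\times\{t\}}=\gamma$, followed by precomposition with a monotone quotient map $\bar D\to\Pi$. But as written your argument has a genuine gap in the middle, caused by a misreading of the setup. Since $u^\pm$ fills $\gamma^\pm:S^\pm/\partial S^\pm\to X$, the \emph{entire} boundary circle of the disc carrying $u^\pm$ is the quotient circle, and the preimage of the wedge point is a single boundary point; there is no arc ``$\partial\bar D\setminus S^\pm$'' that $u^\pm$ maps to a loop at $\gamma(p)$. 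The device you then introduce to handle this phantom loop is false: a Lipschitz loop of finite length cannot in general be filled with area at most $\varepsilon$. Subdividing it into $n$ subloops does not give total area $n\cdot(\length/n)^2/4\pi$, because the small subloops fill the loop only modulo an inscribed polygon whose own filling area does not tend to zero (a round circle of length $\ell$ in $\R^2$ has filling area exactly $\ell^2/4\pi$, no matter how it is subdivided); the mechanism of Lemma \ref{lem:homosmallarea} works only because the two curves there are uniformly $\varepsilon$-close. Your subsequent pivot (``once the loop is contracted to the point\dots'') still presupposes a zero-cost contraction of that loop, which is exactly what cannot be granted.

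The repair is that no loop ever needs to be filled or contracted. Cut $\bar D$ along the chord $c$ from $p$ to $q$ into closed half-discs $H^\pm$ with $\partial H^\pm=\bar S^\pm\cup c$, and choose Lipschitz monotone maps $\phi^\pm:H^\pm\to\bar D$ which collapse $c$ to the boundary point of $\bar D$ corresponding to the wedge point and restrict on $\bar S^\pm$ to the canonical quotient $\bar S^\pm\to S^\pm/\partial S^\pm$. Setting $u:=u^\pm\circ\phi^\pm$ on $H^\pm$, both definitions are constant equal to $\gamma(p)=\gamma(q)$ along $c$, so $u$ is Lipschitz, $u|_{S^1}=\gamma$, and $\a(u)=\a(u^+\circ\phi^+)+\a(u^-\circ\phi^-)=\a(u^+)+\a(u^-)$ by Corollary \ref{cor:monarea}, since monotone Lipschitz precomposition preserves area. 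This is precisely the paper's construction; your zero-area reparametrization collars in the spirit of Lemma \ref{lem:reparaunit} could substitute for the monotone precomposition, but the isoperimetric inequality and the $\varepsilon$-bookkeeping are not needed at all.
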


\begin{proof}
Let $\pi:S^1\to S^1/\{p=q\}$ be the quotient map. Glue two discs $D^\pm$ to $S^1/\{p=q\}$
such that the resulting space $Y$ is a union of two discs which intersect in a single point.
Denote by $\iota:S^1/\{p=q\}\to Y$ the canonical embedding. Note that the mapping cylinder $\Pi$ of 
$\iota\circ\pi$ is homeomorphic to a disc. Write $\Pi=S^1\times[0,1]/(x,1)\sim(\iota\circ\pi(x),1)\cup D^+\cup D^-$.
Then we obtain a Lipschitz map $v$ defined on $\Pi$ by setting $v|_{D^\pm}=u^\pm$ and $v|_{S^1\times\{t\}}=\gamma$. 
The area of $v$ is equal to $\a(u^+)+\a(u^-)$. The desired map $u$ is then given by precomposing $v$ with the quotient map $\bar D\to \Pi$.
\end{proof}

We record a special case using the same notation as above.

\begin{corollary}\label{cor:divisionbound}
 If the image of $\gamma^-$ is a tree, then $\a(u)\leq\frac{\length(\gamma^+)^2}{4\pi}$. 
\end{corollary}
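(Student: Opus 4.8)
The plan is to combine Lemma \ref{lem:divisionbound} with the isoperimetric inequality (Theorem \ref{thm:isoin}), using the hypothesis that the image of $\gamma^-$ is a tree to produce a filling $u^-$ of $\gamma^-$ of zero area. First I would observe that since the image of $\gamma^-$ is a tree, and in particular CAT(0) and hence contractible, the loop $\gamma^-$ is null-homotopic through its own image; more concretely, a Lipschitz loop in a metric tree bounds a Lipschitz disc of vanishing $2$-dimensional Hausdorff measure, so $\a(u^-)=0$ for a suitable filling $u^-$. (One can make this explicit: a tree has a $1$-Lipschitz retraction onto the image of $\gamma^-$, or one simply uses that the area formula of Lemma \ref{lem:areaformula} gives zero Jacobian for any Lipschitz map into a $1$-dimensional space, so \emph{every} Lipschitz filling with image in the tree has zero area.) Next, for the loop $\gamma^+$ I would apply the isoperimetric inequality \ref{thm:isoin} to obtain a Lipschitz disc $u^+$ filling $\gamma^+$ with
\[
\a(u^+)\leq\frac{\length(\gamma^+)^2}{4\pi}.
\]
Finally, feeding $u^+$ and $u^-$ into Lemma \ref{lem:divisionbound} yields a Lipschitz disc $u$ filling $\gamma$ with
\[
\a(u)\leq\a(u^+)+\a(u^-)\leq\frac{\length(\gamma^+)^2}{4\pi}+0=\frac{\length(\gamma^+)^2}{4\pi},
\]
which is exactly the claim.

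The only point that requires a moment's care is the assertion that a Lipschitz loop whose image lies in a tree admits a Lipschitz disc filling of zero area; the cleanest justification is via Lemma \ref{lem:areaformula}, since the Jacobian $\mathcal{J}(df_z)$ of the differential of any Lipschitz map into a space supporting no $2$-dimensional Hausdorff measure on the relevant image vanishes almost everywhere, and the existence of \emph{some} Lipschitz filling follows because the tree (being CAT(0)) is uniquely geodesic, so one may take the geodesic cone over $\gamma^-$ with tip any point of its image, which is Lipschitz by Lemma \ref{lem:conecom}. I expect no genuine obstacle here; the corollary is essentially a direct specialization of the preceding lemma.
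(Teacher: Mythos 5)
Your argument is correct and is essentially the paper's own proof, which likewise combines Lemma \ref{lem:divisionbound}, the isoperimetric inequality (Theorem \ref{thm:isoin}) applied to $\gamma^+$, and the fact that the filling area of a tree is zero. Just take the zero-area filling of $\gamma^-$ to be the cone formed \emph{inside} the tree with its intrinsic metric (the ambient geodesic cone in $X$ over a tree can sweep out positive area), after which your area-formula observation applies because the image stays in the $1$-dimensional tree.
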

\begin{proof}
Since the filling area of a tree is equal to zero, the claim follows from Lemma \ref{lem:divisionbound} and the isoperimetric inequality \ref{thm:isoin}.
\end{proof}

The following will be used repeatedly. It is a consequence of Lemma \ref{lem:conecom}.

\begin{corollary}\label{cor:isom}
Let $f:Z\to X$ be an intrinsic minimal surface.  If $Z$ contains a closed convex subset $W$  which is isometric
to a Euclidean disc, then $f$ resticts to an isometric embedding $W\to X$.
\end{corollary}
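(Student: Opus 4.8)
The statement to prove is Corollary \ref{cor:isom}: if $f:Z\to X$ is an intrinsic minimal surface and $W\subset Z$ is a closed convex subset isometric to a Euclidean disc, then $f|_W$ is an isometric embedding. The plan is to reduce the global statement to the radial inequality of Lemma \ref{lem:conecom} applied from an interior center of $W$, and to use the equality case of that lemma together with the area-preserving property of $f$.

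\textbf{Step 1: Setup and reduction to a density/area comparison.} Let $p$ be the center of the Euclidean disc $W$, say of radius $R$, so that for every $r\le R$ the metric ball $\bar B_r(p)\cap W$ is a round Euclidean disc of radius $r$, and $W$ is the union of the $p$-radial geodesics of length $R$. Since $f$ is short, the restriction $\gamma_r:=f\circ(\partial(\bar B_r(p)\cap W))$ is a Lipschitz circle in $X$ of length at most $\length(\partial(\bar B_r(p)\cap W))=2\pi r$, contained in $\bar B_r(f(p))$ because $f$ is $1$-Lipschitz and radial distances in $W$ are at most $r$. Let $v_r:\bar D\to X$ be the $f(p)$-radial extension of $\gamma_r$ as in Lemma \ref{lem:conecom}; note that, because $f$ maps each $p$-radial geodesic of $W$ to a curve whose endpoint lies at distance $\le r$ from $f(p)$, the map $f|_{\bar B_r(p)\cap W}$ is itself a Lipschitz filling of $\gamma_r$ (after reparametrizing the round disc), so it is an admissible competitor for the Lipschitz-minimization property in the definition of intrinsic minimal surface.

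\textbf{Step 2: Chain of inequalities forcing equality.} Combine three facts. First, the area-preserving property of $f$ gives $\a(f|_{\bar B_r(p)\cap W})=\mathcal H^2(\bar B_r(p)\cap W)=\pi r^2$. Second, minimality of $f$ on the embedded disc $\bar B_r(p)\cap W$ gives $\a(f|_{\bar B_r(p)\cap W})\le \a(v_r)$, since $v_r$ and $f|_{\bar B_r(p)\cap W}$ have the same boundary $\gamma_r$. Third, Lemma \ref{lem:conecom} gives $\a(v_r)\le \tfrac{r}{2}\length(\gamma_r)\le \tfrac{r}{2}\cdot 2\pi r=\pi r^2$. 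Putting these together forces $\a(v_r)=\pi r^2=\tfrac r2\length(\gamma_r)$ and $\length(\gamma_r)=2\pi r$ for every $r\le R$. The equality case of Lemma \ref{lem:conecom} then tells us that $\gamma_r$ lies at constant distance $r$ from $f(p)$ and that the geodesic cone over $\gamma_r$ with tip $f(p)$ is intrinsically a flat cone; the equality $\length(\gamma_r)=2\pi r$ pins the cone angle to $2\pi$, i.e. this cone is isometric to a flat Euclidean disc of radius $r$. Moreover $f$ being short and sending a length-$2\pi r$ circle in $W$ onto a length-$2\pi r$ circle $\gamma_r$ means $f$ restricted to $\partial(\bar B_r(p)\cap W)$ preserves length, hence is an isometry onto $\gamma_r$ (a $1$-Lipschitz surjection between circles of equal length).

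\textbf{Step 3: Assembling the isometric embedding.} From Step 2, for each $r\le R$ the image $f(\bar B_r(p)\cap W)$ is an intrinsically flat geodesic cone of angle $2\pi$ and radius $r$ centered at $f(p)$, and $f$ maps the $p$-radial geodesics of $W$ isometrically to $f(p)$-radial geodesics; since it also preserves the length of each concentric circle and the cone is genuinely flat (angle exactly $2\pi$), $f|_W$ is a radial isometry whose image is isometric to a flat disc and which preserves angles at $f(p)$. A $1$-Lipschitz map from a flat disc that is a radial isometry and preserves the arc-length along all concentric circles and the angular coordinate is forced to be distance-preserving: comparing any two points $x,y\in W$, the CAT(0) comparison together with the fact that $|f(x),f(p)|=|x,p|$, $|f(y),f(p)|=|y,p|$ and the angle $\angle_{f(p)}(f(x),f(y))$ equals the Euclidean angle $\angle_p(x,y)$ (because the link of $f(p)$ restricted to directions of $f(W)$ is a full circle of length $2\pi$ mapped isometrically) yields $|f(x),f(y)|\ge |x,y|$ by the Euclidean cosine formula in $T_{f(p)}X$, combined with $|f(x),f(y)|\le|x,y|$ from shortness. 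Hence $f|_W$ is an isometric embedding. The last subtlety — justifying that the angle at $f(p)$ between images of radial geodesics equals the angle in $W$, rather than merely being bounded above by it — is the one point needing care, and it follows from the flat-cone rigidity of Step 2 applied to the two-point boundary configuration (or, alternatively, directly from Corollary \ref{cor:isom}'s hypothesis being genuinely two-dimensional: $W$ is a disc, not a tree, so the equality in Bishop–Gromov forces the tangent picture to be an honest plane).

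\textbf{Main obstacle.} The main obstacle is Step 3, turning the radial/equality information from Lemma \ref{lem:conecom} into a genuine \emph{metric} isometry rather than just a radial isometry onto a flat image; concretely, one must rule out any angular "folding" at the cone point $f(p)$, which is exactly what the cone-angle $=2\pi$ conclusion and the flatness of the cone over every concentric circle are designed to prevent.
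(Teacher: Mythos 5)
Your Steps 1--2 are correct and coincide with the first half of the paper's proof: compare $f$ restricted to a concentric subdisc with the $f(p)$-radial cone over its boundary, use the area-preserving property plus minimality plus Lemma \ref{lem:conecom} to force equality, and conclude that $f$ is a radial isometry with respect to $p$ and that $\length(\gamma_r)=2\pi r$. The gap is in Step 3, and it is exactly at the point you flag. From the equality case of Lemma \ref{lem:conecom} you only get that the geodesic cone over $\gamma_r$ is \emph{intrinsically} a flat cone, i.e.\ flat with respect to the intrinsic (pulled-back length) metric of the cone map; this is blind to folding in $X$ and gives only the inequality $\angle_{f(p)}(f(x),f(y))\le\angle_p(x,y)$, not the equality you need for the cosine-comparison step. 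Concretely, the map obtained by composing the inclusion of a flat disc into $\R^2$ with the fold $(x,y)\mapsto(x,|y|)$ is short, is a radial isometry at the center, sends each concentric circle to a parametrized curve of length $2\pi r$ at constant distance $r$, and its radial cone is intrinsically a flat cone of angle $2\pi$ --- every conclusion of your Step 2 holds --- yet it preserves no angles at the center and is not injective. (It is of course not an intrinsic minimizer, but your Step 3 never invokes minimality again; it argues only from the Step 2 conclusions, so the inference is invalid.) Neither of your suggested repairs closes this: ``flat-cone rigidity'' is the intrinsic statement just discussed, and Bishop--Gromov concerns the domain $W$ (which is flat by hypothesis), not the behaviour of directions at $f(p)$ in $X$. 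Be careful also that Corollary \ref{cor:smallangleinj} and Lemma \ref{lem:coneangle}, which would rule out such folding, are proved later \emph{using} Corollary \ref{cor:isom}, so invoking them here would be circular.

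The paper's way out is to use minimality again, at other base points: repeat your Steps 1--2 for every subdisc $\bar B_\rho(z)\subset W$ with arbitrary center $z$, obtaining $|f(z),f(x)|=|z,x|$ whenever the closed ball of radius $|z,x|$ around $z$ lies in $W$. This makes $f$ an arc-length preserving local geodesic along every chord of $W$ (interior points of a chord have such balls), and since local geodesics in the CAT(0) space $X$ are geodesics, $|f(x),f(y)|=|x,y|$ for all $x,y\in W$. Alternatively one can exclude the folded picture directly by a cut-and-paste/minimality argument at an off-center subdisc, whose boundary image would then bound a filling of strictly smaller area. Either way, the missing ingredient is a second application of the minimizing property, not a consequence of the single-center equality case.
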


\begin{proof}
We may assume that $W$ is isometric to the closed unit disc. 
Let $w$ be the center of $W$, i.e. $W=\bar{B}_1(w)\subset Z$.
Let $c:S^1\to Z$ be an arc length parametrization of $\partial W$.
If the distance between $f\circ c$ and $f(w)$ would be less than
one, then by Lemma \ref{lem:conecom}, $\mathcal{H}^2(W)=\a(f|_W)<\frac{1}{2}\length(f\circ c)\leq \pi$. 
A cut and paste argument based on Lemma \ref{lem:conecom} would then show that $f$ is not area minimizing.  
Hence $f\circ c$ is at constant distance one from $f(w)$ and $f$ restricts to a radial isometry
on $W$. Repeating the same argument for subdiscs of $W$ with different centers shows that $f$ is an isometric embedding. 
\end{proof}

\begin{corollary}\label{cor:smallangleinj}
 Let $X$ be a CAT(0) space and $f:C_\alpha\to X$ an intrinsic minimal plane where $C_\alpha$ is a Euclidean cone of cone angle $\alpha\geq 2\pi$. Then the following holds.
 \begin{enumerate}
  \item $f$ is a locally isometric embedding away from
 the tip $o$ of $C_\alpha$. In particular, $f$ is a radial isometry with respect to $o$.
 \item If $f(x)=f(y)$ for $x\neq y$ and $v_x,v_y\in\Sigma_o C_\alpha$ denote the directions at $o$ pointing to $x$ respectively $y$, 
 then the intrinsic distance between $v_x$ and $v_y$ is at least $2\pi$.
 \item If $\alpha<4\pi$, then $f$ is injective. If even $\alpha=2\pi$, then $f$ is an isometric embedding.
 \end{enumerate}

\end{corollary}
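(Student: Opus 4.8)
The plan is to prove the three assertions in order, bootstrapping from the first.

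For assertion (i) I would first establish the radial isometry. Fix $r>0$ and consider $\bar B_r(o)\subset C_\alpha$; since $\alpha\ge 2\pi$ this is a CAT(0) disc, and by the defining properties of an intrinsic minimal surface together with Lemma \ref{lem:restrict} the restriction $f|_{\bar B_r(o)}$ is an intrinsic minimal disc, so $\a(f|_{\bar B_r(o)})=\mathcal H^2(\bar B_r(o))=\frac{1}{2}\alpha r^2$. Because $f$ is short we have $f(\partial B_r(o))\subset\bar B_r(f(o))$, so the $f(o)$-radial extension $v:\bar D\to X$ of the Lipschitz circle $f|_{\partial B_r(o)}$ is a Lipschitz filling of the same boundary curve, and Lemma \ref{lem:conecom} gives $\a(v)\le\frac{r}{2}\length(f|_{\partial B_r(o)})\le\frac{r}{2}\cdot\alpha r=\frac{1}{2}\alpha r^2$. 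By minimality of $f|_{\bar B_r(o)}$ all these inequalities are equalities; in particular equality holds in Lemma \ref{lem:conecom}, which forces $f|_{\partial B_r(o)}$ to lie at constant distance $r$ from $f(o)$. As $r$ was arbitrary, $f$ is a radial isometry with respect to $o$. For the local isometric embedding away from $o$, note that $C_\alpha\setminus\{o\}$ is flat, so every $z\ne o$ has a convex ball $\bar B_\rho(z)$ isometric to a Euclidean disc; Corollary \ref{cor:isom} then says $f|_{\bar B_\rho(z)}$ is an isometric embedding.

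For assertion (ii), suppose $f(x)=f(y)$ with $x\ne y$. By (i), $|o,x|=|f(o),f(x)|=|f(o),f(y)|=|o,y|=:d$ and $v_x\ne v_y$; write $\phi$ for the intrinsic distance between $v_x$ and $v_y$ in $\Sigma_o C_\alpha$, and assume for contradiction that $\phi<2\pi$. Parametrizing $[o,x]$ and $[o,y]$ by arclength and using that $f$ is short and a local isometry away from $o$, the $f$-images of the two radii are paths of length $d$ joining $f(o)$ to $f(x)=f(y)$; since $|f(o),f(x)|=d$ these paths are geodesics, and by uniqueness of geodesics in $X$ they coincide, so $f$ maps both radii onto a single geodesic segment $\beta$. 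Now let $S^+\subset\bar B_d(o)$ be the closed sector of angle $\phi$ spanned by $[o,x]$ and $[o,y]$, a CAT(0) disc with $\mathcal H^2(S^+)=\frac{1}{2}\phi d^2$ whose outer circular arc has length $\phi d$; by Lemma \ref{lem:restrict}, $f|_{S^+}$ is an intrinsic minimal disc, so $\a(f|_{S^+})=\frac{1}{2}\phi d^2$. The boundary curve $f|_{\partial S^+}$ decomposes, at the two points mapping to $f(x)=f(y)$, into the loop given by the $f$-image of the outer arc (of length $\le\phi d$) and the loop given by $\beta$ traversed out and back (whose image is a tree); Corollary \ref{cor:divisionbound} therefore furnishes a Lipschitz filling of $f|_{\partial S^+}$ of area $\le\frac{(\phi d)^2}{4\pi}$, which for $\phi<2\pi$ is strictly smaller than $\frac{1}{2}\phi d^2=\a(f|_{S^+})$, contradicting minimality. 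Hence $\phi\ge 2\pi$.

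Assertion (iii) is then immediate: the intrinsic distance between any two points of the circle $\Sigma_o C_\alpha$ of length $\alpha$ is at most $\alpha/2$, which is $<2\pi$ when $\alpha<4\pi$, so by (ii) $f$ identifies no two distinct points and is injective; and when $\alpha=2\pi$ we have $C_\alpha=\R^2$, every closed ball is a convex Euclidean disc, and Corollary \ref{cor:isom} makes $f$ an isometric embedding on each such ball, hence globally. I expect the only real technical point to be the cut-and-paste in (ii): one must check that the low-area filling produced by Corollary \ref{cor:divisionbound}, a priori a map from $\bar D$, is an admissible competitor for $f|_{S^+}$. This is taken care of either by the invariance of $\a$ under precomposition with bijections (via the area formula, Lemma \ref{lem:areaformula}) together with the fact that the flat sector $S^+$ is bilipschitz to $\bar D$, or, more self-containedly, by building the competitor on $S^+$ directly --- collapsing an inner subsector onto $\beta$ with zero area and capping the remaining annular piece with the isoperimetric disc of Theorem \ref{thm:isoin}. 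Everything else is a routine assembly of Lemma \ref{lem:conecom}, Corollary \ref{cor:isom} and Corollary \ref{cor:divisionbound}.
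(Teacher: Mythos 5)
Your proof is correct, but for the heart of the statement (claim ii, and hence the injectivity in iii) you take a genuinely different route from the paper. The paper disposes of (i) and the supplement in (iii) exactly as you do, via Corollary \ref{cor:isom} (your separate cone-comparison argument for the radial isometry via the equality case of Lemma \ref{lem:conecom} is valid but redundant: once $f$ is a local isometric embedding away from $o$, images of radial segments are local geodesics, hence geodesics in the CAT(0) target, which already gives the radial isometry). For (ii), however, the paper argues infinitesimally at $f(o)$: using (i), a coincidence $f(x)=f(y)$ forces the two radii to map onto the same geodesic, and the induced map on links sends the arc of $\Sigma_o C_\alpha$ between $v_x$ and $v_y$ to a geodesic loop in the CAT(1) space $\Sigma_{f(o)}X$ of length at most $\alpha/2$; since CAT(1) spaces contain no geodesic loops of length less than $2\pi$, both (ii) and injectivity for $\alpha<4\pi$ follow at once. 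You instead stay at the level of area: you restrict to the flat sector of angle $\phi$ spanned by the two radii (Lemma \ref{lem:restrict}), note its $f$-area is $\tfrac12\phi d^2$, and build a competitor with the same boundary values of area at most $\tfrac{(\phi d)^2}{4\pi}$ via Corollary \ref{cor:divisionbound} (the two radii contribute a tree, the outer arc a loop of length at most $\phi d$), which beats $f$ strictly exactly when $\phi<2\pi$. Your approach buys self-containedness --- it uses only the minimality, the isoperimetric inequality and the cut-and-paste lemmas already in the paper, and avoids the (tacit) verification that the induced link map turns the arc into a \emph{local geodesic} loop; the paper's link argument is shorter and isolates the CAT(1) obstruction cleanly. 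The technical point you flag, transporting the disc competitor to the sector, is handled correctly: the sector is a Jordan polygon with positive angles, hence bilipschitz to $\bar D$ by Lemma \ref{lem:Zbilip}, and parametrized area is unchanged under bilipschitz reparametrization of the domain (Lemma \ref{lem:areaformula}); just make sure the competitor is taken with the boundary parametrization $f|_{\partial S^+}$ itself, not merely a reparametrization of it, which your construction does. One cosmetic remark: in (iii) the pair $\{x,y\}$ with $y=o$ is not covered by (ii) as stated, but it is excluded immediately by the radial isometry from (i).
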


\begin{proof}
 Claim i) is immediate from Corollary \ref{cor:isom}. If $f$ would not be injective, then $\Sigma_{f(o)}X$ would contain a geodesic loop of length $\leq \frac{\alpha}{2}$.
 Since $\Sigma_{f(o)}X$ is CAT(1), we conclude claim ii) and the first part of iii). The supplement in the third claim follows directly from Corollary \ref{cor:isom}.
\end{proof}

\begin{remark}
 In the  case $\alpha<4\pi$ above, $f$ does not have to be an isometric embedding, as can be seen in a product of two ideal tripods.
\end{remark}

\begin{lemma}\label{lem:boundarydist}
Let $\Gamma$ be  a Jordan curve in $X$.
Let $f:Z\to X$ be an intrinsic minimal disc filling $\Gamma$. Then for $p\in Z$,
\[|p,\D Z|_Z\leq\frac{\length(\Gamma)}{2\pi}.\]
Furthermore, equality holds if and only if $Z$ is a flat disc and
$f$ is an isometric embedding.
\end{lemma}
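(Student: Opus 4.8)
The plan is to exploit the intrinsic minimality of $f$ together with the cone comparison of Lemma \ref{lem:conecom}. Fix $p\in Z$ and set $r:=|p,\partial Z|_Z$. First I would observe that the distance sphere $S_r(p)$ in $Z$ contains (a subarc mapping to) the whole boundary circle, or more precisely that one can reach $\partial Z$ by a geodesic of length $r$ from $p$; the key point is that the geodesic cone from $p$ to $\partial Z$ fills $\Gamma$ inside $Z$ and hence, after applying $f$, gives a Lipschitz filling of $\Gamma$ in $X$. Concretely, let $c:S^1\to\partial Z$ be an arc length parametrization of $\partial Z$ and let $v:\bar D\to Z$ be the $p$-radial extension of $c$ given by Lemma \ref{lem:mono} (a Lipschitz monotone map). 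Since every point of $Z$ lies within distance $r$ of $p$ by definition of $r$, the image of $v$ lies in $\bar B_r(p)\subset Z$, so Lemma \ref{lem:conecom} applies and gives
$$
\mathcal{H}^2(Z)=\a(f\circ v|_{\bar D})\leq\a(v)\leq\frac{r}{2}\cdot\length(\partial Z)=\frac{r}{2}\cdot\length(\Gamma),
$$
where the first equality uses that $v$ is a monotone Lipschitz parametrization of $Z$ together with the area-preserving property of the intrinsic minimizer $f$ and Corollary \ref{cor:monarea}, and the middle inequality uses that $f$ is short.

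Next I would combine this with the lower area bound. By Lemma \ref{lem:boundarydist}'s hypothesis $f$ is an intrinsic minimal disc, so in particular $\mathcal{H}^2(Z)$ is at least the filling area of $\Gamma$; but more usefully I would run the comparison the other way. Actually the cleanest route: since $f|_Z$ minimizes area among Lipschitz fillings of $\Gamma$ and $f\circ v$ is such a filling with $\a(f\circ v)=\mathcal{H}^2(Z)$, there is no contradiction yet — the inequality $\mathcal{H}^2(Z)\le\frac r2\length(\Gamma)$ is what we want to turn into the bound on $r$. To close the loop I would instead bound $\mathcal{H}^2(Z)$ from below in terms of $r$. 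Here I use that $\bar B_r(p)=Z$ is a CAT(0) disc containing the point $p$ at distance $>r'$ from $\partial Z$ for every $r'<r$, so Bishop--Gromov (Theorem \ref{thm:BG}) gives $\mathcal{H}^2(Z)\ge\frac{(r')^2}{2}\mathcal{H}^1(\Sigma_p Z)$, and since $Z$ is a CAT(0) disc with $p$ interior, $\mathcal{H}^1(\Sigma_p Z)\ge 2\pi$. Letting $r'\to r$ yields $\mathcal{H}^2(Z)\ge\pi r^2$. Combining, $\pi r^2\le\frac r2\length(\Gamma)$, i.e. $r\le\frac{\length(\Gamma)}{2\pi}$, which is the desired inequality.

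For the rigidity statement, suppose $r=\frac{\length(\Gamma)}{2\pi}$. Then all the above inequalities are equalities. Equality in Bishop--Gromov (Theorem \ref{thm:BG}) forces $B_r(p)=Z$ to be isometric to the $r$-ball around the tip in $T_pZ$, and $\mathcal{H}^1(\Sigma_p Z)=2\pi$ forces $\Sigma_p Z$ to be a circle of length $2\pi$, so $Z$ is a flat disc of radius $r$. Equality in Lemma \ref{lem:conecom} forces $\partial Z$ to lie at constant distance $r$ from $p$ (consistent with $Z$ being a round flat disc) and, crucially, equality $\a(f\circ v)=\a(v)$ together with $f$ being short and area minimizing forces $f$ to be a radial isometry with respect to $f(p)$; an argument as in Corollary \ref{cor:isom} (applying the equality case of Lemma \ref{lem:conecom} to all subdiscs of $Z$ centered at varying points) then upgrades this to $f$ being an isometric embedding. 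Conversely, if $Z$ is a flat disc of radius $R$ and $f$ is an isometric embedding, then $\Gamma=\partial f(Z)$ has length $2\pi R$ and $|p,\partial Z|_Z=R=\frac{\length(\Gamma)}{2\pi}$ for $p$ the center, giving the converse.

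The main obstacle I anticipate is the rigidity half, specifically extracting "$f$ is an isometric embedding'' from the equality case rather than merely "$f$ is a radial isometry from $p$''. The one-parameter family of cone estimates only sees radial geodesics from a single point; to promote this to a global isometry one must vary the cone point over a dense set of interior points of the now-flat disc $Z$ and invoke the equality case of Lemma \ref{lem:conecom} at each, exactly as in the proof of Corollary \ref{cor:isom}. Checking that these radial-isometry conditions for all centers indeed force a genuine isometric embedding (and that no folding can occur) is the delicate point; everything else is a routine assembly of Lemma \ref{lem:conecom}, Theorem \ref{thm:BG}, and the definitions.
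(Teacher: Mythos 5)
There is a genuine gap, and it sits at the very first step. You set $r:=|p,\D Z|_Z$ and assert that ``every point of $Z$ lies within distance $r$ of $p$ by definition of $r$'', so that $\im(v)\subset\bar B_r(p)$ and Lemma \ref{lem:conecom} applies with radius $r$. But $|p,\D Z|_Z$ is the distance from $p$ to the \emph{nearest} boundary point; the boundary lies \emph{outside} $B_r(p)$, and points of $Z$ (in particular of $\D Z$) can be arbitrarily much farther than $r$ from $p$. Lemma \ref{lem:conecom} would only give $\a(v)\leq\tfrac{R_{\max}}{2}\length(\Gamma)$ with $R_{\max}=\max_{q\in\D Z}|p,q|$, which is useless here. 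Indeed your intermediate inequality $\mathcal{H}^2(Z)\leq\tfrac{r}{2}\length(\Gamma)$ is simply false: take $Z=[0,L]\times[0,\eps]$ flat, $f$ the inclusion into $\R^2$, $p$ the center; then $r=\eps/2$, $\mathcal{H}^2(Z)=L\eps$, while $\tfrac{r}{2}\length(\Gamma)=\tfrac{\eps}{2}(L+\eps)<L\eps$ for $L>\eps$. Consequently the chain you want to force into equality in the rigidity discussion never holds, and that part collapses as well. The lower bound in your argument, $\mathcal{H}^2(Z)\geq\pi r^2$ via Bishop--Gromov, is correct; if you insist on an area comparison you could instead pair it with the isoperimetric inequality for the CAT(0) disc $Z$, $\mathcal{H}^2(Z)\leq\length(\Gamma)^2/4\pi$, which does yield $r\leq\length(\Gamma)/2\pi$ --- but that is a different argument from the one you wrote, and the equality case would then have to be re-analysed from scratch (equality in the isoperimetric inequality plus Bishop--Gromov rigidity).

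For comparison, the paper's proof avoids all cone constructions: with $R=|p,\D Z|_Z$, the nearest-point projection of $Z$ onto the convex ball $\bar B_R(p)$ is short and maps $\D Z$ onto the distance sphere $\D\bar B_R(p)$, so $\length(\Gamma)=\mathcal{H}^1(\D Z)\geq\mathcal{H}^1(\D\bar B_R(p))$; the sphere-length form of Bishop--Gromov (Theorem \ref{thm:BG}) gives $\mathcal{H}^1(\D\bar B_R(p))\geq 2\pi R$ since $\mathcal{H}^1(\Sigma_pZ)\geq 2\pi$ at an interior point, and the equality case follows from the rigidity in Bishop--Gromov together with Corollary \ref{cor:isom} --- the same two rigidity ingredients you invoke, but fed by a correct inequality. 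You should either adopt that projection argument or the isoperimetric variant; as written, the proof does not go through.
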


\begin{proof}
Set $R=|p,\D Z|_Z$. Then $\length(\Gamma)\geq \mathcal{H}^1(\D \bar{B}_R(p))$ since the nearest point projection onto 
$\bar{B}_R(p)$ is short. By Bishop-Gromov (Theorem \ref{thm:BG}), we have $\mathcal{H}^1(\D \bar{B}_R(p))\geq 2\pi R$.
This proves the inequality. The case of equality follows from the rigidity statement in Bishop-Gromov together with Corollary \ref{cor:isom}.
\end{proof}

\subsubsection{Minimal vs. intrinsic minimal}

\begin{lemma}\label{lem:c+p}
Let $X$ be a CAT(0) space and $Z_i$, $i=1,2$, CAT(0) discs with rectifiable boundaries. Let $c_i:S^1\to\D Z_i$ be $L_i$-Lipschitz parametrizations and 
suppose that $f_i: Z_i\to X$ are $L$-Lipschitz maps. Then there exists a constant $C=C(L,L_1,L_2)$ such that
the following holds.  If the compositions $f_i\circ c_i$  are uniformly $\epsilon$-close to each other,  
then there exists a Lipschitz map $\tilde f_1:Z_1\to X$ with $\D \tilde f_1=\D f_1$ and
\[\a(\tilde f_1)<\a(f_2)+C\cdot\epsilon.\]
\end{lemma}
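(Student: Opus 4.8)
The plan is to build the competitor surface $\tilde f_1:Z_1\to X$ by cutting off a collar neighborhood of $\D Z_1$, mapping it via $f_1$, and attaching an annular interpolation region that transitions from $f_1\circ c_1$ to (a reparametrized copy of) $f_2\circ c_2$, then gluing on a scaled copy of $f_2$ on the inside. Concretely, using Lemma \ref{lem:polygon} (or simply the length-space property of $Z_1\setminus\D Z_1$) we find a Jordan polygon $\sigma_1$ in $Z_1\setminus\D Z_1$, uniformly close to $c_1$, whose Jordan domain $W_1\subset Z_1$ is bilipschitz to $\bar D$ by Lemma \ref{lem:bilip}. The thin annular region $A_1:=\overline{Z_1\setminus W_1}$ is itself a CAT(0) disc with controlled boundary lengths (bounded in terms of $L_1$), and $f_1|_{A_1}$ has area $\mathcal{H}^2(A_1)$ by the intrinsic minimality — but here we only need an upper bound of the form $C_0\cdot\epsilon$ on $\mathcal{H}^2(A_1)$, which follows from Lemma \ref{lem:homosmallarea} exactly as in the proof of Lemma \ref{lem:areaconv}, since $\sigma_1$ is $\epsilon$-close to the boundary.

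Next I would match up the two boundary curves. The curve $f_1\circ\sigma_1$ is uniformly close to $f_1\circ c_1$ (by continuity of $f_1$ and closeness of $\sigma_1$ to $c_1$, with a Lipschitz bound coming from $L$), hence uniformly $O(\epsilon)$-close to $f_2\circ c_2$ by hypothesis. Both curves are Lipschitz with constants controlled by $L, L_1, L_2$. Applying Lemma \ref{lem:homosmallarea}(i) after reparametrizing by arc length via Lemma \ref{lem:reparaunit} (reparametrization homotopies have zero area), I obtain a Lipschitz homotopy $h$ — i.e. a Lipschitz annulus — from $f_1\circ\sigma_1$ to $f_2\circ c_2$ of area $\leq \frac{8}{\pi}\cdot L'\cdot O(\epsilon) = C_1\cdot\epsilon$, where $L'$ depends only on $L,L_1,L_2$. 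Then $\tilde f_1$ is defined as: $f_1$ on the collar $A_1$, the homotopy annulus $h$ on a middle annulus, and $f_2$ (transported through a bilipschitz identification $W_1\cong Z_2$ respecting boundary parametrizations) on the innermost disc. Since $Z_1$ is bilipschitz to $\bar D$ (Lemma \ref{lem:Zbilip} or \ref{lem:bilip}) and $W_1$ is too, these three pieces can be assembled on $Z_1$ with the outer boundary parametrization unchanged, so $\D\tilde f_1=\D f_1$, and
\[
\a(\tilde f_1)\leq \mathcal{H}^2(A_1)+\a(h)+\a(f_2)\leq \a(f_2)+(C_0+C_1)\cdot\epsilon,
\]
giving the claim with $C:=C_0+C_1$ depending only on $L,L_1,L_2$.

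The main obstacle I expect is the bookkeeping of the \emph{domain-side} gluing: one must exhibit an explicit subdivision of $Z_1$ into "outer collar / middle annulus / inner disc" together with bilipschitz parametrizations of each piece by standard annuli and a disc, so that the maps $f_1|_{A_1}$, $h$, and $f_2$ glue to a genuinely Lipschitz map on $Z_1$ with the prescribed boundary trace. This is where the hypotheses that the $Z_i$ have rectifiable boundary and the parametrizations $c_i$ are $L_i$-Lipschitz are used to keep all the boundary lengths — and hence the isoperimetric-type area bounds and the Lipschitz constants of the gluing charts — uniformly controlled. The area estimates themselves are routine given Lemma \ref{lem:homosmallarea} and the collar argument from Lemma \ref{lem:areaconv}; the only genuinely delicate point is ensuring the reparametrizations used to match $f_1\circ\sigma_1$ with $f_2\circ c_2$ (via Lemma \ref{lem:reparaunit}) contribute no area and are compatible with the monotone boundary parametrizations, which is handled by the zero-area clause in Lemma \ref{lem:reparaunit}.
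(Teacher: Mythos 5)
Your overall architecture (outer collar kept from $f_1$, small interpolation annulus, inner copy of $f_2$) is exactly the paper's, but there is a genuine gap at the inner-disc gluing step. You propose to transport $f_2$ onto the inner disc ``through a bilipschitz identification $W_1\cong Z_2$'' and you also invoke ``$Z_1$ is bilipschitz to $\bar D$ (Lemma \ref{lem:Zbilip} or \ref{lem:bilip})''. Neither is available under the hypotheses: $Z_1,Z_2$ are only assumed to be CAT(0) discs with \emph{rectifiable} boundary, while Lemma \ref{lem:Zbilip} and the second part of Lemma \ref{lem:bilip} require a polygonal boundary with positive angles. A CAT(0) disc whose boundary has cusps need not admit any bilipschitz parametrization by $\bar D$ (this is precisely the phenomenon behind the remark after Lemma \ref{lem:liftarea}), so the map you want to put on the innermost disc is not defined by your construction. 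The paper circumvents this by treating $Z_2$ symmetrically to $Z_1$: it chooses an interior Jordan polygon $\sigma_2\subset Z_2\setminus\D Z_2$ with positive angles via Lemma \ref{lem:polygon}, so that $\bar\Omega_{\sigma_2}$ \emph{is} bilipschitz to $\bar D$ (Lemma \ref{lem:bilip}), uses $f_2\circ\varphi_2$ on the inner disc (discarding the thin $Z_2$-collar only decreases area), and interpolates between $f_1\circ\sigma_1$ and $f_2\circ\sigma_2$, with the zero-area reparametrization homotopies of Lemma \ref{lem:reparaunit} on both sides. An alternative repair of your version would be to replace the bilipschitz identification by the monotone Lipschitz extension $\mu_2:\bar D\to Z_2$ of $c_2$ from Lemma \ref{lem:mono} and use $f_2\circ\mu_2$ on the inner disc, controlling its area by Corollary \ref{cor:monarea}; but as written, your key gluing step rests on identifications that do not exist. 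Note also that only $W_1\cong\bar D$ (which you do have, since $\sigma_1$ is a polygon with positive angles) is needed to assemble the pieces on $Z_1$; the global statement about $Z_1$ is both unjustified and unnecessary.

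Two smaller points: the lemma assumes only that the $f_i$ are $L$-Lipschitz, so your appeal to ``intrinsic minimality'' of $f_1$ on the collar is out of place — the correct bound is $\a(f_1|_{A_1})\leq L^2\cdot\mathcal{H}^2(A_1)$, and your final display should carry this factor $L^2$ (harmless, since $C$ is allowed to depend on $L$). With the inner-disc step fixed as above, the rest of your estimates go through and reproduce the paper's bound.
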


\begin{proof}
By Lemma \ref{lem:polygon}, we can choose parametrized Jordan polygons $\si_i:S^1\to Z_i\setminus\D Z_i$ which have positive angles and
 are uniformly $\epsilon$-close to $c_i$. Moreover, the Lipschitz constant of $\si_i$ is bounded above by $(L_i+\epsilon)$. 
 We denote the associated Jordan domains by $\Om_i$. 
By Lemma \ref{lem:bilip}, $\bar\Om_i$ is intrinsically
a CAT(0) disc and there is a bilipschitz map $\varphi_i:\bar D\to Z_i$. In order to obtain $\tilde f_1$ we will cut and paste $f_1|_{\bar\Om_1}$.

By Lemma \ref{lem:reparaunit}, there are Lipschitz homotopies $h_i$ of zero area between $\D (f_i\circ\varphi_i)$ and $f_i\circ\si_i$.
By our assumptions, the properties of $\si_i$ and the triangle inequality we conclude 
$\sup_{t\in S^1}|f_1\circ\si_1,f_2\circ\si_2|<(2L+1)\cdot\epsilon$. Hence Lemma \ref{lem:homosmallarea} gives a Lipschitz homotopy $h$ between
$f_1\circ\si_1$ and $f_2\circ\si_2$ with 
\[\a(h)\leq C\cdot L\cdot\max_{i=1,2}L_i\cdot\epsilon.\]
Now we define a Lipschitz map $\Phi:\bar D\to X$ with $\D \Phi=\D(f_1\circ\varphi_1)$ as follows. We partition $\bar D$ into a central disc and three concentric annuli.
Then we use $h_1$ on the outmost annulus, then $h$, then $h_2$
and on the central disc we use $f_2\circ\varphi_2$. In particular, $\a(\Phi)\leq \a(f_2)+C\cdot L\cdot\max_{i=1,2}L_i\cdot\epsilon$.

Again by Lemma \ref{lem:homosmallarea}
\[\a(f_1|_{Z_1\setminus\Om_1})\leq L^2\cdot\mathcal{H}^2(Z_1\setminus\Om_1)\leq L^2\cdot C\cdot L_1\cdot\epsilon.\]
Now we cut $f_1|_{\bar\Om_1}$ and use $\varphi_1$ to paste $\Phi$. This defines $\tilde f_1$.
Combining the estimates above gives the necessary area bound for $\tilde f_1$:
\[\a(\tilde f_1)\leq L^2\cdot C\cdot L_1\cdot\epsilon+\a(f_2)+C\cdot L\cdot\max_{i=1,2} L_i\cdot\epsilon.\]

\end{proof}

The following is a special case of Theorem 1.2 in \cite{LWcanonical}. It can also be deduced from 
Theorem 1.2 in \cite{PS2} and Corollary \ref{cor:monarea}. 

\begin{lemma}\label{lem:eqarea}
 Let $Z$ be a CAT(0) disc and $\Ga\subset Z$ a rectifiable Jordan curve with Jordan domain $\Om_\Ga$.  Suppose that $u:\bar D\to Z$ is a minimal disc filling $\Ga$.
 Then $\im(u)=\bar\Om_\Ga$, $\a(u)=\mathcal{H}^2(\Om_\Ga)$ and  for any Lipschitz map $f:Z\to Y$ to a metric space $Y$ holds
 $\a(f\circ u)=\a(f|_{\Om_\Ga})$.
\end{lemma}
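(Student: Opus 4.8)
The statement has three assertions: (1) $\im(u) = \bar\Om_\Ga$; (2) $\a(u) = \mathcal{H}^2(\Om_\Ga)$; (3) for Lipschitz $f : Z \to Y$ we have $\a(f \circ u) = \a(f|_{\Om_\Ga})$. The plan is to deduce all three from the intrinsic structure theory and the monotonicity-of-area information already available. First I would apply Theorem \ref{thm:int} to the minimal disc $u : \bar D \to Z$: it factors as $u = \pi \circ \bar u$ with $\pi : \bar D \to Z_u$ a monotone map restricting to an embedding on $D$, and $\bar u : Z_u \to Z$ short, arc-length preserving on $\partial Z_u \to \Ga$, and length-preserving on all rectifiable curves. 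Since $Z$ is itself a CAT(0) surface and $\Ga \subset Z$ is a Jordan curve, I would invoke Theorem 6.1 of \cite{LWcanonical} (as was done in the proof of Lemma \ref{lem:bilip}) to conclude that the continuous map $u : \bar D \to Z$ is in fact a homeomorphism onto $\bar\Om_\Ga$; this immediately gives assertion (1), and also shows $\pi$ and $\bar u$ are homeomorphisms, with $\bar u : Z_u \to \bar\Om_\Ga$ a length-preserving homeomorphism, hence an isometry for the intrinsic metric on $\bar\Om_\Ga$.

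For assertion (2): by Theorem \ref{thm:int}(vi), $\a(u) = \a(u|_D) = \mathcal{H}^2(\pi(D)) = \mathcal{H}^2(Z_u \setminus \partial Z_u)$, and since $\bar u$ is an isometry onto $\bar\Om_\Ga$ with the induced intrinsic metric (and the intrinsic and induced metrics agree in the interior of $\Om_\Ga$, as noted in the proof of Lemma \ref{lem:bilip}), this Hausdorff $2$-measure equals $\mathcal{H}^2(\Om_\Ga)$ computed in $Z$. For assertion (3): given Lipschitz $f : Z \to Y$, write $f \circ u = (f \circ \bar u) \circ \pi$. Now $\pi : \bar D \to Z_u$ is a monotone Lipschitz map by Theorem \ref{thm:int}(i)-(ii), and $f \circ \bar u : Z_u \to Y$ is Lipschitz, so Corollary \ref{cor:monarea} (with the roles arranged via a bilipschitz identification of $Z_u$ with $\bar D$) gives $\a(f \circ u) = \a((f \circ \bar u)\circ \pi) = \a(f \circ \bar u)$. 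Finally, since $\bar u$ is an isometry of $Z_u$ onto $\bar\Om_\Ga$ (with induced intrinsic metric, bilipschitz to the induced metric near the boundary by Lemma \ref{lem:bilip}), one has $\a(f \circ \bar u) = \a(f|_{\Om_\Ga})$; the area is insensitive to replacing a metric by a bilipschitz-equivalent one that agrees a.e., and the boundary is $\mathcal{H}^2$-null.

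The main obstacle I anticipate is the bookkeeping around \emph{which} metric on $\bar\Om_\Ga$ is being used and making sure the area functional is computed consistently on both sides. The intrinsic metric on $\bar\Om_\Ga$ and the metric induced from $Z$ coincide on the interior but only agree up to bilipschitz near $\Ga$; I need the fact that $\mathcal{H}^2$ (and hence parametrized area of Lipschitz maps out of the surface) is unchanged under such a modification on a boundary set of measure zero, which is why the statement is phrased as an equality of areas rather than of full metric objects. A secondary, more technical point is the precise citation for the homeomorphism claim $\im(u) = \bar\Om_\Ga$: Theorem 6.1 of \cite{LWcanonical} requires the target $Z$ to be a surface, which holds here, so this should go through directly, but it is the one external input doing real work. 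Everything else is a formal chain through Theorem \ref{thm:int} and Corollary \ref{cor:monarea}.
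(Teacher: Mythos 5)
The paper itself gives no argument here: Lemma \ref{lem:eqarea} is justified purely by citation, as a special case of Theorem 1.2 in \cite{LWcanonical}, or alternatively via Theorem 1.2 in \cite{PS2} together with Corollary \ref{cor:monarea}. Your reconstruction --- factor $u=\bar u\circ\pi$ by Theorem \ref{thm:int}, use Theorem 6.1 of \cite{LWcanonical} exactly as in the proof of Lemma \ref{lem:bilip} to see that $u$ is a homeomorphism onto $\bar\Om_\Ga$ (hence $\pi$, $\bar u$ are homeomorphisms and $\bar u$ is an isometry onto $\bar\Om_\Ga$ with its intrinsic metric), then transfer areas through $\pi$ --- is very much in the spirit of the second cited route, and assertions (1) and (2) go through as you argue them: the intrinsic and induced metrics coincide locally on $\Om_\Ga$, so the Hausdorff measures agree and $\a(u)=\mathcal{H}^2(\pi(D))=\mathcal{H}^2(\Om_\Ga)$.

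There is, however, one justification in part (3) that does not work as written. You propose to arrange the hypotheses of Corollary \ref{cor:monarea} ``via a bilipschitz identification of $Z_u$ with $\bar D$'', and you also invoke the bilipschitz clause of Lemma \ref{lem:bilip} near the boundary. Both of these require the boundary to be a Jordan polygon with positive angles (Lemma \ref{lem:Zbilip}, Lemma \ref{lem:bilip}); for a general rectifiable $\Ga$, say with an outward cusp, no global bilipschitz identification of $Z_u$ with $\bar D$ exists. In addition, $\pi$ is only locally Lipschitz on $D$, not Lipschitz on $\bar D$, whereas Corollary \ref{cor:monarea} is stated for Lipschitz monotone maps between subsets of $\R^2$. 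The repair is a routine localization: cover the interior of $Z_u$ by relatively compact open sets that are bilipschitz to planar domains (interior points of a CAT(0) surface admit such neighborhoods), observe that their $\pi$-preimages are compactly contained in $D$, where $\pi$ is Lipschitz and monotone, apply Corollary \ref{cor:monarea} chart by chart and sum, using that $\mathcal{H}^2(\partial Z_u)=0$ since $\partial Z_u$ is a rectifiable curve. Alternatively, you may simply quote the remark immediately following Theorem \ref{thm:int}, which records precisely the statement you need, namely $\a(\bar v\circ\pi)=\a(\bar v)$ for every Lipschitz map $\bar v$ on $Z_u$. Finally, the phrase ``the area is insensitive to replacing a metric by a bilipschitz-equivalent one that agrees a.e.''\ is not a correct principle (a bilipschitz change of metric does change areas); what you actually need, and what is true, is that the intrinsic and induced metrics coincide locally on the interior $\Om_\Ga$ and that the boundary is $\mathcal{H}^2$-null.
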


 By Theorem \ref{thm:int}, every minimal disc yields an intrinsic minimizer. The following proposition provides a converse.
 
 \begin{proposition}\label{prop:confsol}
  Let $X$ be a CAT(0) space and  $f:Z\to X$ an intrinsic minimal surface. Suppose that $\Gamma\subset Z$
  is a rectifiable Jordan curve. Let
  $u:D\to Z$ be a minimal disc filling $\Gamma$. Then $f\circ u$ is  conformal and harmonic. If in addition $f$ restricts to an embedding on $\Gamma$,
	then $f\circ u$ is a solution to the Plateau problem for $(f(\Gamma),X)$.
  
\end{proposition}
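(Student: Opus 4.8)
The plan is to verify the three conclusions one by one, moving from the intrinsic minimal disc produced by Plateau's theorem for the pair $(\Gamma, Z)$ up to the composition $f\circ u$. First I would record what $u$ already gives us: by Theorem \ref{thm:plateau} applied in the CAT(0) disc $Z$ (which is itself a CAT(0) space), $u$ is an area minimizer, conformal with some conformal factor $\lambda\in L^2(D)$, and locally Lipschitz with a continuous extension to $\bar D$. By Lemma \ref{lem:eqarea} we moreover know $\im(u)=\bar\Om_\Ga$ and $\a(u)=\mathcal{H}^2(\Om_\Ga)$, and for any Lipschitz $g:Z\to Y$ we have $\a(g\circ u)=\a(g|_{\Om_\Ga})$.

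\emph{Conformality and harmonicity of $f\circ u$.} Since $f$ is short, the approximate metric differential of $f\circ u$ satisfies $|d(f\circ u)_z(v)|\le |du_z(v)| = \lambda(z)|v|$ almost everywhere; in particular $f\circ u$ has finite energy, so it is a Sobolev disc. The key point is that $f\circ u$ is still an energy minimizer for its own boundary values. Indeed, the energy of $f\circ u$ is controlled by $E(u)$, and conversely, if some competitor $w\in W^{1,2}(D,X)$ with the same trace had strictly smaller energy, one would want to contradict minimality; but $f$ does not map into $Z$, so the cleanest route is to use that $u$ is a \emph{conformal} energy minimizer and that, because $f$ is $1$-Lipschitz and intrinsically area-preserving along $u$ (via Lemma \ref{lem:eqarea} and the fact that $\bar u$ preserves lengths of rectifiable curves in Theorem \ref{thm:int}), the Dirichlet energy of $f\circ u$ equals that of $u$. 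More precisely, I would argue $\a(f\circ u)=\a(f|_{\Om_\Ga})=\mathcal{H}^2(\Om_\Ga)=\a(u)$ using Lemma \ref{lem:eqarea} and property (1) in the definition of intrinsic minimal surface, and then combine this with the general inequality $\a\le E/ (\text{something})$ together with the conformality of $u$ to force $f\circ u$ to be conformal as well (a map whose area equals the area of a conformal map with the same energy majorant must itself be conformal, since $\mathcal{J}(s)\le \tfrac12\|s\|^2$ with equality iff $s$ is conformal). Once $f\circ u$ is known to be a conformal finite-energy disc with the same parametrized area as the area-minimizer $u$, its energy is minimal among competitors with the same trace — because for conformal maps energy and area agree up to the factor accounting for the conformal normalization, and $f\circ u$ cannot have smaller area than an area minimizer over $\Lambda$. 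By uniqueness in the Dirichlet problem (Theorem \ref{thm:dirichlet}), $f\circ u$ is \emph{the} energy minimizer for its boundary circle, hence harmonic.

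\emph{The Plateau conclusion.} Suppose now $f$ restricts to an embedding on $\Gamma$. Then $f(\Gamma)$ is a Jordan curve in $X$, and $\D(f\circ u)=f\circ\D u$ is a monotone parametrization of $f(\Gamma)$ since $\D u$ monotonically parametrizes $\Gamma$ (Theorem \ref{thm:int}(4) type statement, or directly from $u$ being an element of $\Lambda(\Gamma,Z)$) and $f|_\Gamma$ is a homeomorphism onto $f(\Gamma)$. Thus $f\circ u\in\Lambda(f(\Gamma),X)$. To see it is area minimizing in $\Lambda(f(\Gamma),X)$: given any competitor $v\in\Lambda(f(\Gamma),X)$, pull its boundary parametrization back through $f|_\Gamma^{-1}$ to get a monotone parametrization of $\Gamma$ in $Z$, fill that circle by an intrinsic minimal disc in $Z$ (Theorem \ref{thm:plateau}/Lemma \ref{lem:eqarea}) to obtain a map $u'\in\Lambda(\Gamma,Z)$ with $\a(u')\le\mathcal{H}^2(\Om_\Ga)$; then property (2) of $f$ being an intrinsic minimizer (least area among Lipschitz competitors on embedded discs) together with Lemma \ref{lem:eqarea} gives $\a(f\circ u)=\mathcal{H}^2(\Om_\Ga)\le\a(f\circ u')$, while comparing $f\circ u'$ against $v$ uses that $v$ and $f\circ u'$ have the same (monotone, possibly reparametrized) boundary and invoking Lemma \ref{lem:c+p} or a direct homotopy argument to match areas up to $\epsilon$; letting $\epsilon\to 0$ and using that area is a genuine minimum over $\Lambda$ yields $\a(f\circ u)\le\a(v)$. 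Hence $f\circ u$ solves the Plateau problem for $(f(\Gamma),X)$.

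\emph{Main obstacle.} The delicate step is the very first one: establishing that $f\circ u$ is conformal and energy-minimizing even though $f$ is only short and not an isometry, so a naive energy-comparison argument does not directly apply. The resolution hinges on the rigidity in the Jacobian–energy inequality ($\mathcal{J}(s)\le\tfrac12\,e(s)$ with equality precisely for conformal seminorms) combined with the area identity $\a(f\circ u)=\mathcal{H}^2(\Om_\Ga)=\a(u)$ coming from Lemma \ref{lem:eqarea} and the area-preservation of intrinsic minimizers; this is what pins down both conformality of $f\circ u$ and the fact that it loses no energy relative to $u$, after which Theorem \ref{thm:dirichlet} finishes the harmonicity. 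I expect the remaining bookkeeping — matching boundary parametrizations up to reparametrization, handling the $\epsilon$-approximations via Lemma \ref{lem:c+p} and Lemma \ref{lem:homosmallarea} — to be routine given the tools already assembled in the paper.
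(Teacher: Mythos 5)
Your conformality step is essentially sound, and in fact takes a slightly different route from the paper: you derive it from the area identity $\a(f\circ u)=\a(f|_{\Om_\Ga})=\mathcal{H}^2(\Om_\Ga)=\a(u)$ (Lemma \ref{lem:eqarea} plus area-preservation of $f$) together with the pointwise domination $|d(f\circ u)_z|\le|du_z|=\lambda(z)s_0$ and the rigidity ``a seminorm dominated by $\lambda s_0$ with Jacobian $\lambda^2$ equals $\lambda s_0$'', whereas the paper argues via a.e.\ differentiability of $f$ with isometric differential and the chain rule. Either way this part works.

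The genuine gap is in the harmonicity step. From conformality you get $E(f\circ u)=\a(f\circ u)$, and a Dirichlet competitor $w$ with the same trace satisfies $E(w)\ge\a(w)$; so what you must prove is $\a(w)\ge\a(f\circ u)$, i.e.\ that $f\circ u$ has \emph{least} area among all Sobolev fillings with its trace. Your justification — ``$f\circ u$ cannot have smaller area than an area minimizer over $\Lambda$'' — is the inequality in the useless direction, and the ``main obstacle'' paragraph resolves nothing: the Jacobian--energy rigidity plus the area identity only yield conformality, not energy minimality. The missing ingredient is precisely the transfer of the intrinsic minimality of $f$ (property (2)) to an extrinsic area comparison: one must take the Dirichlet solution $v_\rho$ with $\operatorname{tr}(v_\rho)=\operatorname{tr}(f\circ u_\rho)$ on a slightly smaller disc $(1-\rho)\bar D$ (so that $\D u_\rho$ is a Lipschitz parametrization of $\D Z_\rho$, where $Z_\rho=\overline{\im(u_\rho)}$ is intrinsically a CAT(0) disc and $f|_{Z_\rho}$ is still an intrinsic minimizer), approximate $v_\rho$ by a Lipschitz disc with the same boundary (Prop.\ 3.1 in \cite{LWY}), and cut and paste via Lemma \ref{lem:c+p} to get $\a(f|_{Z_\rho})\le\a(v_\rho)$; then the chain $\a(f|_{Z_\rho})\le\a(v_\rho)\le E(v_\rho)\le E(f\circ u_\rho)=\a(f\circ u_\rho)=\a(f|_{Z_\rho})$ forces $E(v_\rho)=E(f\circ u_\rho)$ and uniqueness gives $v_\rho=f\circ u_\rho$. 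Note also that harmonicity is asserted without assuming $f|_\Gamma$ is an embedding (then $f(\Gamma)$ need not even be a Jordan curve), so you cannot route this through the class $\Lambda(f(\Gamma),X)$ at all; and the localization to $(1-\rho)\bar D$ is not optional, since on the full disc neither $f\circ u$ nor its boundary parametrization is Lipschitz (only rectifiable boundary), which Lemma \ref{lem:c+p} and the Lipschitz approximation require. The same comparison mechanism is what makes your Plateau step rigorous: as written, your inequality $\a(f\circ u)=\mathcal{H}^2(\Om_\Ga)\le\a(f\circ u')$ never involves the competitor $v$, and the needed estimate is $\a(f|_{\bar\Om_\Ga})\le\a(v)+C\epsilon$, obtained by Lipschitz-approximating $v$ and applying Lemma \ref{lem:c+p} against the intrinsic minimality of $f$.
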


 \begin{proof}
  Let us settle the claim on conformality first.
  For almost every $x\in D$ we have $|du_x|=\lambda\cdot s_0$ where $s_0$ denotes the Euclidean norm; $df_{u(x)}$ is a linear isometric embedding;
  $u$ is differentiable at $x$ with a linear differential and the chain  rule holds \cite{Lyt}. Hence $f\circ u$ is conformal.
  
 We will show that $f\circ u$ is harmonic. Choose a small $\rho>0$ and set $u_\rho:=u|_{(1-\rho)\cdot\bar D}$. Let $v_\rho\in W^{1,2}((1-\rho)\cdot D,X)$
 be a solution to the Dirichlet problem with $\operatorname{tr}(v_\rho)=\operatorname{tr}(f\circ u_\rho)$. By Theorem \ref{thm:dirichlet}, $v_\rho$ extends continuously to $(1-\rho)\cdot\bar D$.
 The extension will still be called $v_\rho$. In particular, $\D v_\rho=\D (f\circ u_\rho)$. Then we have 
 \[\a(v_\rho)\leq E(v_\rho)\leq E(f\circ u_\rho)=\a(f\circ u_\rho)=\a(f|_{\im(u_\rho)}).\]
 Now $u_\rho$ is a Lipschitz embedding. Therefore $Z_\rho:=\overline{\im(u_\rho)}$ is intrinsically a CAT(0) disc (Lemma \ref{lem:bilip}) 
 and $f_\rho:=f|_{Z_\rho}$ is an intrinsic minimizer (Lemma \ref{lem:restrict}).
 
 By Proposition 3.1 in \cite{LWY}, for every $\epsilon>0$ there exists a Lipschitz disc $\tilde v_\rho:(1-\rho)\cdot\bar D\to X$ with $\D \tilde v_\rho=\D v_\rho= f\circ \D u_\rho$ and 
 $\a(\tilde v_\rho)\leq \a( v_\rho)+\epsilon$. Since $\D u_\rho$ is a Lipschitz parametrization of $\D Z_\rho$, Lemma \ref{lem:c+p} implies
 $\a(f_\rho)\leq\a(v_\rho)$. Hence $E(v_\rho)= E(f\circ u_\rho)$ and by uniqueness $v_\rho=f\circ u_\rho$. As $\rho>0$ was arbitrary,
 we conlude that $f\circ u$ is harmonic.
 
 If $f|_\Gamma$ is an embedding, then we use Lemma \ref{lem:c+p} and argue as above to show that $f\circ u$ is area minimizing which completes the proof.
 \end{proof}

 \begin{corollary}\label{cor:strong}
  Let $X$ be a CAT(0) space and let $f:Z\to X$ be intrinsic minimal surface. Let $\Gamma\subset Z$ be a rectifiable Jordan curve and denote by $\Om_\Ga$
  its Jordan domain. Let $\varphi:X\to\R$ be a continuous convex function. If $\varphi\circ f|_{\bar\Om_\Ga}$  attains its maximum in $\Om_\Ga$, then 
  $\varphi$ attains a minimum and $\im(f)$ is contained in the minimum level of $\varphi$.
  
 \end{corollary}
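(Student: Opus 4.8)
The plan is to reduce the assertion, after passing to a conformal parametrization, to the maximum principle for harmonic discs, Lemma \ref{lem:maxpr}, and then to spread the conclusion from $\bar\Om_\Ga$ over all of $Z$ by a connectedness argument.

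First I would replace $f$ by its restriction $f|_{\bar\Om_\Ga}$, which by Lemma \ref{lem:restrict} is an intrinsic minimal disc (here $\bar\Om_\Ga$ carries its induced intrinsic metric and is a CAT(0) disc by Lemma \ref{lem:bilip}). By Theorem \ref{thm:plateau} there is a minimal disc $u\colon\bar D\to\bar\Om_\Ga$ whose trace is a monotone parametrization of $\Gamma$, and by Lemma \ref{lem:eqarea} one has $\im(u)=\bar\Om_\Ga$ together with
\[\a(f\circ u)=\a(f|_{\Om_\Ga})=\mathcal{H}^2(\Om_\Ga)>0,\]
the middle equality being the area--preserving property of an intrinsic minimizer. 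By Proposition \ref{prop:confsol}, $g:=f\circ u\colon\bar D\to X$ is a conformal harmonic disc.

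Now I would feed this into Lemma \ref{lem:maxpr}. Since $\im(u)=\bar\Om_\Ga$ and $u(\D\bar D)=\Gamma$, a point $z_0\in\Om_\Ga$ realizing $\max_{\bar\Om_\Ga}\varphi\circ f$ is the $u$--image of an interior point of $\bar D$, and $\max_{\bar D}\varphi\circ g=\max_{\bar\Om_\Ga}\varphi\circ f$; hence $\varphi\circ g$ attains its maximum at an interior point. Lemma \ref{lem:maxpr} then gives one of two alternatives: either $\varphi$ attains a minimum value $m$ and $g(\bar D)\subset M:=\varphi^{-1}(m)$, or $g(\bar D)\subset g(\D\bar D)=f(\Gamma)$. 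The second alternative cannot occur, since $f(\Gamma)$ is rectifiable, hence $\mathcal{H}^2$--null, so by the area formula (Lemma \ref{lem:areaformula}) $\a(g)=\int_{f(\Gamma)}N_g\,d\mathcal{H}^2=0$, contradicting $\a(g)=\mathcal{H}^2(\Om_\Ga)>0$. Therefore $\varphi$ attains its minimum $m$ and $f(\bar\Om_\Ga)=g(\bar D)\subset M$.

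It remains to upgrade this to $\im(f)\subset M$. The set $A:=f^{-1}(M)$ is closed in $Z$, and since $\Om_\Ga$ is a nonempty open subset of the surface $Z$ it meets $\interior Z$, so $A\cap\interior Z$ is a nonempty closed subset of the connected surface $\interior Z$; it suffices to show it is also open, for then it equals $\interior Z$ and $A=Z$ follows, a CAT(0) surface being the closure of its interior. To prove openness I would take $z_1\in\interior Z$ with $f(z_1)\in M$, choose small Jordan subdomains $\Om'\ni z_1$ with $\bar\Om'\subset\interior Z$, and reapply the argument of the first two paragraphs to $(\Om',f|_{\bar\Om'})$ to conclude $f(\bar\Om')\subset M$. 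This local step is where I expect the main obstacle: Lemma \ref{lem:maxpr} requires $\varphi\circ f$ to attain its maximum over $\bar\Om'$ at an interior point, whereas over a small metric ball the maximum might sit entirely on the boundary circle, and since a non--constant subharmonic function has no interior local maximum one cannot simply shrink $\Om'$. The resolution I would pursue is to compare $f$ with $P\circ f$, where $P\colon X\to M$ is the short nearest--point retraction onto the closed convex set $M$: on a subdisc $Y\ni z_1$ whose boundary already lies in $A$, the map $P\circ f|_Y$ is an admissible competitor for $f|_Y$, so minimality together with $\mathcal{J}(df)\equiv 1$ force $P\circ f$ to agree with $f$ almost everywhere on $Y$, hence everywhere by continuity, giving $f(Y)\subset M$; producing such subdiscs $Y$ around $z_1$ is the delicate point, to be handled using $f(z_1)\in M$ and the subharmonicity of $\varphi\circ f$ near $z_1$. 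Granting this local openness, the corollary follows.
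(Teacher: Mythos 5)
Your first two paragraphs are correct and are essentially the paper's own proof: restrict to $\bar\Om_\Ga$ (Lemmas \ref{lem:restrict} and \ref{lem:bilip}), parametrize by a Plateau solution $u$, apply Proposition \ref{prop:confsol} to obtain the harmonic disc $g=f\circ u$, and invoke the maximum principle (Lemma \ref{lem:maxpr}), ruling out the alternative $g(\bar D)\subset g(\D \bar D)$ because $\a(g)=\mathcal{H}^2(\Om_\Ga)>0$. The only difference is cosmetic: the paper transfers the interior maximum point to $D$ by citing that $u$ is a homeomorphism (Theorem 1.1 of \cite{LWcanonical}), whereas you use $\im(u)=\bar\Om_\Ga$ from Lemma \ref{lem:eqarea}; both work.

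The problem is your third paragraph. The ``local openness'' step you flag as delicate is not merely delicate, it is impossible: the literal conclusion $\im(f)\subset\varphi^{-1}(\min\varphi)$ for the whole surface $Z$ is false. Take $Z=X=\R^2$, $f=\id$ (an intrinsic minimal plane), $\varphi(x,y)=\max(x,0)$, and $\Ga$ a round circle contained in $\{x<0\}$. Then $\varphi\circ f\equiv 0$ on $\bar\Om_\Ga$, so the hypothesis holds, $\varphi$ attains its minimum $0$, and $f(\bar\Om_\Ga)$ lies in the minimum level $M=\{x\le 0\}$; but $\im(f)=\R^2\not\subset M$. Concretely, $A=f^{-1}(M)=\{x\le 0\}$ is closed but not open, and around a point of the line $\{x=0\}$ there is no subdisc $Y$ with $\D Y\subset A$ whose Jordan domain leaves $A$, so the retraction comparison with $P\circ f$ never gets started (moreover, equality of areas alone would not obviously force $P\circ f=f$ pointwise; the paper's analogous argument, Lemma \ref{lem:cohu}, uses energy and uniqueness of the Dirichlet solution for this). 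What the paper actually proves --- and all that is used later, e.g.\ in Corollary \ref{cor:separated} --- is the restricted statement: after the reduction ``we may assume $Z$ is a disc and $\Ga=\D Z$'', the conclusion reads that $\varphi$ attains a minimum and $f(\bar\Om_\Ga)\subset M$. So you should simply drop the globalization attempt; your first two paragraphs already prove everything the corollary can (and is meant to) assert.
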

 
 \begin{proof}
  By Lemma \ref{lem:restrict}, $f|_{\bar\Om_\Ga}$ is an intrinsic minimal disc. Therefore we may assume that $Z$ is a disc and $\Ga=\D Z$.
  Let $u$ solve the Plateau problem for $(\Ga,Z)$. By Theorem 1.1 in \cite{LWcanonical}, $u$ is a homeomorphism. Hence if  $\varphi\circ f$
  attains a maximum in $\Om_\Ga$, then $\varphi\circ f\circ u$ attains a maximum in $D$.
  By Proposition \ref{prop:confsol},
  $f\circ u$ is harmonic and hence we can apply the  maximum principle (Lemma \ref{lem:maxpr}) to $\varphi\circ f\circ u$ and conclude that
  either our claim holds or else
  $\varphi\circ f\circ u(\bar D)\subset \varphi\circ f\circ u(\D\bar D)$. But this leads to $\a(f)=\a(f\circ u)=0$. Contradiction. 
  \end{proof}

  \begin{corollary}\label{cor:separated}
  Let $X$ be a CAT(0) space and let $f:Z\to X$ be a intrinsic minimal surface. Let $p\in X$ be a point and set $f_p:=|f(\cdot),p|$.
  Let $r>0$ be a quasi regular value of $f_p$ with $r<|p,f(\D Z)|$. Suppose that $\Pi_r=N\cup\bigcup_{i=1}^\infty \Ga_i$ is the corresponding
  decomposition of the fiber, cf. Proposition \ref{prop:ae}. Then the associated Jordan domains $\Om_i$ are all disjoint.
 \end{corollary}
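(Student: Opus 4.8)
The plan is to combine the strong maximum principle (Corollary~\ref{cor:strong}) with the classical dichotomy for disjoint Jordan curves in a topological disc. Set $\varphi:=|\cdot,p|:X\to\R$, which is continuous and convex because $X$ is CAT(0), so that $f_p=\varphi\circ f$. The hypothesis $r<|p,f(\D Z)|$ guarantees that $\Pi_r$ is contained in the interior of $Z$; hence each $\Gamma_i$ appearing in the decomposition of Proposition~\ref{prop:ae} is a rectifiable Jordan curve lying in $\interior Z$ with a well-defined Jordan domain $\Om_i$, and the curves $\Gamma_i$ are pairwise disjoint.

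The key step is to show that $f_p<r$ on each open domain $\Om_i$. Fix $i$ and apply Corollary~\ref{cor:strong} to the intrinsic minimal surface $f$, the rectifiable Jordan curve $\Gamma_i$, and the convex function $\varphi$. If $f_p|_{\bar\Om_i}$ attained its maximum at an interior point of $\Om_i$, then $\varphi$ would attain its minimum and $\im(f)$ would be contained in the minimum level of $\varphi$; but $\varphi$ attains its minimum value $0$ only at $p$, which would force $f$ to be constant, contradicting the fact that $f$ is area preserving (already $\mathcal{H}^2(\Om_i)=\a(f|_{\Om_i})>0$). Therefore the maximum of $f_p$ over $\bar\Om_i$ is attained only along $\Gamma_i$, where $f_p\equiv r$; consequently $f_p<r$ throughout $\Om_i$.

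Finally I would rule out nesting. Suppose $\Om_i\cap\Om_j\neq\emptyset$ for some $i\neq j$. Since $\Gamma_i$ and $\Gamma_j$ are disjoint Jordan curves in a topological disc, one of the two Jordan domains contains the other; say $\Om_j\subset\Om_i$. As $\Gamma_j$ is disjoint from $\Gamma_i$ and contained in $\bar\Om_i=\Om_i\cup\Gamma_i$, it lies in the open domain $\Om_i$; but $f_p\equiv r$ on $\Gamma_j$, contradicting $f_p<r$ on $\Om_i$. Hence no nesting occurs, and combined with the disjointness of the curves $\Gamma_i$ (which excludes crossings) this shows the $\Om_i$ are pairwise disjoint. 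The only genuine subtlety — and the step I would treat most carefully — is the reduction to the strong maximum principle: one must verify that Corollary~\ref{cor:strong} applies to $\bar\Om_i$, which goes through Lemma~\ref{lem:restrict} (so that $f|_{\bar\Om_i}$ is an intrinsic minimal disc), and that the non-constancy of $f$ genuinely eliminates the degenerate alternative in that corollary. The remaining ingredients are just the standard topology of Jordan curves.
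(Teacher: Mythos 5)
Your argument is correct and follows essentially the same route as the paper: the paper also assumes two domains are nested, notes that $f_p$ then attains its maximum value $r$ at an interior point of the larger domain, and applies Corollary \ref{cor:strong} with the convex function $|\cdot,p|$ (whose unique minimum is at $p$) to force $f\equiv p$, a contradiction. Your reorganization -- first proving $f_p<r$ on each $\Om_i$ and then excluding nesting topologically -- is just a repackaging of that same maximum-principle argument.
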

 
 \begin{proof}
 Assume that $\Om_1\subset \Om_2$. Then $f_p|_{\bar \Om_2}$ attains its maximal value $r$ in $\Om_2$.
  The distance function $|\cdot,p|$ is continuous and convex. It has a unique minimum at $p$. Hence Corollary \ref{cor:strong} implies that
  $f$ is constant equal to $p$ on $\Om_2$. Contradiction.
  \end{proof}

\subsection{Limits of minimal discs}

 \begin{lemma}\label{lem:liftarea}
 Let $(X_k,x_k)$ be a sequence of pointed CAT(0) spaces and denote by $(X_\om,x_\om)$ their ultralimit.
 Let $(Z_k)$ be a sequence of CAT(0) discs which Gromov-Hausdorff converge to a CAT(0) disc
 $Z$.  Assume that each  $Z_k$ is bilipschitz to $\bar D$ and that the boundary lengths $\mathcal{H}^1(\D Z_k)$ are uniformly bounded. 
 Suppose that $f:Z\to X_\om$ is a Lipschitz map and set $\ga:=\partial f$. Further assume that for some $L>0$ there are $L$-Lipschitz circles $\ga_k:\D Z_k\to X_k$ with $\wlim \ga_k=\ga$. 
 Then, for every $\eps>0$ there exist Lipschitz maps
 $f_k:Z_k\to X_k$ with $\D f_k=\ga_k$ and such that for $\om$-all $k$ holds
 $$
 \a(f_k)\leq \a(f)+\eps.
 $$
\end{lemma}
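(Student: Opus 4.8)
The plan is to approximate $f$ on the interior by a Lipschitz map defined on a polygonal subdisc, lift the polygonal boundary to the $Z_k$, and then use the cut-and-paste machinery of Lemma \ref{lem:c+p} together with the area-continuity Lemma \ref{lem:areaconv} to transfer the area estimate. First I would fix $\eps>0$. Using Lemma \ref{lem:polygon} applied to an arc length parametrization $c$ of $\D Z$, I obtain a Jordan polygon $\si\subset Z\setminus\D Z$ with positive angles, with controlled Lipschitz constant, and uniformly $\de$-close to $c$ for a small $\de>0$ to be chosen. Let $\Om\subset Z$ be its Jordan domain; by Lemma \ref{lem:bilip}, $\bar\Om$ with the induced intrinsic metric is a CAT(0) disc bilipschitz to $\bar D$. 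By Lemma \ref{lem:homosmallarea}(i) applied to the strip between $c$ and $\si$ (after the reparametrization in Lemma \ref{lem:reparaunit}), the part of $f$ on $Z\setminus\Om$ has area at most $C\cdot\de$, so $\a(f|_{\bar\Om})\le\a(f)+C\de$.

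Next I would transport $\si$ to the approximating discs. Since $Z_k\to Z$ in the Gromov--Hausdorff sense and $\si$ lies in the interior of $Z$, for $\om$-all $k$ there are Jordan polygons $\si_k\subset Z_k\setminus\D Z_k$ with positive angles, with the same combinatorial structure, whose vertices converge to those of $\si$ and which are uniformly close to $\D Z_k$ (here one uses the uniform boundary length bound and Lemma \ref{lem:polygon} inside $Z_k$ to push $\si_k$ off the boundary while keeping it close to $\ga_k$'s domain). Let $\Om_k$ be the Jordan domain of $\si_k$, so $\bar\Om_k$ is intrinsically a CAT(0) disc (Lemma \ref{lem:bilip}) and, by construction, $\bar\Om_k\to\bar\Om$ in the Gromov--Hausdorff sense, whence by Lemma \ref{lem:areaconv} $\mathcal H^2(\Om_k)\to\mathcal H^2(\Om)$ — more importantly, $\mathcal H^2(Z_k\setminus\Om_k)\le C'\de$ uniformly, by the same argument as in the proof of Lemma \ref{lem:areaconv} via Lemma \ref{lem:homosmallarea}.

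Now I would build $f_k$. On the small collar $Z_k\setminus\Om_k$ I simply use Lemma \ref{lem:homosmallarea}(i): the boundary $\ga_k$ and $f\circ(\text{identification of }\si_k\text{ with }\si)$ are uniformly close Lipschitz curves (using $\wlim\ga_k=\ga$, the $\de$-closeness of $\si$ to $c$, and the closeness of $\si_k$ to $\ga_k$'s domain), so there is a filling annulus of area $\le C\de$ between them. On $\bar\Om_k$ I want a Lipschitz map whose area is close to $\a(f|_{\bar\Om})$ and whose boundary is $f|_{\si}$ transported to $\si_k$. This is exactly the cut-and-paste Lemma \ref{lem:c+p}, applied with $Z_1=\bar\Om_k$, $Z_2=\bar\Om$, $f_2=f|_{\bar\Om}$, and boundary parametrizations $c_1$ of $\si_k$, $c_2$ of $\si$: since $\si_k$ and $\si$ have uniformly bounded Lipschitz constants and $f$ and the comparison map have uniformly bounded Lipschitz constants (Lipschitz constants of $\ga_k$ are uniformly bounded by $L$, and $f$ is fixed Lipschitz), the constant $C$ in Lemma \ref{lem:c+p} is uniform in $k$, and the closeness parameter is $O(\de)$; this yields a Lipschitz map $\tilde f_1\colon\bar\Om_k\to X_k$ with the right boundary and $\a(\tilde f_1)\le\a(f|_{\bar\Om})+C\de$. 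Gluing $\tilde f_1$ on $\bar\Om_k$ to the collar filling on $Z_k\setminus\Om_k$ produces $f_k\colon Z_k\to X_k$ with $\D f_k=\ga_k$ and
\[
\a(f_k)\le \a(f|_{\bar\Om})+C\de+C\de \le \a(f)+C''\de,
\]
so choosing $\de$ with $C''\de<\eps$ finishes the proof.

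The main obstacle I expect is the second step: producing the polygons $\si_k$ in $Z_k$ that are simultaneously (a) genuinely in the interior, (b) of positive angle, (c) combinatorially and metrically convergent to $\si$, and (d) uniformly close to the boundary data $\ga_k$, so that all the subsequent uniform constants (in Lemma \ref{lem:c+p} and Lemma \ref{lem:homosmallarea}) really are independent of $k$. This requires carefully combining the Gromov--Hausdorff convergence $Z_k\to Z$ with the interior-polygon construction of Lemma \ref{lem:polygon} applied to each $Z_k$, and controlling the collar area uniformly via the uniform bound on $\mathcal H^1(\D Z_k)$ exactly as in Lemma \ref{lem:areaconv}. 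The rest is bookkeeping with the estimates already packaged in Lemmas \ref{lem:polygon}, \ref{lem:bilip}, \ref{lem:reparaunit}, \ref{lem:homosmallarea}, \ref{lem:c+p}, and \ref{lem:areaconv}.
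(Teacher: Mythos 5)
Your construction breaks down at the central point of the lemma, which is not the domain side (passing from $Z$ to $Z_k$) but the target side: $f$ maps into the ultralimit $X_\om$, while the maps $f_k$ you must produce map into the individual spaces $X_k$. All the tools you invoke for the transfer --- Lemma \ref{lem:homosmallarea} (homotopies of small area between nearby curves) and Lemma \ref{lem:c+p} (cut and paste) --- operate inside a \emph{single} CAT(0) space: in Lemma \ref{lem:c+p} both $f_1$ and $f_2$ take values in the same $X$, and the output $\tilde f_1$ again maps into that $X$. There is no map from $X_\om$ back to $X_k$, and ``$\wlim\ga_k=\ga$'' gives no homotopy between $\ga_k\subset X_k$ and any curve in the image of $f\subset X_\om$, so the step where you apply Lemma \ref{lem:c+p} with $f_2=f|_{\bar\Om}$ (target $X_\om$) to obtain $\tilde f_1:\bar\Om_k\to X_k$ is not meaningful. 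This lifting problem is exactly the nontrivial content of the lemma, and the paper resolves it by quoting Theorem 5.1 of \cite{W}, which asserts that a Lipschitz filling of the limit curve in the ultralimit can be $\eps$-approximated in area by Lipschitz fillings of the curves $c_k$ in the spaces $X_k$; nothing in your argument substitutes for that result.

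By contrast, the domain side, on which you spend most of your effort (interior polygons $\si_k$, collar estimates, Gromov--Hausdorff convergence of Jordan domains), is handled in the paper in one line: a constant speed parametrization $\rho$ of $\D Z$ extends by Lemma \ref{lem:mono} to a monotone Lipschitz map $\mu:\bar D\to Z$, and $v:=f\circ\mu$ satisfies $\a(v)=\a(f)$ by Corollary \ref{cor:monarea}; after lifting $v$ via Theorem 5.1 in \cite{W} to discs $v_k:\bar D\to X_k$ filling $c_k=\ga_k\circ\rho_k$, one uses only that each $Z_k$ is bilipschitz to $\bar D$ to regard these as maps on $Z_k$ with the prescribed boundary. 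So even if your polygonal approximation were carried out carefully, it would replace the easy half of the proof while leaving the essential lifting step unproved.
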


\begin{proof}
Let $\rho:S^1\to\D Z$ be a constant speed parametrization and set $c:=\ga\circ\rho$. By assumption, we can find constant speed
parametrizations $\rho_k:S^1\to\D Z_k$ such that $c_k:=\ga_k\circ\rho_k$ $\om$-converges to $c$. By Lemma \ref{lem:mono}, $\rho$
extends to a monotone Lipschitz map $\mu:\bar D\to Z$. We put $v:=f\circ \mu$. By Corollary \ref{cor:monarea}, we have $\a(v)=\a(f)$.
For given $\eps>0$, Theorem 5.1 in \cite{W} provides a sequence $(v_k)$ of Lipschitz maps $v_k:\bar D\to X_k$ filling $c_k$ and such that 
$\a(v_k)\leq\a(v)+\eps$ holds for $\om$-all $k$. The statement follows since each $Z_k$ is bilipschitz to $\bar D$.
\end{proof}

\begin{remark}
 The condition on the $Z_k$ is necessary because if $\D Z_k$ has a peak, then there might not be a single Lipschitz map $Z_k\to X_k$
 filling $c_k$.
\end{remark}

\begin{proposition}\label{prop:limitminimizer}
 Let $(X_k,x_k)$ be a sequence of pointed CAT(0) spaces and denote by $(X_\om,x_\om)$ their ultralimit. 
 Further, let $(Z_k,z_k)$ be a sequence of CAT(0) discs, each bilipschitz to $\bar D$. Suppose that $(Z_k)$ Gromov-Hausdorff converges to a CAT(0) disc $Z$
 and such that the boundary lengths $\mathcal{H}^1(\D Z_k)$ are uniformly bounded. 
 For each $k\in\N$ let $f_k:Z_k\to X_k$ be an intrinsic minimal disc with $f(z_k)=x_k$.
 Then $f_\om:=\wlim f_k :Z_\om\to X_\om$ is an intrinsic minimal disc
 with $\a(f_\om)=\wlim \a(f_k)$.
\end{proposition}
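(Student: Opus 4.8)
The plan is to verify the three defining properties of an intrinsic minimal disc for $f_\omega$ and to extract the area convergence along the way. Shortness is automatic, since $f_\omega$ is an ultralimit of the $1$-Lipschitz maps $f_k$; and because $(Z_k)$ Gromov--Hausdorff converges to the CAT(0) disc $Z$, the ultralimit $Z_\omega$ is isometric to $Z$, so $Z_\omega$ is a CAT(0) disc and $f_\omega$ is a short map from a CAT(0) disc to the CAT(0) space $X_\omega$. The crucial step is the global area identity
\[
\a(f_\omega)=\mathcal{H}^2(Z_\omega).
\]
Each $f_k$ is area preserving, hence $\a(f_k)=\mathcal{H}^2(Z_k)$, and since the boundary lengths $\mathcal{H}^1(\D Z_k)$ are uniformly bounded, Lemma \ref{lem:areaconv} gives $\wlim\a(f_k)=\wlim\mathcal{H}^2(Z_k)=\mathcal{H}^2(Z)=\mathcal{H}^2(Z_\omega)$. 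Shortness yields $\a(f_\omega)\le\mathcal{H}^2(Z_\omega)$. For the opposite inequality, fix $\epsilon>0$ and apply Lemma \ref{lem:liftarea} to $f_\omega\colon Z_\omega\cong Z\to X_\omega$ together with the uniformly Lipschitz boundary circles $\gamma_k:=f_k|_{\D Z_k}$, which satisfy $\wlim\gamma_k=\D f_\omega$: this produces Lipschitz maps $g_k\colon Z_k\to X_k$ with $\D g_k=\D f_k$ and $\a(g_k)\le\a(f_\omega)+\epsilon$ for $\omega$-all $k$. Since $f_k$ is area minimizing on the embedded disc $Z_k$ itself, $\mathcal{H}^2(Z_k)=\a(f_k)\le\a(g_k)\le\a(f_\omega)+\epsilon$; taking $\wlim$ and letting $\epsilon\to0$ gives $\mathcal{H}^2(Z_\omega)\le\a(f_\omega)$. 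This proves the identity, and in particular $\a(f_\omega)=\wlim\a(f_k)$.

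Area preservation of $f_\omega$ then follows formally. Since $f_\omega$ is Lipschitz and $Z_\omega$ is a CAT(0) surface, its approximate differential exists almost everywhere and $\a(f_\omega|_U)=\int_U\mathcal{J}(d(f_\omega)_z)\,dz$ for every open $U\subset Z_\omega$; shortness forces $\mathcal{J}(d(f_\omega)_z)\le1$ almost everywhere, while $\int_{Z_\omega}\mathcal{J}(d(f_\omega)_z)\,dz=\a(f_\omega)=\mathcal{H}^2(Z_\omega)=\int_{Z_\omega}1\,dz<\infty$. Hence $\mathcal{J}(d(f_\omega)_z)=1$ for almost every $z$, so $\a(f_\omega|_U)=\mathcal{H}^2(U)$ for all open $U$.

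For area minimality, suppose it fails on some embedded closed disc $Y\subset Z_\omega$ with rectifiable boundary, say there is a Lipschitz map $g\colon Y\to X_\omega$ with $g|_{\D Y}=f_\omega|_{\D Y}$ and $\a(g)<\a(f_\omega|_Y)$. Gluing $g$ and $f_\omega|_{Z_\omega\setminus\interior Y}$ along $\D Y$, where they agree, yields a Lipschitz map $h\colon Z_\omega\to X_\omega$ with $\D h=\D f_\omega$, and since $\mathcal{H}^2(\D Y)=0$ we get $\a(h)=\a(g)+\a(f_\omega|_{Z_\omega\setminus Y})<\a(f_\omega|_Y)+\a(f_\omega|_{Z_\omega\setminus Y})=\a(f_\omega)$. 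Set $\delta:=\a(f_\omega)-\a(h)>0$ and apply Lemma \ref{lem:liftarea} to $h$ with the boundary circles $f_k|_{\D Z_k}$ as before, obtaining Lipschitz maps $h_k\colon Z_k\to X_k$ with $\D h_k=\D f_k$ and $\a(h_k)\le\a(h)+\delta/3$ for $\omega$-all $k$. The area convergence established above gives $\a(f_k)\ge\a(f_\omega)-\delta/3$ for $\omega$-all $k$, whence $\a(h_k)<\a(f_k)$ for $\omega$-all $k$ --- contradicting the area minimality of $f_k$ on $Z_k$. Thus $f_\omega$ is area minimizing on every embedded disc, and is therefore an intrinsic minimal disc.

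I expect the main obstacle, and the place where the hypotheses are really used, to be the two invocations of Lemma \ref{lem:liftarea}: one needs the $Z_k$ to be bilipschitz to $\bar D$ with uniformly bounded boundary lengths in order to produce the comparison maps $g_k$ and $h_k$ with the prescribed boundary values $\D f_k$ and controlled area, and one must check that $\wlim(f_k|_{\D Z_k})=\D f_\omega=\D h$ under the identification $Z_\omega\cong Z$. Once these liftings are in hand, the rest is routine manipulation of ultralimits together with Lemma \ref{lem:areaconv} and the elementary gluing of Lipschitz maps.
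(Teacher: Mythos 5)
Your proof is correct and follows essentially the same route as the paper: shortness gives $\a(f_\om)\leq\mathcal{H}^2(Z_\om)$, Lemma \ref{lem:areaconv} identifies $\mathcal{H}^2(Z_\om)$ with $\wlim\a(f_k)$, and Lemma \ref{lem:liftarea} together with the minimality of the $f_k$ yields the reverse inequality and rules out better competitors. The only difference is that you spell out the area-preservation (Jacobian) argument and the cut-and-paste reduction of subdisc minimality to the global case, steps the paper's proof leaves implicit, and you correctly flag the boundary identification $\wlim(f_k|_{\D Z_k})=\D f_\om$ which the paper also uses without comment.
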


\begin{remark}
 If we remove the condition on $Z_\om$ beeing a disc, then $f_\om$ is still area minimizing and its domain is a CAT(0)
 disc retract, cf. \cite{PS}.
\end{remark}

 \begin{proof}
  Since all the $f_k$ are short, we obtain a well defined short limit map 
  $f_\om:Z_\om\to X_\om$. Since each $f_k$ is area minimizing, we conclude from Lemma \ref{lem:liftarea} $\a(\varphi)\geq \wlim\a(f_k)$
  for any Lipschitz map $\varphi$ with $\D\varphi=\D f_\om$. 
  By our assumption, $Z_\om$ is isometric to $Z$. From Lemma \ref{lem:areaconv} we know
  $\mathcal{H}^2(Z)=\lim\limits_{k\to\infty}\mathcal{H}^2(Z_k)$. Hence
  $$
  \a(f_\om)\leq\mathcal{H}^2(Z_\om)=\lim\limits_{k\to\infty}\mathcal{H}^2(Z_k)=\wlim\a(f_k).
  $$
  Therefore, equality holds and $f_\om$ is an intrinsic minimal disc.
  
 \end{proof}

\subsection{Monotonicity}

A key property of minimal surfaces in smooth spaces is the monotonicity of area ratios.
The aim of this section is to prove  monotonicity in a more general setting.

\begin{lemma}\label{lem:intconecom}
Let $X$ be a CAT(0) space and let $f:Z\to X$ be an intrinsic minimal disc. Suppose that there is a point $p$
in $X$ and a raduis $r>0$ such that $f(\D Z)\subset\D \bar B_r(p)$. Then $a(f)\leq\frac{r}{2}\cdot \mathcal{H}^1(\D Z)$.
 
\end{lemma}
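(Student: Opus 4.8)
The plan is to compare $f$ against the competitor obtained by coning off its boundary circle $f(\partial Z)$ to the center $p$, exactly as in Lemma~\ref{lem:conecom}. First I would fix a solution $u:\bar D\to Z$ to the Plateau problem for $(\partial Z, Z)$, which by Theorem~\ref{thm:plateau} is a conformal harmonic disc, and which by Theorem~1.1 in \cite{LWcanonical} (used already in Corollary~\ref{cor:strong}) is a homeomorphism $\bar D\to Z$. Then $\gamma:=f\circ u|_{\partial\bar D}$ is a Lipschitz circle in $X$ whose image lies in $\partial\bar B_r(p)\subset\bar B_r(p)$, and $\length(\gamma)=\length(f(\partial Z))\leq \mathcal H^1(\partial Z)$ since $f$ is short (in fact, using the arc length parametrization of $\partial Z$ and Lemma~\ref{lem:reparaunit}, $\length(f(\partial Z))=\mathcal H^1(\partial Z)$ when $f$ restricts to an isometry on $\partial Z$, but we only need the inequality). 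Apply Lemma~\ref{lem:conecom} to $\gamma$ with the point $p$ and radius $r$: the $p$-radial extension $v:\bar D\to X$ of $\gamma$ is Lipschitz with $\a(v)\leq\frac r2\cdot\length(\gamma)\leq \frac r2\cdot\mathcal H^1(\partial Z)$.

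The key step is then to use minimality of $f$ to conclude $\a(f)\leq\a(v)$. Since $u$ is a homeomorphism and $f$ is an intrinsic minimizer, $f\circ u$ is a Lipschitz (indeed conformal harmonic, by Proposition~\ref{prop:confsol}) disc with $\partial(f\circ u)=\gamma=\partial v$. Now $\a(f)=\mathcal H^2(Z)=\a(f\circ u)$ by the area-preservation property of intrinsic minimizers together with Lemma~\ref{lem:eqarea} (or directly Theorem~\ref{thm:int}(vi) applied to $Z=Z_u$). On the other hand, $f|_Z$ has least area among Lipschitz competitors with the same boundary; since $v$ is such a competitor once we transport it through the homeomorphism $u$, we get $\a(f\circ u)\leq\a(v)$. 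Chaining the estimates,
\[
\a(f)=\a(f\circ u)\leq\a(v)\leq\frac r2\cdot\length(\gamma)\leq\frac r2\cdot\mathcal H^1(\partial Z),
\]
which is the claim.

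The main obstacle is bookkeeping about which domain the competitor lives on: the definition of intrinsic minimal disc asks that $f|_Y$ be least-area among Lipschitz maps $Y\to X$ with prescribed boundary for embedded closed discs $Y\subset Z$, whereas the natural cone competitor $v$ is a map on $\bar D$. The clean way around this is to work entirely on $\bar D$ via the Plateau homeomorphism $u:\bar D\to Z$: the map $f\circ u$ is an area minimizer for the boundary $\gamma$ in the sense of Plateau (this is precisely the content of Proposition~\ref{prop:confsol}, since $f$ restricts to an embedding on $\partial Z$ — or if one prefers not to assume that, one invokes Lemma~\ref{lem:eqarea} and the fact that $\a(f\circ u)=\a(f)$ directly and compares with $v$ through $u^{-1}$). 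A minor point to check is Lipschitz continuity of $v$, but that is supplied verbatim by Lemma~\ref{lem:conecom} since $\gamma$ is a Lipschitz circle contained in $\bar B_r(p)$. No rigidity statement is needed here; the equality case, if wanted, would follow from the equality clause of Lemma~\ref{lem:conecom} combined with Corollary~\ref{cor:isom}, but the lemma as stated only asserts the inequality.
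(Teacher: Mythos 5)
There is a genuine gap at the key step ``$\a(f\circ u)\le\a(v)$''. Your first route invokes Proposition \ref{prop:confsol} to say $f\circ u$ solves the Plateau problem for $(f(\partial Z),X)$, but that conclusion requires the extra hypothesis that $f$ restricts to an embedding on $\partial Z$, which is not part of the lemma and fails exactly in the situation the lemma is used for: in the proof of monotonicity one applies it to $f|_{\bar\Om_r}$ whose boundary is mapped into a metric sphere, so $f(\partial Z)$ need not even be a Jordan curve and $\Lambda(f(\partial Z),X)$ is not defined. Without Plateau-minimality you cannot compare with a competitor whose trace is a \emph{reparametrization} of the boundary, and such a reparametrization is forced on you anyway, because $\gamma=f\circ u|_{\partial\bar D}$ is only continuous (the Plateau solution $u$ extends continuously, not Lipschitz-continuously, to $\partial\bar D$), so Lemma \ref{lem:conecom} does not apply to $\gamma$ verbatim and you must cone off an arc-length parametrization instead. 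Your fallback --- transporting the cone through $u^{-1}$ to get a competitor on $Z$ and using the definition of intrinsic minimizer with $Y=Z$ --- also breaks: $u^{-1}$ is merely continuous, so $v\circ u^{-1}$ is not a Lipschitz competitor, and in fact $\partial Z$ may have cusps, in which case there may be no Lipschitz map $Z\to X$ at all with boundary values $f|_{\partial Z}$ (this is precisely the phenomenon noted in the remark after Lemma \ref{lem:liftarea}). So the proposal does not close the loop in the generality in which the lemma is stated and needed.

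The paper's proof is designed around exactly this obstruction: it approximates $\partial Z$ from inside by a Jordan polygon $\si$ with positive angles (Lemma \ref{lem:polygon}), notes that the collar $Z\setminus\Om_\si$ has area $O(\eps)$ (Lemma \ref{lem:homosmallarea}), uses that $\bar\Om_\si$ is a CAT(0) disc bilipschitz to $\bar D$ (Lemma \ref{lem:bilip}) and that $f|_{\bar\Om_\si}$ is again an intrinsic minimal disc (Lemma \ref{lem:restrict}), and only then compares with the radial cone of Lemma \ref{lem:conecom}, transported through the bilipschitz chart so that it is a genuine Lipschitz competitor on $\bar\Om_\si$ with the same boundary values; letting $\eps\to 0$ gives the bound. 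If you want to salvage your argument, you would have to add this inner-approximation step (or an equivalent device producing a Lipschitz competitor on an embedded subdisc of $Z$ with matching boundary map); the homeomorphism $u$ alone does not provide it.
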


\begin{proof}
 Let $\epsilon>0$. By Lemma \ref{lem:polygon}, we find a parametrized Jordan polygon $\si:S^1\to Z$ with positive angles which is uniformly close to an arc length parametrization of $\D Z$
 and such that  $\length(\si)\leq(1+\epsilon)\cdot \mathcal{H}^1(\D Z)$ holds.
 Denote by $\Om_\si$ the associated Jordan domain. By Lemma \ref{lem:homosmallarea}, we have 
 \[\a(f|_{Z\setminus\Om_\si})=\mathcal{H}^2(Z\setminus\Om_\si)\leq C\cdot\mathcal{H}^1(\D Z)\cdot\epsilon\]
 with a uniform constant $C>0$. By  Lemma \ref{lem:bilip}, $\bar\Om_\si$ is intrinsically a CAT(0) disc which is bilipschitz to $\bar D$. Hence Lemma \ref{lem:conecom} implies
 \[\a(f|_{\Om_\si})\leq \frac{r}{2}\cdot\length(\si)\leq\frac{(1+\epsilon)\cdot r}{2}\cdot\mathcal{H}^1(\D Z).\]
 The claim follows since $\epsilon>0$ was arbitrary. 
 
\end{proof}

\begin{proposition}[Intrinsic monotonicity]\label{prop:mon}
Let $X$ be a CAT(0) space and $Z$ a CAT(0) surface. Suppose that $f:Z\to X$ is an intrinsic minimizer. Then for any point $p\in X$
the area density
$$
\Theta(f,p,r):=\frac{\mathcal{H}^2(f^{-1}(B_r(p)))}{\pi r^2}
$$
is a nondecreasing function of $r$ as long as
$r<|p,f(\partial Z)|$. 
 
\end{proposition}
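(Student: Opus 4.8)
The plan is to follow the classical differential-inequality argument for monotonicity, transplanted to the intrinsic setting where the coarea/cut-and-paste tools already established in the paper take the place of the first variation formula. Fix $p\in X$ and write $f_p:=|f(\cdot),p|:Z\to\R$, and set $A(r):=\mathcal{H}^2(f^{-1}(B_r(p)))$. The first reduction is local: by Lemma \ref{lem:restrict} we may replace $Z$ by $\bar\Om_\Ga$ for a suitable Jordan domain and thereby assume $Z$ is a CAT(0) disc; the density is also clearly continuous in $r$, so it suffices to prove the monotonicity at (almost) every $r$ in an appropriate sense, e.g. at all quasi regular values of $f_p$, which is a full-measure set by Proposition \ref{prop:ae}, and then conclude by continuity.

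The heart of the argument is to compare, for a quasi regular value $r<|p,f(\D Z)|$, the area $A(r)=\a(f|_{f^{-1}(B_r(p))})$ against the area of a competitor built by coning off the fiber. Let $\Pi_r=f_p^{-1}(r)$ decompose as $N\cup\bigcup_i\Ga_i$ with the $\Ga_i$ rectifiable Jordan curves and, by Corollary \ref{cor:separated} (or Corollary \ref{cor:separated}'s companion), having pairwise disjoint Jordan domains $\Om_i$; the union $\bigcup_i\Om_i$ is (up to a null set) precisely $f^{-1}(B_r(p))$ since by Corollary \ref{cor:strong} the function $f_p$ cannot have an interior local maximum below the boundary value, so every component of $\{f_p<r\}$ is swallowed by some $\Om_i$. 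On each $\Om_i$ we replace $f|_{\Om_i}$ by the $p$-radial cone: using the logarithm map intuition, let $v_i:\bar\Om_i\to X$ send radial rays toward the point $p$ to constant-speed $p$-radial geodesics, exactly as in Lemma \ref{lem:conecom}. Since $f$ is an intrinsic minimizer and $\D v_i=f|_{\D\Om_i}$, minimality gives
$$
\a(f|_{\Om_i})\le\a(v_i)\le\frac{r}{2}\cdot\mathcal{H}^1(\D\Om_i),
$$
where the second inequality is Lemma \ref{lem:conecom} (the boundary already lies in $\bar B_r(p)$, indeed on $\D\bar B_r(p)$). Summing over $i$ and using that $\sum_i\mathcal{H}^1(\D\Om_i)=\mathcal{H}^1(\Pi_r)$ (the $\Ga_i$ exhaust $\Pi_r$ up to measure zero), we obtain $A(r)\le\frac{r}{2}\cdot\mathcal{H}^1(\Pi_r)$.

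Now I invoke the coarea inequality for the $1$-Lipschitz function $f_p$ on the intrinsic surface $Z$: for a.e. $r$,
$$
A'(r)=\frac{d}{dr}\mathcal{H}^2(f^{-1}(B_r(p)))\ge\mathcal{H}^1\bigl(f_p^{-1}(r)\bigr)=\mathcal{H}^1(\Pi_r),
$$
which holds because $f_p$ is $1$-Lipschitz (as $f$ is short) so $|\nabla f_p|\le 1$ a.e. and the coarea formula on the surface of bounded integral curvature $Z$ gives $A(r)=\int_0^r\!\int_{f_p^{-1}(t)}|\nabla f_p|^{-1}\,d\mathcal{H}^1\,dt\ge\int_0^r\mathcal{H}^1(\Pi_t)\,dt$. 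Combining the two displays yields the differential inequality $A(r)\le\frac{r}{2}A'(r)$, i.e. $\frac{d}{dr}\log A(r)\ge\frac{2}{r}$ wherever $A(r)>0$, which integrates to the monotonicity of $A(r)/r^2=\pi\,\Theta(f,p,r)$; on the open set where $A\equiv 0$ the statement is trivial, and the two regimes match up by continuity of $A$.

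The step I expect to be the main obstacle is making the coarea inequality $A'(r)\ge\mathcal{H}^1(\Pi_r)$ rigorous on a CAT(0) surface — one must know that $\mathcal{H}^2$ on such a surface behaves like a Riemannian area measure well enough for the coarea formula to apply to Lipschitz functions, and that the "bad" part $N$ of the fiber together with the set of non-quasi-regular values is genuinely negligible for the integration. This is where I would lean on the identification of the interior of $Z$ as a surface of bounded integral curvature (hence the measure-theoretic results of Alexandrov–Zalgaller and the GCBA framework of Lytchak–Nagano) and on Proposition \ref{prop:ae}. A secondary technical point is the passage from "true" $p$-radial cones inside $X$ to admissible Lipschitz competitors over $\bar\Om_i$: one must check $\D v_i=f|_{\D\Om_i}$ genuinely, which uses that $\bar u$-type maps, here $f$, send the fiber $\Ga_i$ into the sphere $\D\bar B_r(p)$ and that $\bar\Om_i$ is intrinsically a CAT(0) disc bilipschitz to $\bar D$ (Lemma \ref{lem:bilip}), so Lemma \ref{lem:conecom} applies verbatim.
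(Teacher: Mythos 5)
Your strategy is the paper's own: the coarea inequality for the short function $f_p$ gives $A'(r)\ge\mathcal{H}^1(\Pi_r)$ almost everywhere, while a cone competitor over a quasi regular fiber together with minimality gives $A(r)\le\frac{r}{2}\mathcal{H}^1(\Pi_r)$ (using Proposition \ref{prop:ae}, Corollary \ref{cor:separated} and Lemma \ref{lem:restrict} in exactly the same way), and the resulting differential inequality integrates to the monotonicity of $A(r)/r^2$. The one step that does not go through as you wrote it is precisely the point you dismiss as a ``secondary technical point'': you claim that $\bar\Omega_i$ is bilipschitz to $\bar D$ by Lemma \ref{lem:bilip}, so that the $p$-radial cone over $f|_{\Gamma_i}$ is an admissible competitor and Lemma \ref{lem:conecom} ``applies verbatim''. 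Lemma \ref{lem:bilip} gives the bilipschitz conclusion only when the Jordan curve is a polygon with positive angles; for a fiber $\Gamma_i$ of $f_p$, which is merely a rectifiable Jordan curve, it gives only that $\bar\Omega_i$ is intrinsically a CAT(0) disc. This matters: in the definition of an intrinsic minimizer the competitor must be a Lipschitz map defined on $\bar\Omega_i$ itself with the same boundary values, and if $\partial\Omega_i$ has a cusp the cone over a parametrization of $\Gamma_i$ need not transplant to such a map --- indeed there may be no Lipschitz filling with those boundary values at all (this is exactly the phenomenon recorded in the remark after Lemma \ref{lem:liftarea}). So the inequality $\a(f|_{\Omega_i})\le\frac{r}{2}\mathcal{H}^1(\Gamma_i)$ is not justified by the lemmas you cite.

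The gap is repairable with tools already in the paper, and this is what Lemma \ref{lem:intconecom} is for: approximate $\Gamma_i$ from inside by a Jordan polygon $\sigma$ with positive angles (Lemma \ref{lem:polygon}), so that $\bar\Omega_\sigma$ \emph{is} bilipschitz to $\bar D$ by Lemma \ref{lem:bilip} and the cone over $f\circ\sigma$ (after the reparametrization of Lemma \ref{lem:reparaunit}) is a legitimate Lipschitz competitor to which Lemma \ref{lem:conecom} applies; the area of the collar $\Omega_i\setminus\Omega_\sigma$ is absorbed via Lemma \ref{lem:homosmallarea}, and one lets $\epsilon\to 0$. Replacing your direct appeal to Lemmas \ref{lem:bilip} and \ref{lem:conecom} by this approximation argument (equivalently, by quoting Lemma \ref{lem:intconecom}) makes your proof coincide with the one in the paper; the other points you flag --- the coarea inequality on a CAT(0) surface and the reduction to fibers that are Jordan curves with disjoint Jordan domains --- are handled there in the same way you propose.
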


\begin{proof}
We put $f_p(x):=|f(x),p|$ and define $\Omega_r:=f_p^{-1}([0,r))$ and $\Pi_r:=f_p^{-1}(r)$. 
Moreover, we set $A(r):=\mathcal{H}^2(\Omega_r)$ and $L(r):=\mathcal{H}^1(\Pi_r)$. 
Then, since $f_p$ is short, the coarea formula yields $A(r)\geq \int_{\Om_r}|\nabla f_p|=\int_0^r L(t) dt$.
Therefore 
\begin{equation}
A'(r)\geq L(r)
\label{eq:low}
\end{equation}
for almost all $r<|p,f(\D Z)|$.

The desired monotonicity follows, if we can show that 
\begin{equation}
A(r)\leq\frac{r}{2}L(r)
\label{eq:up}
\end{equation}
holds almost everywhere.
By Proposition \ref{prop:ae}, almost all $r$ are quasi regular. By Corollary \ref{cor:separated}, all Jordan domains resulting from a decomposition of a quasi regular fiber are disjoint. 
Hence we may assume that $\Pi_r$ is equal to a single rectifiable Jordan curve. By Lemma \ref{lem:restrict}, $f|_{\bar\Om_r}$ is an intrinsic minimal disc and therefore the required area estimate follows 
from Lemma \ref{lem:intconecom}.
\end{proof}

\begin{corollary}[Monotonicity]\label{cor:mon}
Let $X$ be a CAT(0) space. Suppose that $u:\bar{D}\to X$ is a minimal disc and $p$
is a point in $u(\bar{D})\setminus u(\partial\bar{D})$. Then the area density
$$
\Theta(u,p,r):=\frac{\a(u(D)\cap B_r(p))}{\pi r^2}
$$
is a nondecreasing function of $r$ as long as
$r<|p,u(\partial\bar D)|$.  
 
\end{corollary}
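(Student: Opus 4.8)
The plan is to reduce the statement to the intrinsic monotonicity of Proposition~\ref{prop:mon} by passing through the factorization supplied by Theorem~\ref{thm:int}. First I would apply that theorem to the minimal disc $u$ filling $\Ga:=u(\D\bar D)$, obtaining a CAT(0) disc $Z_u$ together with continuous maps $\pi:\bar D\to Z_u$ and $\bar u:Z_u\to X$ satisfying $u=\bar u\circ\pi$. By the remark following Theorem~\ref{thm:int} the map $\bar u$ is an intrinsic minimizer, and since a CAT(0) disc is in particular a CAT(0) surface, Proposition~\ref{prop:mon} applies to $\bar u$ and shows that $r\mapsto\Theta(\bar u,p,r)$ is nondecreasing for $r<|p,\bar u(\D Z_u)|$. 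Property (iv) of Theorem~\ref{thm:int} gives $\bar u(\D Z_u)=\Ga=u(\D\bar D)$, so this range is exactly $r<|p,u(\D\bar D)|$; note that only $p\notin u(\D\bar D)$ is used here, which is part of the hypothesis.

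It then remains to verify that $\Theta(u,p,r)=\Theta(\bar u,p,r)$ on this range, with the understanding that the numerator $\a(u(D)\cap B_r(p))$ denotes the parametrized area $\a(u|_{U_r})$ of the restriction of $u$ to $U_r:=u^{-1}(B_r(p))$ in the sense of Definition~\ref{def:area}. Fix $r<|p,u(\D\bar D)|$. Since $|u(x),p|\ge|p,u(\D\bar D)|>r$ for every $x\in\D\bar D$, the set $U_r$ is an open subset of $D$. From $u=\bar u\circ\pi$ we get $U_r=\pi^{-1}\bigl(\bar u^{-1}(B_r(p))\bigr)$, hence $\pi(U_r)=\bar u^{-1}(B_r(p))$ because $\pi$ is surjective. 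Applying property (vi) of Theorem~\ref{thm:int} to the open set $U_r\subset D$ now yields
\[
\a\bigl(u|_{U_r}\bigr)=\mathcal H^2\bigl(\pi(U_r)\bigr)=\mathcal H^2\bigl(\bar u^{-1}(B_r(p))\bigr),
\]
which is precisely the identity $\Theta(u,p,r)=\Theta(\bar u,p,r)$. Combined with the monotonicity of $\Theta(\bar u,p,\cdot)$ established above, this proves the corollary.

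The only step that requires genuine care is this last identification: it is here that one actually uses how good the factorization of Theorem~\ref{thm:int} is — that $\pi$ is onto and preserves area in the strong sense of property (vi), so that the parametrized (multiplicity-counted) area of $u$ over $U_r$ equals the Hausdorff measure of the corresponding preimage in $Z_u$ — and not merely that $u$ factors through some CAT(0) space. Everything else (matching the two validity ranges, recalling that $\bar u$ is an intrinsic minimizer) is bookkeeping already recorded in the statement of Theorem~\ref{thm:int} and the remark after it.
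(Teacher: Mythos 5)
Your proposal is correct and follows the paper's own argument: factor $u=\bar u\circ\pi$ via Theorem \ref{thm:int}, identify $\a(u(D)\cap B_r(p))$ with $\mathcal H^2(\bar u^{-1}(B_r(p)))$ using property (vi), and apply Proposition \ref{prop:mon} to the intrinsic minimizer $\bar u$. The extra care you take with the range of radii and the openness of $u^{-1}(B_r(p))\subset D$ is just a more detailed writing of the same proof.
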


\begin{proof}
Factorize $u$ as $\bar u\circ \pi$ as in Theorem \ref{thm:int}. Then 
$\a(u(D)\cap B_r(p))=\mathcal{H}^2(\bar u^{-1}(B_r(p)))$, by Theorem \ref{thm:int} vi).
Since $\bar u$ is an intrinsic minimizer,  Proposition \ref{prop:mon} applies.
\end{proof}

\subsection{Densities and blow-ups}

The monotonicity of area densities justifies the following definition.

\begin{definition}(Density)
For an intrinsic area minimizer $f$ and a point $p\in f(Z)\setminus f({\D Z})$ we define the {\em density at $p$}
by 
$$
\Theta(f,p):=\lim_{r\to 0}\Theta(f,p,r).
$$ 
\end{definition}

If $Z$ is compact, then the density is finite.
The function $p\mapsto \Theta(f,p)$ is upper semi-continuous by  monotonicity (Proposition \ref{prop:mon}).

\begin{lemma}\label{lem:estinv}
Let $X$ be a CAT(0) space. 
If $f:Z\to X$ is an intrinsic area minimizer, and $p\in f(Z)\setminus f({\D Z})$, then 
$$
\Theta(f,p)\geq \# f^{-1}(p).
$$
\end{lemma}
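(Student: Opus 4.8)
The plan is to exploit the monotonicity of area densities together with the fact that near each preimage of $p$ the map $f$ has density at least one. Suppose $f^{-1}(p)=\{z_1,\dots,z_N\}$ consists of at least $N$ points (if the fiber is infinite we argue with an arbitrarily large finite subset). Since all $z_j$ lie in the interior of $Z$ and the $z_j$ are distinct, we can choose a radius $\rho>0$ small enough that the metric balls $B_\rho(z_j)\subset Z$ are pairwise disjoint and each satisfies $\rho<|z_j,\D Z|$. The key local fact is that for each $j$ one has $\Theta(f,p)\geq 1$ "localized at $z_j$'': more precisely, for small $r$, the set $f^{-1}(B_r(p))$ contains a neighborhood of $z_j$ whose $\mathcal{H}^2$-measure is at least $\pi r^2(1-o(1))$.

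To see this localized density bound, fix $j$ and work inside $B_\rho(z_j)$, on which $f$ restricts to an intrinsic minimal disc by Lemma \ref{lem:restrict}. For $r$ small the preimage $f_p^{-1}([0,r))\cap B_\rho(z_j)$ is an open neighborhood of $z_j$; restricting to one of its components $\Omega_r^j$ containing $z_j$ and applying the chain of inequalities from the proof of Proposition \ref{prop:mon} — namely $A^j(r)\ge\int_0^rL^j(t)\,dt$ from the coarea inequality and $A^j(r)\le\frac{r}{2}L^j(r)$ from Lemma \ref{lem:intconecom} applied to $f|_{\bar\Omega_r^j}$ — we obtain that $r\mapsto A^j(r)/(\pi r^2)$ is nondecreasing, where $A^j(r)=\mathcal{H}^2(\Omega_r^j)$. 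Hence $A^j(r)\ge \pi r^2\cdot\lim_{s\to0}A^j(s)/(\pi s^2)$. The limiting density is at least $1$: indeed, blowing up at $z_j$, the rescaled domains converge to a CAT(0) surface without boundary whose link has length $\ge 2\pi$ (a CAT(0) surface, being GCBA and locally a manifold, has links of circumference at least $2\pi$), and by Bishop-Gromov (Theorem \ref{thm:BG}) applied to $Z$ itself, $\mathcal{H}^2(B_s(z_j))\ge\frac{s^2}{2}\mathcal{H}^1(\Sigma_{z_j}Z)\ge\pi s^2$; combining with $f$ being area preserving and short gives $\lim_{s\to0}A^j(s)/(\pi s^2)\ge 1$.

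Now I assemble the pieces. For $r$ small enough that the relevant $\Omega_r^j$ are contained in the disjoint balls $B_\rho(z_j)$, the sets $\Omega_r^j\subset f^{-1}(B_r(p))$ are pairwise disjoint, so
$$
\Theta(f,p,r)=\frac{\mathcal{H}^2(f^{-1}(B_r(p)))}{\pi r^2}\ \ge\ \sum_{j=1}^N\frac{A^j(r)}{\pi r^2}\ \ge\ \sum_{j=1}^N 1\ =\ N.
$$
Letting $r\to0$ gives $\Theta(f,p)\ge N$, and since $N$ was an arbitrary finite bound on $\#f^{-1}(p)$, the claim follows. The main obstacle I anticipate is the localized lower density bound $\lim_{s\to0}A^j(s)/(\pi s^2)\ge1$: one must be careful that $\Omega_r^j$ really captures a definite fraction (asymptotically all) of the measure of $f^{-1}(B_r(p))$ near $z_j$, which uses that $f$ is a \emph{radial} isometry is too strong — rather one only needs that $f$ is short and area-preserving, so $f_p\le|z_j,\cdot|_Z$ near $z_j$, forcing $B_s(z_j)\subset f_p^{-1}([0,s))$ and hence $A^j(s)\ge\mathcal{H}^2(B_s(z_j))\ge\pi s^2$ directly from Bishop-Gromov, bypassing any blow-up argument entirely. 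This simplification should make that step routine.
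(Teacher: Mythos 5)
Your proposal is correct, and the simplification you arrive at in the final paragraph — $f$ short gives $B_s(z_j)\subset f^{-1}(B_s(p))$ for disjoint small balls around finitely many preimages, then Bishop--Gromov gives $\mathcal{H}^2(B_s(z_j))\geq\pi s^2$ — is exactly the paper's proof. The intermediate machinery (localized monotonicity, blow-ups, Lemma \ref{lem:intconecom}) is an unnecessary detour that your own closing observation correctly bypasses.
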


\begin{proof}
Let $\{x_1,\ldots,x_k\}$ be a finite subset of the inverse image of the point $p$ under $f$.
For $r<\frac{1}{2}\min\{|x_i,x_j|\ |\ 1\leq i<j\leq k\}$, the balls $B_r(x_i)$ are disjoint and since 
$f$ is short, we have $\a (f^{-1}(B_r(p)))\geq\sum_{i=1}^k\a (B_r(x_i))$. The claim follows from Bishop-Gromov (Theorem \ref{thm:BG}). 
\end{proof}

\begin{corollary}\label{cor:finitefiber}
 Let $X$ be a CAT(0) space and $f:Z\to X$ an intrinsic minimizer of finite area, $\a(f)<\infty$.
 Then the fiber of each point $p\in f(Z)\setminus f({\D Z})$ is finite.
\end{corollary}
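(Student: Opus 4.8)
The plan is to show that the area density $\Theta(f,p)$ is finite and then quote the lower density bound of Lemma \ref{lem:estinv}. The whole argument is essentially bookkeeping: it combines intrinsic monotonicity with the finite-area hypothesis, so I do not expect a genuine obstacle — only one minor point deserving a word of care, noted at the end.

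First I would record that, since $f$ is area preserving, taking $U=Z$ in the definition of an intrinsic minimizer gives $\a(f)=\mathcal{H}^2(Z)$, so the standing hypothesis says precisely that $Z$ has finite total Hausdorff area. Next, fix $p\in f(Z)\setminus f(\D Z)$; for the density at $p$ to be defined we have $|p,f(\D Z)|>0$, so I may choose a radius $r_0$ with $0<r_0<|p,f(\D Z)|$ (any positive $r_0$ will do if $\D Z=\emptyset$). By intrinsic monotonicity (Proposition \ref{prop:mon}) the ratio $r\mapsto\Theta(f,p,r)$ is nondecreasing on $(0,|p,f(\D Z)|)$, hence, using $f^{-1}(B_{r_0}(p))\subseteq Z$,
\[\Theta(f,p)=\lim_{r\to 0}\Theta(f,p,r)\le\Theta(f,p,r_0)=\frac{\mathcal{H}^2(f^{-1}(B_{r_0}(p)))}{\pi r_0^2}\le\frac{\mathcal{H}^2(Z)}{\pi r_0^2}=\frac{\a(f)}{\pi r_0^2}<\infty.\]
Finally, Lemma \ref{lem:estinv} yields $\#f^{-1}(p)\le\Theta(f,p)<\infty$, which is exactly the assertion (note that $p\notin f(\D Z)$ already forces $f^{-1}(p)\subset Z\setminus\D Z$, so the lemma applies cleanly).

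The only subtlety worth flagging is that the density, and with it the monotone ratio used above, is meaningful only when $p$ lies at positive distance from $f(\D Z)$; this is implicit in the way the density is set up in this subsection, and once it is granted the displayed estimate is immediate. If one wanted to be scrupulous about the case where $f(\D Z)$ fails to be closed, one could simply restrict attention to points $p$ with $|p,f(\D Z)|>0$, which is the only situation in which the statement is used later.
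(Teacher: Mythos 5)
Your proof is correct and is essentially the paper's own argument: bound $\Theta(f,p,r_0)$ by $\a(f)/(\pi r_0^2)$ using the area-preserving property, invoke monotonicity (Proposition \ref{prop:mon}) to get a finite density at $p$, and conclude with the lower density bound of Lemma \ref{lem:estinv}. The extra care you take about $|p,f(\D Z)|>0$ and $\a(f)=\mathcal{H}^2(Z)$ is just a more explicit spelling-out of the same steps.
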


\begin{proof}
 For $r<|p,f(\D Z)|$ holds  $\Theta(f,p,r)\leq \frac{\a(f)}{\pi r^2}$. Hence the claim follows from monotonicity and
 Lemma \ref{lem:estinv}.
\end{proof}

\begin{definition}
Let $\om$ be a nonprincipal ultrafilter on $\N$.
 Let $X$ be a CAT(0) space and let $f:Z\to X$ be an intrinsic minimizer.  Fix a point $z_0\in Z$. For any $r>0$ we define the rescaled maps $f_r:(r\cdot Z,z_0)\to (r\cdot X,x_0)$
 where $x_0=f(z_0)$. A {\em tangent map at $z_0$} is an ultralimit of rescalings $f_{\frac{1}{\eps_i}}$ for some nullsequence $(\epsilon _i)$:
 $$
 df_{z_0}:T_{z_0}Z\to (X_\om,x_\om).
 $$
 Here $X_\om$ denotes the pointed ultralimit $\wlim(\frac{1}{\eps_i}\cdot X,x_0)$.
 
\end{definition}

\begin{lemma}\label{lem:blowup}
 Let $X$ be a CAT(0) space and $f:Z\to X$ an intrinsic minimizer. Let $df_{z_0}:T_{z_0}Z\to X_\om$ be a tangent map at a point $z_0$
 in the interior of $Z$.
 Then $df_{z_0}$ is an intrinsic minimal plane which is a locally isometric embedding away from the tip  $o_{z_0}$ of $T_{z_0}Z$. 
 In particular, it is of constant area density $\Theta(df_{z_0},x_\om,r)\equiv\frac{\mathcal{H}^1(\Sigma_{z_0}Z)}{2\pi}$. 
 Moreover, if $x_0\in f(Z)\setminus f(\D Z)$ and $f^{-1}(x_0)=\{z_0,\ldots,z_k\}$ is a finite fiber, then  $\sum_{i=0}^k\Theta(df_{z_i},x_\om)= \Theta(f,x_0)$.
\end{lemma}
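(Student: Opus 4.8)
The plan is to establish the three assertions in turn, using only the limit theorems of the preceding subsection together with Corollary~\ref{cor:smallangleinj}.

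First I would show that $df_{z_0}$ is an intrinsic minimal plane. Fix $R>0$ and consider, for each $i$, the metric ball $Z_i:=\bar B_R(z_0)$ inside $\frac{1}{\eps_i}Z$. Since $z_0$ is an interior point, each $Z_i$ is bilipschitz to $\bar D$ and its boundary length $\frac{1}{\eps_i}\mathcal{H}^1(\D\bar B_{R\eps_i}(z_0))$ stays bounded (comparing with distance circles in the tangent cone via \cite{Bur}). By Lemma~\ref{lem:restrict} the rescaled restriction of $f$ to $\bar B_{R\eps_i}(z_0)$ is an intrinsic minimal disc, and by Lemma~\ref{lem:limitcone} the pointed spaces $(Z_i,z_0)$ Gromov--Hausdorff converge to the ball $\bar B_R(o_{z_0})$ in $T_{z_0}Z$. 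Hence Proposition~\ref{prop:limitminimizer} identifies the ultralimit as an intrinsic minimal disc, i.e. $df_{z_0}$ restricts to an intrinsic minimal disc on $\bar B_R(o_{z_0})$; as $R$ is arbitrary, $df_{z_0}$ is an intrinsic minimal plane. Its domain is the Euclidean cone $C_\alpha$ over $\Sigma_{z_0}Z$ with $\alpha=\mathcal{H}^1(\Sigma_{z_0}Z)\geq 2\pi$, so Corollary~\ref{cor:smallangleinj} applies and shows that $df_{z_0}$ is a locally isometric embedding away from $o_{z_0}$, in particular a radial isometry with respect to $o_{z_0}$. The density statement is then immediate: since $df_{z_0}$ is area preserving and radial, $(df_{z_0})^{-1}(B_r(x_\om))=B_r(o_{z_0})$, whose measure in $C_\alpha$ equals $\frac{\alpha}{2}r^2$, so $\Theta(df_{z_0},x_\om,r)\equiv\frac{\mathcal{H}^1(\Sigma_{z_0}Z)}{2\pi}$.

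For the additivity statement I would set $f_{x_0}:=|f(\cdot),x_0|$, $A(r):=\mathcal{H}^2(f^{-1}(B_r(x_0)))$ and $L(r):=\mathcal{H}^1(f^{-1}(\D\bar B_r(x_0)))$, so that $\Theta(f,x_0)=\lim_{r\to 0}\frac{A(r)}{\pi r^2}$. Since $x_0\notin f(\D Z)$ all $z_j$ are interior, and for small $r$ the preimage $f^{-1}(B_r(x_0))$ splits as a finite disjoint union of neighbourhoods $W_0^r,\dots,W_k^r$ of the points $z_j$ (using that the relevant sublevel sets are relatively compact). For almost every small $r$ the value $r$ is quasi regular (Proposition~\ref{prop:ae}), and then Corollary~\ref{cor:separated} forces each $\bar W_j^r$ to be the closed Jordan domain of a single rectifiable Jordan curve $\Gamma_j^r$, hence a CAT(0) disc by Lemma~\ref{lem:bilip}. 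Because $f$ is short we have $\bar B_r(z_j)\subset\bar W_j^r$; and blowing up at $z_j$, the fact that every tangent map at $z_j$ is a radial isometry with respect to its tip (by the first two parts, applied at $z_j$) forces $W_j^r\subset B_{2r}(z_j)$ for $r$ small. Consequently $\frac{1}{r}\bar W_j^r$ is trapped between $\bar B_1(z_j)$ and $\bar B_2(z_j)$ in $\frac{1}{r}Z$, and its boundary is pushed onto the distance sphere of radius $1$ about $o_{z_j}$, so these discs converge to $\bar B_1(o_{z_j})\subset T_{z_j}Z$. Finally, the coarea inequality $A(r)\geq\int_0^r L(t)\,dt$ from the proof of Proposition~\ref{prop:mon}, combined with $A(r)\leq\pi\,\Theta(f,x_0,r_0)\,r^2$ (monotonicity), gives $\int_0^r L(t)\,dt\leq Cr^2$, so there is a null sequence $r_i$ of quasi regular values along which $L(r_i)\leq C'r_i$, whence $\frac{1}{r_i}\mathcal{H}^1(\Gamma_j^{r_i})$ is uniformly bounded. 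Then Lemma~\ref{lem:areaconv} yields $\frac{\mathcal{H}^2(W_j^{r_i})}{r_i^2}\to\mathcal{H}^2(B_1(o_{z_j}))=\frac{1}{2}\mathcal{H}^1(\Sigma_{z_j}Z)$; summing over $j$ and passing to the limit gives $\Theta(f,x_0)=\frac{1}{2\pi}\sum_{j}\mathcal{H}^1(\Sigma_{z_j}Z)=\sum_{j}\Theta(df_{z_j},x_\om)$.

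The main obstacle will be this last step, specifically ruling out that a fixed fraction of the area of $f^{-1}(B_r(x_0))$ persists outside shrinking neighbourhoods of the fiber as $r\to 0$: this is exactly where the radial isometry of the tangent maps (to trap $W_j^r$ inside $B_{2r}(z_j)$) and the boundary length control coming from the coarea inequality together with monotonicity (to legitimately apply the area convergence Lemma~\ref{lem:areaconv}) are needed. The one-sided bound $\Theta(f,x_0)\geq\sum_j\Theta(df_{z_j},x_\om)$ is, by contrast, a routine Bishop--Gromov argument in the spirit of Lemma~\ref{lem:estinv}.
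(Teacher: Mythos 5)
Your first half is exactly the paper's argument: rescaled restrictions $f|_{B_{\eps_i r}(z_0)}$ are intrinsic minimal discs, Lemma \ref{lem:limitcone} and Proposition \ref{prop:limitminimizer} identify the blow-up as an intrinsic minimal plane on the cone $C_\alpha$, and Corollary \ref{cor:isom} (equivalently part i) of Corollary \ref{cor:smallangleinj}) gives the locally isometric embedding, radial isometry and constant density. For the additivity the paper uses the same two ingredients you do (radial isometry of the tangent map plus the area-convergence Lemma \ref{lem:areaconv}), but packages them differently: for a quasi-regular radius it takes a Jordan domain of the same measure as the preimage, shows its rescalings ultraconverge to $\bar B_r(o_{z_0})$, traps it in the smallest enclosing metric ball $\bar B_{r_i}(z_0)$, and applies Lemma \ref{lem:areaconv} to those balls --- which are convex, so there is no discrepancy between the induced intrinsic metric and the restricted one.

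The genuine gap in your version is the decomposition claim: you assert that for small $r$ every component of $f^{-1}(B_r(x_0))$ contains one of the fiber points $z_j$, so that $\sum_j\mathcal{H}^2(W_j^r)$ exhausts $\mathcal{H}^2(f^{-1}(B_r(x_0)))$. The function $|f(\cdot),x_0|$ satisfies a maximum principle (Corollary \ref{cor:strong}) but no minimum principle, so ``satellite'' components of the sublevel set which avoid the fiber entirely (a sheet of the surface passing at distance less than $r$ from $x_0$ without hitting it) are not excluded; your localization only puts them inside $B_{2r}(z_j)$, not inside the component of $z_j$, and as written the inequality $\Theta(f,x_0)\le\sum_j\Theta(df_{z_j},x_\om)$ does not account for their area. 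The repair is to drop the component bookkeeping: your own radial estimate (every tangent map at $z_j$ is a radial isometry, hence $|f(w),x_0|\ge(1-\delta)|w,z_j|$ near $z_j$) squeezes the \emph{entire} preimage between the disjoint balls $B_r(z_j)$ and $B_{(1+\delta)r}(z_j)$, and the ball-area asymptotics $\mathcal{H}^2(B_s(z_j))/s^2\to\tfrac12\mathcal{H}^1(\Sigma_{z_j}Z)$ (Lemma \ref{lem:areaconv} applied to balls, or the paper's smallest-enclosing-ball device) then give both bounds. This also removes two further soft spots: Lemma \ref{lem:areaconv} requires Gromov--Hausdorff convergence of the rescaled Jordan domains \emph{with their induced intrinsic metrics}, which your extrinsic squeezing between $\bar B_1$ and $\bar B_2$ does not directly provide, and the factor-$2$ trap must anyway be sharpened to $1+\delta$ before you may say the boundaries are ``pushed onto the unit sphere''. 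Your coarea-plus-monotonicity boundary-length control is correct but becomes unnecessary after this fix; the lower bound via disjoint balls and Bishop--Gromov is fine as you state it.
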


\begin{proof}
 Since $z_0$ lies in the interior of $Z$, the tangent cone $T_{z_0}Z$ is isometric to a Euclidean cone $C_\alpha$ with $\alpha=\mathcal{H}^1(\Sigma_{z_0}Z)\geq 2\pi$.
 For each $r>0$ and $i$ large enough, $f|_{B_{\eps_i r}(z_0)}$ is an intrinsic minimal disc in $X$ and $\bar B_{\eps_i r}$ is bilipschitz to $\bar D$. 
 By Proposition \ref{prop:limitminimizer}, we conclude that $df_{z_0}|_{B_r(o_{z_0})}$
 is an intrinsic minimal disc. Hence $df_{z_0}$ is an intrinsic minimal plane. By Corollary \ref{cor:isom}, $df_{z_0}$ is a locally isometric embedding away from $o_{z_0}$. In particular, it is
 radially isometric and therefore has constant area density $\Theta(df_{z_0},x_\om,r)\equiv\frac{\mathcal{H}^1(\Sigma_{z_0}Z)}{2\pi}$.
 
 Now assume that the fiber of $x_0$ is finite, $f^{-1}(x_0)=\{z_0,\ldots,z_k\}$. (By Corollary \ref{cor:finitefiber}, this is automatic if 
$Z$ is compact.) 
For simplicity we assume that $k=0$ so that $z_0$ is the only inverse image of $x_0$. The proof for $k>0$ is identical.
 We know that $\Theta(df_{z_0},x_\om)\leq \Theta(f,x_0)$ since $f$ is short and therefore $f(B_{\eps_i r}(z_0))\subset B_{\eps_i}(x_0)$.
 To see the converse inequality, let $r>0$ be such that $\eps_i r$ is quasi-regular for $\om$-all $i$. Choose Jordan domains $\Om_{\eps_i r}\subset Z$ with $z_0\in\Om_{\eps_i r}$ such that
 $\mathcal{H}^2(f^{-1}(B_{\eps_i r}(x_0)))=\mathcal{H}^2(\Om_{\eps_i r})$. We claim that
 $\wlim\frac{1}{\eps_i}\Om_{\eps_i r}=\bar B_{r}(o_{z_0})$.
 
 The inclusion ``$\supset$'' is clear, since $f$ is short. On the other hand $df_{z_0}(\wlim\frac{1}{\eps_i}\Om_{\eps_i r})\subset \bar B_{r}(o_{f(x_0)})\subset X_\om$ and $df_{z_0}$ is a radial isometry.
 Hence $\bar B_{r}(o_{z_0})=df_{z_0}^{-1}(\bar B_{r}(o_{f(x_0)}))$. Now define $r_i>0$ to be the smallest radius such that $\Om_{\eps_i r}\subset\bar B_{r_i}(z_0)$. In particular,
 $\mathcal{H}^2(\Om_{\eps_i r})\leq\mathcal{H}^2(\bar B_{r_i}(z_0))$. From our claim we obtain $\wlim \frac{1}{\eps_i}\cdot\bar B_{r_i}(z_0)=\bar B_{r}(o_{z_0})$ and in particular $\wlim\frac{r_i}{\eps_i}=r$.
 Since a subsequence of the $\frac{1}{\eps_i}\cdot\bar B_{r_i}(z_0)$ converges Gromov-Hausdorff to $\bar B_{r}(o_{z_0})$ we get from Lemma \ref{lem:areaconv} 
 $\lim_{i\to\infty}\mathcal{H}^2(\frac{1}{\eps_i}\cdot\bar B_{r_i}(z_0))=\mathcal{H}^2(\bar B_{r}(o_{z_0}))=\mathcal{H}^1(\Sigma_{z_0}Z)\cdot\frac{r^2}{2}$.
 This shows $\Theta(f,x_0)\leq \Theta(df_{z_0},x_\om)$ and completes the proof.
 
\end{proof}

 From Lemma \ref{lem:blowup} above and Proposition 1.1 in \cite{Lytopen}, we can conclude that an intrinsic minimal surface $f:Z\to X$ is
 a locally bilipschitz embedding on an open dense set of $Z$. However, our situation is more special and we actually get:
 
 \begin{theorem}
  Let $X$ be a CAT(0) space and $f:Z\to X$ an intrinsic minimizer. 
  If $z_0$ is a point in the interior of $Z$ with $\mathcal{H}^1(\Sigma_{z_0}Z)<4\pi$, then $f$ restricts to a bilipschitz embedding on a 
  neighborhood of $z_0$. In particular, if $Z$ is a CAT(0) disc, then $f$ is locally a bilipschitz embedding in the interior of $Z$ away from 
  finitely many points.
 \end{theorem}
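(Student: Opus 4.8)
The plan is to rule out bad behaviour by blowing up at a carefully chosen pair of nearby points. Set $\alpha:=\mathcal{H}^1(\Sigma_{z_0}Z)$; since $z_0$ lies in the interior of $Z$, the tangent cone $T_{z_0}Z$ is a Euclidean cone $C_\alpha$ with $2\pi\le\alpha<4\pi$. As $f$ is short, on a ball $B_{1/n}(z_0)$ it is a bilipschitz embedding as soon as $|f(z),f(z')|\ge L^{-1}|z,z'|$ there for some $L$. Hence, if $f$ is \emph{not} a bilipschitz embedding on any neighbourhood of $z_0$, then for every $n$ there are points $z_n\neq z_n'$ in $B_{1/n}(z_0)$ with $|f(z_n),f(z_n')|<\tfrac1n\,|z_n,z_n'|$; in particular $z_n,z_n'\to z_0$ and $|f(z_n),f(z_n')|/|z_n,z_n'|\to 0$. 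I will derive a contradiction with Corollary \ref{cor:smallangleinj} iii).

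The key step is to rescale by $\eps_n:=|z_n,z_n'|\to 0$. After passing to a subsequence along which $|z_0,z_n|/\eps_n$ converges in $[0,\infty]$, form the pointed blow-up limit $W:=\wlim(\frac{1}{\eps_n}Z,z_n)$ together with the rescaled maps into $X_\om:=\wlim(\frac{1}{\eps_n}X,f(z_n))$. By Lemma \ref{lem:limitcone}, $W$ is isometric to $\R^2$ when the quotient tends to $+\infty$ and to $T_{z_0}Z\cong C_\alpha$ when it stays bounded; in either case $W$ is a Euclidean cone of cone angle at most $\alpha<4\pi$. To see that the limit map $g:W\to X_\om$ is an intrinsic minimal plane, I apply Proposition \ref{prop:limitminimizer} to the rescaled balls $\frac{1}{\eps_n}\bar B_{\eps_n R}(z_n)$ for each fixed $R>0$: for large $n$ these are CAT(0) discs bilipschitz to $\bar D$ on which $f$ restricts to an intrinsic minimal disc (Lemma \ref{lem:restrict}), and their boundary lengths are uniformly bounded because $z_n\to z_0$ --- since $\lim_{r\to0}\mu(B_r(z_0)\setminus\{z_0\})=0$ for the curvature measure $\mu$ of $\interior Z$ (cf.\ the proof of Lemma \ref{lem:limitcone}), one has $\mathcal{H}^1(\Sigma_{z_n}Z)=2\pi-\mu(\{z_n\})\to2\pi$, so small balls around $z_n$ are geometrically close to balls in the flat cone $T_{z_0}Z$. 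Thus $g$ restricts to an intrinsic minimal disc on every metric ball, hence $g$ is an intrinsic minimal plane.

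Now $z_n$ converges in $W$ to the base point $p$ and $z_n'$ to a point $w$ with $|p,w|=1$, while $|g(p),g(w)|=\wlim\frac{|f(z_n),f(z_n')|}{|z_n,z_n'|}=0$. So $g$ identifies the two distinct points $p\neq w$ of $W$. But $W$ is a Euclidean cone of cone angle strictly less than $4\pi$, so by Corollary \ref{cor:smallangleinj} iii) the intrinsic minimal plane $g$ must be injective --- a contradiction. Hence $f$ is a bilipschitz embedding on a neighbourhood of $z_0$.

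For the final assertion, the first part already shows that $f$ is a local bilipschitz embedding at every interior point $z$ with $\mathcal{H}^1(\Sigma_zZ)<4\pi$, so the exceptional set is contained in $E=\{z\in\interior Z:\ \mathcal{H}^1(\Sigma_zZ)\ge4\pi\}$. Each point of $E$ is an atom of mass $\le-2\pi$ of the nonpositive curvature measure of $\interior Z$; as $Z$ is a compact CAT(0) disc this measure is finite (its total mass being controlled via Gauss--Bonnet by the turn of $\D Z$), so $E$ is finite. I expect the main obstacle to lie in the blow-up analysis --- identifying the limit domain $W$ via Lemma \ref{lem:limitcone} and verifying the hypotheses of Proposition \ref{prop:limitminimizer}, in particular the uniform boundary-length bound for the rescaled balls around $z_n$; once $W$ is known to be a Euclidean cone of angle $<4\pi$, the contradiction with Corollary \ref{cor:smallangleinj} iii) is immediate.
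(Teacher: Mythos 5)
Your proposal is correct and follows essentially the same route as the paper: assume failure of the lower Lipschitz bound, blow up at the pair of offending points at scale $\eps_n=|z_n,z_n'|$, identify the limit domain as a Euclidean cone of angle $<4\pi$ via Lemma \ref{lem:limitcone}, show the limit map is an intrinsic minimal plane via Proposition \ref{prop:limitminimizer} (as in Lemma \ref{lem:blowup}), and contradict injectivity from Corollary \ref{cor:smallangleinj}. Your extra care with the uniform boundary-length bound for the rescaled balls and the finiteness of the exceptional set only fills in details the paper leaves implicit.
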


 \begin{proof}
  Let $z_0$ be a point in the interior of $Z$ with $\mathcal{H}^1(\Sigma_{z_0}Z)<4\pi$. Assume that the claim is false. Then we can find sequences
  $(x_k)$ and $(y_k)$ in $Z$ with $x_k\neq y_k$, and such that $x_k,y_k\in B_{\frac{1}{k}}(z_0)$ and $|f(x_k),f(y_k)|\leq\frac{1}{k}|x_k,y_k|$. We set
  $\eps_k:=|x_k,y_k|$. We consider the rescaled maps $f_{\frac{1}{\eps_k}}:(\frac{1}{\eps_k}\cdot Z,x_k)\to (\frac{1}{\eps_k}\cdot X,f(x_k))$ and build the blow up
  $f_\om:\wlim(\frac{1}{\eps_k}\cdot Z,x_k)\to \wlim(\frac{1}{\eps_k}\cdot X,f(x_k))$. By Lemma \ref{lem:limitcone}, we know that $\wlim(\frac{1}{\eps_k} Z,x_k)$
  is isometric to a Euclidean cone $C_\alpha$ with cone angle $\alpha\leq\mathcal{H}^1(\Sigma_{z_0}Z) $. (Note that the tip of $C_\alpha$  might be different from $x_\om$.) 
  As in the proof of Lemma \ref{lem:blowup}, we conclude from Proposition \ref{prop:limitminimizer}
  that $df_{x_\om}$ is an intrinsic minimal plane. Corollary \ref{cor:smallangleinj} implies that $df_{x_\om}$ is injective. But by construction we have  $f_\om(x_\om)=f_\om(y_\om)$
  and $|x_\om,y_\om|=1$. This contradiction completes the proof.
  
 \end{proof}

Combining Theorem \ref{thm:structure} with Theorem \ref{thm:int}, we obtain our main structure result:

\begin{theorem}\label{thm:mainstructure}
Let $X$ be a CAT(0) space and $\Ga\subset X$ a rectifiable Jordan curve. Let $u:D\to X$ be a minimal disc filling $\Ga$.
Then there exists a finite set $B\subset D$ such that $u$ is a local embedding on $D\setminus B$.
\end{theorem}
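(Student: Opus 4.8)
The plan is to deduce the statement directly from the two structural results already established, Theorem \ref{thm:int} and Theorem \ref{thm:structure}, by transporting the conclusion of the latter from the intrinsic space $Z_u$ back to the parameter disc $D$ via the factor map $\pi$.

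First I would invoke Theorem \ref{thm:int} to write $u=\bar u\circ\pi$ with $\pi:\bar D\to Z_u$ monotone onto the CAT(0) disc $Z_u$ and $\bar u:Z_u\to X$ short and length preserving. By the remark following Theorem \ref{thm:int} (which uses Corollary \ref{cor:monarea}), the map $\bar u$ is an intrinsic minimizer. The crucial feature of $\pi$ is that it restricts to an embedding on the open disc $D$; combined with the surjectivity of $\pi$ and the fact that $\bar u$ carries $\D Z_u$ homeomorphically onto $\Gamma$, this shows that $\pi|_D$ is a homeomorphism from $D$ onto the interior of $Z_u$.

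Next I would apply Theorem \ref{thm:structure} to $\bar u$: there is a finite set $B'\subset \interior(Z_u)$ such that $\bar u$ is locally a bilipschitz embedding on $\interior(Z_u)\setminus B'$; in particular it is a local embedding there. Setting $B:=(\pi|_D)^{-1}(B')$ yields a finite subset of $D$, since $\pi|_D$ is injective. It then remains to check that $u$ is a local embedding on $D\setminus B$. For $z\in D\setminus B$ the point $\pi(z)$ lies in $\interior(Z_u)\setminus B'$, so there is an open neighborhood $V$ of $\pi(z)$ contained in $\interior(Z_u)\setminus B'$ on which $\bar u$ is an embedding; then $U:=(\pi|_D)^{-1}(V)$ is an open neighborhood of $z$ on which $u=\bar u\circ(\pi|_U)$ is the composition of a homeomorphism onto its image with an embedding, hence itself an embedding.

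The argument is essentially bookkeeping, so there is no genuine obstacle here; all of the analytic content has been absorbed into Theorem \ref{thm:structure}. The only points needing a moment's care are the identification of $\pi(D)$ with the interior of $Z_u$ (so that $B$ is well defined and finite) and the elementary fact that precomposing a local embedding with a homeomorphism onto an open set again gives a local embedding — both are immediate from the properties of $\pi$ and $\bar u$ listed in Theorem \ref{thm:int}.
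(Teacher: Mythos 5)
Your argument is correct and is exactly the paper's route: the paper derives Theorem \ref{thm:mainstructure} precisely by combining Theorem \ref{thm:int} (the factorization $u=\bar u\circ\pi$ with $\bar u$ an intrinsic minimizer) with Theorem \ref{thm:structure}, and your bookkeeping with $\pi|_D$ and the pulled-back finite set $B$ is the intended (and correct) way to transport the conclusion back to $D$.
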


\begin{corollary}\label{cor:inbilipext}
 Let $X$ be a CAT(0) space and $\Ga\subset X$ a rectifiable Jordan curve. Suppose that $u:D\to X$ is a minimal disc filling $\Ga$.
Denote by $Y$ the image of $u$. Then there is a finite set $P$ in $Y$ such that on $Y\setminus P$ the intrinsic and extrinsic metrics
are locally bilipschitz equivalent.
\end{corollary}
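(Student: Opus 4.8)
The plan is to transfer the statement to the intrinsic picture and then read it off from Theorem~\ref{thm:structure}. Factor $u=\bar u\circ\pi$ as in Theorem~\ref{thm:int}. Then $\pi:\bar D\to Z_u$ is a homeomorphism restricting to a homeomorphism $D\to\interior(Z_u)$, and $\bar u:Z_u\to X$ is short, preserves the length of every rectifiable curve, and restricts to an arc length preserving homeomorphism $\D Z_u\to\Ga$; in particular $\pi$ identifies the intrinsic space of $u$ with $Z_u$ and $Y=u(D)=\bar u(\interior(Z_u))$. Since $Z_u$ is compact, $\a(u)=\mathcal H^2(Z_u)<\infty$, so by Corollary~\ref{cor:finitefiber} every fibre $\bar u^{-1}(y)$, $y\in Y$, is finite, and by Theorem~\ref{thm:structure} there is a finite set $B'\subset\interior(Z_u)$ such that $\bar u$ is a local bilipschitz embedding near every point of $\interior(Z_u)\setminus B'$.

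Set $P:=\bar u(B')$, a finite subset of $Y$, fix $y\in Y\setminus P$ and write $\bar u^{-1}(y)=\{z_1,\dots,z_m\}\subset\interior(Z_u)\setminus B'$. Pick pairwise disjoint neighbourhoods $U_j\ni z_j$ on which $\bar u$ is $L$-bilipschitz. As $Z_u$ is compact, $\bar u$ is closed, so $\bar u(Z_u\setminus\bigcup_jU_j)$ is compact and avoids $y$; hence there is $\rho>0$ with $\bar u^{-1}(B_\rho(y))\subset\bigcup_jU_j$ and therefore $Y\cap B_\rho(y)=S_1\cup\dots\cup S_m$, where $S_j:=\bar u\bigl(U_j\cap\bar u^{-1}(B_\rho(y))\bigr)$ is the $L$-bilipschitz image of a piece of the CAT(0) disc $Z_u$. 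On a single sheet $S_j$ the intrinsic metric of $Y$, the ambient metric, and the $Z_u$-metric transported by $\bar u|_{U_j}$ are pairwise $L^{\pm2}$-comparable; and the bound (extrinsic)$\,\le\,$(intrinsic) is automatic. So everything reduces to controlling how the finitely many sheets $S_j$ through $y$ interact: a curve in $Y$ joining two sheets must pass through $\bigcup_{j\ne k}(S_j\cap S_k)$, and what is needed is that distinct sheets through $y$ separate at a uniformly linear rate, i.e. a constant $c>0$ with $|a,b|\ge c(|a,y|+|y,b|)$ for $a\in S_j$, $b\in S_k$ with $j\ne k$.

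I expect this last point to be the main obstacle, and I would settle it by a blow-up argument as in Lemma~\ref{lem:blowup}. If $S_j\cap S_k$ were not reduced to $\{y\}$ in arbitrarily small balls, one gets $y_i\to y$ with $\bar u(a_i)=\bar u(b_i)=y_i$, $a_i\to z_j$, $b_i\to z_k$ and $a_i\ne b_i$; rescaling at $y$ produces intrinsic minimal planes $df_{z_j},df_{z_k}:\R^2\to T_yX$ (genuine planes, since $z_j,z_k\notin B'$) whose images meet at a point other than the origin, hence along the entire geodesic joining that point to the origin. Since each of these maps is a radial isometry that is a locally isometric embedding away from the origin, propagating this coincidence and using that $\bar u$ is area preserving and injective on $U_j$ and on $U_k$ leads to a contradiction. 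Thus $S_j\cap S_k$ accumulates only at images of the finitely many cone points of $Z_u$; enlarging $P$ by this finite set we may assume the sheets through $y$ meet only at $y$ and that their tangent flats in $T_yX$ meet only in the origin, which yields the separation constant $c>0$ above. Combining $c$ with the per-sheet comparison from the second step gives the asserted local bilipschitz equivalence of the intrinsic and extrinsic metrics on $Y\setminus P$.
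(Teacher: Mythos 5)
The reduction in your first two paragraphs is sound and is exactly what the paper's structure theorem is for: factor $u=\bar u\circ\pi$ by Theorem~\ref{thm:int}, take $B'$ from Theorem~\ref{thm:structure}, use Corollary~\ref{cor:finitefiber} for finiteness of fibres, and note that on a single sheet the two-sided comparison is automatic because $\bar u$ is short, length-preserving and locally bilipschitz there. The gap is in your third step, the interaction of distinct sheets through $y$. The dichotomy you try to establish --- after enlarging $P$ by finitely many cone-point images, distinct sheets through $y$ meet only at $y$ and their tangent flats meet only at the origin, whence a linear separation $|a,b|\ge c\,(|a,y|+|y,b|)$ --- is false, and the blow-up contradiction you invoke does not exist: two intrinsic minimal planes in a CAT(0) space (even flat, convex, isometrically embedded ones) may share a geodesic line, a half-plane, or coincide, and nothing in Lemma~\ref{lem:blowup}, Corollary~\ref{cor:smallangleinj}, area-preservation, or injectivity of $\bar u$ on each $U_j$ separately forbids $\bar u(U_j)\cap\bar u(U_k)$ from being large. (Also, minor points: the tangent maps land in the ultralimit $X_\om$, not in $T_yX$, and for $z_j\notin B'$ the cone $T_{z_j}Z_u$ has angle in $[2\pi,4\pi)$, so it need not be $\R^2$.)

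Concretely, let $X$ be two Euclidean planes $P_1,P_2$ glued along a line $L$, and let $\Ga$ be obtained by joining a large circle in $P_1$ centred at a point of $L$ to a circle in $P_2$ centred at the same point by a thin bridge running along $L$. A homotopy argument in $X\setminus\{p\}$ shows that the image $Y$ of the minimal disc contains both round discs, so for every point $p\in L$ inside both circles and away from the bridge the nearby part of $Y$ consists of sheets lying in $P_1$ and sheets lying in $P_2$, with flat links upstairs; these sheets intersect exactly along $L$. Such points $p$ fill a segment, so no finite enlargement of $P$ removes them, and your separation estimate fails there: for $a\in P_1$, $b\in P_2$ at distance $\eps$ from $L$ over a common foot point $q\neq p$ one has $|a,b|=2\eps$ while $|a,p|+|p,b|$ stays bounded below. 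The corollary is nevertheless true at such $p$, but for a different reason: the ambient geodesic from $a$ to $b$ passes through the intersection locus $S_j\cap S_k\subset Y$, so the intrinsic distance is still comparable to the extrinsic one. This shows what a correct argument must do --- either prove that extrinsically close points on different sheets can be joined inside $Y$ through $S_j\cap S_k$ (or that the sheets locally coincide), rather than prove that distinct sheets separate linearly from $y$. As written, your third step does not close the proof.
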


\subsection{Rigidity}

We will make use of the following auxiliary lemma which gives a lower bound on the size of links in domains of intrinsic minimizers.

\begin{lemma}\label{lem:coneangle}
Let $f:Z\to X$ be an intrinsic area minimizer.
Let  $x_1\neq x_2$ and $y$ be points in the interior of $Z$ with $f(x_i)=p, i=1, 2$ and $f(y)=q$.
Assume that $f$ maps the geodesics $x_i y$ isometrically onto the geodesic $p q$. If $v_i$ denotes the direction at $y$
pointing to $x_i$, then $|v_1,v_2|\geq 2\pi$.
\end{lemma}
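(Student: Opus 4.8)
The statement is an infinitesimal separation property: if the minimizer $f$ sends two distinct radii $x_iy$ isometrically onto a common segment $pq$, then the two incoming directions $v_1,v_2$ at $y$ must be at angular distance at least $2\pi$ in the link $\Sigma_yZ$. The natural strategy is to blow up at $y$ and reduce to the conical model, where this was already recorded. So the first step is to pass to a tangent map $df_y:T_yZ\to X_\om$ at $y$ (Lemma \ref{lem:blowup}): this is an intrinsic minimal plane on the Euclidean cone $C_\alpha = T_yZ$, and it is a locally isometric embedding away from the tip $o_y$, in particular a radial isometry. Under the blow-up the two geodesics $x_iy$ rescale to two $o_y$-radial rays $\ell_1,\ell_2$ in $T_yZ$ whose initial directions are exactly $v_1,v_2$, and because $f$ was already isometric on $x_1y$ and $x_2y$, these rays are mapped by $df_y$ isometrically onto a single common ray emanating from $df_y(o_y)$. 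Hence $df_y$ identifies two distinct radial rays of $C_\alpha$, i.e. it is not injective.

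**Key steps.** I would carry this out in the following order. (1) Invoke Lemma \ref{lem:blowup} to get $df_y:T_yZ=C_\alpha\to X_\om$ an intrinsic minimal plane, a radial isometry with respect to $o_y$, locally isometric away from $o_y$. (2) Check that the rescalings of the two radial segments $x_iy$ converge (in the ultralimit) to genuine $o_y$-radial rays $\ell_i\subset C_\alpha$ with initial directions $v_i$; this uses that $x_iy$ is a geodesic through the interior point $y$ and the standard fact that geodesics converge under blow-up (as in Lemma \ref{lem:limitcone} and the argument of Lemma \ref{lem:blowup}). (3) Observe that since $f|_{x_iy}$ is already an isometric embedding onto $pq$, the rescaled maps $f_{1/\varepsilon}$ restricted to the rescaled segments are isometries onto the rescaled $pq$, so in the limit $df_y$ maps both $\ell_1$ and $\ell_2$ isometrically onto one and the same $df_y(o_y)$-radial ray in $X_\om$. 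In particular $df_y(\ell_1(t))=df_y(\ell_2(t))$ for all $t\ge0$, so $df_y$ is not injective on $C_\alpha$. (4) Apply Corollary \ref{cor:smallangleinj}: since $df_y$ is an intrinsic minimal plane on the Euclidean cone $C_\alpha$ that fails to be injective, by part ii) of that corollary the intrinsic distance in $\Sigma_{o_y}C_\alpha=\Sigma_yZ$ between the directions of $\ell_1$ and $\ell_2$ — which are $v_1$ and $v_2$ — is at least $2\pi$. This is exactly the claim.

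**Main obstacle.** The delicate point is step (2)–(3): making sure the blow-up really produces two \emph{distinct} radial rays $\ell_1\ne\ell_2$, rather than the two segments collapsing onto each other or degenerating. Distinctness of $\ell_1,\ell_2$ is equivalent to $v_1\ne v_2$, which is not given a priori — but if $v_1=v_2$ then by uniqueness of geodesics in the CAT(0) space $Z$ the segments $x_1y$ and $x_2y$ would coincide near $y$, forcing $x_1=x_2$ (extending to a point on the common segment), contradicting the hypothesis; so $v_1\ne v_2$, $\ell_1\ne\ell_2$, and there is genuine non-injectivity to feed into Corollary \ref{cor:smallangleinj}. One should also record that the limit rays have full length (which follows since $|x_i,y|>0$ is fixed while $\varepsilon\to0$), so that $\ell_i$ are honest radial geodesics in $C_\alpha$ and their directions are well-defined points of $\Sigma_{o_y}C_\alpha$. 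With those bookkeeping points settled, the rest is a direct citation of Lemma \ref{lem:blowup} and Corollary \ref{cor:smallangleinj}.
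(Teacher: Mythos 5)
Your main line — blow up at $y$ via Lemma \ref{lem:blowup}, check that the rescaled segments $x_iy$ converge to the radial rays $t\mapsto tv_i$ in $T_yZ=C_\alpha$ and are identified by the tangent map because $f$ is isometric on both segments with common image $pq$, then quote Corollary \ref{cor:smallangleinj}~ii) — is exactly the paper's proof; your steps (1)--(3) are precisely what its three-line argument leaves implicit, and they are fine.

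The problem is the point you yourself flag as the main obstacle, namely ruling out $v_1=v_2$. Uniqueness of geodesics in the CAT(0) space $Z$ only says that the geodesic between two \emph{given} endpoints is unique; it does not prevent the two distinct geodesics $yx_1$ and $yx_2$ (to different endpoints) from sharing an initial segment and branching later at an interior cone point of $Z$ with link of length $>2\pi$ — CAT(0) surfaces do not have unique geodesic extension. In that situation $v_1=v_2$, both rescaled segments collapse onto a single ray of $C_\alpha$, the tangent map at $y$ exhibits no non-injectivity, and your step (4) has nothing to bite on. Moreover this configuration cannot be excluded from the stated hypotheses at all: the branched double cover $f\colon C_{4\pi}\to\R^2$, $(\theta,r)\mapsto(\theta \bmod 2\pi,\,r)$, is short, area preserving and area minimizing on embedded discs (the multiplicity of any Lipschitz competitor is bounded below by the winding number of the boundary curve), hence an intrinsic minimal plane; taking $y=(0,1)$, $x_1=(\pi,1)$, $x_2=(3\pi,1)$, both geodesics $x_iy$ pass through the tip, share the segment from $y$ to the tip, and are mapped by $f$ isometrically onto one and the same segment $pq\subset\R^2$, while $v_1=v_2$.

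So the dichotomy cannot be settled the way you propose; the case $v_1=v_2$ has to be excluded by an additional hypothesis (for instance that the segments $x_1y$ and $x_2y$ meet only in $y$, equivalently $v_1\neq v_2$), and this is exactly what holds in the paper's applications: there $y$ is the midpoint of the geodesic $x_1x_2$ (in Theorem \ref{thm:rigid} and in the F\'ary--Milnor argument), so $v_1$ and $v_2$ are the two directions of a geodesic through $y$ and are automatically at angle $\pi$, in particular distinct. The paper's own proof is silent on this point, so apart from this one incorrect reduction your write-up reproduces it; but as a self-contained argument for the lemma as literally stated, the step ``$v_1=v_2$ forces $x_1=x_2$'' is a genuine gap, not a bookkeeping remark.
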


\begin{proof}
Recall that $T_y Z$ is a Euclidean cone of cone angle $\alpha\geq 2\pi$. By Lemma \ref{lem:blowup}, each tangent map $df_y$ is an intrinisic
minimal plane. Hence the claim follows from Corollary \ref{cor:smallangleinj}.
  
\end{proof}

\begin{theorem}[Rigidity]\label{thm:rigid}
Let $X$ be a CAT(0) space. Let $f:Z\to X$ be an intrinsic minimal disc  and let $p$ be a point in 
$f(Z)\setminus f(\D Z)$. Assume that there exists $\Theta>0$ and a radius $R>0$ with $R<|p,f(\D Z)|$ such that
the area ratio with respect to $p$ is constant,
$$
A(r)\equiv \frac{\Theta}{2}r^2\text{  for all  }r\leq R.
$$
Then $\Om_R:=f^{-1}(B_R(p))$ is flat away from a finite set of cone points $z_1,\ldots z_k$.
Moreover,
$f$ is a locally isometric embedding on $\Om_R\setminus\{z_1,\ldots,z_k\}$ and $f(\Om_R)$
is a geodesic cone.

\end{theorem}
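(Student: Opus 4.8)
The plan is to upgrade the infinitesimal rigidity of Lemma \ref{lem:blowup} to a global statement, by feeding it into the sharp Bishop--Gromov estimate of Theorem \ref{thm:BG} applied to the regular stars around the preimages of $p$. The hypothesis $A(r)\equiv\frac{\Theta}{2}r^2$ will be used only through the two facts it encodes: the area ratio is \emph{constant} on $(0,R]$, and its value is $\frac{\Theta}{2\pi}$. First, since $\Theta(f,p)=\frac{\Theta}{2\pi}<\infty$, Lemma \ref{lem:estinv} bounds $\#f^{-1}(p)$, so the fiber $f^{-1}(p)=\{z_1,\dots,z_m\}$ is finite. By Lemma \ref{lem:blowup}, each tangent map $df_{z_i}$ is a flat cone of density $\frac{1}{2\pi}\mathcal{H}^1(\Sigma_{z_i}Z)$ and these densities sum to $\Theta(f,p)$; hence $\sum_{i=1}^m\mathcal{H}^1(\Sigma_{z_i}Z)=\Theta$. (Differentiating $A(r)=\frac{\Theta}{2}r^2$ and inserting into \eqref{eq:low} and \eqref{eq:up} from the proof of Proposition \ref{prop:mon} also gives $L(r)=\Theta r$ for a.e.\ $r\le R$; this confirms that the level sets of $f_p$ grow exactly linearly but will not be needed directly.)

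Next, fix $r\le R$. Since $f$ is short, a $z_i$-radial geodesic of length $\le r$ maps to a curve of length $\le r$ issuing from $p$, so the closed regular star $\bar{\mathcal{R}}_r(z_i)$ lies in $\bar{\Omega}_r$. By the Bishop--Gromov computation preceding Lemma \ref{lem:conesurf}, $\mathcal{H}^2(\mathcal{R}_r(z_i))\ge\frac{r^2}{2}\mathcal{H}^1(\Sigma_{z_i}Z)$, with equality exactly when the quotient $Q_r(z_i)$ is a Euclidean cone of angle $\mathcal{H}^1(\Sigma_{z_i}Z)$. Using the hypothesis together with Step 1,
\[
\mathcal{H}^2(\Omega_r)=\frac{\Theta}{2}r^2=\frac{r^2}{2}\sum_{i=1}^m\mathcal{H}^1(\Sigma_{z_i}Z)\le\sum_{i=1}^m\mathcal{H}^2(\mathcal{R}_r(z_i)).
\]
The heart of the proof is to show that equality must hold, i.e.\ that the stars $\mathcal{R}_r(z_i)$ are pairwise disjoint and that their union has full measure in $\bar{\Omega}_r$. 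Disjointness is obtained by a cut-and-paste argument in the spirit of Corollary \ref{cor:isom}: overlapping stars on a set of positive measure would let one rebuild $f$ with strictly smaller area. That no positive measure of $\Omega_R$ is left uncovered follows by localizing the monotonicity of Proposition \ref{prop:mon} to the individual components of $\Omega_R$: the resulting local densities at the $z_i$ are nondecreasing and, by Lemma \ref{lem:blowup}, already sum to $\frac{\Theta}{2\pi}$ at scale $0$, so they are all constant and equal to their infinitesimal value — which in particular rules out components of $\Omega_R$ whose closure misses $f^{-1}(p)$. Granting this, every $Q_r(z_i)$ is a Euclidean cone of angle $\mathcal{H}^1(\Sigma_{z_i}Z)$ and $\bigcup_i\mathcal{R}_r(z_i)$ is of full measure in $\bar{\Omega}_r$; letting $r\to R$ and noting that $\Omega_R\setminus\bigcup_i\bar{\mathcal{R}}_R(z_i)$ is an open subset of a surface of measure zero, hence empty, we conclude that $\bar{\Omega}_R$ is the union of the cut-open flat cones $\bar{\mathcal{R}}_R(z_i)$, glued to one another along geodesic hinges.

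The three conclusions now follow. First, $\Omega_R$ with its intrinsic metric is built from finitely many cut-open Euclidean cones glued along geodesic segments, so, exactly as in the proof of Lemma \ref{lem:conesurf}, the only points at which it fails to be flat are the $z_i$ and the endpoints of the hinges; each such endpoint $z$ satisfies $\mathcal{H}^1(\Sigma_zZ)\ge\frac{3\pi}{2}$, and $Z$ is compact, so there are only finitely many of them. Denote the resulting finite set by $\{z_1,\dots,z_k\}$. Second, every point of $\Omega_R\setminus\{z_1,\dots,z_k\}$ has a convex neighbourhood isometric to a Euclidean disc, so Corollary \ref{cor:isom} shows $f$ restricts there to an isometric embedding; hence $f$ is a local isometric embedding on $\Omega_R\setminus\{z_1,\dots,z_k\}$. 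Third, on the flat part $f$ maps the radial geodesics emanating from the $z_i$ to locally geodesic curves issuing from $p=f(z_i)$, and a locally geodesic curve in the CAT(0) space $X$ is a geodesic; since $\Omega_R$ is, up to the measure-zero set of cut radii, the union of these radial geodesics, $f(\bar{\Omega}_R)=\bigcup\{[p,q]:q\in f(\Pi_R)\}$ is a geodesic cone over $f(\Pi_R)$ with tip $p$.

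The main obstacle is the middle step: converting the single scalar statement ``constant area ratio'' into the structural statement that the finitely many preimages of $p$ carry mutually disjoint conical neighbourhoods that exhaust $\Omega_R$. This is precisely where monotonicity alone is insufficient and the blow-up analysis of Lemma \ref{lem:blowup} is indispensable, supplemented by a cut-and-paste argument to preclude overlapping stars and a localized monotonicity argument to preclude components of $\Omega_R$ disjoint from the fiber of $p$.
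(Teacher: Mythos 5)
Your overall skeleton coincides with the paper's: finite fiber, the identity $\sum_i\mathcal{H}^1(\Sigma_{z_i}Z)=\Theta$ from Lemma \ref{lem:blowup}, the Bishop--Gromov estimate on the regular stars, the forced equality chain, and then Lemma \ref{lem:conesurf} plus Corollary \ref{cor:isom} for the conclusions. But the step you yourself identify as the heart --- pairwise disjointness of the regular stars $\mathcal{R}_R(z_i)$ --- is not actually proved. You dispose of it with ``overlapping stars on a set of positive measure would let one rebuild $f$ with strictly smaller area,'' and no such competitor is exhibited or plausible: overlap of stars is a purely intrinsic statement about $Z$ (two families of radial geodesics through common points), while $f$ is an intrinsic minimizer by hypothesis, so non-minimality cannot be the source of the contradiction. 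At this stage of the argument you know nothing about how $f$ behaves on a putative overlap, so there is no cut-and-paste to perform. This is exactly the point where, as the introduction warns, rigidity cannot be squeezed out of monotonicity (or area comparison) alone.

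What the paper does instead is an induction on the pairwise distances of the points in $f^{-1}(p)$: assuming the separation statement for all closer pairs, the equality chain at the scale $r=\tfrac{1}{2}|x_1,x_2|$ forces the smaller stars $\mathcal{R}_r(x_i)$ to be disjoint and $f$ to restrict to a \emph{radial isometry} on each of them; consequently $f$ maps the two halves $x_1m_{12}$ and $x_2m_{12}$ of the geodesic $x_1x_2$ isometrically onto the single geodesic $p\,f(m_{12})$, and then Lemma \ref{lem:coneangle} (whose proof is a blow-up at the midpoint combined with Corollary \ref{cor:smallangleinj} for minimal planes over cones) yields $|v_1,v_2|\ge 2\pi$ in $\Sigma_{m_{12}}Z$, i.e.\ link length at least $4\pi$ at the midpoint, which is what keeps the full-size stars apart. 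You never invoke Lemma \ref{lem:coneangle}, nor do you establish the folding of $x_1m_{12}\cup m_{12}x_2$ onto a single geodesic that makes it applicable, so the essential mechanism is missing. (Your ``localized monotonicity'' remark for full measure is superfluous --- once disjointness is known, full measure of $\bigcup_i\mathcal{R}_R(z_i)$ in $\Omega_R$ drops out of the equality chain --- but that part is harmless; the gap is the disjointness argument itself.)
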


\begin{proof}
The supplement follows from Corollary \ref{cor:isom}, so it is enough to show that $\Om_R$ is flat
away form finitely many points. By Corollary \ref{cor:finitefiber}, $f$ has finite fibers. 
Let $P:=\{x_1,\ldots,x_n\}$ be the finitely many inverse images of $p$.
We claim that it is enough to show that the regular stars $\mathcal{R}_R(x_i)$, $i=1\ldots,n$
are disjoint. Indeed, if this holds, then we get 
$$
 \Theta=\frac{\mathcal{H}^2(\Om_R)}{\pi R^2}\geq\sum_{i=1}^n\frac{\mathcal{H}^2(\mathcal{R}_R(x_i))}{\pi R^2}
 \geq \sum_{i=1}^n\frac{\mathcal{H}^1(\Si_{x_i}Z)}{2\pi}=\Theta
$$
 The last equality follows from Lemma \ref{lem:blowup}. Since $\bigcup_{i=1}^n \mathcal{R}_R(x_i)\subset N_R(P)\subset\Om_R$
 we obtain $\mathcal{H}^2(N_R(P))=\frac{R^2}{2}\sum_{i=1}^n \mathcal{H}^1(\Si_{x_i}Z)$ and Lemma \ref{lem:conesurf} applies.
 
 To see that the regular stars are disjoint we let $m_{ij}$ denote the midpoint of $x_i$ and $x_j$. Further,
 we will denote by $v_i$ the direction at $m_{ij}$ pointing at $x_i$. If we can show $|v_i,v_j|\geq 2\pi$, then
 clearly the regluar stars have to be disjoint. Suppose this is not the case for $m_{12}$. Moreover, we may assume that it holds
 for all $(i,j)$ with $|x_i,x_j|<|x_1,x_2|$. Set $r:=\frac{|x_1,x_2|}{2}$. Then, the $\mathcal{R}_r(x_i)$ are disjoint
 and $f$ restricts to a radial isometry on each of them. It follows that $f$ maps the geodesics $x_1 m_{12}$ and $x_2 m_{12}$
 isometrically to the geodesic $p f(m_{12})$. Hence Lemma \ref{lem:coneangle} shows $|v_1,v_2|\geq 2\pi$. Contradiction.
\end{proof}

\begin{remark}
 Notably, the proof shows that the link at any midpoint $m_{ij}$ as above has length at least $4\pi$.
\end{remark}

\subsection{Extending minimal discs to planes}

Recall that a map between metric spaces is called {\em (metrically) proper} if inverse images of bounded sets are bounded.

Let $X$ be a CAT(0) space and $\hat f:\hat Z\to \hat X$ a proper intrinsic minimal plane. Then we know that the monotonicity of area densities holds for all times. More precisely,
for all points $p\in \hat f(Z)$ the function $r\mapsto\Theta(\hat f,p,r)$ is nondecreasing for all $r>0$.

\begin{definition}(Area-growth)
Let $\hat f:\hat Z\to \hat X$ be a proper intrinsic minimal plane. Then we define the {\em density at infinity} or {\em area growth} of $\hat f$
by
$$
\Theta^\infty(\hat f):=\lim_{r\to \infty}\Theta(\hat f,p,r).
$$ 
We say that $\hat f$ is of {\em quadratic area growth}, if $\Theta^\infty(\hat f)\in(0,\infty)$.
\end{definition}

Combinig the monotonicity of area densities with Lemma \ref{lem:estinv} we obtain the following.

\begin{lemma}(Key estimate)\label{lem:key}
Let $\hat f:\hat Z\to \hat X$ be a proper intrinsic minimal plane in an arbitrary CAT(0) space $\hat X$.
 Then for every point $p$ in the image of $\hat f$ we have
\begin{equation*}
\#\hat{f}^{-1}(p)\leq\Theta^\infty(\hat f).
\end{equation*}
In particular, if $\hat f$ is of quadratic area growth, then it has finite fibers.
\end{lemma}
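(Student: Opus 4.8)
The plan is to combine the two ingredients already assembled in this subsection: the all-scale monotonicity of $\Theta(\hat f,p,r)$ for a proper intrinsic minimal plane, and the lower density bound $\Theta(f,p)\geq \#f^{-1}(p)$ from Lemma \ref{lem:estinv}. The point is that monotonicity now runs in \emph{both} directions along the scale: letting $r\to 0$ controls the local density from above by the global area growth, and the local density in turn bounds the cardinality of the fiber.

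More precisely, first I would fix a point $p$ in the image of $\hat f$. By the monotonicity statement recorded just before the Key estimate lemma, the function $r\mapsto\Theta(\hat f,p,r)$ is nondecreasing on all of $(0,\infty)$. Hence for every $r>0$ we have $\Theta(\hat f,p,r)\leq\lim_{R\to\infty}\Theta(\hat f,p,R)=\Theta^\infty(\hat f)$, and letting $r\to 0$ gives $\Theta(\hat f,p):=\lim_{r\to 0}\Theta(\hat f,p,r)\leq\Theta^\infty(\hat f)$. Here I should note that this limit exists (again by monotonicity) and that $p$ lies in $\hat f(\hat Z)\setminus\hat f(\D\hat Z)$ vacuously, since a plane has empty boundary, so the density $\Theta(\hat f,p)$ is defined in the sense of the earlier definition.

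Next I would invoke Lemma \ref{lem:estinv}, which applies to any intrinsic area minimizer and gives $\Theta(\hat f,p)\geq\#\hat f^{-1}(p)$. Chaining the two inequalities yields $\#\hat f^{-1}(p)\leq\Theta(\hat f,p)\leq\Theta^\infty(\hat f)$, which is exactly the asserted estimate. For the ``in particular'' clause, if $\hat f$ has quadratic area growth then $\Theta^\infty(\hat f)<\infty$ by definition, so every fiber is finite.

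Honestly, there is no serious obstacle here: both halves of the argument are already packaged as lemmas earlier in the excerpt, and the proof is essentially the two-line concatenation above together with the observation that the density $\Theta(\hat f,p)$ makes sense and is squeezed between the two quantities. The only mild care needed is to make sure the hypotheses of Lemma \ref{lem:estinv} — that $\hat f$ is an intrinsic area minimizer and $p$ is not in the image of the boundary — are met, and both are immediate for a proper intrinsic minimal plane.
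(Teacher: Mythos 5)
Your argument is exactly the paper's: the lemma is stated there as an immediate consequence of the all-scale monotonicity for proper intrinsic minimal planes combined with the lower density bound of Lemma \ref{lem:estinv}, which is precisely your chain $\#\hat f^{-1}(p)\leq\Theta(\hat f,p)\leq\Theta^\infty(\hat f)$. Your checks (empty boundary, existence of the density, finiteness in the quadratic growth case) are correct and complete the same two-line proof.
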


\begin{corollary}\label{lem:eucareagrowth}
 Let $\hat X$ be a CAT(0) space and $\hat f:\hat Z\to\hat X$ a proper intrinsic minimal plane. Suppose that the area growth of  $\hat f$
 satisfies $\Theta^\infty(\hat f)<2$. Then $\hat f$ is an embedding. If the area growth is even Euclidean, $\Theta^\infty(\hat f)=1$, then
 $Z$ is isometric to the flat Euclidean plane and $\hat f$ is an isometric embedding.
\end{corollary}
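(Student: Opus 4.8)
The plan is to read both assertions off the key estimate (Lemma~\ref{lem:key}), the monotonicity of area densities, and the rigidity case of Bishop--Gromov (Theorem~\ref{thm:BG}); somewhat pleasantly, the rigidity Theorem~\ref{thm:rigid} is not needed. First I would prove that $\hat f$ is injective: by Lemma~\ref{lem:key}, every point $p$ in the image of $\hat f$ satisfies $1\le\#\hat f^{-1}(p)\le\Theta^\infty(\hat f)<2$, forcing $\#\hat f^{-1}(p)=1$. To upgrade injectivity to a topological embedding I would use that $\hat f$ is proper and that $\hat Z$ is a proper metric space: if $\hat f(z_n)\to\hat f(z)$, then $(z_n)$ is bounded, hence relatively compact, and any subsequential limit must equal $z$ by continuity and injectivity, so $z_n\to z$; thus $\hat f^{-1}$ is continuous on the image and $\hat f$ is an embedding. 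This settles the first assertion.

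Now assume $\Theta^\infty(\hat f)=1$. Fix $z_0\in\hat Z$ and set $p:=\hat f(z_0)$, so that $\hat f^{-1}(p)=\{z_0\}$ by the above. From Lemma~\ref{lem:estinv} we get $\Theta(\hat f,p)\ge\#\hat f^{-1}(p)=1$; since $r\mapsto\Theta(\hat f,p,r)$ is nondecreasing and tends to $\Theta^\infty(\hat f)=1$, it is identically equal to $1$, i.e. $\mathcal H^2(\hat f^{-1}(B_r(p)))=\pi r^2$ for every $r>0$. Because $\hat f$ is short we have $B_r(z_0)\subset\hat f^{-1}(B_r(p))$, hence $\mathcal H^2(B_r(z_0))\le\pi r^2$. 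On the other hand Bishop--Gromov (Theorem~\ref{thm:BG}) gives $\mathcal H^2(B_r(z_0))\ge\frac{r^2}{2}\mathcal H^1(\Sigma_{z_0}\hat Z)\ge\pi r^2$, the last inequality because links at interior points of a CAT(0) surface have length at least $2\pi$. Therefore all these inequalities are equalities: the link $\Sigma_{z_0}\hat Z$ has length exactly $2\pi$, and the rigidity part of Theorem~\ref{thm:BG} shows that $B_r(z_0)$ is isometric to the radius-$r$ ball in $T_{z_0}\hat Z\cong\R^2$ for every $r>0$. Letting $r\to\infty$ yields $\hat Z\cong\R^2$.

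Finally, once $\hat Z$ is the flat plane, every closed metric ball $\bar B_r(z)\subset\hat Z$ is a closed convex subset isometric to a Euclidean disc, so Corollary~\ref{cor:isom} shows that $\hat f$ restricts to an isometric embedding on each such ball; applying this with $r=|z,w|$ gives $|\hat f(z),\hat f(w)|=|z,w|$ for all $z,w\in\hat Z$, so $\hat f$ is a global isometric embedding.

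The only delicate point is the passage from ``$B_r(z_0)$ is isometric to the flat $r$-disc for every $r$'' to ``$\hat Z\cong\R^2$'': one must choose the isometries on the nested balls compatibly (each can be post-composed with a rotation so as to extend the one already chosen on a smaller ball, both sending $z_0$ to the centre) and pass to the direct limit. Equivalently, one observes that the curvature measure of $\hat Z$ vanishes identically and $\hat Z$ is simply connected, whence $\hat Z$ is isometric to $\R^2$. Everything else is a direct application of the quoted results.
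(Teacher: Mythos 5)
Your proof is correct and follows essentially the same route as the paper: the first claim from the key estimate (Lemma \ref{lem:key}), and the second from shortness plus the rigidity case of Bishop--Gromov (Theorem \ref{thm:BG}) followed by Corollary \ref{cor:isom}. The extra details you supply (injectivity plus properness yields an embedding; patching the flat balls to identify $\hat Z$ with $\R^2$) are exactly the steps the paper's terse proof leaves implicit, so there is nothing to add.
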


\begin{proof}
The first claim is immediate from Lemma \ref{lem:key}. We turn to the second claim.
Since $f$ is short, the area growth of $Z$ is bounded above by  $\Theta^\infty(\hat f)=1$. Hence by Bishop-Gromov (Theorem \ref{thm:BG}),
$Z$ is isometric to the flat Euclidean plane. Then $\hat f$ is an isometric embedding by Corollary \ref{cor:isom}.
\end{proof}

Our goal now is to extend a minimal disc to a minimal plane such that the area growth of the minimal plane is controlled by the total curvature of the boundary of the minimal disc. If we can do this, then 
we can argue as above to control the mapping behavior of the minimal disc.

So let $\Ga$ be a Jordan curve of finite total curvature $\kappa$ in a CAT(0) space $X$. 
Set $E_\kappa:=E_{\frac{\kappa}{2\pi}}$ and denote by $\hat{X}:=X\cup_\Ga E_\kappa$ the CAT(0) space
obtained from the funnel construction, see Section \ref{sec:funnel}. Let $f:Z\to X$ be an intrinsic minimal disc spanning $\Gamma$ and such that 
$f$ restricts to a homeomorphism $\D Z\to \Gamma$. Then we can glue 
the space $\hat Z:=Z\cup_{\D Z}E_\kappa$ via $f$. Using the identity map on $E_\kappa$, we obtain a natural extension
\[\hat f:\hat Z\to\hat X.\]
Clearly, this is a proper, area preserving short map.

\begin{lemma}\label{lem:extamin}
The map $\hat f$ is area minimizing. More precisely, if $Y$ is an embedded disc in $\hat Z$, then
$\hat f|_{Y}$ minimizes the area among all Lipschitz maps $h:Y\to\hat X$ with $\D h=\D \hat f|_{Y}$.
\end{lemma}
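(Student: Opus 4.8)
The plan is to estimate the area of an arbitrary competitor from below by splitting it according to whether it maps into $X$ or into the interior of the funnel. Since $\hat f$ is area preserving, $\a(\hat f|_Y)=\mathcal{H}^2(Y)=\mathcal{H}^2(Y\cap Z)+\mathcal{H}^2(Y\cap E_\kappa)$, so it is enough to show $\a(h)\ge\mathcal{H}^2(Y\cap Z)+\mathcal{H}^2(Y\cap E_\kappa)$ for every Lipschitz map $h:Y\to\hat X$ with $\D h=\D\hat f|_Y$. By the area formula (Lemma \ref{lem:areaformula}), and since $\Ga$ has $\mathcal{H}^2$-measure zero,
\[
\a(h)=\int_X N_h\,d\mathcal{H}^2+\int_{E_\kappa\setminus\Ga}N_h\,d\mathcal{H}^2=:I_X+I_E ,
\]
and I would bound $I_E\ge\mathcal{H}^2(Y\cap E_\kappa)$ and $I_X\ge\mathcal{H}^2(Y\cap Z)$ separately.

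For $I_E$ I would use that $E_\kappa$ is a flat, oriented topological surface. For $x$ in the interior of $Y$ with $x\in E_\kappa\setminus\Ga$ the loop $\D\hat f|_Y$ avoids $x$, as it maps $\D Y\cap E_\kappa$ by the identity into $\bar E_\kappa$ and $\D Y\cap Z$ into $X$; a Lipschitz filling of a loop in $\hat X\setminus\{x\}$ then has a well-defined degree at $x$, namely the class of the loop in $H_1(\hat X\setminus\{x\})\cong\Z$, which depends only on the loop. Since $\hat f$ is the identity on $E_\kappa$ and sends $Y\cap Z$ into $X$, the map $\hat f|_Y$ has degree $\pm1$ at every interior point of $Y$ lying in $E_\kappa\setminus\Ga$ and degree $0$ at every point of $E_\kappa\setminus\Ga$ outside $Y$; the same is then true for $h$, because $\D h=\D\hat f|_Y$. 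As $N_h(x)\ge|\deg(h,x)|$ for almost every $x$, this gives $I_E\ge\int_{E_\kappa\setminus\Ga}|\deg(\hat f|_Y,\cdot)|\,d\mathcal{H}^2=\mathcal{H}^2(Y\cap E_\kappa)$.

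For $I_X$ I would first construct a short retraction $R:\hat X\to X$. By definition of a flat funnel, $E_\kappa$ is the complement of a relatively compact convex set $K$ in a Euclidean cone, and the nearest-point projection onto $K$ restricts on $\bar E_\kappa$ to a map $r$ onto $\partial K=\Ga$; comparing lengths of curves in the intrinsic metric of $\bar E_\kappa$ shows $r$ is short, and extending it by the identity on $X$ produces a short retraction $R:\hat X\to X$ with $R(E_\kappa)\subset\Ga$. Now $R\circ h:Y\to X$ satisfies $\a(R\circ h)=I_X$, since on $h^{-1}(E_\kappa\setminus\Ga)$ the composition takes values in the one-dimensional set $\Ga$ and hence has vanishing Jacobian almost everywhere, while on $h^{-1}(X)$ it agrees with $h$. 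Its boundary is $R\circ\D\hat f|_Y=\D\varphi|_Y$, where $\varphi:=R\circ\hat f:\hat Z\to X$ agrees with $f$ on $Z$ and collapses $E_\kappa$ onto $\Ga$. After a routine reduction to the case where the gluing curve $C=\D Z\subset\hat Z$ meets $Y$ in a finite union of arcs and circles, so that $Y$ is assembled from finitely many sub-discs each contained in $Z$ or in $E_\kappa$, I would correct the boundary of $R\circ h$ back to $\D\hat f|_{Y\cap Z}$ by gluing on zero-area homotopies running inside $\Ga$ (Lemma \ref{lem:reparaunit}, Lemma \ref{lem:homosmallarea}) and invoke that $f$ restricted to each sub-disc contained in $Z$ is an intrinsic minimal disc (Lemma \ref{lem:bilip}, Lemma \ref{lem:restrict}). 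Summing over these sub-discs yields $I_X=\a(R\circ h)\ge\a(f|_{Y\cap Z})=\mathcal{H}^2(Y\cap Z)$, and adding the two bounds finishes the proof.

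I expect the $I_X$ part to be the main obstacle. The one genuinely geometric point there is that the flat funnel retracts shortly onto its bounding curve — this is where flatness and the convexity of the removed set enter — and the rest is topological bookkeeping: cutting the arbitrary embedded disc $Y$ along the gluing curve $C$, keeping track of how the competitor behaves near $C$ after applying $R$, and reassembling the pieces with no loss of area. The degree argument giving $I_E$, by contrast, needs no such case analysis.
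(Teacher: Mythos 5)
Your overall architecture (split $\a(h)=I_X+I_E$ by the area formula, control the funnel part by a degree argument, and control the $X$-part by composing with a short retraction $R$ collapsing $E_\kappa$ onto $\Ga$) is sound, and the $I_E$ half is fine: the open funnel is a flat surface open in $\hat X$, $\hat X$ is contractible, so the class of $\D h$ in $H_1(\hat X\setminus\{x\})$ is the local degree and $N_h\geq|\deg|$ a.e.\ is standard for Lipschitz maps; the construction of $R$ is also correct (the paper itself uses that the funnel projects shortly onto its boundary circle). The genuine gap is in the $I_X$ step. When $\D Y$ crosses the gluing curve $C=\D Z$ several times, $Y\cap Z$ is a disjoint union of discs $D_1,\dots,D_m$, while you have only \emph{one} competitor $R\circ h$ defined on $Y$, and the intrinsic minimality of $f$ applies to one embedded subdisc of $Z$ at a time with pointwise matching boundary values. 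Gluing zero-area homotopies inside $\Ga$ onto $\D(R\circ h)$ cannot split this single disc into $m$ competitors, and the corrected boundary curve is in general not $f|_{\D D_i}$ for any $i$, nor the boundary of any embedded subdisc of $Z$: it concatenates the arcs $f|_{\D Y\cap Z}$ of \emph{all} the $D_i$ with arcs of $\Ga$ traversed with multiplicity. The obvious implementation even gives the wrong-signed inequality: capping $R\circ h$ along $f|_{\D Y\cap \D D_2}$ with the minimal piece $f|_{D_2}$ and correcting inside $\Ga$ produces a filling of $f|_{\D D_1}$ of area $\a(R\circ h)+\mathcal{H}^2(D_2)$, whence only $\a(R\circ h)\geq\mathcal{H}^2(D_1)-\mathcal{H}^2(D_2)$, not $\a(R\circ h)\geq\sum_i\mathcal{H}^2(D_i)$. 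In addition, the announced ``routine reduction'' to $\D Y$ meeting $C$ in finitely many arcs and circles is itself unjustified ($C$ is merely rectifiable, and you may not move $\D Y$ without changing the boundary condition; you can only enlarge $Y$ and extend $h$ by $\hat f$), and even granted it does not remove the multi-component difficulty, which is where the real work lies.

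The repair is exactly the first reduction in the paper's proof, and your own extension trick makes it available: enlarge $Y$ to $Y_0:=Z\cup\hat f^{-1}\bigl(d_X^{-1}((0,r_0])\bigr)\supset Y$ and extend the competitor by $\hat f$ on $Y_0\setminus Y$; this adds $\mathcal{H}^2(Y_0\setminus Y)$ to both sides, so it suffices to treat $Y_0$. Now $\D Y_0$ lies in the funnel, $Y_0\cap Z=Z$ is a single disc, and $R\circ\D\hat f|_{Y_0}$ is a degree-one Lipschitz loop in $\Ga$; after a zero-area homotopy inside $\Ga$ and the usual reparametrization/transport-of-domain technicalities (Lemmas \ref{lem:reparaunit}, \ref{lem:mono}, Corollary \ref{cor:monarea}, in the spirit of Lemma \ref{lem:c+p}), $R\circ h$ becomes an admissible competitor for $f$ as a filling of $\Ga$, giving $I_X\geq\a(f)=\mathcal{H}^2(Z)=\mathcal{H}^2(Y_0\cap Z)$ in one stroke, and your degree argument gives $I_E\geq\mathcal{H}^2(Y_0\cap E_\kappa)$. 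With that change your route is complete and genuinely different from the paper's, which instead compares with a Plateau solution for the circle $d_X^{-1}(r_0)$, uses the topological covering of the annulus $A_0$, and performs surgery along a quasi-regular level of $d_X\circ u$ near $X$ at the cost of an $\epsilon$-error; the retraction $R$ would let one avoid that surgery. As written, however, the $I_X$ bound for an arbitrary embedded disc $Y$ is not proven.
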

\begin{proof}
Denote by $d_X$ the distance function to $X\subset\hat X$. Let $r_0>0$ and set $A_0:=d_X^{-1}((0,r_0])$, $Y_{0}:=\hat f^{-1}(A_0)\cup Z$.
It is enough to show that $\hat f|_{Y_0}$ is area minimizing for all large $r_0$.
Note that 
\[\a(\hat f|_{Y_0})=\mathcal{H}^2(A_0)+\a(f).\]
Since there is a monotone map  $\bar D\to Y_{0}$ (cf. Lemma \ref{lem:mono}), it is enough to show that a solution $u$ to the Plateau problem of $(\Gamma_{0},\hat X)$
has at least the area of $\hat f|_{Y_{0}}$. For topological reasons, any continuous disc filling $\Ga_{0}$ has to contain 
$A_0$ in its image. Therefore, it is enough to show that the part of $u$ which maps to $X$ has at least the area of $f$.
Let $\epsilon>0$ be a small quasi regular value of $d_X\circ u$. By Proposition \ref{prop:ae}, the corresponding fiber $\Pi_\eps$ decomposes as $\Pi_\eps=N_\eps\cup\bigcup_{k=1}^\infty \Ga_k$ where
$N_\eps$ has $\mathcal{H}^1$-measure zero and each $\Ga_k\subset D$ is a rectifiable Jordan curve. Because $u$ is minimizing, each $u_*[\Ga_i]$ has to be nontrivial in the first homology group with integer coefficients,  
$H_1(E_\kappa)$.
Since $u$ is locally Lipschitz continuous, the decomposition can only contain a finite number of Jordan curves, say $\Ga_1,\ldots,\Ga_n$. Then the sum $\sum_{i=1}^n u_*[\Ga_i]$ is equal to $u_*[\D \bar D]$ in $H_1(E_\kappa)$.
Choose Lipschitz maps $v_i:\bar D\to \bar D$ which extend arc length parametrizations of $\Ga_i$ and such that $\a(u\circ v_i)\leq \a(u|_{\Om_i})+\frac{\eps}{n}$. Since each
$u\circ v_i|_{\D \bar D}$ maps onto $\Ga_{\eps}:=d_X^{-1}(\eps)$, we can construct a Lipschitz map $v:\bar D\to \hat X$ which fills $\Ga_\eps$
and such that $\a(v)\leq\sum_{i=1}^n \a(u|_{\Om_i})+\epsilon$ and $v|_{\D \bar D}$ represents a generator of $H_1(E_\kappa)$. By Lemma \ref{lem:reparaunit}, we can adjust the boundary parametrization
to obtain a new Lipschitz disc $\tilde v:\bar D\to\hat X$ with $\a(\tilde v)=\a(v)$ and such that $\tilde v|_{\D \bar D}$ is uniformly $\eps$-close to an arc length parametrization of $\Ga$.
By minimality of $f$ and Lemma \ref{lem:homosmallarea}, we obtain $\a(v)\geq\a(f)-C\cdot\eps$ with a uniform constant $C>0$. We conclude
\[\a(u)\geq\mathcal{H}^2(A_0)+\a(f)-(C+1)\epsilon.\]
This holds for every small quasi regular value $\epsilon$ and therefore finishes the proof.
\end{proof}

\begin{lemma}\label{lem:extmin}
The extended space $\hat Z$ is a CAT(0) plane.
\end{lemma}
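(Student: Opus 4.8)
The plan is to deduce the statement from the funnel construction of Section~\ref{sec:funnel}. First the topology: $\hat Z=Z\cup_{\D Z}E_\kappa$ is obtained by gluing the compact disc $Z$ (a compact CAT(0) surface, hence homeomorphic to $\bar D$) to the half-open flat annulus $E_\kappa$ along the circle $\D Z\cong\D E_\kappa$, so $\hat Z$ is homeomorphic to $\R^2$; in particular it is a topological plane, and being the union of the compact set $Z$ with the complete space $E_\kappa$ along a compact subset it is complete. By the Cartan--Hadamard theorem it therefore suffices to show that $\hat Z$ is locally CAT(0).

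Local CAT(0) is immediate away from $\D Z$: a small ball about an interior point of $Z$, respectively of $E_\kappa$, lies in $Z$, respectively in $E_\kappa$, and both are locally CAT(0) ($E_\kappa$ is flat in its interior and, being the complement of a convex neighbourhood of the tip of its ambient cone, is locally CAT(0) along $\D E_\kappa$ as well). Near a point of $\D Z$ the local structure of $\hat Z$ is exactly that produced by the funnel construction of Section~\ref{sec:funnel} applied to the CAT(0) space $Z$ and the embedded closed curve $\D Z\subset Z$: that construction glues to $Z$ along $\D Z$ a flat funnel of total angle $\ka(\D Z)$, the total curvature of $\D Z$ computed \emph{in} $Z$, and produces a CAT(0) space. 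Hence it remains only to identify the glued funnel, i.e. to verify $\ka(\D Z)=\kappa$, where $\kappa=\ka(\Ga)$ is the total curvature of $\Ga$ in $X$ used to define $E_\kappa$ and $\hat X$; granting this, $\hat Z=Z\cup_{\D Z}E_\kappa$ is precisely the space built by the funnel construction, and is therefore CAT(0).

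To check $\ka(\D Z)=\kappa$ one uses the two properties of $f$: it is short, and it restricts to an isometry $\D Z\to\Ga$ (the latter is needed already for the gluing defining $\hat Z$ to make sense; in the situation at hand $f$ preserves the length of every rectifiable curve). Since $f|_{\D Z}$ preserves the cyclic order, one compares an inscribed polygon $x_0,\dots,x_{k-1}$ of $\D Z$ with its image polygon $y_i=f(x_i)$ of $\Ga$ vertex by vertex: the edges $x_{i-1}x_i$, $x_ix_{i+1}$ are geodesics of $Z$ that $f$ maps to paths of the \emph{same} length joining the $y_j$ in $X$, and a comparison‑angle estimate (the Alexandrov angle at a vertex being the infimum of the Euclidean comparison angles along the two edges, in $Z$ and in $X$ alike) shows that the turning $\pi-\angle_{x_i}(x_{i-1},x_{i+1})$ of $\D Z$ at $x_i$ agrees with the turning $\pi-\angle_{y_i}(y_{i-1},y_{i+1})$ of $\Ga$ at $y_i$; passing to the supremum over inscribed polygons — first over those on $\D Z$, then over those on $\Ga$ pulled back by $(f|_{\D Z})^{-1}$ — yields $\ka(\D Z)=\kappa$.

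The topological statement and the reduction via Cartan--Hadamard and the funnel construction are routine. I expect the identification $\ka(\D Z)=\kappa$ — that is, controlling precisely how the turning of the boundary curve transforms under $f$, which is short and length‑preserving on curves but a priori does not send geodesics of $Z$ to geodesics of $X$ — to be the main obstacle.
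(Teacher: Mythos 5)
Your reduction breaks down at the step where you identify the glued funnel: the claimed identity $\ka(\D Z)=\kappa$ (intrinsic total turn of $\D Z$ in $Z$ equals the total curvature of $\Ga$ in $X$) is false in general, so $\hat Z$ is \emph{not} the space produced by the funnel construction applied to $(Z,\D Z)$. Already for a non-convex planar Jordan polygon $\Ga\subset\R^2=X$ the minimal disc is the Jordan domain and $Z$ is that domain with its (flat) intrinsic metric: at a reflex vertex with interior angle $\theta>\pi$ the Alexandrov angle in $Z$ between the two adjacent boundary points is $\pi$, so the vertex contributes $0$ to $\ka(\D Z)$, while in $X$ the angle is $2\pi-\theta<\pi$ and the vertex contributes $\theta-\pi>0$ to $\kappa$; hence $\ka(\D Z)<\kappa$ strictly. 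Correspondingly, your vertex-by-vertex comparison argument cannot work: $f$ preserves lengths of curves but does not send $Z$-geodesics to $X$-geodesics, and shortness only gives $|f(x_i),f(x_j)|_X\le|x_i,x_j|_Z$ for \emph{all three} sides of each comparison triangle, which yields no inequality (let alone equality) between $\angle_{x_i}(x_{i-1},x_{i+1})$ and $\angle_{y_i}(y_{i-1},y_{i+1})$. To rescue your route you would need two further inputs that are not in the paper: the nontrivial inequality $\ka(\D Z)\le\kappa$ (finiteness and control of the intrinsic boundary turn of a minimal disc by the extrinsic total curvature), and a strengthening of the funnel lemma saying that gluing a flat funnel of angle \emph{larger} than the boundary turn, by arc length, still yields a CAT(0) space.

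The paper takes a different and softer route that sidesteps any curvature bookkeeping on $\D Z$: by Corollary 1.5 in \cite{LWcanonical} it suffices to verify the Euclidean isoperimetric inequality $\mathcal{H}^2(D_c)\le\length(c)^2/(4\pi)$ for Jordan domains $D_c\subset\hat Z$, and this follows from Lemma \ref{lem:extamin} ($\hat f|_{D_c}$ is an area-minimizing filling of $\hat f\circ c$ in the CAT(0) space $\hat X$, where the isoperimetric inequality holds) together with the facts that $\hat f$ is short and area preserving. If you want to keep a gluing-type argument, you must either prove the boundary-turn inequality mentioned above or replace it by an argument of this isoperimetric kind.
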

\begin{proof}
 Note that $\hat Z$ is a geodesic space which is homeomorphic to the plane. Moreover, $\hat Z$ contains $Z$ as a closed convex subset
since the nearest point projection $E\to\D E$ is short.

By Corollary 1.5 in \cite{LWcanonical} it is enough to show that any Jordan domain $D_c$ in $\hat Z$ bounded by a Jordan curve $c$
satisfies
\[\mathcal{H}^2(D_c)\leq \frac{\length(c)^2}{4\pi}.\]

Let $c$ be a rectifiable Jordan curve in $\hat Z$ and denote by $D_c$ the associated Jordan domain.
By Lemma \ref{lem:extmin} we know that $\hat f|_{D_c}$ is an area minimizing filling of $\hat f\circ c$ in $\hat X$. But since $\hat X$
is CAT(0), the Euclidean isoperimetric inequality holds and therefore $\a(\hat f|_{D_c})\leq\frac{\length(\hat f\circ c)^2}{4\pi}$. Since $\hat f$ is short and
preserves area, the claim follows.
\end{proof}

Hence, from Lemma \ref{lem:extamin} and Lemma \ref{lem:extmin} we obtain:

\begin{proposition}\label{prop:extension}
Let $\Ga$ be a Jordan curve of finite total curvature $\kappa$ in a CAT(0) space $X$. Denote by $\hat{X}:=X\cup_\Ga E_\kappa$ the CAT(0) space
obtained from the funnel construction. (See Section \ref{sec:funnel}.) Let  $f:Z\to X$ be an intrinsic minimal disc filling $\Gamma$. Then 
$\hat Z:=Z\cup_f E_\kappa$ is a CAT(0) plane and $f$ extends canonically to a proper intrinsic minimal plane 
$\hat f:\hat Z\to \hat X$. Moreover, $\hat f$ has area growth
$\Theta^\infty(\hat f)$ equal to $\frac{\kappa}{2\pi}$.
\end{proposition}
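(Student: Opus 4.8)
The plan is to assemble the pieces that are already in place and then carry out the area growth computation. By Lemma~\ref{lem:extmin} the space $\hat Z=Z\cup_f E_\kappa$ is a CAT(0) plane. The map $\hat f$ is proper, since it equals $\id_{E_\kappa}$ on the funnel and $Z$ is compact, so preimages of bounded sets are bounded. It is short and area preserving directly from the construction: it restricts to the short, area preserving intrinsic minimal disc $f$ on $Z$ and to an isometric embedding $E_\kappa\hookrightarrow\hat X$ on the funnel, and $Z\cap E_\kappa=\Gamma$ has $\mathcal H^2$-measure zero. Finally, Lemma~\ref{lem:extamin} states precisely that $\hat f$ restricts to an area minimizer on every embedded disc of $\hat Z$. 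Together these three points are the definition of a proper intrinsic minimal plane, so only the identity $\Theta^\infty(\hat f)=\tfrac{\kappa}{2\pi}$ remains.

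For the area growth I would compute $\Theta(\hat f,p,r)$ for the fixed basepoint $p=\hat f(q_0)$ with $q_0$ a point of the funnel $E_\kappa$; that the limit is independent of this choice follows by comparing concentric balls about two points and letting $r\to\infty$, using monotonicity (Proposition~\ref{prop:mon}). For $r$ large enough that $f(Z)\subset B_r(p)$ one has, inside $\hat Z$,
\[
\hat f^{-1}(B_r(p))=Z\cup\big(E_\kappa\cap B_r^{\hat X}(p)\big),
\]
the two pieces overlapping only along $\Gamma$, so $\mathcal H^2(\hat f^{-1}(B_r(p)))=\mathcal H^2(Z)+\mathcal H^2\big(E_\kappa\cap B_r^{\hat X}(p)\big)$. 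Since $\mathcal H^2(Z)$ is a fixed finite constant, the area growth is governed solely by the second term.

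To handle that term I would sandwich $E_\kappa\cap B_r^{\hat X}(p)$ between $B_r^{E_\kappa}(q_0)$ and a controlled enlargement of it. The inclusion $B_r^{E_\kappa}(q_0)\subset E_\kappa\cap B_r^{\hat X}(p)$ is immediate because $\hat f$ is short, i.e.\ $|x,p|_{\hat X}\le|x,p|_{E_\kappa}$. For the reverse direction, take $x\in E_\kappa$ with $|x,p|_{\hat X}<r$ and consider a geodesic from $p$ to $x$ in $\hat X$: either it stays in $E_\kappa$, and then $|x,p|_{E_\kappa}<r$, or else its final subarc, running from the last crossing point $b\in\Gamma=\partial E_\kappa$ to $x$, lies entirely in $E_\kappa$ and has length $<r$, so $x\in B_r^{E_\kappa}(b)$. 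Since $\Gamma$ is compact, in both cases $x\in B_{r+c}^{E_\kappa}(q_1)$ for a fixed $q_1\in\Gamma$ and a constant $c$. Now the funnel $E_\kappa$ produced by the construction of Section~\ref{sec:funnel} consists of finitely many flat half-strips --- contributing only linearly in the radius --- together with ideal flat sectors whose angles sum to the total curvature $\kappa$, so $\mathcal H^2\big(B_s^{E_\kappa}(q)\big)=\tfrac{\kappa}{2}s^2+o(s^2)$ as $s\to\infty$ for any fixed $q$. Combining the two inclusions gives $\mathcal H^2\big(E_\kappa\cap B_r^{\hat X}(p)\big)=\tfrac{\kappa}{2}r^2+o(r^2)$, hence $\Theta^\infty(\hat f)=\tfrac{\kappa}{2\pi}$.

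The first paragraph is pure bookkeeping. The one point needing genuine care is the reverse inclusion above --- ruling out that a short path in $\hat X$ from $p$ to a point far out in the funnel gains distance by detouring through $X$. The resolution is that only the final subarc after the last return to $E_\kappa$ matters, and $\Gamma$ is compact; everything else, in particular the flat asymptotics of ball areas in $E_\kappa$, is elementary.
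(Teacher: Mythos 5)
Your proposal is correct and follows essentially the same route as the paper, which obtains the proposition by combining Lemma~\ref{lem:extamin} and Lemma~\ref{lem:extmin} with the observation that the canonical extension $\hat f$ is a proper, area preserving short map. The only difference is that you spell out the asymptotic computation $\Theta^\infty(\hat f)=\frac{\kappa}{2\pi}$ (sandwiching $E_\kappa\cap B_r^{\hat X}(p)$ between intrinsic balls in the flat funnel), a point the paper treats as immediate from the construction; your added argument is sound.
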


This proposition allows us to relate the total curvature of the boundary curve to
the multiplicity of points via our key estimate Lemma \ref{lem:key}.

\section{The F\'{a}ry-Milnor Theorem}

In this last part we will apply the above results on minimal discs 
filling curves of finite total curvature in order to obtain the general version of the F\'{a}ry-Milnor Theorem.

\begin{theorem}(F\'{a}ry-Milnor)
Let $\Ga$ be a Jordan curve in a CAT(0) space $X$. If $\kappa(\Ga)\leq 4\pi$,
then either $\Ga$ bounds an embedded disc, or $\kappa(\Ga)= 4\pi$ and $\Ga$
bounds an intrinsically flat geodesic cone. More precisely, there is a map from a convex subset of a Euclidean cone of cone angle
equal to $4\pi$ which is a local isometric embedding away from the cone point and which fills $\Ga$.
\end{theorem}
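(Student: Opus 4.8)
The plan is to push the minimal disc out to a minimal plane via the funnel construction and then run the argument of Ekholm--White--Wienholtz. If $\ka(\Ga)=\infty$ there is nothing to prove, so set $\ka:=\ka(\Ga)\leq 4\pi$; then $\Ga$ is rectifiable. First I would solve the Plateau problem for $(\Ga,X)$ (Theorem \ref{thm:plateau}) and pass, via Theorem \ref{thm:int}, to the associated intrinsic minimal disc $f:=\bar u:Z\to X$, where $Z=Z_u$ is a CAT(0) disc and $f$ restricts to a homeomorphism $\D Z\to\Ga$. Proposition \ref{prop:extension} then yields the CAT(0) plane $\hat Z=Z\cup_f E_\ka$ and the proper intrinsic minimal plane $\hat f:\hat Z\to\hat X=X\cup_\Ga E_\ka$ with $\Theta^\infty(\hat f)=\frac{\ka}{2\pi}\leq 2$. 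Since $\hat f$ agrees with $f$ on $Z$ and $f$ fills $\Ga$, everything reduces to analysing $\hat f$.

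If $\Theta^\infty(\hat f)<2$, i.e.\ $\ka<4\pi$, then $\hat f$ is an embedding by Corollary \ref{lem:eucareagrowth}; hence $f$ is injective and $f(Z)$ is an embedded disc with boundary $\Ga$. The same holds if $\ka=4\pi$ but $\hat f$ is injective. So assume $\ka=4\pi$ and $\hat f$ is not injective; I must produce the flat cone. As $\hat f$ is the identity on the funnel $E_\ka\subset\hat X$ it is injective there, so any point $p$ with $\#\hat f^{-1}(p)\geq 2$ lies in $X\setminus\Ga$ and $\hat f^{-1}(p)\subset Z$. By the key estimate (Lemma \ref{lem:key}), $\#\hat f^{-1}(p)\leq\Theta^\infty(\hat f)=2$, so $\hat f^{-1}(p)=\{x_1,x_2\}$ with $x_1\neq x_2$. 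Lemma \ref{lem:estinv} gives $\Theta(\hat f,p)\geq 2$, while the monotonicity of area densities (valid for all radii, $\hat f$ being a proper plane) gives $\Theta(\hat f,p,r)\leq\Theta^\infty(\hat f)=2$ for all $r$; hence $\Theta(\hat f,p,r)\equiv 2$, that is $\mathcal H^2(\hat f^{-1}(B_r(p)))=2\pi r^2$ for every $r>0$.

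For the rigidity, put $r_0:=\frac12|x_1,x_2|$ and let $m$ be the midpoint of $x_1$ and $x_2$. The closed balls $\bar B_{r_0}(x_1)$ and $\bar B_{r_0}(x_2)$ are disjoint and, $\hat f$ being short, are both contained in $\hat f^{-1}(B_r(p))$ for every $r>r_0$; letting $r\downarrow r_0$ and combining with the Bishop--Gromov lower bound $\mathcal H^2(B_{r_0}(x_i))\geq\pi r_0^2$ (Theorem \ref{thm:BG}) forces $\mathcal H^2(B_{r_0}(x_i))=\pi r_0^2$ for $i=1,2$. Thus each $\bar B_{r_0}(x_i)$ is isometric to a flat $r_0$-disc, and being closed and convex in $\hat Z$ it is mapped isometrically by $\hat f$ (Corollary \ref{cor:isom}); in particular $\hat f$ sends the geodesics $x_1 m$ and $x_2 m$ isometrically onto the geodesic from $p$ to $\hat f(m)$. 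Lemma \ref{lem:coneangle} (equivalently, the Remark after Theorem \ref{thm:rigid}) then forces the directions at $m$ towards $x_1$ and $x_2$ to be at distance $\geq 2\pi$, so $\mathcal H^1(\Sigma_m\hat Z)\geq 4\pi$. On the other hand $B_R(m)\subset\hat f^{-1}(B_{R+|m,x_1|}(p))$ gives $\mathcal H^2(B_R(m))\leq 2\pi(R+|m,x_1|)^2$, so the monotone quantity $\mathcal H^2(B_R(m))/(\pi R^2)$ tends to a value $\leq 2$ while being $\geq\mathcal H^1(\Sigma_m\hat Z)/2\pi\geq 2$. Hence $\mathcal H^1(\Sigma_m\hat Z)=4\pi$ and $\mathcal H^2(B_R(m))=\frac{R^2}{2}\mathcal H^1(\Sigma_m\hat Z)$ for all $R>0$; by the rigidity case of Bishop--Gromov, $\hat Z$ is isometric to the Euclidean cone $C_{4\pi}$ with cone point $m$. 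Applying Corollary \ref{cor:smallangleinj} to $\hat f:C_{4\pi}\to\hat X$ shows that $\hat f$ is a radial isometry with respect to $m$ and a local isometric embedding away from $m$. Finally $Z\subset\hat Z$ is closed and convex (as noted in the proof of Lemma \ref{lem:extmin}) and contains the geodesic $x_1 x_2$, hence the cone point $m$; therefore $f=\hat f|_Z$ is a map from a convex subset of a Euclidean cone of cone angle $4\pi$ which is a local isometric embedding away from the cone point, is intrinsically flat away from it, sends radial geodesics to geodesics (so $f(Z)$ is a geodesic cone), and fills $\Ga$ since $f(\D Z)=\Ga$. This is the asserted alternative.

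The crux, I expect, is the passage from ``$\Theta(\hat f,p,\cdot)\equiv 2$ with $\hat f$ non-injective'' to ``$\hat Z\cong C_{4\pi}$'': one has to concentrate the entire angle deficit at a single point --- the midpoint $m$, via the lower bound $\mathcal H^1(\Sigma_m\hat Z)\geq 4\pi$ coming out of Lemma \ref{lem:coneangle} --- and then promote the scale-by-scale Bishop--Gromov equality into a global isometry with the Euclidean cone. One must also check, more routinely, that the non-injectivity genuinely occurs inside $Z$ rather than in the funnel and that the relevant midpoint lies in $Z$, so that the cone structure is actually seen by $f$ itself.
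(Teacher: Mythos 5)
Your proof is correct and follows the paper's strategy almost verbatim up to the key estimate: Plateau problem plus Theorem \ref{thm:int}, the funnel extension of Proposition \ref{prop:extension}, Lemma \ref{lem:key} to dispose of $\kappa<4\pi$ via Corollary \ref{lem:eucareagrowth}, and constancy of the density at a double point $p$. Where you genuinely diverge is the rigidity endgame. The paper first invokes the general rigidity statement (Theorem \ref{thm:rigid}) to get flatness of $\hat Z$ away from a finite set together with the local isometric embedding, and only then shows the singular set is a single point of cone angle $4\pi$; you bypass Theorem \ref{thm:rigid} entirely, squeezing the two balls $B_{r_0}(x_i)$ between the Bishop--Gromov lower bound and the density-$2$ upper bound to see they are flat, then using Corollary \ref{cor:isom} and Lemma \ref{lem:coneangle} to force $\mathcal{H}^1(\Sigma_m\hat Z)\geq 4\pi$ at the midpoint, and concluding $\hat Z\cong C_{4\pi}$ by Bishop--Gromov rigidity and the mapping statement from Corollary \ref{cor:smallangleinj}. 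This is a legitimate shortcut. Two small repairs are needed. First, the closed balls $\bar B_{r_0}(x_1)$ and $\bar B_{r_0}(x_2)$ are not disjoint (they meet at $m$); the open balls are, and that is all the measure count uses. Second, your final squeeze leans on monotonicity of the intrinsic ratio $\mathcal{H}^2(B_R(m))/(\pi R^2)$ in $\hat Z$, which is true for CAT(0) surfaces but is not among the paper's stated results; it is cleaner to base the upper bound at $q:=\hat f(m)$ rather than at $p$: by Proposition \ref{prop:mon} the density $\Theta(\hat f,q,R)$ is nondecreasing with limit $\Theta^\infty(\hat f)=2$, while $\Theta(\hat f,q,R)\geq \mathcal{H}^2(B_R(m))/(\pi R^2)\geq \mathcal{H}^1(\Sigma_m\hat Z)/2\pi\geq 2$ by Theorem \ref{thm:BG}, so it is identically $2$ and $\mathcal{H}^2(B_R(m))=2\pi R^2$ exactly at every scale, which is what the rigidity case of Bishop--Gromov requires. (Also, a double point need not lie in $X\setminus\Ga$ a priori, but all you actually use is that its preimages lie in $Z$, which does hold.) With these adjustments your argument is complete, including the correct final reduction to $f=\hat f|_Z$ on the convex subset $Z\subset C_{4\pi}$ containing the cone point $m$.
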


\begin{proof}
Let $u:D\to X$ be a solution to the Plateau problem for $(\Ga,X)$ provided by Theorem \ref{thm:plateau}. Denote by $f:=\bar u:Z\to X$
the induced intrinisic minimal disc. Then, by Theorem \ref{thm:int}, $\D f:\D Z\to \Ga$ is an arc length preserving homeomorphism.
Next, let $\hat X:=X\cup_\Ga E_\kappa$ 
be the CAT(0) space obtained from the funnel construction, see Section \ref{sec:funnel}. By Proposition
\ref{prop:extension} we can extend $f$ to an intrinsic minimal plane $\hat f:\hat Z\to\hat X$
with area growth $\Theta^\infty(\hat f)$ equal to $\frac{\kappa}{2\pi}$.
Then, for any point $p$ in the image of $\hat f$ our key estimate Lemma \ref{lem:key} reads 
\begin{equation}\label{eq:key}
\#\hat{f}^{-1}(p)\leq\frac{\kappa}{2\pi}.
\end{equation}
Now if $\ka<4\pi$ holds, then $\hat f$ is injective. Hence  $\Gamma$ bounds the embedded disc $f(Z)$.

So we may assume that $\ka=4\pi$ and our
intrinsic minimal disc $f$ filling $\Ga$ is not embedded. Hence we find a point $p\in\im(\hat f)$ where equality in (\ref{eq:key}) holds, i.e. which has exactly two inverse images $x^+$ and $x^-$. 
Moreover, by monotonicity (Proposition \ref{prop:mon}), we must have

$$
A(r)=\frac{\Theta^\infty(\hat f)}{2}r^2
$$
for all $r>0$. Therefore the conclusions of Theorem \ref{thm:rigid} apply
to $\hat f$. In particluar, there exists a finite set $B\subset\hat Z$ such that $\hat Z\setminus B$ is flat and the restriction of $\hat f$ is a locally isometric embedding. 
It remains to show that $B$ consists only of a single point where the cone angle is equal to $4\pi$.
To see this, let $m$ be the midpoint of $x^+$ and $x^-$. Set $r:=\frac{|x^+,x^-|}{2}$. Then $\mathcal{H}^2(B_r(x^\pm))=\pi r^2$ and
$B_r(x^\pm)\subset \hat Z$ is a flat disc. Hence, by Corollary \ref{cor:isom}, the restriction $\hat f|_{B_r(x^\pm)}$ is an isometric embedding. Consequently,
the geodesic $x^+x^-$ is folded onto the geodesic $\hat f(m)p$. 
From Lemma \ref{lem:coneangle}  we conclude that $\mathcal{H}^1(\Si_m \hat Z)\geq 4\pi$. But the area growth of $\hat Z$
is equal to $2$. Hence from Bishop-Gromov (Theorem \ref{thm:BG}) we conclude that $\hat Z$ is isometric to a Euclidean cone of cone angle $4\pi$ and the proof is complete.

\end{proof}

\newpage

\begin{example}\label{ex}

Let $X$ be the CAT(0) space which results from gluing two flat planes along a flat sector of angle $\alpha\leq \pi$.
Then $X$ contains a Jordan curve of total curvature
equal to $4\pi$ but which does not bound an embedded disc.
The picture shows an example in the case $\alpha=\pi$. 
Note that $\Gamma$, as shown in the picture, surrounds some points in $X$ twice and therefore cannot bound an embedded disc. 

\begin{figure}

\begin{center}
\includegraphics[width=0.5\textwidth]{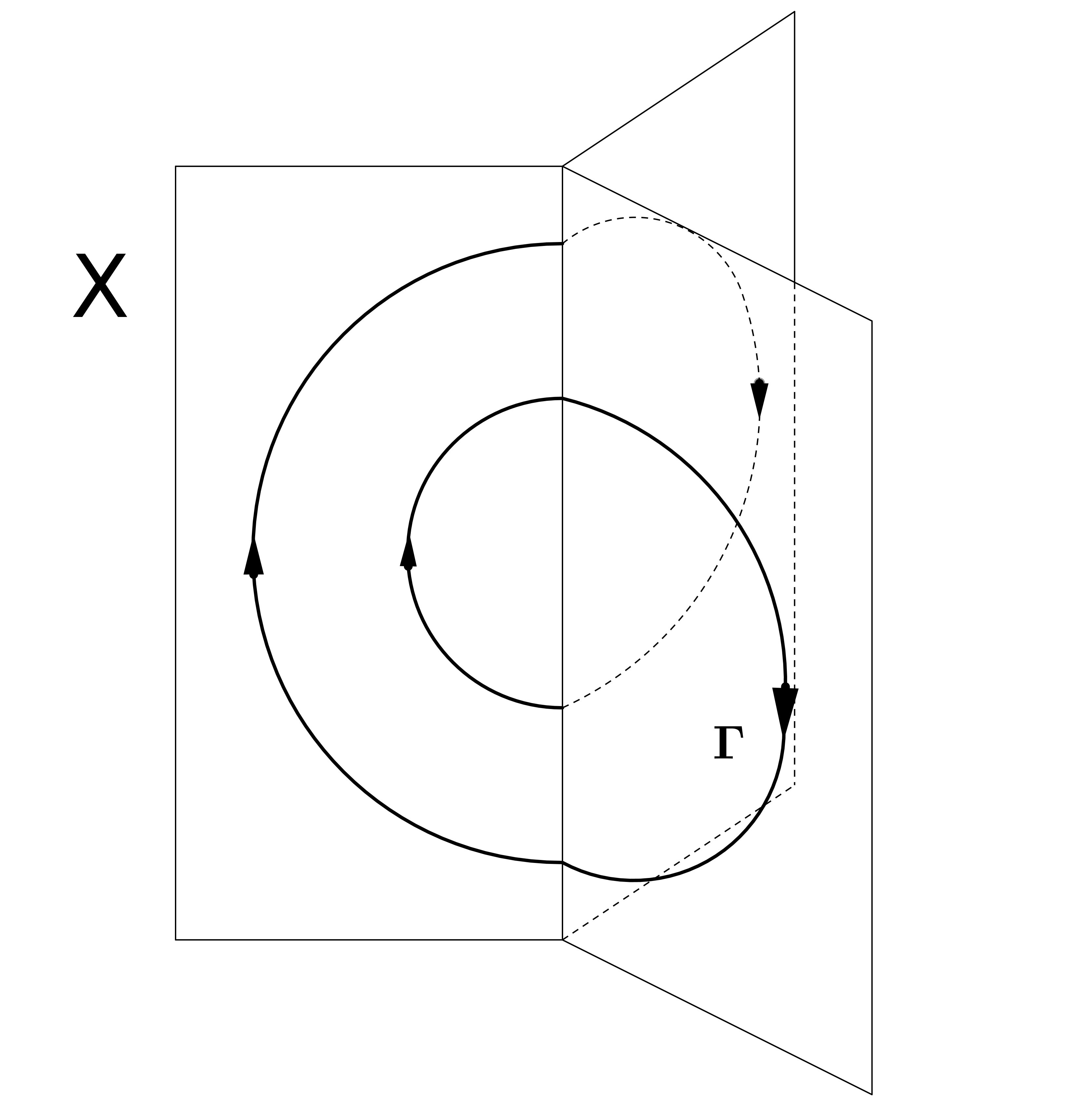}

\end{center}

\end{figure}

\end{example}

\Addresses

\end{document}